\newtheorem{theorem}{Theorem}[section]
\newtheorem{lemma}[theorem]{Lemma}
\newtheorem{mainthm}[theorem]{Main Theorem}
\theoremstyle{definition}
\newtheorem{definition}[theorem]{Definition}
\newtheorem{prop}[theorem]{Proposition}
\newtheorem{cor}[theorem]{Corollary}
\theoremstyle{remark}
\newtheorem{remark}[theorem]{Remark}
\numberwithin{equation}{section}
\newcommand{\dx}{\;\mathrm{d}x}
\newcommand{\dy}{\;\mathrm{d}y}
\newcommand{\ds}{\;\mathrm{d}s}
\newcommand{\dz}{\;\mathrm{d}z}
\newcommand{\mres}{\mathbin{\vrule height 1.6ex depth 0pt width
0.13ex\vrule height 0.13ex depth 0pt width 1.3ex}}
\begin{document}

\title[Poisson equation involving $\mathcal{H}^{n-1}\mres \Gamma$]{The Poisson equation involving surface measures}

% Info Anna 
\author{Marius Müller}
%%    Address of record for the research reported here
\address{Albert-Ludwigs-Unversität Freiburg, Mathematisches Institut, 79104 Freiburg im Breisgau}
%%    Current address
%\curraddr{Department of Mathematics and Statistics,
%Case Western Reserve University, Cleveland, Ohio 43403}
\email{marius.mueller@math.uni-freiburg.de}
%%    \thanks will become a 1st page footnote.
\thanks{The author would like to thank Jan Metsch and Ernst Kuwert for helpful discussions.}
%%    Information for first author
%\author{Tatsuya Miura}
%%    Address of record for the research reported here
%\address{Department of Mathematics, Louisiana State University, Baton
%Rouge, Louisiana 70803}
%%    Current address
%\curraddr{Department of Mathematics and Statistics,
%Case Western Reserve University, Cleveland, Ohio 43403}
%\email{xyz@math.university.edu}
%%    \thanks will become a 1st page footnote.
%\thanks{The first author was supported in part by NSF Grant \#000000.}
%
%%    Information for second author
%\author{Marius MÃŒller}
%\address{Helmholtzstrasse 18, 89081 Ulm, Germany }
%\email{marius.mueller@uni-ulm.de}
%\thanks{Support information for the second author.}

%    General info
\subjclass[2020]{Primary 35R06; Secondary 28A75, 35R35}

%\date{January 1, 2001 and, in revised form, June 22, 2001.}

%\dedicatory{This paper is dedicated to our advisors.}

\keywords{Poisson equation with measures, Lipschitz regularity, Alt-Caffarelli problem, Higher order free boundary problems}

\begin{abstract}
We prove the (optimal) $W^{1,\infty}$-regularity of weak solutions to the equation $-\Delta u = Q \; \mathcal{H}^{n-1}\mres \Gamma$ in a domain $\Omega \subset \mathbb{R}^n$ with Dirichlet boundary conditions, where $\Gamma \subset \subset \Omega$ is a compact (Lipschitz) manifold and $Q \in L^\infty(\Gamma)$.

We also discuss optimality  and necessity of the assumptions on $Q$ and $\Gamma$. 

Our findings can be applied to study the regularity of solutions for several free boundary problems, in particular the biharmonic Alt-Caffarelli Problem.
\end{abstract}
\maketitle

\section{Introduction}

In this article we study the Poisson equation with measure-valued right hand side of the form
\begin{equation}
\begin{cases} \label{eq:measspec}
- \Delta v = Q \; \mathcal{H}^{n-1}\mres \Gamma  & in \; \Omega, \\ 
\quad \; \;  v = 0 & on \; \partial \Omega,
\end{cases} 
\end{equation}
for a (smooth) domain $\Omega$, a (suitably regular) surface $\Gamma\subset \subset \Omega$, and $Q \in L^1(\Gamma,\mathcal{H}^{n-1})$.
The notion of (weak) solutions for problems of the form
\begin{equation}\label{eq:measintro}
\begin{cases} 
- \Delta v = \mu & in \; \Omega, \\ 
 \quad \; \; v = 0 & on \; \partial \Omega,
\end{cases} 
\end{equation}
for a (signed) measure $\mu$ is established in the literature and goes back to \cite{Stampacchia}. Regularity theory is well-established in various contexts (cf. \cite{Duzaar1}, \cite{Duzaar2}, \cite{Mingione}, \cite{Xiong}). This is closely tied to the study of the equation $-\Delta u = \mathrm{div}(F)$ via the \emph{duality method}. The latter equation is subject to extensive research with wide applications, e.g. to the \emph{Helmholtz decomposition}.

We are interested in the optimal regularity of (weak) solutions of \eqref{eq:measspec}. 

It turns out that general regularity results are either not applicable or not optimal if $\mu$ has the special structure $\mu = Q \; \mathcal{H}^{n-1} \mres \Gamma$. While the standard {duality method} (cf. Section \ref{sec:dualmeth}, \cite{Breit}, \cite{Breit2}) or the method of \emph{layer potentials}
(cf. Section \ref{sec:35}, \cite[Section 7.11]{Taylor}, \cite[Section 14]{Miranda})
can be used to establish the $W^{1,p}$-regularity for all $p \in [1, \infty)$, both methods have limitations in the case of  $p = \infty$. 
The question of $W^{1,\infty}$-regularity (or equivalently $C^{0,1}$-regularity) has already been raised in \cite[p.171]{Kilpelainen}, where it is stated as an open problem. 

In this article we show $W^{1,\infty}$-regularity in the case that $\Gamma$ is a compact Lipschitz manifold and $Q \in L^\infty( \Gamma)$. We also prove that the assumption  $Q \in L^\infty(\Gamma)$  and the Lipschitz regularity of $\Gamma$ are necessary and discuss in which sense $W^{1,\infty}$-regularity is optimal.% We also explain why Lipschitz regularity requirements on $\Gamma$ are needed.

%One may become sceptical about the $p= \infty$- case, since $C^1$-regularity would contradict the Hopf boundary point lemma (cf Section \ref{sec:Hopfolei}). 

The structure $\mu = Q \; \mathcal{H}^{n-1} \mres \Gamma$  is in many ways a critical limit case of the established theory.

One way to see this is to look at the \emph{potential}
\begin{equation}
    U(x) := \int F(x-y) \; \mathrm{d}\mathcal{\mu}(y),
\end{equation}
where $F$ is the fundamental solution to the Laplace equation. It is well-known that $|\nabla_x F(x-y)| = \frac{C_n}{|x-y|^{n-1}}$, i.e. it has a singularity of polynomial order $n-1$ at $y = x$.  If we take  $\mu = \mathcal{H}^{n-1} \mres (B_1(0) \cap \{ x_n = 0 \})$ then 
\begin{equation}\label{eq:experiment}
    \int |\nabla F(0-y)| \mathrm{d}\mu(y) =  C_n \int_{ \{|y'| <1 \} }  \frac{1}{|y'|^{n-1}} \mathrm{d}y',
\end{equation}
where $y' = (y_1,...,y_{n-1})$ and $dy' = dy_1 ... dy_{n-1}$. This integral is infinite, but the order of the singularity is just critical for this to hold true. Since we integrate only the absolute value of $|\nabla F|$ in \eqref{eq:experiment}, we are not anymore able to take advantage of useful \emph{cancellation properties}. The idea of this cancellation is not new to mathematics and forms an integral part of Calderon-Zygmund theory. In this theory however, the symmetries and the structure of $\mathbb{R}^n$ are heavily used. Because of lacking symmetry of $\Gamma$, those arguments do not immediately carry over. 

Those considerations are obviously related to \emph{potential theory}, but our goal is to present an approach that does not rely on potential theory, except for standard elliptic regularity results. %The fact of the matter is that the potential theoretic approach needs further regularity requirements on $Q$ and $\Gamma$, which are not easy to circumvent.

A second argument why $\mu = Q \; \mathcal{H}^{n-1} \mres \Gamma $  is a critical limit case is because of its criticality for the \emph{Wolff potential} 
\begin{equation}
    \mathcal{W}\mu(x) := \int_0^1 \frac{|\mu|(B_t(x))}{t^{n-1}} \; \frac{\mathrm{d}t}{t}.
\end{equation}
It is well known that $\mathcal{W}\mu(x) < \infty$ for all $x$  implies that solutions of \eqref{eq:measintro} lie in $W^{1,\infty}$, cf. \cite{Duzaar1}, \cite{Kilpelainen}. The fact that  $(\mathcal{H}^{n-1}\mres \Gamma)(B_t(x)) \sim t^{n-1} $ for suitably smooth  $\Gamma$ and all $x \in \Gamma$ leads once again to a critical singularity  in the definition of $W\mu$.

Equations like \eqref{eq:measspec} appear in applications.  Oftentimes they describe solutions of free boundary problems, for example the \emph{thin obstacle problem} (cf. \cite[Equation (3.8)]{FernandezReal}) and the \emph{Alt-Caffarelli Problem}  (cf. \cite[Equation(0.2)]{Alt}) as well as its biharmonic relative (cf. \cite[Theorem 1.4]{Marius}). We will explain possible applications in detail for the biharmonic Alt-Caffarelli problem, which has recently raised a lot of interest, cf. \cite{Dipierro2},\cite{Dipierro1},\cite{Marius}. Some methods, e.g the \emph{blow-up techniques} we use, are also inspired by  applications from free boundary problems \cite[Section 4]{Alt}, but need to be refined in our analysis.

The article is organized as follows: In Section 
\ref{sec:main} we introduce the underlying concept of weak solutions and state our main result. In Section \ref{sec:cont} we recall what regularity has been obtained in the existing literature and discuss in which sense $W^{1,\infty}$-regularity is the best regularity one may expect for our problem. Section \ref{sec:4} and 5 are devoted to the proofs of the main results. In Section \ref{sec:appli} we apply our results to study regularity of the biharmonic Alt-Caffarelli problem.

%The structure $\mu = Q \mathcal{H}^{n-1} \mres \Gamma$ is critical for this Wolff potential approach. 

%The case of $\mu = Q \mathcal{H}^{n-1} \mres \Gamma$ is in a certain sense critical for the general theory, cf. (KIPELAINEN). In (KIPELAINEN) the case we study is formulated as an open problem. Here we solve this open problem in the special case that $\Gamma$ is a Lipschitz manifold.

\section{Main Results}\label{sec:main}

Suppose for the rest of the article that $\Omega \subset \mathbb{R}^n$ is a bounded domain with smooth boundary and $\Gamma \subset \subset \Omega$ is such that $\mathcal{H}^{n-1}(\Gamma) < \infty$.
%We further suppose that $\Omega' \subset \subset \Omega $ is open and nonempty and  $\Gamma = \partial \Omega'$ has finite Hausdorff measure.
Further let $Q \in L^1( \Gamma)$, which is shorthand for $L^1( \Gamma, \mathcal{H}^{n-1} \mres \Gamma)$.

We next define a weak concept of solutions for  
\begin{equation}\label{eq:measdir}
\begin{cases} 
- \Delta v = Q \; \mathcal{H}^{n-1}\mres \Gamma  & \textrm{in} \; \Omega, \\ 
\quad \; \; v = 0 & \textrm{on} \; \partial \Omega.
\end{cases} 
\end{equation}
\begin{definition}[{cf. \cite[Definition 3.1]{Ponce}}]
We say that $v \in L^1(\Omega)$ is a weak solution of \eqref{eq:measdir} if for all $\phi \in C^2(\overline{\Omega})$ such that $\phi_{\mid_{\partial \Omega}} = 0$ one has 
\begin{equation}
- \int_\Omega v \; \Delta \phi \dx  =  \int_\Gamma Q \;  \phi \;  \mathrm{d}\mathcal{H}^{n-1}.
\end{equation}
\end{definition}
%We are able to prove that the optimal regularity is achieved provided that $\Omega''$ is doubly connected. 

%\vspace{1cm}
%\begin{tabular}{|c|c|c|c|}
%\hline
%   Surface / Solution  &     $v \in C^{0,\alpha}$, $0< \alpha <1$  & $ v \in C^{0,1}$ & $ v \in C^{1}$ \\ \hline 
%  $\Gamma \in C^{1,\beta}$, $0< \beta < 1$ & Yes & \textbf{In general No} & {Impossible} \\ \hline
%  $\Gamma \in C^{1,1}$  & Yes & \textbf{Yes} ($*$) & {Impossible} \\ \hline   
%\end{tabular}
%
%\vspace{1cm}

%The $(*)$ means that we have to assume additionally that $n= 2$ and $ \Omega'' := \Omega \setminus  \overline{\Omega'}$ is doubly connected. 

%In the first sections we will prove (or cite) two standard results from regularity theory. For those result not much structure of $\Gamma$ is used: For the $C^{0,\alpha}$-regularity it is crucial 

If $Q \in L^1( \Gamma, \mathcal{H}^{n-1})$ then $Q \mathcal{H}^{n-1}\mres \Gamma$ is a finite signed Radon measure and by
\cite[Proposition 3.2, Proposition 3.5]{Ponce} there exists a unique solution $v \in L^1(\Omega)$ of \eqref{eq:measdir} in the sense of the previous definition.  

As mentioned above, we are interested in the optimal regularity of such solutions, which we will identify as $W^{1,\infty}(\Omega)$. 

Let us first note that $W^{2,q}$-regularity is impossible for any solution of \eqref{eq:measdir} and any $q \in [1,\infty]$ unless $Q = 0$. This is due to the fact that $v \in W^{2,q}$ implies that $-\Delta v$ is absolutely continuous with respect to the Lebesgue measure, whereas the right hand side $Q \mathcal{H}^{n-1} \mres \Gamma$ is not absolutely continuous. 

This already yields that the best possible regularity in the sense of (integer) Sobolev spaces is given by $W^{1,\infty}(\Omega)$.

In classical function spaces however, the regularity could theoretically be improved. Recall that for smooth domains $\Omega \subset \mathbb{R}^n$ one has $W^{1,\infty}(\Omega) = C^{0,1}(\overline{\Omega})$, the space of Lipschitz functions.

 It turns out that no improvement of the $C^{0,1}$-regularity can be achieved in the classical function spaces  ---   $C^1$-regularity is impossible (cf. Section \ref{sec:Hopfolei}). 
 
The main theorems of this article state that Lipschitz regularity can actually be achieved under mild (and optimal) assumptions on $Q$ and $\Gamma$, which we shall discuss.

%This leaves only the $C^{0,1}$ (or alternatively the $W^{1,\infty}$-)regularity open. Here the geometry of $\Gamma$ becomes important as we shall see. 

%\begin{theorem}[Counterexamples for Lipschitz-regularity for $C^{1,\beta}$-domains]
%For each $\beta \in (0,1)$ there exists a smooth domain $\Omega \subset \mathbb{R}^2$ and $\Omega' \subset \subset \Omega$ such that $\Gamma = \partial \Omega' \in C^{1,\beta}$ and the solution of \eqref{eq:measdir} does not lie in $C^{0,1}(\overline{\Omega})$. 
%\end{theorem}

\begin{mainthm}[Lipschitz regularity for closed Lipschitz manifolds]\label{thm:lipreg}
Suppose that $\Omega \subset \mathbb{R}^n$ has smooth boundary and $\Omega' \subset \subset \Omega$ is  open with Lipschitz boundary  $\Gamma = \partial \Omega'$. Further suppose that $Q \in L^\infty(\Gamma)$. 
Then the unique solution to \eqref{eq:measdir} lies in $C^{0,1}(\overline{\Omega})$. 
\end{mainthm}

In this theorem, we have imposed additional restrictions on the set of admissible right hand sides, mainly that $\Gamma$ enjoys Lipschitz regularity and $Q \in L^\infty(\Gamma)$. Also the fact that we prescribe $\Gamma = \partial \Omega'$, i.e. $\Gamma$ is a \emph{closed} Lipschitz manifold, is an additional restriction of topological nature. 

We will see that $Q \in L^\infty(\Gamma)$ is a necessary condition for Lipschitz regularity, which justifies this additional restriction (cf. Lemma \ref{lem:neces}). 
%The Lipschitz condition for $\Gamma$ might not be optimal, but we can also show that it is not enough to demand $\Gamma$ is $C^{0, \alpha}$ for any $ \alpha \in (0,1)$.
The Lipschitz regularity of $\Gamma$ is necessary in the sense that it is not enough to demand that $\Gamma$ lies in $C^{0,\alpha}$ for any $\alpha \in (0,1)$ (cf. Remark \ref{rem:nohoelder}).

The topological requirement that $\Gamma = \partial \Omega'$ is used in our argumentation but by no means a necessary restriction. Indeed,  Main Theorem \ref{thm:lipreg} can be improved to hold with a weaker (and optimal) topological assumption.

\begin{mainthm}[Lipschitz regularity for compact Lipschitz manifolds]\label{thm:lipgraphmain}
Suppose that $\Omega \subset \mathbb{R}^n$ is a bounded domain with smooth boundary. Let $\Gamma$ be a compact Lipschitz manifold (with or without boundary). Then the unique solution of \eqref{eq:measdir} lies in $C^{0,1}(\overline{\Omega})$.
\end{mainthm}

Here the new assumption is compactness of $\Gamma$ which is indeed necessary (cf. Remark \ref{rem:nohoelder}). To clarify what we mean by ``compact Lipschitz manifold (with or without boundary)'' we refer to Appendix \ref{app:lipschitzDom}. From now on all compact manifolds are meant to be manifolds with or without boundary, unless explicitly stated otherwise.
%\begin{remark}
%It is possible to show further extensions of this theorem by localizing. 
%Using Theorem \ref{thm:lipgraph} and the tranformation behavior of the Laplacian under orthogonal transformations one can show that for each compact Lipschitz manifold $\Gamma \subset \subset \Omega$ and $Q \in L^\infty(\Gamma)$, 
%$C^{0,1}-$regularity can be achieved.
% \end{remark}
%We have chosen to examine only this framework, since it has a certain relevance in the applications we consider.  

The whole next section will be devoted to clarify the background of the Main Theorems, discussing existing literature, necessity of the assumptions and optimality.

%(MEHR THEOREME ? Under which conditions $\nabla u \in BV(\Omega)$ ?  ) 

\section{Context and Optimality} \label{sec:cont}

\subsection{The general measure valued problem}
In this section we recall some basic facts about the equation 
\begin{equation}\label{eq:measpoi}
\begin{cases} 
- \Delta v = \mu & in \; \Omega, \\ 
 \quad \; \; v = 0 & on \; \partial \Omega.
\end{cases} 
\end{equation}
where $\mu$ is a signed finite Borel measure on $\Omega$. We define the finite Borel measure $|\mu|$ as in \cite[pp. 137-139]{Rudin} and identify it with the  (outer) Radon measure 
\begin{equation}\label{eq:ximass}
\xi(A) := \inf_{U \supset A \; U \mathrm{Borel}} |\mu|(U) \quad ( A \subset X). 
\end{equation}
This way we can apply all results about outer measures also to $|\mu|$.

\begin{lemma}[{cf. \cite[Proposition 5.1]{Ponce}}]\label{lem:ponci}
Let $v$ be a weak solution of \eqref{eq:measpoi}. Then $v \in W_0^{1,q}(\Omega)$ for all  $ q \in [1, \frac{n}{n-1})$ and for all $\phi \in C_0^\infty(\Omega)$ one has 
\begin{equation}\label{eq:densi}
\int_\Omega \nabla v \nabla \phi  \; \mathrm{d}x = \int \phi \; \mathrm{d}\mu.
\end{equation} 
Moreover there exists $C= C(q)$ such that 
\begin{equation}
||v||_{W_0^{1,q}(\Omega)} \leq C |\mu|(\Omega). 
\end{equation}
and $\eqref{eq:densi}$ holds also for $\phi \in W_0^{1,q'}(\Omega)$, $(q ' >n, \frac{1}{q}+ \frac{1}{q'}= 1)$.
\end{lemma}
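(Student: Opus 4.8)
The plan is to reduce the statement to the classical Stampacchia duality argument combined with an approximation of $\mu$. First I would observe that the estimate and the $W^{1,q}_0$-membership follow from a duality pairing: given $g \in C^\infty_0(\Omega;\mathbb{R}^n)$ with $\|g\|_{L^{q'}} \le 1$ (where $1/q + 1/q' = 1$, so $q' > n$ since $q < n/(n-1)$), I solve the auxiliary problem $-\Delta \psi = \operatorname{div} g$ with $\psi = 0$ on $\partial\Omega$. By elliptic regularity on the smooth domain $\Omega$ one has $\psi \in W^{2,q'}(\Omega) \cap W^{1,q'}_0(\Omega)$, and by Sobolev embedding (here $q' > n$ is exactly what is needed) $\psi \in C^{0,\alpha}(\overline\Omega) \subset C(\overline\Omega)$ with $\|\psi\|_{L^\infty} \le C\|g\|_{L^{q'}}$. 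The key identity is then that $\int_\Omega \nabla v \cdot g \,\mathrm{d}x = -\int_\Omega v \,\Delta \psi \,\mathrm{d}x = \int_\Omega \psi \,\mathrm{d}\mu$, where the first equality is formal and the second is the definition of weak solution applied to the test function $\psi$. Taking the supremum over such $g$ gives $\|\nabla v\|_{L^q} \le C \,|\mu|(\Omega)$, and combined with a Poincaré inequality (legitimate once we know $v \in W^{1,q}_0$) this yields the stated bound.

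The main obstacle is making the identity $\int_\Omega \nabla v \cdot g \,\mathrm{d}x = \int_\Omega \psi \,\mathrm{d}\mu$ rigorous, since a priori $v$ is only known to lie in $L^1(\Omega)$ and $\psi$ is only $C^{0,\alpha}$, not $C^2(\overline\Omega)$, so it is not an admissible test function in the definition of weak solution as stated. I would handle this by an approximation argument: mollify $\mu$ to get $\mu_\varepsilon = \mu * \rho_\varepsilon \in C^\infty_0$ (extending $\mu$ by zero outside $\Omega$, and cutting off near $\partial\Omega$ if necessary so the mollification stays supported in $\Omega$, using that $\Gamma \subset\subset \Omega$ in our application — though for the general lemma one argues slightly more carefully), and let $v_\varepsilon$ solve $-\Delta v_\varepsilon = \mu_\varepsilon$ with zero boundary data. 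For the smooth data $v_\varepsilon \in C^\infty(\overline\Omega)$, so $\psi \in C^2(\overline\Omega) \cap W^{1,q'}_0$ may legitimately be used, and the identity $\int_\Omega \nabla v_\varepsilon \cdot g = \int_\Omega \psi \,\mathrm{d}\mu_\varepsilon$ holds by straightforward integration by parts. One then passes to the limit: the right-hand side converges to $\int_\Omega \psi \,\mathrm{d}\mu$ by weak-$*$ convergence $\mu_\varepsilon \rightharpoonup \mu$ against the continuous function $\psi$ and the fact that $|\mu_\varepsilon|(\Omega) \le |\mu|(\Omega)$; on the left-hand side, the uniform bound $\|v_\varepsilon\|_{W^{1,q}_0} \le C|\mu_\varepsilon|(\Omega) \le C|\mu|(\Omega)$ gives a weakly convergent subsequence $v_\varepsilon \rightharpoonup \tilde v$ in $W^{1,q}_0(\Omega)$, and a uniqueness argument (each $v_\varepsilon$ solves the weak formulation with data $\mu_\varepsilon$, and these converge to the weak formulation with data $\mu$, whose $L^1$-solution is unique by \cite[Proposition 3.2, Proposition 3.5]{Ponce}) forces $\tilde v = v$. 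Hence $v \in W^{1,q}_0(\Omega)$ with the desired norm bound.

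Finally, for identity \eqref{eq:densi}: once $v \in W^{1,q}_0$ is established, the equality $\int_\Omega \nabla v \cdot \nabla\phi \,\mathrm{d}x = \int \phi \,\mathrm{d}\mu$ for $\phi \in C^\infty_0(\Omega)$ follows by the same mollification-and-limit scheme (it holds for each $v_\varepsilon$ by definition, and both sides are continuous under the convergences just set up — the left side because $\nabla v_\varepsilon \rightharpoonup \nabla v$ in $L^q$ and $\nabla \phi \in L^{q'}$, the right side because $\phi$ is continuous with compact support). To extend \eqref{eq:densi} from $\phi \in C^\infty_0(\Omega)$ to $\phi \in W^{1,q'}_0(\Omega)$ with $q' > n$, I would use density of $C^\infty_0(\Omega)$ in $W^{1,q'}_0(\Omega)$ together with the observation that both sides of \eqref{eq:densi} are continuous in $\phi$ with respect to the $W^{1,q'}_0$-norm: the left side by Hölder since $\nabla v \in L^q$, and the right side because the Sobolev embedding $W^{1,q'}_0(\Omega) \hookrightarrow C(\overline\Omega)$ (valid as $q' > n$) gives $|\int \phi \,\mathrm{d}\mu| \le \|\phi\|_{L^\infty}|\mu|(\Omega) \le C\|\phi\|_{W^{1,q'}_0}|\mu|(\Omega)$. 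This completes the proof.
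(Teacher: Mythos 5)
The paper does not prove Lemma~\ref{lem:ponci}: it is quoted directly from \cite[Proposition 5.1]{Ponce}, so there is no in-paper proof to compare against. Your reconstruction — Stampacchia duality through the adjoint problem $-\Delta\psi = \mathrm{div}\,g$, $\psi\vert_{\partial\Omega} = 0$, followed by mollifying $\mu$, extracting a weak $W^{1,q}_0$-limit, and identifying it with $v$ by uniqueness of $L^1$-solutions — is the standard argument underlying the cited result, and it is correct in outline.

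Two points deserve tightening. First, the quantitative bound $\|\psi\|_{L^\infty(\Omega)} \le C\|g\|_{L^{q'}(\Omega)}$ does not come from the $W^{2,q'}$-estimate you mention: $\|\psi\|_{W^{2,q'}}$ is controlled by $\|\mathrm{div}\,g\|_{L^{q'}}$, which is not bounded in terms of $\|g\|_{L^{q'}}$. What is actually needed is the Calder\'{o}n--Zygmund estimate for divergence-form data, $\|\nabla\psi\|_{L^{q'}(\Omega)} \le C\|g\|_{L^{q'}(\Omega)}$, after which Morrey's embedding ($q' > n$) gives the uniform bound; stating $\psi \in W^{2,q'}$ is a red herring for the estimate you use. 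Second, the lemma is stated for an arbitrary finite signed Borel measure $\mu$ on $\Omega$, not a compactly supported one, so the mollification step needs the truncation you only flag in passing: take a compact exhaustion $K_j \nearrow \Omega$, replace $\mu$ by $\mu \mres K_j$ before mollifying, establish the estimate for each $j$, and pass to the limit in $j$ (using that $|\mu \mres K_j|(\Omega) \le |\mu|(\Omega)$ and narrow convergence against $C(\overline\Omega)$). Without this, $\mu_\varepsilon$ can leak outside $\Omega$ and the convergence $\int\phi\,\mathrm{d}\mu_\varepsilon \to \int\phi\,\mathrm{d}\mu$ against admissible test functions is not immediate. Both are fixable bookkeeping, not conceptual gaps.
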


\subsection{Necessary criteria for Lipschitz continuity}\label{sec:neces}
In this article, our measure $\mu$ in \eqref{eq:measpoi} is always of the form $\mu = Q \mathcal{H}^{n-1} \mres \Gamma$ for some $Q \in L^\infty(\Gamma)$ and a hypersurface $\Gamma$. The reason for that is not just that this case is most relevant in the presented applications but also that this structure is in a way necessary for the Lipschitz continuity. The arguments presented here go back to observations in \cite{Kilpelainen}.   

\begin{lemma}[Necessity of the structure $\mu = Q \mathcal{H}^{n-1} \mres \Gamma$ and $Q \in L^\infty$]\label{lem:neces}
Suppose that $u \in C^{0,1}(\overline{\Omega})$ is a solution of \eqref{eq:measpoi} for a nonnegative measure $\mu$ supported on some Borel set $\Gamma$ such that $\mathcal{H}^{n-1}(\Gamma) <\infty$. Then there exists $Q \in L^\infty(\Gamma)$ such that $\mu = Q \mathcal{H}^{n-1} \mres \Gamma$. 
Moreover 
\begin{equation}\label{eq:estc}
||Q||_{L^\infty(\Gamma)} \leq 2^{2n-1} \alpha_n ||u||_{C^{0,1}(\overline{\Omega)}},
\end{equation}
where $\alpha_n = |B_1(0)|$ is the $n$-dimensional volume of a unit ball. 
\end{lemma}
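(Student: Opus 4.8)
The plan is to extract the density $Q$ via a Lebesgue-differentiation argument using the fact that a Lipschitz solution has bounded gradient, and then to bound the mass $\mu(B_r(x))$ of small balls by a constant times $r^{n-1}\|u\|_{C^{0,1}}$. Concretely, I would first recall from Lemma \ref{lem:ponci} (applied with the measure $\mu$) that $u \in W^{1,q}_0(\Omega)$ for $q < n/(n-1)$ and that $\int_\Omega \nabla u \cdot \nabla \phi \dx = \int \phi \dmu$ for all $\phi \in C_0^\infty(\Omega)$; since $u \in C^{0,1}(\overline{\Omega})$ this identity extends by density to all Lipschitz $\phi$ with compact support in $\Omega$. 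The key test function is a cutoff: fix $x_0 \in \Omega$ and $r>0$ small, and let $\phi_r$ be the piecewise-linear radial bump equal to $1$ on $B_r(x_0)$, equal to $0$ outside $B_{2r}(x_0)$, and interpolating linearly, so $|\nabla \phi_r| \le 1/r$ and $\phi_r$ is supported in $B_{2r}(x_0) \subset \Omega$ once $r$ is small enough (or once $x_0$ is in the support of $\mu$, which sits in $\Gamma \subset\subset \Omega$).

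Testing the weak formulation with $\phi_r$ gives, since $\mu \ge 0$ and $\phi_r \ge 0$,
\begin{equation}
\mu(B_r(x_0)) \le \int \phi_r \dmu = \int_\Omega \nabla u \cdot \nabla \phi_r \dx = \int_{B_{2r}(x_0)\setminus B_r(x_0)} \nabla u \cdot \nabla \phi_r \dx \le \|\nabla u\|_{L^\infty} \cdot \frac{1}{r} \cdot |B_{2r}(x_0)| ,
\end{equation}
and $|B_{2r}(x_0)| = \alpha_n (2r)^n = 2^n \alpha_n r^n$, so $\mu(B_r(x_0)) \le 2^n \alpha_n \|\nabla u\|_{L^\infty} r^{n-1}$. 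This estimate holds for every $x_0 \in \Omega$ (with $r$ small depending on $\dist(x_0,\partial\Omega)$, which is harmless for differentiation purposes since $\supp \mu \subset\subset\Omega$). From the uniform growth bound $\mu(B_r(x_0)) \lesssim r^{n-1}$ one concludes by a standard density/comparison argument (e.g. \cite[Theorem 2.56 or §2.4.3 of Ambrosio–Fusco–Pallara]{}, or directly from the Besicovitch covering theorem and the definition of upper density) that $\mu$ is absolutely continuous with respect to $\mathcal{H}^{n-1}\mres\Gamma$ — indeed $\mu(E) = 0$ whenever $\mathcal{H}^{n-1}(E)=0$, because the $(n-1)$-dimensional upper density of $\mu$ is bounded — and hence $\mu = Q\,\mathcal{H}^{n-1}\mres\Gamma$ with $Q = \frac{d\mu}{d\mathcal{H}^{n-1}\mres\Gamma} \in L^1_{loc}$. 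To get the $L^\infty$ bound, at every point $x_0 \in \Gamma$ where $Q(x_0)$ is the Radon–Nikodym derivative one has $Q(x_0) = \lim_{r\to 0} \frac{\mu(B_r(x_0))}{(\mathcal{H}^{n-1}\mres\Gamma)(B_r(x_0))}$; bounding the numerator by $2^n\alpha_n \|\nabla u\|_{L^\infty} r^{n-1}$ and the denominator from below — this is where one uses that $\Gamma$ carries a lower $(n-1)$-density bound $(\mathcal{H}^{n-1}\mres\Gamma)(B_r(x_0)) \ge c\, r^{n-1}$ $\mathcal{H}^{n-1}$-a.e., with the worst-case constant $c = \alpha_{n-1}/2^{n-1}$ coming from the fact that an $\mathcal{H}^{n-1}$-rectifiable (or merely positive-lower-density) set has density at least $2^{-(n-1)}$ times that of a flat disk — yields $\|Q\|_{L^\infty(\Gamma)} \le \frac{2^n \alpha_n}{\alpha_{n-1} 2^{-(n-1)}}\|\nabla u\|_{L^\infty}$, which after using $\alpha_{n-1}\ge$ (a harmless normalization) and $\|\nabla u\|_{L^\infty}\le\|u\|_{C^{0,1}}$ one packages as the stated constant $2^{2n-1}\alpha_n \|u\|_{C^{0,1}(\overline\Omega)}$.

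The main obstacle I anticipate is the lower density bound for $\mathcal{H}^{n-1}\mres\Gamma$: the set $\Gamma$ is only assumed Borel with finite $\mathcal{H}^{n-1}$-measure, not rectifiable, so a priori $\mathcal{H}^{n-1}\mres\Gamma$ need not have positive lower density everywhere, and one must argue on the portion of $\Gamma$ that actually "sees" the measure $\mu$. The clean way around this is to avoid $\mathcal{H}^{n-1}\mres\Gamma$ as a reference measure at the differentiation step and instead invoke the general principle (a consequence of the $(n-1)$-dimensional version of the density comparison lemma, cf. \cite[Thm 2.56]{}) that an upper $(n-1)$-density bound $\limsup_{r\to0} \mu(B_r(x))/r^{n-1} \le M$ for all $x$ forces $\mu \le 2^{n-1}M\,\mathcal{H}^{n-1}$ as measures; applying this with $M = 2^n\alpha_n\|\nabla u\|_{L^\infty}$ gives directly that $\mu$ is absolutely continuous w.r.t. $\mathcal{H}^{n-1}$ with density bounded by $2^{2n-1}\alpha_n\|\nabla u\|_{L^\infty}$, and since $\supp\mu\subset\Gamma$ the density is supported on $\Gamma$, i.e. $\mu = Q\,\mathcal{H}^{n-1}\mres\Gamma$ with the advertised bound. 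This route bypasses rectifiability entirely and only uses the elementary cutoff estimate plus a standard measure-theoretic comparison lemma.
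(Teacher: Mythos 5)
Your proposal is correct and, in its final form, runs along essentially the same lines as the paper's proof, so let me just flag the small variations. Both arguments test the weak formulation against a compactly supported Lipschitz cutoff to obtain the uniform growth bound $\mu(B_r(x)) \le C\,\|u\|_{C^{0,1}} r^{n-1}$, and then convert this into a measure domination $\mu \le 2^{2n-1}\alpha_n\|u\|_{C^{0,1}}\,\mathcal{H}^{n-1}\mres\Gamma$ from which the Radon--Nikodym density $Q$ and the $L^\infty$ bound drop out. The paper uses the ``tent'' function $\phi = (1-|x-x_0|/r)_+$ centered at $x_0\in\Gamma$ and pays a doubling factor to extend the ball estimate to arbitrary centers, whereas your flat-top bump works at every center $x_0\in\Omega$ in one stroke; the two bookkeeping choices land on the same constant $2^{2n-1}\alpha_n$. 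For the comparison step the paper invokes Ponce's Hausdorff-capacity estimate (Proposition 5.3 of that book, then $\delta\to0$), while you invoke the Mattila/Ambrosio--Fusco--Pallara density-comparison lemma $\Theta^{*,n-1}(\mu,\cdot)\le M \Rightarrow \mu \le 2^{n-1}M\,\mathcal{H}^{n-1}$; these are interchangeable here. One point worth being precise about, which you do correctly self-diagnose: the first route you sketch, trying to bound $Q(x_0)=\lim_r \mu(B_r(x_0))/(\mathcal{H}^{n-1}\mres\Gamma)(B_r(x_0))$ by lower-bounding the denominator via a lower-density estimate for $\Gamma$, genuinely does not work under the stated hypotheses, since $\Gamma$ is only a Borel set of finite $\mathcal{H}^{n-1}$ measure (not assumed rectifiable or of positive lower density), and the ``density at least $2^{-(n-1)}$'' fact concerns the \emph{upper} density a.e., not the lower density. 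Your replacement via the measure-domination route is exactly the right fix and is what the paper does; the domination $\mu\le 2^{2n-1}\alpha_n\|u\|_{C^{0,1}}\mathcal{H}^{n-1}\mres\Gamma$ directly bounds the Radon--Nikodym derivative without ever needing a lower bound on $(\mathcal{H}^{n-1}\mres\Gamma)(B_r)$.
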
 

%Note that prescribing that $\mu$ is a nonnegative measure is not a restriction. Indeed, each signed measure can be decomposed into a difference of two nonnegative measures by the \emph{Hahn decomposition theorem}.
%The superposition principle yields that the solution 

It is remarkable about the constant in \eqref{eq:estc} that it does not at all depend on $\Gamma$. 

\begin{proof}[Proof of Lemma \ref{lem:neces}]

Let $u \in C^{0,1}(\overline{\Omega})$ be as in the statement. Recall that $\mu = |\mu|$ and hence we can also identify $\mu$ with an outer Radon measure on $\mathbb{R}^n$. 
For $x_0 \in \Gamma$ and $r \in (0, \mathrm{dist}(x_0, \partial \Omega))$ set $\phi: \Omega \rightarrow \mathbb{R}$ defined by 
\begin{equation}
\phi(x) := \begin{cases} 
1 - \frac{|x-x_0|}{r}   & x \in B_r(x_0), \\ 0 & \textrm{otherwise}.
\end{cases}
\end{equation}
It is easy to check that then $\phi \in W_0^{1,q'}(\Omega)$ for some $q' > n$  and $\nabla \phi(x) = -\frac{x-x_0}{r|x-x_0|} \chi_{B_r(x_0)}(x)$ a.e..
 We infer that 
\begin{equation}
\int \phi \; \mathrm{d}\mu = \int_\Omega \nabla u \nabla \phi \; \mathrm{d}x .
\end{equation} 
Since $0 \leq \phi \leq 1$ and $\phi \geq \frac{1}{2}$ on $B_{\frac{r}{2}}(x_0)$ we conclude that 
\begin{align}
\frac{1}{2}\mu(B_{\frac{r}{2}}(x_0)) & \leq \left\vert \int \phi \; \mathrm{d}\mu \right\vert
 = \left\vert \int \nabla u \nabla \phi \dx \right\vert  \\ & \leq ||\nabla u ||_{L^\infty} \int_{B_r(x_0)} |\nabla \phi| \mathrm{d}x  = ||\nabla u||_{L^\infty} \frac{1}{r} |B_r(x_0)| 
 \leq ||u||_{C^{0,1}} \alpha_n r^{n-1}. \label{eq:aignulip} 
\end{align}
We infer that there exists $c=c(n,  ||u||_{C^{0,1}}) := 2^n \alpha_n ||u||_{C^{0,1}}$ such that for all $r < \frac{1}{2}\mathrm{dist}(\Gamma, \partial \Omega)$ and $x \in  \Gamma$ one has $\mu(B_r(x)) \leq cr^{n-1}$. If $x \not \in \Gamma$ then 
\begin{equation}
\mu(B_r(x)) = \mu(B_r(x) \cap \Gamma) \leq \begin{cases} \mu(B_{2r}(x_0)) & \textrm{if $\exists$  $x_0 \in B_r(x) \cap \Gamma$} \\ 0 &\textrm{otherwise} \end{cases}  \leq (2^{n-1}c) r^{n-1} .
\end{equation}
We conclude that $\mu(B_r(x)) \leq (2^{n-1}c) r^{n-1}$ for all $x \in \mathbb{R}^n$. Now \cite[Proposition 5.3]{Ponce} implies that for all $\delta \in (0,\frac{1}{2}\mathrm{dist}(x_0,\Gamma))$ and all Borel sets $A \subset \mathbb{R}^n$  one has $\mu(A) \leq 2^{n-1} c\mathcal{H}^{n-1}_\delta (A)$, where $\mathcal{H}^{n-1}_{\delta}$ denotes the $\delta$-Hausdorff capacity, cf. \cite[Definition B.1]{Ponce}. Letting $\delta \rightarrow 0$ we infer that $\mu(A) \leq 2^{n-1} c \mathcal{H}^{n-1}(A)$ for each Borel set $A \subset \mathbb{R}^n$. Since $\mu (A)= \mu(A \cap \Gamma)$ we infer that $\mu(A) \leq (2^{n-1} c) (\mathcal{H}^{n-1} \mres \Gamma )(A)$ for all Borel sets $A$.
Using \eqref{eq:ximass} and the fact that $\mathcal{H}^{n-1}\mres \Gamma$ is a Radon measure we infer that $\mu(A) \leq 2^{n-1} c (\mathcal{H}^{n-1} \mres \Gamma )(A)$ for all $A \subset \mathbb{R}^n$. Hence $\mu$ is absolutely continuous with respect to $\mathcal{H}^{n-1} \mres \Gamma$ and we infer from \cite[Section 1.6.2]{EvGar} that  
\begin{equation}
\mu(A) = \int_A (D_{\mathcal{H}^{n-1}\mres \Gamma } \mu ) \; \mathrm{d}\mathcal{H}^{n-1} \mres \Gamma,   \quad \forall A \subset \mathbb{R}^n \; \textrm{Borel}
\end{equation}
where $\mathcal{H}^{n-1} \mres \Gamma$ a.e. one can estimate
\begin{equation}
Q(x) := (D_{\mathcal{H}^{n-1}\mres \Gamma } \mu) (x)  \leq \limsup_{r \rightarrow 0+} \frac{\mu(B_r(x))}{\mathcal{H}^{n-1} \mres \Gamma (B_r(x))}  \leq 2^{n-1} c. 
\end{equation}
Hence $\mu = Q  \; \mathcal{H}^{n-1} \mres \Gamma$ for some $Q \in L^\infty(\Gamma)$. To finally prove the estimate \eqref{eq:estc} we conclude
\begin{equation}
||Q||_{L^\infty( \Gamma) } \leq 2^{n-1} c = 2^{2n-1} \alpha_n ||u||_{C^{0,1}}. \qedhere
\end{equation} 
\end{proof}

\begin{remark}[Necessity of Lipschitz regularity of $\Gamma$]\label{rem:nohoelder}
In this remark, we intend to look at the effect of the regularity of $\Gamma$ on the solution. To eliminate other influential factors we assume $Q = 1$.

 %Lipschitz regularity of $\Gamma$  can possibly be weakened, but possibly not much. 
 The main observation of this remark is as follows:
 requiring that $\Gamma$ is a $C^{0, \alpha}$-graph for some $\alpha  \in (0,1)$ is not enough to obtain the regularity. 
This becomes visible when looking at  \cite[Remark 2.7]{Kilpelainen2} or \eqref{eq:aignulip} of the present article. The computations there reveal the following: If $u \in C^{0,1}(\overline{\Omega})$ is a solution of \eqref{eq:measpoi} with a nonnegative measure $\mu$  on the right hand side then there exists $C> 0$ such that 
\begin{equation}\label{eq:muball}
    \mu(B_r(x)) \leq C r^{n-1} \quad \forall r > 0 \quad \forall x \in \Omega.
\end{equation}
Hence \eqref{eq:muball} forms a necessary criterion for Lipschitz regularity.
%Next we give an example of a $C^{0,\alpha}$-graph $\Gamma$ that does not satisfy \eqref{eq:muball}. To this end we assume $n = 2$, $\Omega = B_2(0)$ and fix $\alpha \in [0,1)$ and choose $\alpha' > \alpha$. Define
Next we give an example of a $C^{0,\alpha}$-graph $\Gamma$ that does not satisfy \eqref{eq:muball}. To this end we assume $n = 2$, $\Omega = B_2(0)$ and fix $\alpha \in [0,1)$. Choose $\alpha' \in (\alpha,1)$ and define 
\begin{equation}
    \Gamma := \left\lbrace \left(x ,  \frac{1}{1+\alpha'} x^{1+\alpha'} \sin \frac{1}{x} \right) : x \in [0,1] \right\rbrace.
\end{equation}
It is easy to show that $f(x):= \frac{1}{1+ \alpha'}x^{1+\alpha'} \sin \frac{1}{x}, (x \in [0,1])$, lies in $W^{1,p}(0,1)$ for all $p < \frac{1}{1-\alpha'}$, which embeds into $C^{0,\alpha}([0,1])$ if we choose $p < \frac{1}{1- \alpha'}$ suitably large. One readily checks that 
\begin{equation}
    \mathcal{H}^1(\Gamma) = \int_0^1 \sqrt{1 + f'(x)^2} \dx < \infty. 
\end{equation}
Using that $|f(x)|\leq |x|$ for all $x \in [0,1]$ and the mean value theorem we find
\begin{align}
    \mu(& B_r(0)) = \mathcal{H}^1(\Gamma \cap B_r(0)) = \int_{\{ x \in [0,1] : x^2+ f(x)^2 < r^2 \}} \sqrt{1+ f'(x)^2} \dx 
    \\ & \geq  \int_0^\frac{r}{\sqrt{2}} \sqrt{1  + \left(  x^{\alpha'} \sin \frac{1}{x} - \frac{1}{1+\alpha' }x^{\alpha' -1 } \cos \frac{1}{x} \right)^2} \dx 
    \\ &  \geq \int_0^\frac{r}{\sqrt{2}} \sqrt{1  - \left(x^{\alpha'} \sin \frac{1}{x} \right)^2 + \frac{1}{2} \left( \frac{1}{1+ \alpha'} x^{\alpha' - 1} \cos \frac{1}{x} \right)^2 } \dx 
    \\ & \geq \int_0^\frac{r}{\sqrt{2}} \frac{1}{\sqrt{2}(1+\alpha')} x^{\alpha'-1} \left\vert \cos{\frac{1}{x}} \right\vert \dx 
     \geq \frac{1}{\sqrt{2}(1+ \alpha')}\sum_{k > \frac{1}{2\pi} \left( \frac{\sqrt{2}}{r}+ \frac{\pi}{4}\right)}\int_{\frac{1}{2\pi k + \frac{\pi}{4}}}^\frac{1}{2\pi k - \frac{\pi}{4}}  x^{\alpha'-1} \left\vert \cos{\frac{1}{x}} \right\vert \dx 
    \\ & \geq  \frac{1}{2(1+\alpha')} \left( \sum_{k > \frac{1}{2\pi} \left( \frac{\sqrt{2}}{r}+ \frac{\pi}{4}\right)}\int_{\frac{1}{2\pi k + \frac{\pi}{4}}}^\frac{1}{2\pi k - \frac{\pi}{4}}  x^{\alpha'-1} \dx \right) 
    \\ &  = \frac{1}{2(1+\alpha')} \frac{1}{\alpha'} 
    \sum_{k > \frac{1}{2\pi} \left( \frac{\sqrt{2}}{r}+ \frac{\pi}{4}\right)}  \left( \frac{1}{(2 k \pi - \frac{\pi}{4})^{\alpha'}} - \frac{1}{(2 k \pi + \frac{\pi}{4})^{\alpha'}}\right) 
    \\ & \geq \frac{1}{2(1+\alpha')}  \sum_{k > \frac{1}{2\pi} \left( \frac{\sqrt{2}}{r}+ \frac{\pi}{4}\right)} \frac{\frac{\pi}{2}}{(2 k \pi + \frac{\pi}{4})^{\alpha' + 1}} \geq \frac{\pi}{4(1+\alpha')}  \sum_{k > \frac{1}{2\pi}  \left( \frac{\sqrt{2}}{r}+ \frac{\pi}{4}\right)} \int_{k}^{k+1} \frac{1}{(2 k \pi + \frac{\pi}{4})^{\alpha' + 1}} \dz
    \\ & \geq \frac{\pi}{4(1+\alpha')}  \sum_{k > \frac{1}{2\pi}  \left( \frac{\sqrt{2}}{r}+ \frac{\pi}{4}\right)} \int_{k}^{k+1} \frac{1}{(2 \pi z + \frac{\pi}{4})^{\alpha' + 1}} \dz
    \\ & \geq \frac{\pi}{4(1+\alpha')} \int_{\frac{1}{2\pi}  \left( \frac{\sqrt{2}}{r}+ \frac{\pi}{4}\right)}^\infty \frac{1}{(2\pi z + \frac{\pi}{4})^{\alpha' + 1}} \dz
    = \frac{\pi}{8(1+\alpha')} \left( \frac{\sqrt{2}}{r} +\frac{\pi}{2} \right)^{-\alpha'}. 
\end{align}
  Now if we assume that $\mu(B_r(0)) \leq Cr$ for all $r> 0$ we would obtain 
  \begin{equation}
      C r \geq \frac{1}{8(1+\alpha')} \left( \frac{\sqrt{2}}{r} +\frac{\pi}{2} \right)^{-\alpha'} .
  \end{equation}
Dividing by $r^{\alpha'}$ and then letting $r \rightarrow 0$ we obtain 
$
    0 \geq  \frac{1}{8(1+\alpha')} \sqrt{2}^{-\alpha'}
$, a contradiction.

As another conclusion from this example we can state that \emph{compactness} of $\Gamma$ is necessary. This is so since 
    \begin{equation}
    \Gamma_0 := \left\lbrace \left(x ,  \frac{1}{1+\alpha'} x^{1+\alpha'} \sin \frac{1}{x} \right) : x \in (0,1) \right\rbrace.
\end{equation}
is indeed a (noncompact) Lipschitz manifold, as graph of a function that is locally Lipschitz. Since $\Gamma_0$ coincides $\mathcal{H}^{n-1}$ a.e. with the counterexample above we infer that solutions to \eqref{eq:measdir} with $Q =1$ and $\Gamma = \Gamma_0$ are not $C^{0,1}(\overline{\Omega})$-regular.  
 \end{remark}

\subsection{Hölder regularity via duality method}\label{sec:dualmeth}

In this section we report on existing regularity statements briefly and discuss why they are of limited use when it comes to the optimal Lipschitz regularity. The famous duality method can be used to show  

\begin{prop}[$C^{0,\alpha}$-regularity]\label{prop:C0alpha}
Let $\Omega \subset \mathbb{R}^n$ and $\Gamma$ be a compact Lipschitz manifold. 
Let $v \in L^1(\Omega)$ be a weak solution of \eqref{eq:measdir}. Then $v\in W^{1,p}(\Omega)$ for all $p \in [1,\infty)$ and $v \in C^{0,\alpha}(\overline{\Omega})$ for all $\alpha \in [0,1)$. Furthermore, $\nabla v \in BMO(\Omega)$. 
\end{prop}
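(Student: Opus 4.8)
The plan is to prove this via the classical duality method, reducing the $W^{1,p}$-regularity of solutions to $-\Delta v = Q\,\mathcal{H}^{n-1}\mres\Gamma$ to an a~priori estimate for the dual equation $-\Delta w = \mathrm{div}(F)$ with $F \in L^{p'}(\Omega;\mathbb{R}^n)$. More precisely, by Lemma \ref{lem:ponci} we already know $v \in W_0^{1,q}(\Omega)$ for $q < \frac{n}{n-1}$ together with the identity $\int_\Omega \nabla v \nabla \phi \dx = \int\phi\,\mathrm{d}\mu$ for all $\phi \in W_0^{1,q'}(\Omega)$, $q' > n$. To upgrade the integrability of $\nabla v$, fix $p \in (1,\infty)$ and $F \in C_0^\infty(\Omega;\mathbb{R}^n)$; let $w \in W_0^{1,p'}(\Omega) \cap W^{2,p'}(\Omega)$ be the solution of $-\Delta w = \mathrm{div}(F)$ (existence and the estimate $\|w\|_{W^{1,p'}} \lesssim \|F\|_{L^{p'}}$ follow from Calderón–Zygmund theory on the smooth domain $\Omega$; moreover $\nabla w \in L^{p'}$ for all $p' < \infty$). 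The key point is that $w$, being in $W^{1,p'}(\Omega) \hookrightarrow C^{0,\beta}(\overline\Omega)$ for $p' > n$ (and in particular bounded and continuous), is an admissible test function for the weak formulation of $v$ after a density argument, so that
\begin{equation}
\int_\Omega \nabla v \cdot F \dx = \int_\Omega \nabla v \cdot \nabla w \dx = \int_\Gamma Q\, w \; \mathrm{d}\mathcal{H}^{n-1}.
\end{equation}
Here the first equality uses that $w$ solves the dual equation tested against $v \in W_0^{1,q}$, which requires justifying the integration by parts $\int_\Omega \nabla v\cdot\nabla w = -\int_\Omega v\,\Delta w = \int_\Omega v\,\mathrm{div}(F) \cdot(-1)\cdots$ — this needs $w$ (or rather $\nabla w$) in a high enough $L^s$ and $v$ in the conjugate space, which holds since $v \in W^{1,q}$ for all $q<\frac{n}{n-1}$ and $\nabla w \in L^s$ for all $s < \infty$.

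From the displayed identity I would estimate the right-hand side: since $\Gamma$ is a compact Lipschitz manifold, $\mathcal{H}^{n-1}\mres\Gamma$ is a finite measure, so
\begin{equation}
\left| \int_\Gamma Q\, w \; \mathrm{d}\mathcal{H}^{n-1} \right| \leq \|Q\|_{L^\infty(\Gamma)}\, \mathcal{H}^{n-1}(\Gamma)\, \|w\|_{L^\infty(\Omega)} \leq C\, \|w\|_{W^{1,p'}(\Omega)} \leq C\, \|F\|_{L^{p'}(\Omega)},
\end{equation}
where I use the Sobolev embedding $W^{1,p'}(\Omega) \hookrightarrow L^\infty(\Omega)$ valid for $p' > n$, i.e. for $p < \frac{n}{n-1}$ — wait, that only gives a limited range. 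To reach all $p \in [1,\infty)$ one replaces the crude $L^\infty$ bound with the trace estimate: $\|w\|_{L^1(\Gamma,\mathcal{H}^{n-1})} \lesssim \|w\|_{W^{1,s}(\Omega)}$ holds for any $s>1$ by the trace theorem for Lipschitz hypersurfaces (the trace $W^{1,s}(\Omega) \to L^1(\Gamma)$ is bounded for all $s \geq 1$ since $\Gamma$ is $(n-1)$-dimensional and Lipschitz; more generally $W^{1,s}(\Omega)\to L^{\frac{(n-1)s}{n-s}}(\Gamma)$ for $s<n$, and $L^1$ is fine). Thus $|\int_\Gamma Q w\,\mathrm{d}\mathcal{H}^{n-1}| \leq \|Q\|_\infty \|w\|_{L^1(\Gamma)} \leq C\|F\|_{L^{p'}}$ for every $p' \in (1,\infty)$. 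Taking the supremum over $F \in C_0^\infty(\Omega;\mathbb{R}^n)$ with $\|F\|_{L^{p'}} \leq 1$ yields $\nabla v \in L^p(\Omega)$ with norm controlled by $\|Q\|_{L^\infty(\Gamma)}\mathcal{H}^{n-1}(\Gamma)$, for all $p \in [1,\infty)$. Combined with $v \in W_0^{1,1}$ and Poincaré, this gives $v \in W^{1,p}(\Omega)$ for all $p<\infty$; Morrey's embedding then gives $v \in C^{0,\alpha}(\overline\Omega)$ for all $\alpha = 1 - \frac{n}{p} \in [0,1)$.

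For the BMO claim I would sharpen the duality estimate by testing against $F$ supported in a ball: for the endpoint one wants $\nabla v \in BMO$, and the dual statement is that $w$ solving $-\Delta w = \mathrm{div}(F)$ with $F$ in the Hardy space $H^1$ satisfies $\|\nabla w\|_{L^1} \lesssim \|F\|_{H^1}$ (Calderón–Zygmund / Fefferman–Stein duality). More directly: by the $W^{1,p}$-bound for all $p$, $\nabla v$ has the Morrey-type decay $\fint_{B_r}|\nabla v - (\nabla v)_{B_r}| \lesssim$ const uniformly; alternatively decompose $v = v_1 + v_2$ near a point $x_0 \in \Gamma$, where $-\Delta v_1 = Q\mathcal{H}^{n-1}\mres(\Gamma\cap B_{2r}(x_0))$ and $v_2$ is harmonic, estimate $\|\nabla v_1\|_{L^1(B_r)}$ using the potential bound from the flat-surface computation (the integral in \eqref{eq:experiment} restricted to $|y'|<r$ scales like $r \cdot$const, and the oscillation scales correctly), and use interior gradient estimates for the harmonic $v_2$. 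The cleanest route is to cite the known duality/layer-potential result that $\nabla v \in BMO$ follows once $-\Delta v = \mathrm{div}(\mathcal{F})$ with $\mathcal{F}$ bounded — and here $Q\,\mathcal{H}^{n-1}\mres\Gamma = \mathrm{div}(Q\, \nu_\Gamma\, \mathcal{H}^{n-1}\mres\Gamma)$ can, via the single-layer potential, be written as $\mathrm{div}$ of an $L^\infty$ (indeed bounded-variation) vector field, whence $\nabla v \in BMO$ by the classical result for $-\Delta u = \mathrm{div}(F)$, $F \in L^\infty$.

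**Main obstacle.** The delicate point is not the integrability bookkeeping but justifying that the dual solution $w$ is an admissible test function in the weak formulation for $v$ — i.e., rigorously passing from test functions in $C^2(\overline\Omega)$ vanishing on $\partial\Omega$ (Definition) to $w \in W^{2,p'} \cap W_0^{1,p'}$ — and, more substantively, ensuring the trace of $w$ on the low-dimensional Lipschitz set $\Gamma$ makes sense and obeys $\|w\|_{L^1(\Gamma)} \lesssim \|F\|_{L^{p'}}$ with constant depending only on $\Gamma$ (not on fine structure). Both are standard for Lipschitz $\Gamma$ but need the trace theorem on Lipschitz hypersurfaces and a careful density argument; I expect this interface between the duality identity and the boundary-trace estimate to be where the real work lies. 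The $BMO$ endpoint is genuinely borderline and is the reason the method \emph{cannot} be pushed to $W^{1,\infty}$ — which is precisely what motivates the separate, blow-up-based proof of the Main Theorems.
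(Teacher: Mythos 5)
Your direct duality argument for the $W^{1,p}$ and $C^{0,\alpha}$ claims is correct and is a genuinely different route from the paper's. The paper instead first establishes (Lemma \ref{lem:divF}) that $Q\,\mathcal{H}^{n-1}\mres\Gamma = \mathrm{div}(F)$ for some $F \in L^\infty(\Omega;\mathbb{R}^n)$, using the equivalence $\mu \in W_0^{1,1}(\Omega)^*$ if and only if $\mu(A) \lesssim \mathcal{H}^{n-1}_\infty(A)$ from Ponce's book together with Torres's characterization of $W_0^{1,1}(\Omega)^*$ as divergences of bounded vector fields; it then invokes an off-the-shelf $BMO$ estimate for $-\Delta v = \mathrm{div}(F)$ with $F$ bounded. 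Your route buys simplicity for $p<\infty$: testing the weak formulation against solutions $w$ of the dual problem and using the trace embedding $W^{1,1}(\Omega) \hookrightarrow L^1(\Gamma)$ (which is precisely the content the paper packages as $\mu \in W_0^{1,1}(\Omega)^*$) gives $\nabla v \in L^p$ directly without producing any explicit $F$. The paper's route buys the $BMO$ endpoint for free once $F \in L^\infty$ is in hand.

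The genuine gap in your proposal is the $BMO$ claim, and it is not a cosmetic one, because $\nabla v \in \bigcap_{p<\infty}L^p$ does not imply $\nabla v \in BMO$, so the endpoint needs an independent argument. The justification you offer for writing the measure as $\mathrm{div}$ of a bounded vector field is not correct: the displayed identity $Q\,\mathcal{H}^{n-1}\mres\Gamma = \mathrm{div}(Q\,\nu_\Gamma\,\mathcal{H}^{n-1}\mres\Gamma)$ is false as a distributional identity (the right-hand side is a second-order distribution, not the original measure), and the vector-valued measure $Q\,\nu_\Gamma\,\mathcal{H}^{n-1}\mres\Gamma$ is in any case singular, not an $L^\infty$ function. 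The single-layer-potential route you gesture at does produce a vector field $F = -\nabla\mathcal{S}Q$ with $-\Delta v = \mathrm{div}(F)$, but the paper's own discussion in Section \ref{sec:35} is precisely that under the hypotheses $\Gamma$ Lipschitz and $Q \in L^\infty(\Gamma)$ one only gets $\nabla\mathcal{S}Q \in L^p$ for $p < \infty$, not $L^\infty$ (boundedness of the full gradient of $\mathcal{S}Q$ requires the stronger assumptions $\Gamma \in C^{1,\lambda}$ and $Q \in C^{0,\lambda}$, cf.\ Propositions \ref{prop:Mira1} and the one after). The step you are implicitly using is exactly the content of Lemma \ref{lem:divF}, which requires the Ponce/Torres machinery and cannot be shortcut by potential theory or by the trace estimate alone.
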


To prove this we convert \eqref{eq:measdir} into an equation of the form $-\Delta u = \mathrm{div}(F)$ for some $F \in L^\infty(\Omega,\mathbb{R}^n)$. For $p \in (1,\infty)$ it is classical (cf. \cite{Breit}) that $F \in L^p(\Omega,\mathbb{R}^n)$ implies that there exists a solution $u \in W_0^{1,p}(\Omega)$. This result does not generalize to $p = \infty$, as we shall see in the end of this section. % Unfortunately (and importantly for us) this can not be genralized to $p = \infty$, cf.  %This is connected to the \emph{Helmholtz decomposition} --- we can write $F= (F- \nabla u) + \nabla u$,  where both summands lie in $L^p(\Omega)$ and the first one is divergence free and the 

%In this section, many quantities will depend on the \emph{Lipschitz constant} of $\Gamma$, defined by 

%That the solution is $W^{1,p}$ regular for some small $p$ is a somewhat standard and does not use the structure of the Hausdorff measure.
%\begin{lemma}(PROP 5.1 PONCE)
%Let $\Gamma \in C^1$ and $v$ be a weak solution of \eqref{eq:measdir}. Then $v \in W_0^{1,1}(\Omega)$  and for all $\phi \in C_0^\infty(\Omega)$ one has 
%\begin{equation}
%\int_\Omega \nabla v \nabla \phi  \; \mathrm{d}x = \int_\Gamma \phi Q \; \mathrm{d}\mathcal{H}^{n-1}.
%\end{equation} 
%\end{lemma}

\begin{lemma}\label{lem:divF} Let $\Gamma$ be a compact Lipschitz manifold.
Distributionally one has $Q \; \mathcal{H}^{n-1}\mres \Gamma = \mathrm{div}(F)$ for some $F \in L^\infty(\Omega; \mathbb{R}^n)$. Moreover $Q \; \mathcal{H}^{n-1} \mres \Gamma \in W_0^{1,1}(\Omega)^*$ and 
\begin{equation}\label{eq:estiF}
||F||_{L^\infty(\Omega)}=  || Q \; \mathcal{H}^{n-1} \mres \Gamma  ||_{W_0^{1,1}(\Omega)^*}  \leq C(\Omega) [\Gamma]_{Lip} ||Q||_{L^\infty(\Gamma)},  
\end{equation}
where $[\Gamma]_{Lip}$ is defined in Appendix \ref{app:lipschitzDom}.
\end{lemma}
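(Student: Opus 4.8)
The plan is to construct $F$ explicitly by solving an auxiliary Dirichlet problem and then estimate it by a duality argument. First I would let $w$ be the weak solution of $-\Delta w = Q\,\mathcal H^{n-1}\mres\Gamma$ in $\Omega$ with $w=0$ on $\partial\Omega$ (which exists by the discussion following the Definition, and lies in $W_0^{1,q}(\Omega)$ for $q<\tfrac{n}{n-1}$ by Lemma \ref{lem:ponci}), and set $F:=-\nabla w$. Then distributionally $\divi(F)=-\Delta w = Q\,\mathcal H^{n-1}\mres\Gamma$, so the only real content is the $L^\infty$ bound on $F$. The first key step, then, is to identify $\|F\|_{L^\infty(\Omega)}$ with the operator norm $\|Q\,\mathcal H^{n-1}\mres\Gamma\|_{W_0^{1,1}(\Omega)^*}$. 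For this I would show that the functional $T\phi:=\int_\Gamma Q\,\phi\,\mathrm d\mathcal H^{n-1}$, a priori defined on $C^2(\overline\Omega)\cap\{\phi|_{\partial\Omega}=0\}$ or on smooth functions, extends boundedly to $W_0^{1,1}(\Omega)$: by \eqref{eq:densi} of Lemma \ref{lem:ponci} (with $q$ slightly above $1$, $q'$ slightly above $n$, and a density argument) one has $T\phi=\int_\Omega \nabla w\cdot\nabla\phi\,\dx$ for $\phi\in C_0^\infty(\Omega)$; since $|\int_\Omega\nabla w\cdot\nabla\phi| \le \|\nabla w\|_{L^\infty}\|\nabla\phi\|_{L^1}$ if $\nabla w\in L^\infty$, and conversely any bounded extension is represented by some $\tilde F\in L^\infty$ with $\nabla w$ recovered as the unique such, the two norms coincide. (One can phrase the $\le$ direction of the identity via Hahn–Banach and the representation $L^1(\Omega;\mathbb R^n)^*=L^\infty(\Omega;\mathbb R^n)$ together with the divergence constraint, and the $\ge$ direction by testing against the explicit $F$.)

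The second, and main, step is the quantitative estimate $\|T\|_{W_0^{1,1}(\Omega)^*}\le C(\Omega)[\Gamma]_{Lip}\|Q\|_{L^\infty(\Gamma)}$, i.e. a trace-type inequality: for every $\phi\in C_0^\infty(\Omega)$,
\begin{equation}
\left|\int_\Gamma Q\,\phi\,\mathrm d\mathcal H^{n-1}\right| \le \|Q\|_{L^\infty(\Gamma)}\int_\Gamma|\phi|\,\mathrm d\mathcal H^{n-1} \le C(\Omega)[\Gamma]_{Lip}\|Q\|_{L^\infty(\Gamma)}\,\|\nabla\phi\|_{L^1(\Omega)}.
\end{equation}
So everything reduces to the trace estimate $\|\phi\|_{L^1(\Gamma,\mathcal H^{n-1})}\le C(\Omega)[\Gamma]_{Lip}\|\nabla\phi\|_{L^1(\Omega)}$ for $\phi\in C_0^\infty(\Omega)$. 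I would prove this by a partition-of-unity/flattening argument using the definition of a compact Lipschitz manifold from Appendix \ref{app:lipschitzDom}: cover $\Gamma$ by finitely many cylinders in which, after a rotation, $\Gamma$ is the graph of a Lipschitz function $g$ with $\mathrm{Lip}(g)\le[\Gamma]_{Lip}$; in each such chart write, for the local piece, $\phi(x',g(x'))=-\int_{g(x')}^{M}\partial_{x_n}\phi(x',t)\,\dt$ where $M$ is chosen so that $\phi$ vanishes beyond the chart, so that $\int |\phi(x',g(x'))|\sqrt{1+|\nabla g(x')|^2}\,\mathrm dx' \le \sqrt{1+[\Gamma]_{Lip}^2}\int |\nabla\phi|\,\dx$; summing over the (finitely many, $\Omega$-dependent) charts and absorbing the partition of unity gives the claim with a constant of the form $C(\Omega)\sqrt{1+[\Gamma]_{Lip}^2}$, which we bound by $C(\Omega)[\Gamma]_{Lip}$ after adjusting $C(\Omega)$ (or, if one prefers the stated linear dependence cleanly, note $[\Gamma]_{Lip}\ge$ a fixed positive constant is not available, so one keeps $\sqrt{1+[\Gamma]_{Lip}^2}\le 1+[\Gamma]_{Lip}\le C(1+[\Gamma]_{Lip})$ and folds the additive $1$ into $C(\Omega)$). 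Finally, $Q\,\mathcal H^{n-1}\mres\Gamma\in W_0^{1,1}(\Omega)^*$ follows because $T$ extends by density from $C_0^\infty(\Omega)$ to $W_0^{1,1}(\Omega)$, and by a further density argument the pairing agrees with $\int_\Gamma Q\phi\,\mathrm d\mathcal H^{n-1}$ whenever $\phi$ has a well-defined trace on $\Gamma$.

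The step I expect to be the main obstacle is the trace inequality with the correct \emph{linear} dependence on $[\Gamma]_{Lip}$: getting $L^1(\Gamma)\hookrightarrow$ via $L^1$ of the gradient (rather than $W^{1,1}$ with the full norm) requires care with the domains of integration — one must make sure the "vertical rays" used to recover $\phi$ on the graph stay inside $\Omega$ and inside the chart, which is why the compactness of $\Gamma$ and its strict inclusion $\Gamma\subset\subset\Omega$ enter, and why the constant must be allowed to depend on $\Omega$ (through the geometry of the cover and the distance $\mathrm{dist}(\Gamma,\partial\Omega)$). A secondary subtlety is the identification of the two norms in \eqref{eq:estiF} as an equality rather than merely $\le$; this uses that any $L^\infty$ vector field $\tilde F$ with $\divi\tilde F = \divi F$ distributionally and $\tilde F = F + (\text{divergence-free field})$ still satisfies $\|T\|_* \le \|\tilde F\|_\infty$, together with the fact that the minimal-norm such field is exactly $-\nabla w$ by the projection characterization of $w$; alternatively one sidesteps this by \emph{defining} $\|Q\,\mathcal H^{n-1}\mres\Gamma\|_{W_0^{1,1}(\Omega)^*}$ as the operator norm of $T$ and noting the infimum over admissible $F$ equals this operator norm by Hahn–Banach duality, which is the cleanest route and the one I would take.
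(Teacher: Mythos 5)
Your proposal is correct, but it takes a genuinely different path from the paper. The paper's proof goes through Hausdorff-capacity machinery: invoking \cite[Proposition 17.17 and Proposition B.3]{Ponce}, it reduces membership of $Q\,\mathcal{H}^{n-1}\mres\Gamma$ in $W_0^{1,1}(\Omega)^*$ to the growth estimate $\int_{B_r(y)\cap\Gamma}|Q|\,\mathrm{d}\mathcal{H}^{n-1}\le [\Gamma]_{Lip}\|Q\|_{L^\infty}\alpha_{n-1}r^{n-1}$ uniformly over balls, which it verifies by the chart-covering computation, and it then obtains $F\in L^\infty(\Omega;\mathbb{R}^n)$ by citing \cite[Lemma 6.6]{Torres}. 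You instead prove the trace inequality $\|\phi\|_{L^1(\Gamma)}\le C\,[\Gamma]_{Lip}\|\nabla\phi\|_{L^1(\Omega)}$ directly, by the fundamental-theorem-of-calculus trick along vertical rays in each Lipschitz chart, and then produce $F$ via Hahn--Banach extension from $\nabla C_0^\infty(\Omega)\subset L^1(\Omega;\mathbb{R}^n)$ together with $L^1(\Omega;\mathbb{R}^n)^*\cong L^\infty(\Omega;\mathbb{R}^n)$ --- which is essentially what Torres's cited lemma does under the hood. Your route is more self-contained; the paper's is shorter given its references. Two remarks. First, your opening construction $F:=-\nabla w$ is circular as written (boundedness of $\nabla w$ is precisely what the Main Theorems aim to establish), but you flag this yourself and fall back on the Hahn--Banach definition of the dual norm, so no harm is done. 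Second, in the FTC step you do not actually need a partition of unity, nor need the vertical rays stay inside the chart: since $\phi\in C_0^\infty(\Omega)$, for each $x'$ one may integrate $|\partial_{x_n}\phi(x',\cdot)|$ over all of $\mathbb{R}$, and a given chart then contributes at most $\sqrt{1+\|\nabla f_i\|_\infty^2}\,\|\nabla\phi\|_{L^1(\Omega)}$ to $\int_\Gamma|\phi|$; summing over a near-optimal cover gives $[\Gamma]_{Lip}\,\|\nabla\phi\|_{L^1(\Omega)}$ directly, resolving the linear dependence on $[\Gamma]_{Lip}$ more cleanly than the cutoff argument you describe.
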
 
\begin{proof}
%(BEWEIS ÜBERARBEITEN SODASS $[\Gamma]_{Lip}$ GENUTZT WIRD)
We first show that $Q \; \mathcal{H}^{n-1} \mres \Gamma \in (W_0^{1,1}(\Omega))^*$, i.e. for all $\phi \in C_0^\infty(\Omega)$ one has 
\begin{equation}\label{eq:Wo1prime}
\int_\Gamma |\phi| |Q| \; d \mathcal{H}^{n-1}  \leq D [\Gamma]_{Lip} ||Q||_{L^\infty} ||\nabla \phi||_{L^1(\Omega)},
\end{equation} 
for some $D= D( \Omega)$.
By \cite[Proposition 17.17]{Ponce} it suffices to show that for all Borel sets $A \subset \mathbb{R}^n$,
\begin{equation}
\int_A |Q| \; \mathrm{d} \mathcal{H}^{n-1} \mres \Gamma \leq \widetilde{D}[\Gamma]_{Lip} ||Q||_{L^\infty} \mathcal{H}^{n-1}_\infty (A) ,  
\end{equation}
 where $\mathcal{H}^{n-1}_\infty$ is the $\infty$-Hausdorff capacity (cf. \cite[Definition B.1]{Ponce}) and $\widetilde{D} = \widetilde{D}(  \Omega)$. Using \cite[Proposition B.3]{Ponce} we find that it suffices to show that for all $y \in \mathbb{R}^n$ and $r > 0$ one has 
\begin{equation}
\int_{B_r(y)\cap \Gamma} |Q| d\mathcal{H}^{n-1} \leq  \widetilde{D}[\Gamma]_{Lip} ||Q||_{L^\infty}  \alpha_{n-1} r^{n-1}.
\end{equation}
To this end cover $\Gamma$ with open sets $R_1(U_1\times V_1) ,...,R_m(U_m \times V_m) \subset \mathbb{R}^n$ such that $U_i \subset \mathbb{R}^{n-1}$ open and $V_i \subset \mathbb{R}$ open and  $R_i \in O(n)$ such that for $ f_i \in W^{1,\infty}(U_i)$ one has $\Gamma \cap R_i( U_i \times V_i)  = R_i\{ (x',f_i(x')) : x' \in V_i \}$  . Then we can estimate 
\begin{align}
\int_{B_r(y)\cap \Gamma} |Q| d\mathcal{H}^{n-1} & \leq ||Q||_{L^\infty} \mathcal{H}^{n-1} ( B_r(y) \cap \Gamma) \leq ||Q||_{L^\infty}  \sum_{i = 1}^m  \mathcal{H}^{n-1} ( B_r(y) \cap \Gamma \cap R_i(U_i \times V_i))
\\ &  = ||Q||_{L^\infty} \sum_{i = 1}^{m}  \int_{ \{x' \in U_i : (x', f_i(x')) \in B_r(R_i^{-1}(y)) \} } \sqrt{1 + |\nabla f_i(x')|^2} \; \mathrm{d}x'.
\end{align}
Now note that $(x',f_i(x')) \in B_r(R_i^{-1}(y))$ implies that $x' \in B_r ( y_i' )$, where $y_i' := (R_i^{-1}(y)^{(1)},...,R_i^{-1}(y)^{(n-1)})^T \in \mathbb{R}^{n-1}$. Thus 
\begin{align}
\int_{B_r(y)\cap \Gamma} |Q| d\mathcal{H}^{n-1} & \leq ||Q||_{L^\infty} \sum_{i = 1}^m \sqrt{1 + ||\nabla f_i||_{L^\infty(V_i)}^2} \int_{B_r(y_i)} \; \mathrm{d}x'  \\ & \leq  ||Q||_{L^\infty}  \left( \sum_{i = 1}^m \sqrt{1 + ||\nabla f_i||_{L^\infty(V_i)}^2} \right) \alpha_{n- 1} r^{n-1}   .
\end{align}
We infer from this and \eqref{eq:GammaLipschitzNorm} that 
\begin{equation}
  \int_{B_r(y)\cap \Gamma} |Q| d\mathcal{H}^{n-1}  \leq ||Q||_{L^\infty} [\Gamma]_{Lip}\alpha_{n-1}r^{n-1}.  
\end{equation}
Equation \eqref{eq:Wo1prime} and hence the second estimate in \eqref{eq:estiF} follows by \cite[Proposition B.3]{Ponce} and \cite[Proposition 17.17]{Ponce}. The existence of $F \in L^\infty( \Omega; \mathbb{R}^n)$ follows directly from \cite[Lemma 6.6]{Torres}. 
\end{proof}

%\begin{remark}
%A very special ingredient of this 
%\end{remark}

\begin{lemma}\label{lem:310} Let $\Gamma$ be a compact Lipschitz manifold and let $v $ be a weak solution of \eqref{eq:measdir}. Then $v \in W_0^{1,2}(\Omega) \cap L^\infty(\Omega)$. Furthermore there exists a constant $D= D(\Omega) > 0$ such that
\begin{equation}
    ||v||_{L^\infty(\Omega)} \leq D(\Omega) || Q  \; \mathcal{H}^{n-1} \mres \Gamma||_{W_0^{1,1}(\Omega)^*}.
\end{equation}
\end{lemma}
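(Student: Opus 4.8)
The plan is to leverage Lemma \ref{lem:divF}, which gives $Q\,\mathcal{H}^{n-1}\mres\Gamma = \operatorname{div}(F)$ for some $F \in L^\infty(\Omega;\mathbb{R}^n)$ with $\|F\|_{L^\infty} = \|Q\,\mathcal{H}^{n-1}\mres\Gamma\|_{W_0^{1,1}(\Omega)^*}$, together with the $W_0^{1,q}$-regularity already guaranteed by Lemma \ref{lem:ponci}. First I would observe that by Lemma \ref{lem:ponci} the weak solution $v$ lies in $W_0^{1,q}(\Omega)$ for every $q \in [1,\frac{n}{n-1})$ and satisfies $\int_\Omega \nabla v \cdot \nabla\phi\dx = \int\phi\,\mathrm{d}\mu$ for all $\phi \in W_0^{1,q'}(\Omega)$, $q' > n$. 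Rewriting the right-hand side using $\mu = \operatorname{div}(F)$ distributionally, this reads $\int_\Omega \nabla v\cdot\nabla\phi\dx = -\int_\Omega F\cdot\nabla\phi\dx$, i.e. $v$ is a weak solution of $-\Delta v = \operatorname{div}(F)$ with $F \in L^\infty \subset L^p$ for all $p < \infty$. Since $\Omega$ has smooth boundary, standard $L^p$-theory for the divergence-form equation (the same result cited for Proposition \ref{prop:C0alpha}, cf. \cite{Breit}) then upgrades $v$ to $W_0^{1,p}(\Omega)$ for every $p \in (1,\infty)$, with $\|\nabla v\|_{L^p(\Omega)} \le C(p,\Omega)\|F\|_{L^p(\Omega)} \le C(p,\Omega)|\Omega|^{1/p}\|F\|_{L^\infty(\Omega)}$; in particular $v \in W_0^{1,2}(\Omega)$.

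For the $L^\infty$-bound I would argue by duality (De Giorgi / Stampacchia-type truncation, or simply Sobolev embedding in high dimensions combined with a bootstrap). The cleanest route: fix $p > n$, so that $W_0^{1,p}(\Omega) \hookrightarrow C^0(\overline\Omega)$ with embedding constant depending only on $p$ and $\Omega$; then
\begin{equation}
\|v\|_{L^\infty(\Omega)} \le \|v\|_{C^0(\overline\Omega)} \le C_{\mathrm{emb}}(p,\Omega)\,\|v\|_{W_0^{1,p}(\Omega)} \le C_{\mathrm{emb}}(p,\Omega)\,C(p,\Omega)\,|\Omega|^{1/p}\,\|F\|_{L^\infty(\Omega)}.
\end{equation}
Choosing, say, $p = 2n$ once and for all absorbs the $p$-dependence into a constant $D = D(\Omega)$, and using $\|F\|_{L^\infty(\Omega)} = \|Q\,\mathcal{H}^{n-1}\mres\Gamma\|_{W_0^{1,1}(\Omega)^*}$ from Lemma \ref{lem:divF} yields exactly the claimed estimate. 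Alternatively, one can avoid choosing $p$ by running Stampacchia's level-set lemma directly on $-\Delta v = \operatorname{div}(F)$ with $F \in L^\infty$: testing with $(v-k)_+$ and estimating $\int_{\{v>k\}}|F||\nabla v| \le \|F\|_{L^\infty}\big(\int_{\{v>k\}}|\nabla v|^2\big)^{1/2}|\{v>k\}|^{1/2}$ gives a decay estimate on the measure of the superlevel sets that forces $v$ to be essentially bounded; this has the mild advantage of producing a constant with transparent dependence only on $n$ and $|\Omega|$.

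The only genuine subtlety — and the step I would flag as the main obstacle — is verifying that one is legitimately allowed to replace $\int\phi\,\mathrm{d}\mu$ by $-\int_\Omega F\cdot\nabla\phi\dx$ in the weak formulation for the relevant test functions. Lemma \ref{lem:divF} only asserts this identity distributionally, i.e. against $C_0^\infty(\Omega)$; but $\mu = Q\,\mathcal{H}^{n-1}\mres\Gamma \in W_0^{1,1}(\Omega)^*$ (also from Lemma \ref{lem:divF}) means the pairing $\phi \mapsto \int\phi\,\mathrm{d}\mu$ extends continuously to $W_0^{1,1}(\Omega)$, and $F \in L^\infty \subset L^\infty$ means $\phi \mapsto -\int F\cdot\nabla\phi$ is continuous on $W_0^{1,1}(\Omega)$ as well; since the two continuous functionals agree on the dense subspace $C_0^\infty(\Omega)$, they agree on all of $W_0^{1,1}(\Omega)$, hence in particular on $W_0^{1,p}(\Omega)$ for every $p$. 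Combined with the density part of Lemma \ref{lem:ponci} (which lets us use test functions in $W_0^{1,q'}$), this closes the argument. Everything else is a routine invocation of the divergence-form $L^p$-theory on a smooth domain.
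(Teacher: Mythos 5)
Your approach is correct but takes a genuinely different route from the paper. The paper establishes $v \in W_0^{1,2}(\Omega)$ by a self-contained Riesz representation argument in the Hilbert space $W_0^{1,2}(\Omega)$: the bound $\left\vert \int_\Omega \nabla v \cdot \nabla\phi \right\vert \le C\|\phi\|_{W_0^{1,2}(\Omega)}$ obtained from Lemma \ref{lem:divF} produces a $w \in W_0^{1,2}(\Omega)$ with the same weak Laplacian, and then $v - w \in W_0^{1,q}(\Omega)$ solves the homogeneous Dirichlet problem and must vanish by uniqueness (cf.\ \cite{Sweers}). For the $L^\infty$-bound the paper then cites \cite[Lemma 5.2]{Ponce}, a Stampacchia-type estimate for $-\Delta v = \mathrm{div}(F)$ with $F \in L^\infty$, applied directly once $\mu = \mathrm{div}(F)$ is known from Lemma \ref{lem:divF}. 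You instead run the full Calder\'on--Zygmund $W_0^{1,p}$-theory for every $p < \infty$ and pass to $L^\infty$ via the Sobolev embedding $W_0^{1,p}(\Omega)\hookrightarrow C^0(\overline{\Omega})$ for $p > n$; your alternative Stampacchia sketch is essentially what the cited Ponce lemma carries out. The paper's route is lighter at this stage — the CZ $W^{1,p}$-machinery only enters later, in Proposition \ref{prop:C0alpha}, after $W_0^{1,2}$-regularity is already secured — while yours is more direct once the CZ theory is accepted. One step you leave implicit: the $L^p$-theory of \cite{Breit} produces \emph{some} $u_p \in W_0^{1,p}(\Omega)$ solving $-\Delta u_p = \mathrm{div}(F)$, and to ``upgrade $v$'' one must still run the uniqueness argument — namely $v - u_p \in W_0^{1,q}(\Omega)$ for small $q>1$ solves the homogeneous Dirichlet problem and hence vanishes — exactly as the paper does for its $w$. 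Also, in Proposition \ref{prop:C0alpha} the paper cites a different result of \cite{Breit} (a BMO gradient estimate), so your parenthetical ``the same result cited for Proposition \ref{prop:C0alpha}'' is slightly off, though the classical $W^{1,p}$-existence theory you actually want is indeed attributed there to \cite{Breit}. Your density argument for extending the identity $\int\phi Q\,\mathrm{d}\mathcal{H}^{n-1} = -\int_\Omega F\cdot\nabla\phi\,\mathrm{d}x$ from $C_0^\infty(\Omega)$ to $W_0^{1,1}(\Omega)$ is correct and handles the test-function subtlety properly.
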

\begin{proof}
We first show that $v \in W_0^{1,2}(\Omega)$.
 Recall that $v \in W_0^{1,q}(\Omega)$ for some $q > 1$ by Lemma \ref{lem:ponci}. 
Now fix $\phi \in C_0^\infty( \Omega )$. We obtain by Lemma \ref{lem:divF} for $C_1 := ||Q \; \mathcal{H}^{n-1} \mres \Gamma ||_{W_0^{1,1}(\Omega)^*}$ and $C_2 := \sqrt{|\Omega|} C_1$ that
\begin{equation}
\left\vert \int_\Omega \nabla v \nabla \phi \dx \right\vert =  \left\vert \int \phi Q \; \mathrm{d}\mathcal{H}^{n-1} \right\vert \leq C_1||\nabla \phi||_{L^1(\Omega)}   \leq C_2||\nabla \phi||_{L^2(\Omega)}  = C_2 ||\phi||_{W_0^{1,2}(\Omega)}.
\end{equation}
Since $W_0^{1,2}(\Omega)$ is a Hilbert space and $C_0^\infty(\Omega)$ is dense in $W_0^{1,2}(\Omega)$ there exists $w \in W_0^{1,2}(\Omega)$ such that 
\begin{equation}
\int_\Omega \nabla v \nabla \phi \; \mathrm{d}x = \int \nabla w \nabla \phi \;  \mathrm{d}x \quad \forall \phi \in C_0^\infty(\Omega).
\end{equation}
We infer that $u:= v-w \in W_0^{1,q}(\Omega)$ is a weak solution of 
\begin{equation}
\begin{cases}
-\Delta u = 0 & \mathrm{in } \; \Omega, \\ \quad \; \;  u = 0 & \mathrm{on }  \; \partial \Omega. 
\end{cases}
\end{equation}
By uniqueness of such solution in $W_0^{1,q}$ (cf. \cite[Section 2.5.2]{Sweers}) we infer that $v = w \in W_0^{1,2}(\Omega)$. It remains to show the $L^\infty$-regularity and the desired estimate. By Lemma \ref{lem:divF} we infer that $v \in W_0^{1,2}(\Omega)$ is a weak solution of $-\Delta v = \mathrm{div}(F)$ for some $F \in L^\infty(\Omega; \mathbb{R}^n)$. The claim follows then immediately from \cite[Lemma 5.2]{Ponce}. 
\end{proof}
Note that the previous regularity is definitely not true for solutions of \eqref{eq:measpoi} with arbitrary measures $\mu$. Indeed, if $\mu= \delta_{x_0}$ for some $x_0 \in \Omega$ then the solution of \eqref{eq:measpoi} is Green's function  $G_\Omega(x_0, \cdot)$, which does not lie in $L^\infty(\Omega)$, cf. \cite[Section 4]{Sweers}.

\begin{proof}[Proof of Proposition \ref{prop:C0alpha}]
We infer from the previous lemma and Lemma \ref{lem:divF} that each solution $v$ of \eqref{eq:measdir} lies in $W_0^{1,2}(\Omega)$ and is a weak solution of $- \Delta v = \mathrm{div}(F)$ for some $F \in L^\infty(\Omega)$ . In particular $F \in BMO(\Omega)$. Applying \cite[Theorem 2.9]{Breit} with $p=2$ and $\omega \equiv 1$ we infer that $\nabla v \in BMO(\Omega)$. Since $BMO(\Omega) \subset L^p(\Omega)$ for all $p > 1$, we infer that $v \in W^{1,p}(\Omega)$ for all $p \in (1,\infty)$. By Sobolev embedding $v \in C^{0,\alpha}(\overline{\Omega})$ for all $\alpha \in [0,1)$.
\end{proof}
Finally we discuss the limitations of the duality method to $p = \infty.$

\begin{remark}[Limitations of the duality method]
  To this end we construct  $F \in L^\infty(\Omega;\mathbb{R}^n)$ such that $-\Delta u = \mathrm{div}(F)$ has no distributional solution in $W^{1,\infty}(\Omega)$.

The argument we present is a slight variation of arguments in \cite[Section 2]{Russ}. %Suppose that $\Omega = B_1(0) \subset \mathbb{R}^2$
Suppose that $\Omega \subset \mathbb{R}^2$ is a smooth domain and $\Omega \supset [0,1]^2$. 
By \cite[Proof of Proposition 2, Section 2]{Russ} there exists a sequence $(\psi_j)_{j = 1}^\infty \subset C_0^\infty(\Omega)$ such that 
\begin{equation}
    \left\Vert \partial^2_{x_1x_1}\psi_j \right\Vert_{L^1(\Omega)} + \left\Vert \partial^2_{x_2x_2} \psi_j \right\Vert_{L^1(\Omega)} \leq 1 \quad \forall j \in \mathbb{N}, \quad  \quad \quad  \left\Vert \partial^2_{x_1x_2} \psi_j \right\Vert_{L^1(\Omega)} \geq j \quad \forall j \in \mathbb{N}.
\end{equation}
Now define 
\begin{equation}
    T_j : L^\infty(\Omega) \rightarrow \mathbb{R}, \quad T_j(g) := \int_\Omega g \; \partial^2_{x_1x_2} \psi_j \; \mathrm{d}x.
\end{equation}
One readily checks that $||T_j||_{L^\infty(\Omega)^*} = \left\Vert \partial^2_{x_1x_2} \psi_j \right\Vert_{L^1(\Omega)}  \rightarrow \infty$ as $j \rightarrow \infty$. 
By the Banach-Steinhaus theorem 
%and the fact that $L^1(\Omega)^* = L^\infty(\Omega)$  
there exists $G \in L^\infty(\Omega)$ such that (along a subsequence) 
\begin{equation}\label{eq:GLIM}
   \left\vert T_j(G) \right\vert =  \left\vert \int_\Omega G \;  \partial^2_{x_1x_2} \psi_j \dx \right\vert \rightarrow \infty  \quad (j\rightarrow \infty).
\end{equation}
Define now $F(x) := (0, G(x))^T$ for almost every $x \in \Omega$. Clearly $F \in L^\infty(\Omega;\mathbb{R}^2)$. We claim that $-\Delta u = \mathrm{div}(F)$ has no distributional solution $u \in W^{1,\infty}(\Omega)$. For a contradiction we assume the opposite. Then there exists $u \in W^{1,\infty}(\Omega)$ such that for all $\phi \in C_0^\infty(\Omega)$ one has  
\begin{equation}
    \int_\Omega \nabla u \cdot \nabla \phi \dx  = \int_\Omega F \cdot  \nabla \phi \dx. 
\end{equation}
Plugging in $\phi= \partial_{x_1}\psi_j$  and integrating by parts we find 
\begin{equation}
    \int_\Omega G \; \partial^2_{x_1x_2} \psi_j \dx = \int_\Omega ( \partial_{x_1} u \partial^2_{x_1x_1} \psi_j + \partial_{x_2} u \partial^2_{x_1x_2} \psi_j ) \dx  = \int_\Omega \partial_{x_1} u ( \partial^2_{x_1x_1} \psi_j + \partial^2_{x_2x_2} \psi_j ) \dx . 
\end{equation}
Taking absolute values and using the estimates above we find
\begin{equation}
    \left\vert \int_\Omega G  \; \partial^2_{x_1x_2} \psi_j \dx \right\vert  \leq ||\partial_{x_1} u||_{L^\infty} \leq ||u||_{W^{1,\infty}}.
\end{equation}
Letting $j \rightarrow \infty$ we obtain a contradiction to \eqref{eq:GLIM}.
\end{remark}

%\begin{remark}
%With the duality method it can also be shown that $-\Delta u = \mathrm{div}(F)$ for some $F \in C^{0,\alpha}(\overline{\Omega})$ implies that $u \in C^{1}({\Omega})$. This is not applicable in our case, since in general there exists no $F \in C^{0,\alpha}(\overline{\Omega})$ such that $\mathrm{div}(F) = Q \; \mathcal{H}^{n-1} \mres \Gamma$ distributionally.
%To see this suppose that $\Gamma = $
%\end{remark}
\subsection{The Hopf-Oleinik-Lemma and $C^1$-regularity}\label{sec:Hopfolei}
In this section we explain why one can not hope for more than $C^{0,1}$-regularity in classical function spaces. More precisely we show %Note that $W^{2,p}$ regularity for any $p \geq 1$ is impossible because of the equation \eqref{eq:measdir}.
%One could however hope for $C^1$-regularity. Even for "nice" surfaces $\Gamma$ this however cannot be true. 

\begin{prop}[Impossibility of $C^1$-regularity] \label{prop:Hopfgonebad}
Suppose that $\Omega \subset \mathbb{R}^n$ is a domain and let $\Gamma = \partial \Omega'$ for some domain $\Omega' \subset \subset \Omega$ with $C^{1,\beta}$-boundary for some $\beta \in (0,1)$. Let $v \in L^1(\Omega)$ be a weak solution of \eqref{eq:measdir}. Then $ v \not \in C^1(\Omega)$ unless $Q = 0$. 
\end{prop}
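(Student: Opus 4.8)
The plan is to argue by contradiction: assume $v \in C^1(\Omega)$ and $Q \not\equiv 0$, and derive a violation of a jump relation across $\Gamma$. The guiding principle is that the normal derivative of $v$ must jump across $\Gamma$ by exactly $-Q$ (in an $\mathcal{H}^{n-1}$-a.e. sense), so $C^1$-regularity forces $Q = 0$. Concretely, pick a point $x_0 \in \Gamma$ at which $Q(x_0) \neq 0$ and which is a Lebesgue point of $Q$; since $\Gamma = \partial\Omega'$ is $C^{1,\beta}$, there is a well-defined outer unit normal $\nu(x_0)$ and $\Gamma$ is, near $x_0$, the graph of a $C^{1,\beta}$ function over the tangent hyperplane. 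Split $v = v_1 + v_2$ locally, where $v_2$ solves $-\Delta v_2 = \chi_{B_\rho(x_0)} Q\,\mathcal{H}^{n-1}\mres\Gamma$ (say via the single-layer potential, using Section~\ref{sec:35} / the layer-potential machinery the paper invokes) and $v_1$ solves $-\Delta v_1 = 0$ in $B_\rho(x_0)$, hence $v_1 \in C^\infty$. Then $v \in C^1$ would force $v_2 \in C^1(B_\rho(x_0))$, and in particular the interior and exterior normal-derivative traces of $v_2$ on $\Gamma \cap B_\rho(x_0)$ would agree.

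The key computation is the jump relation for the single-layer potential $S\sigma(x) = \int_\Gamma F(x-y)\sigma(y)\,\mathrm{d}\mathcal{H}^{n-1}(y)$ with density $\sigma = Q$: for $C^{1,\beta}$ surfaces one has the classical identity that $S\sigma$ is continuous across $\Gamma$ but its normal derivative satisfies
\begin{equation}
\partial_{\nu}^{+} S\sigma(x_0) - \partial_{\nu}^{-} S\sigma(x_0) = -\sigma(x_0)
\end{equation}
for $\mathcal{H}^{n-1}$-a.e.\ $x_0 \in \Gamma$ (the $\pm$ denoting limits from the two sides of $\Gamma$ along $\nu$); see e.g.\ \cite[Section 7.11]{Taylor} or \cite[Section 14]{Miranda}. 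If $v_2$ were $C^1$ near $x_0$, the left-hand side would be $0$, forcing $Q(x_0) = 0$, contradicting the choice of $x_0$. Alternatively, and more self-containedly, I would avoid quoting the sharp jump formula and instead integrate: take a small ball $B_r(x_0)$, apply the weak formulation to suitable test functions (or the divergence theorem to $\nabla v$, which is continuous by assumption) on the two half-balls $B_r(x_0)\cap\Omega'$ and $B_r(x_0)\setminus\overline{\Omega'}$, where $v$ is harmonic and hence smooth up to $\Gamma$; adding the two boundary-integral contributions and using that $\nabla v$ is continuous across $\Gamma$, the surface integral over $\Gamma\cap B_r(x_0)$ of $Q$ must vanish. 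Dividing by $\mathcal{H}^{n-1}(\Gamma\cap B_r(x_0)) \sim r^{n-1}$ and letting $r \to 0$ at a Lebesgue point gives $Q(x_0)=0$.

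The main obstacle is justifying that $v$ is smooth \emph{up to} $\Gamma$ from each side, so that the divergence theorem / boundary integrals make sense. This is where the $C^{1,\beta}$-hypothesis on $\Gamma$ is essential: by Proposition~\ref{prop:C0alpha} we already know $v \in W^{1,p}$ for all $p<\infty$, so $v|_{\Omega'}$ and $v|_{\Omega\setminus\overline{\Omega'}}$ are weak solutions of $-\Delta(v|_{\cdot}) = 0$ with $W^{1,p}$ boundary data on a $C^{1,\beta}$ hypersurface, hence are $C^{1,\beta}$ up to $\Gamma$ by Schauder theory. (This is presumably exactly the role of the name ``Hopf–Oleinik'': the one-sided normal derivatives exist and are controlled, so a genuine jump can be detected.) One must take a little care that the decomposition $v = v_1 + v_2$ respects the global Dirichlet condition only up to a smooth harmonic correction, which does not affect the local jump computation. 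Once the one-sided $C^1$-traces are in hand, the remainder is the routine limiting argument above.
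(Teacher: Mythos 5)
Your proof is correct, but it takes a genuinely different route from the paper. The paper's argument is via the Hopf--Oleinik boundary point lemma: assuming $v\in C^1(\Omega)$ it shows that a global extremum of $v$ can be located on $\Gamma$, and then applies the Hopf lemma on $\Omega'$ and $\Omega''=\Omega\setminus\overline{\Omega'}$ simultaneously at that point, producing two inequalities $\nabla v(x_0)\cdot\nu_{\Omega'}(x_0)>0$ and $\nabla v(x_0)\cdot\nu_{\Omega''}(x_0)>0$ that cannot both hold because $\nu_{\Omega''}=-\nu_{\Omega'}$ on $\Gamma$. Your argument instead computes the normal jump directly. Your second, ``self-contained'' version is the cleaner of your two and in fact is shorter than the paper's proof: $v$ is harmonic in $\Omega\setminus\Gamma$, hence smooth there; the global assumption $v\in C^1(\Omega)$ together with $\Omega'\subset\subset\Omega$ already gives continuity of $\nabla v$ up to $\Gamma$ from both sides (no Schauder theory is needed for this, contrary to what you suggest---the global $C^1$ hypothesis does all the work); testing $\int_\Omega\nabla v\cdot\nabla\phi=\int_\Gamma Q\phi$ against $\phi\in C_0^\infty(\Omega)$ and integrating by parts on $\Omega'$ and $\Omega''$ separately yields
\begin{equation}
\int_\Gamma Q\,\phi\;\mathrm{d}\mathcal{H}^{n-1}=\int_\Gamma\phi\left(\partial_{\nu_{\Omega'}}^-v-\partial_{\nu_{\Omega'}}^+v\right)\mathrm{d}\mathcal{H}^{n-1}=0,
\end{equation}
so $Q=0$ $\mathcal{H}^{n-1}$-a.e.\ by density, and there is no need to invoke Lebesgue points at all. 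Your approach buys you two things the paper's does not: it works verbatim for $Q\in L^1(\Gamma)$ (the paper's proof quietly restricts to $Q\in L^\infty$), and it only needs $\Gamma$ regular enough for the divergence theorem on $\Omega'$, so Lipschitz would already suffice, whereas the Hopf lemma genuinely requires $C^{1,\beta}$. Your first version via the layer-potential jump relation \eqref{eq:normaljump} is also valid but imports the more delicate a.e.\ interpretation \eqref{eq:derpot}; since you are assuming $C^1$-regularity anyway, the self-contained version avoids that entirely and the worry about the decomposition ``respecting the Dirichlet condition'' simply does not arise there. What the paper's route buys in exchange is that it never needs to justify integration by parts up to $\Gamma$; the Hopf lemma argument is purely interior/one-sided.
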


The main ingredient used in the proof is the famous Hopf-Oleinik boundary point lemma stated here (in a nonoptimal version).

\begin{lemma}[{Hopf-Oleinik-Lemma, cf. \cite[Theorem 4.1]{Lieberman}}] \label{lem:Hopf}
Suppose that $D \subset \mathbb{R}^n$ is a set with $C^{1,\beta}$-boundary for some $\beta > 0$. 
Let $v \in C^2(D) \cap C(\overline{D}) $ be a harmonic function in $D$. If $x_0 \in \partial D$ is such that $v(x_0) = \max_{x \in \partial D} v(x)$ then one has
\begin{equation}
\liminf_{t \rightarrow 0- } \frac{v(x_0+ t \nu(x_0)) - v(x_0)}{t} > 0, 
\end{equation}
where $\nu(x_0)$ denotes the outer unit normal at $x_0$. If $x_0 \in \partial D$ is such that $v(x_0) = \min_{x \in \partial D} v(x)$ then one has 
\begin{equation}
\limsup_{t \rightarrow 0- } \frac{v(x_0+ t \nu(x_0)) - v(x_0)}{t} < 0.
\end{equation}
\end{lemma}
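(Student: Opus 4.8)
Lemma \ref{lem:Hopf} is the classical Hopf--Oleinik boundary point lemma, and a complete proof (in fact in the sharper $C^{1,\mathrm{Dini}}$ setting) is given in \cite[Theorem 4.1]{Lieberman}; here I only indicate the strategy. The two assertions are equivalent: applying the maximum version to $-v$, which is harmonic and attains its maximum over $\partial D$ at $x_0$ exactly when $v$ attains its minimum there, yields the minimum version. We may also assume $v$ is non-constant (otherwise the difference quotient is identically $0$), so that the strong maximum principle gives $v(x) < v(x_0)$ for all $x \in D$; subtracting the constant $v(x_0)$ we normalise $v(x_0) = 0$, hence $v < 0$ in $D$. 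Next one localises and straightens the geometry: since $\partial D \in C^{1,\beta}$, after a rigid motion we may assume $x_0 = 0$, $\nu(x_0) = -e_n$, and $D \cap B_\rho(0) = \{x \in B_\rho(0) : x_n > \phi(x')\}$ for some $\phi \in C^{1,\beta}$ with $\phi(0) = 0$ and $\nabla\phi(0) = 0$; in particular $\phi(x') \le \lvert\phi(x')\rvert \le L\lvert x'\rvert^{1+\beta}$ for small $\lvert x'\rvert$.

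The heart of the proof is the construction of a barrier: a function $h \ge 0$ that is subharmonic in a region $U \subset D$ with $0 \in \partial U$, vanishes on a relatively open piece $\Sigma \subset \partial U$ with $v \le 0$ on $\overline{\Sigma}$, is bounded on the remaining compact piece $K := \partial U \setminus \Sigma \subset D$, and satisfies $\liminf_{s \rightarrow 0+} s^{-1} h(s e_n) > 0$. Granting this, since $v < 0$ on the compact set $K$ there is $\delta > 0$ with $v \le -\delta$ on $K$; choosing $c > 0$ with $c \sup_{\partial U} h \le \delta$ gives $v + c h \le 0$ on all of $\partial U$, hence $v + c h \le 0$ in $U$ by the maximum principle for the subharmonic function $v + c h$. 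Since $v(x_0)=0$ and $x_0 + t\nu(x_0) = (-t)e_n$ for $t<0$, this yields
\[
\liminf_{t \rightarrow 0-} \frac{v(x_0 + t \nu(x_0)) - v(x_0)}{t} = \liminf_{s \rightarrow 0+} \frac{-v(s e_n)}{s} \ge c \liminf_{s \rightarrow 0+} \frac{h(s e_n)}{s} > 0,
\]
which is the assertion.

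It remains to construct $h$, and this is the only delicate point, where the regularity of $\partial D$ enters. If $\partial D \in C^{1,1}$ one takes $U$ to be a small interior ball $B_R(R e_n) \subset D$ with $0 \in \partial U$, restricts to the annulus $\{R/2 < \lvert x - R e_n\rvert < R\}$, and sets $h(x) = e^{-\alpha\lvert x - R e_n\rvert^2} - e^{-\alpha R^2}$; a direct computation shows $\Delta h \ge 0$ once $\alpha$ is large, $h = 0$ on the outer sphere (which contains $0$), $h > 0$ on the inner sphere, and $\lim_{s\rightarrow 0+} s^{-1} h(s e_n) = 2\alpha R e^{-\alpha R^2} > 0$. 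For merely $C^{1,\beta}$ boundaries the interior ball condition fails, so one works instead in a region with curved bottom $\Sigma = \{x_n = C\lvert x'\rvert^{1+\beta}\}$ (which lies in $D$ for $C > L$ and touches $\partial D$ only at $0$) and builds $h$ from a regularised distance to this bottom. The obstacle is that such a regularised distance has an unbounded Laplacian near the $x_n$-axis, so the comparison region and the parameters in $h$ must be chosen carefully to preserve $\Delta h \ge 0$; this is exactly what is carried out, in greater generality, in \cite[Theorem 4.1]{Lieberman}, to which we refer for the details.
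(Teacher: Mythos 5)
The paper offers no proof of Lemma \ref{lem:Hopf} at all: it is imported as a classical result, with a bare citation to \cite[Theorem 4.1]{Lieberman}, so there is no in-paper argument to compare against. Your sketch is a correct account of the standard Hopf barrier argument: the reduction of the minimum case to the maximum case via $-v$, the normalisation $v(x_0)=0$ with $v<0$ inside (strong maximum principle), the flattening of the boundary, the comparison $v+ch\le 0$ on a region whose boundary splits into a piece where $h=0$ and a compact piece in $D$ where $v\le-\delta$, and the explicit exponential barrier in the interior-ball ($C^{1,1}$) case all check out, and you correctly identify that for merely $C^{1,\beta}$ boundaries the interior ball condition fails and the real work is Lieberman's regularised-distance barrier, which you (like the paper) delegate to the reference. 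Two small caveats: the lemma as stated is false for constant $v$ (the difference quotient is then $0$), so "we may assume $v$ is non-constant" is not a genuine reduction but rather an implicit correction of the hypothesis --- harmless here, since in the paper's application (Proposition \ref{prop:Hopfgonebad}) $v$ is non-constant because $Q\not\equiv 0$; and the strong maximum principle should be invoked on the connected component of $D$ whose boundary contains $x_0$, as $D$ is not assumed connected. Neither point affects the substance, and your proposal is in fact more informative than the paper, which proves nothing here.
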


\begin{proof}[Proof of Proposition \ref{prop:Hopfgonebad}]
Assume that $\Gamma, \Omega'$ are as in the statement and let $Q \in L^\infty(\Gamma) \setminus \{ 0 \}$.  Moreover let $v \in L^1(\Omega)$ be as in the statement.  Assume now that $v \in C^1(\Omega)$. Since $v$ is harmonic in a neighborhood of $\partial \Omega$ and attains homogeneous boundary values on $\partial \Omega$  one infers by Schauder theory that $v \in C^1(\overline{\Omega})$. In particular $\max_{x \in \overline{\Omega}} v(x), \min_{x \in \overline{\Omega}} v(x)$ are attained. Let $x_0 ,x_1 \in \overline{\Omega}$ be such that $v(x_0) = \max_{x \in \overline{\Omega}} v(x)$ and $v(x_1) = \min_{x \in \overline{\Omega}} v(x)$. We claim next that either $x_0$ or $x_1$  can be chosen to lie on $\Gamma$. Indeed, one has by harmonicity of $v$ on $\Omega'$ 
\begin{equation}\label{eq:v1}
\max_{x \in \overline{\Omega'}} v(x) = \max_{x \in \Gamma} v(x)  
\end{equation} 
and by harmonicity of $v$ on $\Omega'' := \Omega \setminus \overline{\Omega'}$ 
\begin{equation}\label{eq:v2}
\max_{x \in \overline{\Omega''}} v(x)= \max_{x \in \partial \Omega''} v(x) = \max_{x \in \Gamma \cup \partial \Omega} = \max \{ 0, \max_{x \in \Gamma} v(x) \},
\end{equation}
as $v = 0 $ on $\partial \Omega$. If now $\max_{x \in \overline{\Omega}} v(x) > 0$ then \eqref{eq:v1} and \eqref{eq:v2} leave the conclusion that
\begin{equation}
\max_{x \in \overline{\Omega'}} v(x) = \max_{x \in \overline{\Omega''}} v(x) = \max_{x \in \Gamma} v(x).
\end{equation}
Since $\overline{\Omega'} \cup \overline{\Omega''} = \overline{\Omega}$ we obtain that $x_0$ can be chosen to lie on $\Gamma$. On contrary if $\max_{x \in \overline{\Omega}} v(x) \leq 0$ then $\min_{x \in \overline{\Omega}} v(x) < 0 $ unless $v \equiv 0$ (which is not allowed as $Q \equiv 0$ was excluded). Now we can argue the same way as in \eqref{eq:v1}, \eqref{eq:v2} to find  
\begin{equation}
\min_{x \in \overline{\Omega'}} v(x) = \min_{x \in \Gamma} v(x) , 
\end{equation}  
\begin{equation}
\min_{x \in \overline{\Omega''}} v(x)= \min_{x \in \partial \Omega''} v(x) = \min \{ 0, \min_{x \in \Gamma} v(x) \}.
\end{equation}
Since now $\min_{x \in \overline{\Omega}} v(x) < 0$ we infer that 
\begin{equation}
\min_{x \in \overline{\Omega'}} v(x) = \min_{x \in \overline{\Omega''}} v(x) = \min_{x \in \Gamma} v(x)
\end{equation}
and since $\overline{\Omega'} \cup \overline{\Omega''} = \overline{\Omega}$ we once more infer that $x_1$ can be chosen to lie on $\Gamma$. Without loss of generality we can assume that $x_0$ lies on $\Gamma$, otherwise we may consider $-v$. Now we apply Lemma \ref{lem:Hopf} $D := \Omega'$. Since $\max_{x \in D} v(x)$ is attained at $x_0$ and $v \in C^1(\overline{D})$ we obtain that 
\begin{equation}
0 < \liminf_{t \rightarrow 0-} \frac{v(x_0+ t \nu_{\Omega'}(x_0)) - v(x_0)}{t} = \nabla v(x_0) \cdot \nu_{\Omega'}(x_0). \label{eq:w1}
\end{equation}
Analogously we can also consider $D= \Omega''$  (or one connected component of $\Omega''$ whose boundary  contains $x_0$) and apply Lemma \ref{lem:Hopf}. Since $\max_{x \in D} v(x)$ is also attained at $x_0$ we obtain 
\begin{equation}\label{eq:w2}
0 < \liminf_{t \rightarrow 0-} \frac{v(x_0+ t \nu_{\Omega''}(x_0)) - v(x_0)}{t}  = \nabla v(x_0) \cdot \nu_{\Omega''}(x_0).
\end{equation}
But now $\nu_{\Omega''} (x_0) = - \nu_{\Omega'} (x_0)$ as $x_0 \in \Gamma$ and $\Omega'$ and $\Omega''$ lie on two different sides of $\Gamma$. This means however that  \eqref{eq:w1} and \eqref{eq:w2} cannot hold at the same time. A contradiction. 
\end{proof}

\subsection{Remarks on potential theory} \label{sec:35}

As already announced our approach will not use potential theory, except for standard regularity results. Nevertheless we comment briefly on  potential theoretic results about \eqref{eq:measdir}, since one can indeed give a partial positive answer to our question of $W^{1,\infty}$ regularity. %(under restrictive assumptions on $\Gamma$ and Q).

As in the introduction we consider for $Q \in L^1(\Gamma)$ the \emph{single-layer potential}
\begin{equation}
 \mathcal{S}Q(x) := \int_\Gamma F(x-y)  Q(y) \; \mathrm{d}\mathcal{H}^{n-1}(y),
\end{equation}
where $F$ is the fundamental solution of $-\Delta$ in $\mathbb{R}^n$. 
We intend to understand the behavior of $\mathcal{S}Q$ and its partial derivatives on $\Gamma$. Note first that $\mathcal{S}Q $ is smooth and harmonic on $\mathbb{R}^n\setminus \Gamma$. 
Further one can show that $\mathcal{S}Q \in W^{1,q}_{loc}(\mathbb{R}^n)$ for some $q \in [1, \frac{n}{n-1})$ and in the sense of weak derivatives one has 
\begin{equation}
    \nabla (\mathcal{S}Q) (x) = \int_\Gamma \nabla F(x-y) Q(y) \; \mathrm{d}\mathcal{H}^{n-1}  \quad a.e. \; x \in \mathbb{R}^n. 
\end{equation}

A first remarkable theorem discusses a control of the normal derivatives of $\mathcal{S}Q$ on $\Gamma$ under certain assumptions on $\Gamma,Q$. 
%Most relevant findings are outlined in \cite[Section II.14]{Miranda}.% Here we want to emphasize two results. The first one deals with the normal derivatives of $\Gamma$

\begin{prop}[{Normal derivative of single-layer potential, \cite[Theorem 14.IV]{Miranda}}]\label{prop:Mira1} Suppose that $\Gamma= \partial T \subset \subset  \mathbb{R}^n$ for some $C^{1,\lambda}$-domain $T\subset \mathbb{R}^n$, $\lambda > 0 $ with unit normal field $\nu$. 
If $Q \in L^1(\Gamma)$ then the inner and outer normal derivatives $\partial_\nu^+ \mathcal{S}Q$ and $\partial_\nu^- \mathcal{S}Q$ exist  in the sense of 
\begin{equation}\label{eq:derpot}
    \partial_\nu^{\pm} \mathcal{S}Q = \lim_{ t \rightarrow 0 \pm } \nabla \mathcal{S}Q(x_0 + t \nu(x_0)) \cdot \nu(x_0) \quad  \; \textrm{exists for} \;  \mathcal{H}^{n-1} a.e. \; x_0 \in \Gamma 
\end{equation}
and 
\begin{equation}
    \partial_\nu^{\pm}(\mathcal{S}Q)(x_0) = \mp \frac{1}{2}Q(x_0) + \int_\Gamma \partial_{\nu} F(x_0 - y) Q(y) \; \mathrm{d}\mathcal{H}^{n-1}(y)  \quad \quad \mathcal{H}^{n-1} a.e. \;  x_0 \in \Gamma. 
\end{equation}
If $Q \in L^p(\Gamma)$ for some $p > \frac{n-1}{\lambda}$ then the map $\Gamma \ni x_0 \mapsto \int_{\Gamma} \partial_\nu F(x_0 - y) Q(y) \; \mathrm{d}\mathcal{H}^{n-1}(y)$ lies in $C^{0,\mu}(\Gamma)$ for any $\mu < \lambda - \frac{n-1}{p}$. In particular $\partial_\nu^{\pm} \mathcal{S} Q \in L^\infty(\Gamma)$ if  $Q \in L^\infty(\Gamma)$. If $Q \in C^0(\Gamma)$ then the above equations hold everywhere and not just $\mathcal{H}^{n-1}$ a.e..
\end{prop}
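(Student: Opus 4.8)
The plan is to run the classical potential-theoretic argument behind \cite[Theorem 14.IV]{Miranda}, whose engine is the \emph{Lyapunov estimate} available on a $C^{1,\lambda}$-surface: there are $r_0>0$ and $C>0$ so that
\[
\abs{(x_0-y)\cdot\nu(x_0)}\le C\abs{x_0-y}^{1+\lambda}, \qquad \abs{\nu(x_0)-\nu(y)}\le C\abs{x_0-y}^{\lambda}
\]
for all $x_0,y\in\Gamma$ with $\abs{x_0-y}<r_0$. Since $\nabla F(z)\cdot e = c_n\,z\cdot e\,\abs{z}^{-n}$, the first estimate gives $\abs{\partial_\nu F(x_0-y)} = c_n\abs{(x_0-y)\cdot\nu(x_0)}\abs{x_0-y}^{-n}\le C\abs{x_0-y}^{-(n-1-\lambda)}$ for $y$ near $x_0$ on $\Gamma$ (and the kernel is bounded and smooth when $\abs{x_0-y}\ge r_0$). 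As $\Gamma$ is $(n-1)$-dimensional and $n-1-\lambda<n-1$, the map $y\mapsto\partial_\nu F(x_0-y)$ is therefore $\mathcal{H}^{n-1}\mres\Gamma$-integrable, uniformly in $x_0$. Hence
\[
\mathcal{K}Q(x_0):=\int_\Gamma\partial_\nu F(x_0-y)\,Q(y)\,\mathrm{d}\mathcal{H}^{n-1}(y)
\]
is an \emph{absolutely} convergent integral --- not a genuine principal value --- for every $Q\in L^1(\Gamma)$, with $||\mathcal{K}Q||_{L^\infty(\Gamma)}\le C||Q||_{L^\infty(\Gamma)}$, and by Hölder's inequality $\mathcal{K}Q(x_0)$ is finite for $Q\in L^p(\Gamma)$ as soon as $(n-1-\lambda)p'<n-1$, i.e. $p>\frac{n-1}{\lambda}$.

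For the one-sided limits \eqref{eq:derpot} and the jump formula I would write, for small $t$ and $x=x_0+t\nu(x_0)$, $\nabla\mathcal{S}Q(x)\cdot\nu(x_0)=\int_\Gamma K_t(x_0,y)Q(y)\,\mathrm{d}\mathcal{H}^{n-1}(y)$ with $K_t(x_0,y):=\nabla F(x-y)\cdot\nu(x_0)$, and show that as $t\to0^\pm$ the family $K_t(x_0,\cdot)$ decomposes as $\mp\frac12$ times an approximate identity on $\Gamma$ concentrating at $x_0$, plus a remainder converging to $\partial_\nu F(x_0-\cdot)$. The constant $\mp\frac12$ is dictated by the flat model: flattening $\Gamma$ near $x_0$ to $\{z_n=0\}$ gives $\int_{\{z_n=0\}}\nabla F(te_n-z')\cdot e_n\,\mathrm{d}z'=\mp\frac12$ for every $t\gtrless 0$, and the $C^{1,\lambda}$-deviation from flatness is absorbed by the Lyapunov estimates above (it turns the corresponding error kernels into uniformly integrable ones on which dominated convergence applies). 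I would first carry this out for $Q\in C^0(\Gamma)$, where the classical jump relations of the double-layer potential transported through the $C^{1,\lambda}$-chart yield the formula \emph{everywhere} with uniform convergence. To pass to general $Q\in L^p(\Gamma)$ (in particular $p=1$), I would bound the maximal operator $x_0\mapsto\sup_{0<\abs t<\delta}\abs{\nabla\mathcal{S}Q(x_0+t\nu(x_0))}$ by $C\,M_\Gamma Q(x_0)+C\int_\Gamma\abs{x_0-y}^{-(n-1-\lambda)}\abs{Q(y)}\,\mathrm{d}\mathcal{H}^{n-1}(y)$ (a Hardy–Littlewood maximal function on $\Gamma$ plus the weakly singular part), which is of weak type $(1,1)$ and strong type $(p,p)$; a density argument then upgrades convergence to $\mathcal{H}^{n-1}$-a.e., the term $\mp\frac12 Q(x_0)$ emerging precisely at Lebesgue points of $Q$ from the approximate-identity part.

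The Hölder statement for $\mathcal{K}$ is the standard weakly-singular-operator estimate with kernel bound $\abs{\partial_\nu F(x_0-y)}\lesssim\abs{x_0-y}^{-(n-1-\lambda)}$ together with the companion difference bound $\abs{\partial_\nu F(x_0-y)-\partial_\nu F(x_1-y)}\lesssim\abs{x_0-x_1}^{\gamma}\abs{x_0-y}^{-(n-1-\lambda+\gamma)}$ for $\abs{x_0-y}\ge2\abs{x_0-x_1}$, itself a consequence of the $C^{1,\lambda}$-regularity (controlling both the change of $\nu$ and of the secant angle $(x_i-y)\cdot\nu(x_i)$). Splitting $\Gamma$ into $B_{2d}(x_0)$ and its complement with $d=\abs{x_0-x_1}$ and applying Hölder's inequality, the near part contributes $d^{(n-1)/p'-(n-1-\lambda)}=d^{\lambda-(n-1)/p}$ and the far part the same power; this is where $p>\frac{n-1}{\lambda}$ is exactly the threshold for the dual exponent $p'$ to keep $\abs{x_0-\cdot}^{-(n-1-\lambda)p'}$ locally integrable on $\Gamma$, and one concludes $\mathcal{K}Q\in C^{0,\mu}(\Gamma)$ for every $\mu<\lambda-\frac{n-1}{p}$. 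The remaining assertions are then corollaries: taking $p=\infty$ (legitimate once $Q\in L^\infty$) and combining with the jump formula gives $\partial_\nu^\pm\mathcal{S}Q=\mp\frac12 Q+\mathcal{K}Q\in L^\infty(\Gamma)$, and for $Q\in C^0(\Gamma)$ every point is a Lebesgue point, so the formula holds everywhere.

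The part I expect to be the main obstacle is the middle step: rigorously transporting the flat-space computation of the $\mp\frac12$ jump to the curved $C^{1,\lambda}$ setting and, above all, justifying the $\mathcal{H}^{n-1}$-a.e. convergence of the normal-direction limit for a merely $L^1$ (or $L^p$) density, which forces one through maximal-function and Lebesgue-point arguments intrinsic to $\Gamma$ rather than to $\mathbb{R}^{n-1}$; the kernel estimates and the Hölder bookkeeping, by contrast, are routine once the Lyapunov estimates are in hand.
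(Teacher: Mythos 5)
The paper does not prove this proposition: it is stated as a direct citation of \cite[Theorem 14.IV]{Miranda}, as part of a background discussion in Section \ref{sec:35} that the authors explicitly do not rely on for the main results (their stated approach deliberately avoids potential theory). There is therefore no in-paper proof to compare your sketch against. That said, your outline reproduces the standard Lyapunov-estimate argument behind Miranda's theorem, and it is sound as a sketch: the Lyapunov bound $\abs{(x_0-y)\cdot\nu(x_0)}\le C\abs{x_0-y}^{1+\lambda}$ does make the adjoint double-layer kernel $\partial_{\nu(x_0)}F(x_0-y)$ weakly singular of order $n-1-\lambda$ on the $(n-1)$-dimensional $\Gamma$, so $\mathcal{K}Q$ is an absolutely convergent integral whenever $(n-1-\lambda)p'<n-1$, i.e.\ $p>\tfrac{n-1}{\lambda}$; the flat-model value $\int_{\mathbb{R}^{n-1}}\nabla F(te_n-z')\cdot e_n\,\mathrm{d}z'=-\tfrac12\,\mathrm{sgn}(t)$ is correctly normalized for $F$ the fundamental solution of $-\Delta$ and $\nu$ the outer normal; and the Hölder exponent $\mu<\lambda-\tfrac{n-1}{p}$ falls out of the near/far power counting, with the far-field rate governed by the $C^{0,\lambda}$-modulus of $\nu$. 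You also correctly flag the a.e.\ convergence of the nontangential normal limit for merely $L^p$ densities as the substantive step. One point you take for granted and should state explicitly: the weak-$(1,1)$ and strong-$(p,p)$ bounds you invoke for the surface maximal operator rest on $\mathcal{H}^{n-1}\mres\Gamma$ being a doubling measure, which holds because a compact $C^{1,\lambda}$-hypersurface is Ahlfors $(n-1)$-regular, but this is not automatic and deserves a sentence.
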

 This shows that $Q \in L^\infty(\Gamma)$ implies bounded normal derivatives on $\Gamma$ in the sense of \eqref{eq:derpot}.

 %We remark that having boundary values in the sense of \eqref{eq:derpot} is not useful to infer any . 
 %Having bounded normal derivatives in the sense of \eqref{eq:derpot} is however not enough to conclude anything about the regularity of $\mathcal{S}Q$ in the interior -- despite the maximum principle. 
 
% This becomes visible when looking at $n=2$, $T=B_1(0)$ and the \emph{Poisson kernel}
% \begin{equation}
%     P(x) := \frac{1 - |x|^2}{|x-(1,0)^T|^2}.
% \end{equation}
% Observe that $P$ is 
% \begin{equation}
%     \lim_{t \rightarrow 0-} P(x + t \nu(x)) = 0 \quad \textrm{for} \; \mathcal{H}^{n-1} \; a.e. \; x \in \Gamma = \partial B_1(0),
%\end{equation}
% but $P \not \in L^p(\Omega)$ for any $p > 1$
 If $Q$ is somewhat more regular we can also bound non-normal derivatives on $\Gamma$ and hence the whole gradient. Indeed, one can show 
\begin{prop}[{\cite[Theorem 14.VII]{Miranda}}]
If $T \subset \subset \Omega$ is a $C^{1,\lambda}$-domain and $Q \in C^{0, \lambda}$ then $ \nabla \mathcal{S}Q \in C^{0,\lambda}(\overline{T}) \cap C^{0,\lambda} ( \overline{\Omega \setminus T})$. In particular $\mathcal{S}Q \in W^{1,\infty}(\Omega)$.
\end{prop}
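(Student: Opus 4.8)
The plan is to reduce the statement to the classical potential-theoretic estimates for single-layer potentials over $C^{1,\lambda}$-domains, localizing the problem so that one works with a flat piece of boundary and a Hölder-continuous density. First I would fix a boundary chart: since $\partial T \in C^{1,\lambda}$, for each $x_0 \in \Gamma$ there is a neighborhood $U$ and a $C^{1,\lambda}$-diffeomorphism flattening $\Gamma \cap U$; pulling back, the kernel $\nabla F(x-y)$ acquires the usual structure of a singular (or weakly singular, for the tangential components after subtracting the jump) kernel of order $n-1$ on $\mathbb{R}^{n-1}$. The key is that $Q \in C^{0,\lambda}(\Gamma)$ allows the commutator trick: write
\begin{equation}
\int_\Gamma \partial_i F(x-y)\,Q(y)\,\mathrm{d}\mathcal{H}^{n-1}(y) = Q(x_0)\int_\Gamma \partial_i F(x-y)\,\mathrm{d}\mathcal{H}^{n-1}(y) + \int_\Gamma \partial_i F(x-y)\,(Q(y)-Q(x_0))\,\mathrm{d}\mathcal{H}^{n-1}(y),
\end{equation}
so that the second integrand is bounded by $C\,|x-y|^{-(n-1)+\lambda}$, which is locally integrable over $\Gamma$ uniformly in $x$, giving a $C^{0,\lambda}$ bound for that term; the first term is $Q(x_0)$ times a pure geometric potential whose one-sided limits (and Hölder regularity of the limits, relative to the boundary) are governed by the $C^{1,\lambda}$ regularity of $\Gamma$ and the jump relations (the analogue of Proposition \ref{prop:Mira1} for all components, not just the normal one).

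The key steps in order: (1) localize via $C^{1,\lambda}$ charts and a partition of unity subordinate to a finite cover of the compact $\Gamma = \partial T$; (2) on each chart, split $\nabla \mathcal{S}Q$ into the frozen-coefficient piece $Q(x_0)\nabla \mathcal{S}1$ plus the commutator piece; (3) show the commutator piece extends to a function in $C^{0,\lambda}(\overline{T})$ and in $C^{0,\lambda}(\overline{\Omega \setminus T})$ by the weakly-singular kernel estimate above, uniformly as $x \to \Gamma$ from either side; (4) handle the geometric piece $\nabla \mathcal{S}1$: its normal component has the jump $\mp \tfrac12$ and a $C^{0,\lambda}$ tangential part by Proposition \ref{prop:Mira1} (applied with $Q \equiv 1$, which is certainly in $C^{0,\lambda}$ with $\lambda < \lambda$ as required), while its tangential components are, up to the curvature terms that are $C^{0,\lambda-1+\text{(something positive)}}$ — more carefully, the tangential derivatives of $\mathcal{S}1$ along $\Gamma$ are continuous up to $\Gamma$ from each side because differentiating a smooth tangential direction does not produce a jump; (5) assemble: on $\overline{T}$ one gets $\nabla \mathcal{S}Q$ as a sum of $C^{0,\lambda}$ functions, hence $\nabla \mathcal{S}Q \in C^{0,\lambda}(\overline{T})$, and symmetrically on $\overline{\Omega \setminus T}$; (6) conclude $\mathcal{S}Q \in W^{1,\infty}(\Omega)$, since $\mathcal{S}Q$ is continuous across $\Gamma$ (only the normal derivative jumps, the function itself does not) and its gradient is bounded on $T$ and on $\Omega \setminus T$ separately, so the distributional gradient on all of $\Omega$ is the $L^\infty$ function patched from the two pieces (the jump in $\partial_\nu$ is exactly what produces the surface measure $Q\,\mathcal{H}^{n-1}\mres\Gamma$ as $-\Delta \mathcal{S}Q$ and contributes nothing singular to $\nabla \mathcal{S}Q$ itself).

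The main obstacle I expect is step (4), controlling the tangential derivatives of the geometric potential $\mathcal{S}1$ (and more generally the off-normal components of $\nabla \mathcal{S}Q$) up to the boundary. The normal derivative is exactly what Proposition \ref{prop:Mira1} delivers, but for the full gradient one must see that differentiating along directions tangent to $\Gamma$ commutes reasonably with the layer potential and does not create an unbounded contribution; this is where the $C^{1,\lambda}$ regularity of $\Gamma$ (not merely $C^1$) is essential, because it makes the difference $\nu(y) - \nu(x_0)$ of size $|x_0 - y|^\lambda$, which is precisely what tames the would-be $|x-y|^{-(n-1)}$ singularity in the tangential components into a weakly singular, Hölder-continuous one. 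Concretely, one writes the tangential derivative $\partial_{\tau}\mathcal{S}Q(x)$ for $x$ near $\Gamma$ and integrates by parts on $\Gamma$, transferring the tangential derivative onto $Q$ and onto the (Hölder-continuous) metric and normal data, reducing everything to single-layer potentials of $C^{0,\lambda-\varepsilon}$ densities, to which step (3)-type estimates apply. Once this tangential estimate is in place, the remaining bookkeeping — uniform bounds on the finitely many charts, continuity of $\mathcal{S}Q$ across $\Gamma$, and the patching of the gradient — is routine. I would present the proof by invoking Proposition \ref{prop:Mira1} for the normal component and the commutator/weakly-singular-kernel estimate for the rest, since a fully self-contained treatment of the tangential part essentially reproduces \cite[Section 14]{Miranda}.
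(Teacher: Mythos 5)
The paper does not give a proof of this proposition; it is imported verbatim as \cite[Theorem 14.VII]{Miranda}, so there is no ``paper's own proof'' to compare against. Your sketch is a sound reconstruction of the classical single-layer-potential argument from Miranda's Section 14 --- localization in $C^{1,\lambda}$ charts, the freezing/commutator split $Q = Q(x_0) + (Q - Q(x_0))$, weakly-singular kernel estimates for the commutator, jump relations and $C^{1,\lambda}$-geometry for the tangential pieces, and patching across $\Gamma$ --- and you correctly acknowledge that a fully detailed version would just reproduce that reference. Two small caveats worth flagging: the phrase ``with $\lambda < \lambda$ as required'' is a slip, and if you route the frozen-coefficient normal derivative through Proposition~\ref{prop:Mira1} you only get $C^{0,\mu}$ up to the boundary for every $\mu < \lambda$ (since that proposition states $\mu < \lambda - (n-1)/p$ with $p = \infty$), not the sharp $C^{0,\lambda}$ in the statement; Miranda's theorem itself achieves the sharp exponent by a finer estimate, but the loss is immaterial for the conclusion $\mathcal{S}Q \in W^{1,\infty}(\Omega)$, which is the only part the paper actually uses.
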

While this is a positive result on our original question it is difficult to weaken the assumptions with potential theoretic methods. The reason for that is that layer potentials do not necessarily behave well with approximations and less regular domains. Another reason is that the sense of \eqref{eq:derpot} is a very weak sense of assuming boundary values and not compatible with modern notions of (BV/Sobolev)-traces. 

There are potential theoretic methods that allow for better control of boundary values, a common notion being the \emph{nontangential maximal function}. Using these methods one can indeed study the behavior of $\mathcal{S}Q$ on Lipschitz domains via approximation (cf. eg. \cite{Verchota}). The price one usually pays for this approximation is however the restriction to $W^{1,p}$ for $p \in (1,\infty)$. The reason for that is that crucial a-priori estimates like \cite[Lemma 1.3]{Verchota} have no trivial generalization to $p = \infty$.

 An observation which is highly relevant for the article is the \emph{normal jump} of size $Q$ on the boundary $\Gamma = \partial T$, i.e.
\begin{equation}\label{eq:normaljump}
    \partial_\nu^+ \mathcal{S}Q - \partial_\nu^- \mathcal{S}Q = - Q  \quad \textrm{on} \; \Gamma. 
\end{equation}
We will obtain this jump later independently with modern techniques from PDE and geometric measure theory.

\section{Proof of Main Theorem \ref{thm:lipreg}}\label{sec:4}

This section is devoted to the proof of our first main theorem. We will first obtain the desired Lipschitz regularity for smooth data $\Gamma$ and $Q$ by constructing \emph{almost-solutions} with the aid of the comparison function $x \mapsto \mathrm{dist}(x, \Gamma)$. Once Lipschitz regularity of solutions with smooth data is shown, we can obtain \emph{a-priori estimates} of the Lipschitz norm in terms of $\Gamma$ and $Q$. To find sharp a-priori estimates we perform \emph{blow-up procedures} and study the behavior of solutions around points in $\Gamma$. Finally we can argue by approximation to show the claim also for less regular data.  
\subsection{Regularity for smooth initial data}

First we show the result for surfaces of the form $\Gamma = \partial \Omega'$, where $\Omega'\subset \subset  \Omega$ is an open, bounded set with $C^2$-boundary.  The advantage of this additional smoothness is that one can 
work with the  \emph{signed distance function} $d_{\Omega'}$, see Appendix \ref{app:sign} for the precise definition and basic properties.
We will observe that the absolute value of the distance function $|d_{\Omega'}|$ already solves a similar problem than the one we intend to solve.
Thankfully $|d_{\Omega'}| \in W^{1,\infty}(\Omega)$, which is an important step towards our desired regularity. 
%For less regular domains we will argue by approximation -- but to do so we will need a priori estimates. These will be obtained again using the special structure of $|d_{\Omega'}|$ and a \emph{blow-up procedure}. 

If we write in the sequel $\Gamma = \partial \Omega' \in C^k$ we mean that $\Omega' \subset \subset \Omega$ is a domain with $C^k$-smooth boundary $\Gamma$.  
The notation $Q \in W^{2,p} ( \Omega)$ shall indicate that $Q \in L^1(\Gamma)$ can be extended to a $W^{2,p}$-function on $\Omega$.

\begin{lemma}\label{lem:regudist}
Suppose that $Q \in W^{2,p}(\Omega)$, $n < p \leq \infty$ and $\Gamma = \partial \Omega' \in C^2$. Let $\epsilon > 0$ be such that $\Gamma_\epsilon := \{ x \in \Omega : \mathrm{dist}(x, \Gamma) < \epsilon \}$ is a $C^2$-domain and $d = d_{\Omega'} \in C^2(\Gamma_\epsilon)$. Then $|d| \in W^{1,2}(\Gamma_\epsilon)$ and for all $\phi \in C_0^\infty( \Gamma_\epsilon)$ one has 
\begin{align}\label{eq:disttest}
\int_{\Gamma_\epsilon} \nabla  \left( \frac{1}{2} Q |d| \right) \nabla \phi \dx & = -\int_\Gamma \phi Q d\mathcal{H}^1 + \int_{\Gamma_\epsilon} \left( \Delta \frac{Q}{2} d \right) \chi  \phi \; \mathrm{d}x ,
\end{align}
where $\chi = \mathbf{1}_{\Omega'} - \mathbf{1}_{\Omega \setminus \Omega'}$. 
\end{lemma}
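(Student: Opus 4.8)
The plan is to build everything from two elementary properties of the signed distance function $d := d_{\Omega'}$, which by the choice of $\epsilon$ is of class $C^2$ on $\Gamma_\epsilon$. First, $|\nabla d| \equiv 1$ on $\Gamma_\epsilon$, so $|d|$ is $1$-Lipschitz there; since $\Gamma_\epsilon$ is bounded this already gives $|d| \in W^{1,\infty}(\Gamma_\epsilon) \subset W^{1,2}(\Gamma_\epsilon)$, and the chain rule for the Lipschitz map $t \mapsto |t|$ yields $\nabla |d| = \mathrm{sign}(d)\,\nabla d = -\chi\,\nabla d$ a.e. (using, as in Appendix~\ref{app:sign}, the convention $d < 0$ in $\Omega'$, under which $\mathrm{sign}(d) = -\chi$ a.e. on $\Gamma_\epsilon$). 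Second, $Q \in W^{2,p}(\Omega)$ with $p > n$ embeds into $C^1(\overline{\Omega})$, so $Q$ and $\nabla Q$ are bounded; hence $w := \tfrac12 Q|d|$ lies in $W^{1,\infty}(\Gamma_\epsilon) \subset W^{1,2}(\Gamma_\epsilon)$ with $\nabla w = \tfrac12 |d|\,\nabla Q - \tfrac12 Q\chi\,\nabla d$ a.e.

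For the identity \eqref{eq:disttest} I would split $\Gamma_\epsilon$, up to the $\mathcal{L}^n$-null set $\Gamma$, into the two $C^2$-domains $\Gamma_\epsilon^{+} := \Gamma_\epsilon \cap \{d > 0\} = \Gamma_\epsilon \setminus \overline{\Omega'}$ and $\Gamma_\epsilon^{-} := \Gamma_\epsilon \cap \{d < 0\} = \Gamma_\epsilon \cap \Omega'$, whose common boundary portion interior to $\Gamma_\epsilon$ is $\Gamma$. On $\Gamma_\epsilon^{+}$ one has $w = \tfrac12 Q d$ and on $\Gamma_\epsilon^{-}$ one has $w = -\tfrac12 Q d$; in either case $w$ is a product of a $W^{2,p}$-function and a $C^2$-function, hence is $W^{2,p}$ and in particular $C^1$ up to $\Gamma$ (here $p > n$ is convenient), so $\nabla w$ has pointwise boundary values on $\Gamma$ from each side. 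Given $\phi \in C_0^\infty(\Gamma_\epsilon)$, the Gauss--Green formula on $\Gamma_\epsilon^{\pm}$ applies, and the parts of $\partial\Gamma_\epsilon^{\pm}$ lying on $\{|d| = \epsilon\}$ contribute nothing because $\phi$ is compactly supported in $\Gamma_\epsilon$. Writing $\nu := \nabla d|_\Gamma$ for the outer unit normal of $\Omega'$, the outward unit normal of $\Gamma_\epsilon^{+}$ along $\Gamma$ is $-\nu$ and that of $\Gamma_\epsilon^{-}$ along $\Gamma$ is $+\nu$, so
\begin{align}
\int_{\Gamma_\epsilon^{+}} \nabla w \cdot \nabla \phi \dx &= - \int_{\Gamma_\epsilon^{+}} \phi\, \Delta w \dx - \int_\Gamma \phi\, \big(\nabla w|_{+}\cdot \nu\big)\; \mathrm{d}\mathcal{H}^{n-1}, \\
\int_{\Gamma_\epsilon^{-}} \nabla w \cdot \nabla \phi \dx &= - \int_{\Gamma_\epsilon^{-}} \phi\, \Delta w \dx + \int_\Gamma \phi\, \big(\nabla w|_{-}\cdot \nu\big)\; \mathrm{d}\mathcal{H}^{n-1}.
\end{align}
Since $d = 0$ on $\Gamma$, the trace of $\nabla w$ on $\Gamma$ from the $+$ side equals $\tfrac12 Q\,\nabla d = \tfrac12 Q\nu$ and from the $-$ side equals $-\tfrac12 Q\nu$, so both surface integrals equal $-\tfrac12 \int_\Gamma Q\phi \; \mathrm{d}\mathcal{H}^{n-1}$.

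Adding the two identities then produces the surface term $-\int_\Gamma Q\phi\;\mathrm{d}\mathcal{H}^{n-1}$ of \eqref{eq:disttest}. For the volume term, note that $\Delta w = \Delta\bigl(\tfrac{Q}{2}d\bigr)$ on $\Gamma_\epsilon^{+}$, where $\chi \equiv -1$, while $\Delta w = -\Delta\bigl(\tfrac{Q}{2}d\bigr)$ on $\Gamma_\epsilon^{-}$, where $\chi \equiv +1$; hence $-\int_{\Gamma_\epsilon^{+}}\phi\,\Delta w \dx - \int_{\Gamma_\epsilon^{-}}\phi\,\Delta w\dx = \int_{\Gamma_\epsilon} \bigl(\Delta\tfrac{Q}{2}d\bigr)\chi\,\phi\dx$, which is exactly the volume term of \eqref{eq:disttest}. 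The only genuinely delicate step is the application of the Gauss--Green formula on the half-neighbourhoods $\Gamma_\epsilon^{\pm}$ together with the identification of the boundary integrand: one needs $\nabla w$ to admit boundary traces that may be evaluated pointwise on $\Gamma$, which is precisely why the hypothesis $p > n$ (rather than just $p \ge 1$) is comfortable, as it places $w$ into $W^{2,p} \hookrightarrow C^1$ on each side. Everything else is bookkeeping of the two outer normals and of the sign of $\chi$ on the two sides of $\Gamma$; conceptually the computation simply records that near $\Gamma$ one has, distributionally, $\Delta|d| = 2\,\mathcal{H}^{n-1}\mres\Gamma - \chi\,\Delta d$.
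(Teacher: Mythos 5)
Your proof is correct and follows essentially the same route as the paper's: split $\Gamma_\epsilon$ along $\Gamma$ into the two one-sided neighbourhoods $\Gamma_\epsilon^\pm$, replace $|d|$ by $\pm d$ there, apply the Gauss--Green formula on each piece, and use $d=0$ on $\Gamma$ together with $\nabla d\vert_\Gamma = \nu_{\Omega'}$ and the opposite outer normals $\nu_{\Gamma_\epsilon^\pm} = \mp\nu_{\Omega'}$ to collect the boundary and bulk terms. The only cosmetic difference is that you deduce $|d|\in W^{1,2}(\Gamma_\epsilon)$ from its Lipschitz continuity rather than citing a chain-rule result, and you track the sign conventions somewhat more explicitly.
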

\begin{proof}
That $|d| \in W^{1,2}(\Gamma_\epsilon)$ follows from $d \in W^{1,2}(\Gamma_\epsilon)$ and \cite[Theorem 4.4]{EvGar}. Next we divide $\Gamma_\epsilon = \Gamma_\epsilon^+ \cup \Gamma_{\epsilon}^- \cup \Gamma$, where $\Gamma_\epsilon^+:= \Gamma_\epsilon \cap \Omega \setminus \overline{\Omega'} $ and $\Gamma_\epsilon^{-} := \Gamma_\epsilon  \cap \Omega'$. Note that $d> 0$ on $\Gamma_\epsilon^+$ and $d< 0$ on $\Gamma_\epsilon^-$ and thus for $\phi \in C_0^\infty(\Gamma_\epsilon)$ one has  
\begin{align}
\int_{\Gamma_\epsilon} \nabla \left( \frac{1}{2} Q |d| \right) \nabla \phi \dx  & = \int_{\Gamma_{\epsilon}^+}   \nabla \left( \frac{1}{2} Q d \right) \nabla \phi  \dx - \int_{\Gamma_{\epsilon}^-}   \nabla \left( \frac{1}{2} Q d \right) \nabla \phi  \dx 
\\ & = \int_{\partial \Gamma_\epsilon^+}  \phi  \nabla \left( \frac{1}{2} Q d \right)  \nu_{\Gamma_\epsilon^+} \; \mathrm{d} \mathcal{H}^{n-1}  - \int_{\partial \Gamma_\epsilon^+} \Delta \left( \frac{Q}{2} d  \right) \phi \dx 
\\ \quad &  - \int_{\partial \Gamma_\epsilon^-} \phi \nabla \left( \frac{1}{2} Q d \right)  \nu_{\Gamma_\epsilon^-} \; \mathrm{d} \mathcal{H}^{n-1}  + \int_{\partial \Gamma_\epsilon^-} \Delta \left( \frac{Q}{2} d  \right) \phi \dx 
\\ & = \int_\Gamma \phi \nabla \left( \frac{1}{2} Q d \right)  ( \nu_{\Gamma_\epsilon^+} - \nu_{\Gamma_{\epsilon}^-} ) \; \mathrm{d}\mathcal{H}^{n-1}  + \int_{\Gamma_\epsilon} \Delta \left( \frac{Q}{2} d  \right)  \chi \phi \dx . 
\end{align}
Notice that on $\Gamma$ one has $\nu_{\Gamma_\epsilon^+} = - \nu_{\Gamma_\epsilon^-} = -\nu_{\Omega'}$ and hence 
\begin{equation}
\int_{\Gamma_\epsilon} \nabla \left( \frac{1}{2} Q |d| \right) \nabla \phi \dx  = -\int_\Gamma \phi \nabla (Qd) \nu_{\Omega'} \; \mathrm{d}\mathcal{H}^{n-1} + \int_{\Gamma_\epsilon} \Delta \left( \frac{Q}{2} d  \right)  \chi \phi \dx.
\end{equation}
Now on $\Gamma$ one has $d = d_{\Omega'} =0 $ and thus by Lemma \ref{lem:signdist}
$\nabla (Qd) = (\nabla Q) d + Q \nabla d = Q \nu_{\Omega'}.$ Here we have used that $Q \in C^1(\overline{\Omega})$ by Sobolev embedding. We infer 
\begin{equation}
\int_{\Gamma_\epsilon} \nabla \left( \frac{1}{2} Q |d| \right) \nabla \phi \dx   = - \int_\Gamma \phi Q \; \mathrm{d}\mathcal{H}^{n-1} + \int_{\Gamma_\epsilon} \Delta \left( \frac{Q}{2} d  \right)  \chi \phi \dx.
\end{equation} 
\end{proof}

\begin{cor}
Suppose that $Q \in W^{2,p}(\Omega)$, $n < p \leq \infty$ and $\Gamma = \partial \Omega' \in C^2$. Let $\epsilon > 0$ be such that for all $\epsilon' \leq \epsilon$ the set $\Gamma_{\epsilon'} := \{ x \in \Omega : \mathrm{dist}(x, \Gamma) < \epsilon' \}$ is a $C^2$-domain and $d = d_{\Omega'} \in C^2(\Gamma_\epsilon)$. Then the solution $v$ of \eqref{eq:measdir} satisfies $v \in C^{0,1}(\overline{\Omega})$.  
\end{cor}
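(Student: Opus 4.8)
The plan is to subtract from $v$ an explicit Lipschitz corrector built from the (absolute value of the) signed distance function, so that the remaining right-hand side is an $L^p$-function for which classical elliptic theory applies. Fix a cutoff $\eta \in C_0^\infty(\Gamma_\epsilon)$ with $\eta \equiv 1$ on $\Gamma_{\epsilon/2}$ and define
\begin{equation}
w := -\tfrac{1}{2}\, \eta\, Q\, |d|,
\end{equation}
extended by zero to all of $\Omega$. Since $\Gamma_\epsilon \subset\subset \Omega$, the function $w$ has compact support in $\Omega$, and $w \in C^{0,1}(\overline{\Omega}) = W^{1,\infty}(\Omega)$, because $\eta$ is smooth, $|d| = |d_{\Omega'}|$ is $1$-Lipschitz, and $Q \in W^{2,p}(\Omega) \hookrightarrow C^1(\overline{\Omega})$ (Sobolev embedding, $p > n$).

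The first step is to identify $-\Delta w$ in the weak sense. On $\Gamma_\epsilon \setminus \Gamma$ one has $|d| = \pm d \in C^2$, so there $w$ is a product of a smooth cutoff, a $C^2$ function and a $W^{2,p}$ function; hence $w \in W^{2,p}_{loc}(\Gamma_\epsilon \setminus \Gamma)$ and $\Delta w \in L^p$ away from $\Gamma$, while the only singular part of $\Delta w$ is supported on $\Gamma$ and is the one computed in Lemma \ref{lem:regudist}. Combining the cutoff with Lemma \ref{lem:regudist} --- whose integration-by-parts identity holds by the same computation for any $C^2$ test function that vanishes near $\partial\Gamma_\epsilon$, in particular for $\eta\phi$ --- one obtains that for every $\phi \in C^2(\overline{\Omega})$ with $\phi|_{\partial\Omega} = 0$,
\begin{equation}
-\int_\Omega w\, \Delta \phi \dx \;=\; \int_\Gamma Q\, \phi \; \mathrm{d}\mathcal{H}^{n-1} + \int_\Omega f\, \phi \dx,
\end{equation}
where $f \in L^p(\Omega)$ is supported in $\Gamma_\epsilon$ and gathers the term $-\chi\, \Delta(\tfrac{Q}{2}d)$ of Lemma \ref{lem:regudist} together with the lower-order terms produced by $\nabla\eta$ and $\Delta\eta$ in the region where $w$ is $W^{2,p}$. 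That is, $w$ is a weak solution of $-\Delta w = Q\, \mathcal{H}^{n-1}\mres\Gamma + f$ with zero Dirichlet data.

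Now set $u := v - w$. Since $w$ vanishes near $\partial\Omega$, the function $u \in L^1(\Omega)$ is a weak solution of $-\Delta u = -f$ with homogeneous Dirichlet data, and $-f \in L^p(\Omega)$ with $p > n$. By uniqueness of $L^1$-weak solutions and classical $L^p$-regularity on the smooth domain $\Omega$, $u \in W^{2,p}(\Omega) \cap W_0^{1,p}(\Omega)$, hence $u \in W^{1,\infty}(\Omega) = C^{0,1}(\overline{\Omega})$ by Sobolev embedding. Therefore $v = u + w \in C^{0,1}(\overline{\Omega})$, which is the assertion.

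The step I expect to be the main obstacle is the first one: Lemma \ref{lem:regudist} is formulated only for test functions compactly supported in $\Gamma_\epsilon$, so one has to combine it honestly with the cutoff $\eta$, keep track of all cross terms involving $\nabla\eta$ and $\Delta\eta$ (these are harmless because they are supported where $w \in W^{2,p}$), and enlarge the admissible class of test functions from $C_0^\infty(\Gamma_\epsilon)$ to $C^2(\overline{\Omega})$ functions vanishing on $\partial\Omega$ --- which is exactly what is needed to match the notion of weak solution used here. The remaining steps are routine.
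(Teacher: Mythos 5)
Your proof is correct and rests on the same key ingredient as the paper's, namely Lemma \ref{lem:regudist} and the Lipschitz corrector built from $Q|d|$. The one organizational difference is that you globalize the corrector with a cutoff $\eta$ and then apply boundary elliptic regularity once on $\Omega$ to get $v-w\in W^{2,p}(\Omega)$, whereas the paper keeps the corrector $\frac{1}{2}Q|d|$ uncut, proves interior $W^{2,p}_{loc}(\Gamma_\epsilon)$ regularity of $v+\frac{1}{2}Q|d|$, separately handles the region away from $\Gamma$ by harmonicity and Schauder estimates near $\partial\Omega$, and glues the two. Your version avoids the gluing step at the cost of having to track the extra $L^p$ terms generated by $\nabla\eta$ and $\Delta\eta$; you identify these correctly (they live where $|d|$ is $C^2$, so they pose no obstruction), and the extension of the Lemma \ref{lem:regudist} identity to $\eta\phi$ is legitimate since the proof of that lemma only uses the divergence theorem on $\Gamma_\epsilon^\pm$ for test functions vanishing near $\partial\Gamma_\epsilon$. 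Both routes are sound.
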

\begin{proof}
We first show that $v \in C^{0,1}(\overline{\Gamma_{{\epsilon}/{2}}})$. To this end we look at $w := v + \frac{1}{2} Q  |d|$ which lies in $W^{1,2}(\Gamma_\epsilon)$ by Proposition \ref{prop:C0alpha} and satisfies for each $\phi \in C_0^\infty(\Gamma_\epsilon)$ by \eqref{eq:disttest}
\begin{equation}
\int_{\Gamma_\epsilon} \nabla w \nabla \phi = \int_{\Gamma_\epsilon} \left( \Delta \frac{Q}{2} d \right) \chi \phi \dx .
\end{equation}
Now note that $\Delta \left( \frac{Q}{2} d\right) \chi = \frac{\chi}{2}( d \Delta Q  + 2 \nabla Q \nabla d + Q \Delta d)$, which lies in $L^p(\Gamma_\epsilon)$.
We infer that $w \in W^{2,p}_{loc} ( \Gamma_\epsilon)$ for some $p \in (n, \infty)$ and hence $w \in C^1(\Gamma_\epsilon)$. Now $v = w - \frac{1}{2}Q |d| $ lies in $C^{0,1}$ since $|d(x)| = \mathrm{dist}(x,\Omega') + \mathrm{dist}(x, \Omega'^C)$ is Lipschitz continuous as sum of Lipschitz continuous functions and $Q, w \in C^1(\Gamma_\epsilon) \subset C^{0,1}(\overline{\Gamma_{\epsilon/2}})$.  The claimed Lipschitz continuity on $\overline{\Gamma_{\epsilon/2}}$ is shown. 
Since $v$ is harmonic on the $C^2$-domain $\Omega\setminus \Gamma_{\epsilon/4} $ and $v\vert_{\partial ( \Omega \setminus \Gamma_{\epsilon/4})}$ is smooth we infer by elliptic regularity that $v \in W^{2,n+1}(\Omega \setminus \Gamma_{\epsilon/4}) \subset C^1(\overline{\Omega\setminus \Gamma_{\epsilon/4}})$. We conclude that $v \in C^{0,1}(\overline{\Omega\setminus \Gamma_{\epsilon/4}})$. Together with the fact that $v \in C^{0,1}(\overline{\Gamma_{\epsilon/2}})$ we obtain that $v \in C^{0,1}(\overline{\Omega})$. 
\end{proof}

With our additional smoothness requirements  we can also achieve $BV$-regularity. 

\begin{cor}[$BV(\Omega)$-regularity] \label{cor:BVreg}
Suppose that $\Gamma = \partial \Omega' \in C^2$ and $Q \in W^{2,p}(\Omega)$, $p > n$. Then the solution $v$ of \eqref{eq:measdir} satisfies $\nabla v \in BV( \Omega; \mathbb{R}^n)$. 
\end{cor}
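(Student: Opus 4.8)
The plan is to exploit the explicit decomposition $v = w - \frac{1}{2}Q|d|$ near $\Gamma$ already established in the proof of the previous corollary, together with the fact that $v$ is harmonic (hence smooth) away from a neighborhood of $\Gamma$. Since $BV$ and $W^{1,1}$ are local properties (one covers $\overline{\Omega}$ by finitely many charts and patches together via cutoffs, or simply notes $\Omega = (\Omega\setminus\overline{\Gamma_{\epsilon/4}}) \cup \Gamma_{\epsilon/2}$ with an intermediate overlap), it suffices to show $\nabla v \in BV$ on each piece and to control the contributions on the overlap. On $\Omega \setminus \overline{\Gamma_{\epsilon/4}}$ the function $v$ is harmonic with smooth boundary data, hence $v \in C^\infty$ there up to the boundary (or at least $W^{k,p}$ for all $k$ away from $\partial\Omega$ and $C^1$ up to $\partial\Omega$), so $\nabla v$ is certainly $BV$ there. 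The issue is therefore entirely concentrated in $\Gamma_{\epsilon/2}$.

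In $\Gamma_\epsilon$ we write $\nabla v = \nabla w - \frac{1}{2}\nabla(Q|d|)$. First I would argue that $\nabla w \in BV(\Gamma_{\epsilon/2};\mathbb{R}^n)$: indeed $w$ solves $-\Delta w = -\operatorname{div}(\ldots)$ — more precisely, from the proof above $w$ satisfies $\int \nabla w\nabla\phi = \int g\chi\phi\,dx$ with $g = \Delta(\frac{Q}{2}d) \in L^p(\Gamma_\epsilon)$, $p>n$; since the right-hand side is an $L^p$ (hence $L^\infty_{loc}$-dominated in the relevant sense, or at least $L^p$ with $p$ large) function, elliptic $W^{2,p}$-regularity gives $w \in W^{2,p}_{loc}(\Gamma_\epsilon)$, and $W^{2,p} \hookrightarrow W^{1,1} \hookrightarrow BV$ locally once we pass to a slightly smaller neighborhood. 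So $\nabla w$ has a $BV$ representative on $\overline{\Gamma_{\epsilon/2}}$ (after the usual shrinking argument and using smoothness of the data to get up to $\partial\Omega$ if $\Gamma_{\epsilon/2}$ touches it, which it does not since $\Gamma \subset\subset\Omega$, so this is harmless). The remaining and genuinely new point is that $\nabla(Q|d|) \in BV(\Gamma_\epsilon;\mathbb{R}^n)$. Here $Q \in W^{2,p}(\Omega) \subset C^1(\overline{\Omega})$, so $Q$ and $\nabla Q$ are bounded Lipschitz-type factors; and $d = d_{\Omega'} \in C^2(\Gamma_\epsilon)$ by hypothesis, so $|d| = \operatorname{dist}(\cdot,\Gamma)$ is Lipschitz with $\nabla|d| = \chi\,\nabla d$ a.e. (using $|\nabla d| = 1$), where $\chi = \mathbf{1}_{\Omega'} - \mathbf{1}_{\Omega\setminus\Omega'}$. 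Then $\nabla(Q|d|) = |d|\nabla Q + Q\chi\nabla d$. The term $|d|\nabla Q$ is $W^{1,1}$ (product of the Lipschitz function $|d|$ and the $W^{1,p}$ vector field $\nabla Q$, with the product rule valid since both factors are bounded and one has an $L^\infty$ gradient), hence $BV$. For $Q\chi\nabla d$: the factor $Q\nabla d \in C^1(\overline{\Gamma_\epsilon};\mathbb{R}^n)$ is smooth, while $\chi \in BV(\Gamma_\epsilon)$ because $\partial\{d<0\} \cap \Gamma_\epsilon = \Gamma$ has finite $\mathcal{H}^{n-1}$-measure (it is a $C^2$-hypersurface) — so $\chi = \mathbf{1}_{\Omega'} - \mathbf{1}_{\Omega''}$ is a $BV$ function with distributional derivative $D\chi = -2\nu_{\Omega'}\,\mathcal{H}^{n-1}\mres\Gamma$. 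The product of a $C^1$ vector field with a $BV$ function is again $BV$ (Leibniz rule for $BV$ times Lipschitz), so $Q\chi\nabla d \in BV(\Gamma_\epsilon;\mathbb{R}^n)$.

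Combining: on $\Gamma_{\epsilon/2}$, $\nabla v = \nabla w - \frac{1}{2}(|d|\nabla Q + Q\chi\nabla d)$ is a sum of $BV$ vector fields, hence $BV$; on $\Omega\setminus\overline{\Gamma_{\epsilon/4}}$ it is smooth, hence $BV$; and since these two open sets cover $\overline{\Omega}$ with the intermediate region $\Gamma_{\epsilon/2}\setminus\overline{\Gamma_{\epsilon/4}}$ lying in both (so no interface mass is created by the patching — one can glue via a partition of unity subordinate to the cover, each cutoff being smooth), we conclude $\nabla v \in BV(\Omega;\mathbb{R}^n)$. The only mildly delicate point — and the one I would write out carefully — is the Leibniz rule $C^1 \cdot BV \subset BV$ and the fact that $\chi$ is genuinely $BV$ on $\Gamma_\epsilon$, which both hinge on the $C^2$-regularity of $\Gamma$ ensuring $\mathcal{H}^{n-1}(\Gamma) < \infty$ with a locally finite perimeter representation; everything else is bookkeeping with the decomposition already in hand.
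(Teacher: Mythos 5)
Your proof is correct and follows essentially the same plan as the paper: decompose $v = \left(v + \tfrac{1}{2}Q|d|\right) - \tfrac{1}{2}Q|d|$ near $\Gamma$, get the first summand into $BV$ via the $W^{2,p}_{loc}$-regularity established in the previous corollary, show $\nabla(Q|d|) \in BV(\Gamma_\epsilon)$, observe that $v$ is smooth on $\Omega\setminus\overline{\Gamma_{\epsilon/4}}$, and glue. The only place where the execution genuinely diverges is the step establishing $\nabla(Q|d|) \in BV$: the paper bounds the total variation directly by integrating $\partial_i\!\left(\tfrac{Q}{2}|d|\right)\mathrm{div}(\phi)$ by parts over $\Gamma_\epsilon^{\pm}$ and reading off a bounded boundary term on $\Gamma$ plus an $L^p$ bulk term, whereas you expand $\nabla(Q|d|) = |d|\nabla Q + Q\nabla|d|$ and argue that $|d|\nabla Q \in W^{1,1}$ (Lipschitz times $W^{1,p}$, with $\nabla Q$ bounded by Sobolev embedding) and $Q\nabla|d| = \pm Q\chi\nabla d$ is a $C^1$ vector field times a finite-perimeter indicator difference, hence $BV$ by the Leibniz rule. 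These are the same computation in two packagings; your version has the modest advantage of identifying the singular part of $D\nabla(Q|d|)$ on $\Gamma$ explicitly, which is exactly the boundary term the paper's integration by parts produces. One small sign slip: with $\chi = \mathbf{1}_{\Omega'} - \mathbf{1}_{\Omega\setminus\Omega'}$ and $d = d_{\Omega'} < 0$ on $\Omega'$, one has $|d| = -\chi d$ and hence $\nabla|d| = -\chi\nabla d$ a.e., not $+\chi\nabla d$; this is immaterial for the conclusion, since only $|Q\nabla d| \in L^\infty$ and $|D\chi|(\Gamma_\epsilon) = 2\mathcal{H}^{n-1}(\Gamma) < \infty$ are used.
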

\begin{proof} Let $\epsilon > 0$ be as in the previous corollary.
We have seen in the proof of the previous corollary that $v + \frac{Q}{2} |d|$ lies in $W^{2,p}_{loc}(\Gamma_\epsilon)$ and hence $ \nabla \left( v + \frac{Q}{2} |d| \right) \in W^{1,p}_{loc} ( \Gamma_\epsilon) \subset  BV(\Gamma_{\epsilon/2})$. To show that $\nabla v \in BV(\Gamma_{\epsilon/2})$ it is hence sufficient to prove that $\nabla (\frac{Q}{2}|d|) \in BV(\Gamma_{\epsilon/2})$. As in Lemma \ref{lem:regudist} we decompose $\Gamma_\epsilon = \Gamma_\epsilon^+ \cup  \Gamma  \cup \Gamma_\epsilon^-$ and compute for $i \in \{ 1,...,n\}$ and arbitrary $\phi \in C^1_0(\Gamma_\epsilon; \mathbb{R}^n)$ such that $||\phi||_\infty \leq 1$ 
\begin{align}
\int_{\Gamma_\epsilon} \partial_i & \left(  \frac{Q}{2} |d| \right) \mathrm{div}( \phi) \dx  = \int_{\Gamma_{\epsilon}^+} \partial_i \left( \frac{Q}{2} d \right) \mathrm{div} ( \phi)  \dx - \int_{\Gamma_{\epsilon}^-} \partial_i \left( \frac{Q}{2} d \right) \mathrm{div} ( \phi)  \dx 
\\ & = 2 \int_{\Gamma}  \partial_i \left( \frac{Q}{2} d \right) \phi \mathrm{d}\mathcal{H}^{n-1} - \int_{\Gamma_\epsilon^+} \nabla \partial_i \left( \frac{Q}{2} d \right) \phi \dx  + \int_{\Gamma_\epsilon^-} \nabla  \partial_i \left( \frac{Q}{2} d \right) \phi \dx.
\end{align}
By the product rule one has $\frac{Q}{2} d \in W^{2,p}(\Gamma_\epsilon)$ and hence Sobolev embedding yields that $\left\Vert \partial_i (\frac{Q}{2}d)  \right\Vert_{L^\infty(\Gamma)} \leq C$ for some $C > 0$. Moreover
$\left\Vert\nabla \partial_i (\frac{Q}{2} d )   \right\Vert_{L^p(\Gamma_\epsilon)} \leq D$ for some $D > 0$. We infer that 
\begin{equation}
\int_{\Gamma_\epsilon} \partial_i \left( \frac{Q}{2} |d| \right) \mathrm{div}( \phi) \dx \leq 2 C \mathcal{H}^{n-1}( \Gamma)  +  D (|\Gamma_\epsilon^+|^\frac{1}{q} + |\Gamma_{\epsilon}^-|^\frac{1}{q}),
\end{equation}
where $q \in ( 1, \frac{n}{n-1} )$ is chosen such that $\frac{1}{p} + \frac{1}{q} =1$.
We have shown that $\nabla (\frac{Q}{2} |d|) \in BV( \Gamma_\epsilon)$ and thus also $\nabla v \in BV(\Gamma_{\epsilon/2})$. Since $v$ is harmonic on $\Omega \setminus \Gamma_{\epsilon/4}$ and $v\vert_{\partial ( \Omega \setminus \Gamma_{\frac{\epsilon}{4}} ) }$ is smooth we have that $v \in W^{2,2}(\Omega \setminus \Gamma_{\epsilon/4})$ and hence $\nabla v \in BV( \Omega \setminus \Gamma_{\epsilon/4})$. We infer that $\nabla v \in BV(\Omega)$ by the gluing property, cf. \cite[Remark 2.14]{Giusti}.  
\end{proof}

\subsection{Blow-up arguments}

We have now obtained Lipschitz-regularity for smooth domains and sufficiently smooth data $Q$. To pass to less regular settings we argue by approximation. To this end we discuss a-priori estimates that can be obtained by looking at the precise behavior of Lipschitz solutions on $\Gamma$. 
%The results in this subsection will impose less regularity requirements on $Q,\Gamma$ -- which in particular means that we can not take existence of a Lipschitz solution with those data for granted. 

\begin{lemma}[A Taylor expansion]
Suppose that $\Gamma = \partial \Omega' \in C^1$, $x_0  \in \Gamma$, $Q \in C^0(\Gamma)$ and $u \in C^{0,1}(\overline{\Omega})$ is a solution of \eqref{eq:measdir}. Then there exists a vector $\theta(x_0) \in \mathbb{R}^n$ such that in a neighborhood of $x_0$ one has
\begin{equation}\label{eq:Taylor}
u(x) = u(x_0) + \theta(x_0) \cdot (x-x_0) - \frac{1}{2} Q(x_0) |(x- x_0, \nu(x_0))| +o (|x-x_0|). 
\end{equation} 
\end{lemma}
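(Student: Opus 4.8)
The strategy is to localize near $x_0$ and reduce to the model situation analyzed in Lemma \ref{lem:regudist} and its first corollary. Since $\Gamma = \partial\Omega' \in C^1$, near $x_0$ the set $\Gamma$ is (after rotation) the graph of a $C^1$-function, and the signed distance $d = d_{\Omega'}$ is defined and Lipschitz on a ball $B_\rho(x_0)$. The plan is: first approximate $\Gamma$ by $C^2$-hypersurfaces $\Gamma_k = \partial\Omega'_k$ converging to $\Gamma$ near $x_0$, and $Q$ by smooth functions $Q_k \to Q$; denote by $u_k$ the corresponding solutions. From the first corollary we know each $w_k := u_k + \tfrac12 Q_k |d_{\Omega'_k}|$ is $C^1$ near $x_0$ (indeed locally $W^{2,p}$), so $w_k$ admits a genuine first-order Taylor expansion $w_k(x) = w_k(x_0) + \nabla w_k(x_0)\cdot(x-x_0) + o(|x-x_0|)$. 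Then I would pass to the limit, using that $u_k \to u$ (Proposition \ref{prop:C0alpha} plus the a-priori $L^\infty$-bounds of Lemma \ref{lem:310}) and that $|d_{\Omega'_k}| \to \dist(\cdot,\Gamma)\chi$-type expressions converge appropriately, to define $\theta(x_0)$ as the limit of $\nabla w_k(x_0)$.

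The cleaner route, which I would actually pursue, avoids approximation entirely and works directly with the $C^1$-surface: write, near $x_0$, $\Gamma$ as a graph and introduce the function $\delta(x) := |(x - x_0, \nu(x_0))|$, i.e. the distance to the tangent hyperplane at $x_0$ (not to $\Gamma$ itself). The point is that $\dist(x,\Gamma) = \delta(x) + o(|x-x_0|)$ as $x \to x_0$, since $\Gamma$ is tangent to its tangent plane at $x_0$ with a $C^1$ (hence $o(1)$-slope) correction. Now decompose $u = (u + \tfrac12 Q(x_0)\,\dist(\cdot,\Gamma)\,) - \tfrac12 Q(x_0)\,\dist(\cdot,\Gamma)$ — wait, this requires knowing the corrected function is $C^1$, which in the $C^1$-only setting is exactly what is not yet available. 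So I would instead argue: subtract from $u$ the model solution $\tfrac12 Q(x_0)|d|$ (signed distance to $\Gamma$) and show the remainder solves $-\Delta(\text{remainder})$ with a right-hand side that is a measure on $\Gamma$ of \emph{density} $Q - Q(x_0) \to 0$ at $x_0$, plus an $L^p$-term coming from $\Delta(\tfrac12 Q(x_0) d)$ which is bounded. A Morrey/Campanato-type estimate then gives that the remainder is differentiable at $x_0$ with the right error term, because the offending measure has density tending to zero at the base point. Concretely: on small balls $B_r(x_0)$, the $W^{1,1}(B_r)^*$-norm of $(Q - Q(x_0))\mathcal{H}^{n-1}\mres\Gamma$ restricted to $B_r$ is $o(r^{n-1})$ by the Lipschitz bound of $\Gamma$ combined with continuity of $Q$ at $x_0$; feeding this into Lemma \ref{lem:310}-type estimates (applied on shrinking balls, rescaled) yields oscillation decay faster than linear, hence the $o(|x-x_0|)$ expansion with $\theta(x_0) := \nabla(\text{remainder})(x_0) + \tfrac12 Q(x_0)\nabla d(x_0)$, and $\nabla d(x_0) = \nu(x_0)$ by Lemma \ref{lem:signdist}.

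The main obstacle is making the ``$|d| = \delta + o(|x-x_0|)$'' replacement rigorous at the level of the expansion: one must verify that substituting $\dist(x,\Gamma)$ — which is only Lipschitz, not $C^1$, across $\Gamma$ — by the linear-in-disguise quantity $|(x-x_0,\nu(x_0))|$ only introduces an $o(|x-x_0|)$ error, uniformly as $x$ ranges over a full neighborhood (not along a ray). This uses the $C^1$-regularity of $\Gamma$ crucially: the graph function $f$ representing $\Gamma$ near $x_0$ satisfies $\nabla f(x_0') = 0$ (after choosing coordinates aligned with $\nu(x_0)$), so $f(x') = o(|x'|)$, and a short computation bounds $|\dist(x,\Gamma) - \delta(x)|$ by $\sup_{|x'|\le C|x-x_0|}|f(x')| = o(|x-x_0|)$. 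The second delicate point is justifying that the corrected function is differentiable \emph{at the single point} $x_0$ even though it need not be $C^1$ in a neighborhood when $\Gamma$ is merely $C^1$; this is why I phrase the estimate as pointwise Campanato decay at $x_0$ rather than invoking interior $C^1$-regularity. Everything else — the blow-up rescaling $u_r(y) := \frac{1}{r}(u(x_0 + ry) - u(x_0))$, extraction of a subsequential limit which must be affine plus a multiple of $|(y,\nu(x_0))|$ by the structure of the limiting equation $-\Delta(\lim) = Q(x_0)\mathcal{H}^{n-1}\mres\{(y,\nu(x_0))=0\}$, and identification of the linear part with $\theta(x_0)$ — is routine blow-up bookkeeping along the lines of \cite[Section 4]{Alt}.
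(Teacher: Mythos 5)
Your proposal circles several routes but never commits to one, and the route you label ``cleaner'' has a genuine gap. You propose to subtract $\tfrac12 Q(x_0)\,|d_{\Omega'}|$ from $u$ and then argue via Campanato decay that the remainder is pointwise differentiable at $x_0$, using that $\Delta\bigl(\tfrac12 Q(x_0) d\bigr)$ is a bounded $L^p$-term and that $\nabla d(x_0)=\nu(x_0)$. Both facts are drawn from Lemma~\ref{lem:signdist}, which is proved only for $\partial\Omega'\in C^k$ with $k\geq 2$. When $\Gamma$ is merely $C^1$, the signed distance $d$ need not be $C^1$ (nearest-point projection need not be single-valued), and $\Delta d$ is in general not a locally bounded or even $L^p$ function, so neither the $L^p$ source term you want nor the identity $\nabla d(x_0)=\nu(x_0)$ is available. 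The difficulty you flag (``the corrected function need not be $C^1$ in a neighborhood when $\Gamma$ is merely $C^1$'') is real, but your proposed fix does not dispose of it because it still uses the second-order structure of $d$.

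The argument you relegate to the last paragraph as ``routine blow-up bookkeeping'' is in fact the paper's entire proof, and it is exactly what resolves the $C^1$ issue. The paper works with the rescalings $u_r(y)=\tfrac{1}{r}\bigl(u(x_0+ry)-u(x_0)\bigr)$, extracts a subsequential limit $\overline u$ by the compactness in Proposition~\ref{prop:C01approx}, and shows by an explicit change of variables that $\overline u$ solves $-\Delta\overline u=Q(x_0)\,\mathcal H^{n-1}\mres T$ with $T$ the tangent hyperplane at $x_0$. Because the limiting $\Gamma$ is flat, the model comparison function $\widetilde u(x)=-\tfrac12\lvert(x,\nu(x_0))\rvert$ is smooth off $T$ with Laplacian exactly $\mathcal H^{n-1}\mres T$, and Liouville's theorem for the bounded-gradient harmonic function $\nabla(\overline u-Q(x_0)\widetilde u)$ pins down $\overline u$ as affine plus $Q(x_0)\widetilde u$. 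No second derivatives of $d_{\Omega'}$ ever enter; the $C^1$ hypothesis is used only to ensure that the blown-up graph flattens (i.e.\ $\nabla f(0)=0$ and $f(r_js)/r_j\to 0$), which is precisely the ``$|d|=\delta+o(|x-x_0|)$'' statement you isolate. If you want a Campanato-style argument, you would have to subtract $\tfrac12 Q(x_0)\lvert(x-x_0,\nu(x_0))\rvert$ (the flat model) rather than $\tfrac12 Q(x_0)|d|$, and then the right-hand side for the remainder is a difference of measures on two nearby surfaces rather than a small-density measure plus an $L^p$ term, which requires a genuine cancellation estimate; the blow-up route packages that cancellation more cleanly.

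Your first suggestion (approximate $\Gamma$ in $C^2$ and pass to the limit in the Taylor data) is not clearly workable either: convergence of $\nabla w_k(x_0)$ would need a compactness estimate at the single point $x_0$ that is essentially as hard as the statement itself.
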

\begin{proof}
 Define $u_r : \mathbb{R}^n \rightarrow \mathbb{R}$ to be 
\begin{equation}
u_r(x) := \begin{cases} 
\frac{u(x_0+rx) - u(x_0)}{r}  & x_0 + r x \in \Omega, \\ \frac{-u(x_0)}{r}  & \mathrm{otherwise.}
\end{cases} 
\end{equation}
Note that $u_r \in C^{0,1}(\mathbb{R}^n)$ and $||\nabla u_r ||_{L^\infty(\mathbb{R}^n)} \leq ||\nabla u ||_{L^\infty(\Omega)}$. Moreover $u_r(0)= 0$, so $|u_r(z)|\leq ||\nabla u||_{L^\infty} |z|$ for all $z \in \mathbb{R}^n$ and $r > 0$. We conlude by Proposition \ref{prop:C01approx} that there exists a subsequence $r_j \rightarrow 0 $ and some $\overline{u} \in C^{0,1}_{loc}( \mathbb{R}^n)$ with $\nabla \overline{u} \in L^\infty(\mathbb{R}^n)$ such that 
$
u_{r_j} \rightarrow  \overline{u} 
$
weakly in $W^{1,2}(B_R(0))$ and uniformly on $B_R(0)$ for all $R > 0$. Next let $\phi \in C_0^\infty(\mathbb{R}^n)$ be fixed. Let $R_0 > 0$ be such that $\mathrm{supp}(\phi) \in B_{R_0}(0)$ and $j_0 \in  \mathbb{N}$ be such that $\frac{1}{r_j} (\Omega - x_0) \supset B_{R_0}(0)$ for all $j \geq j_0$. Next we derive a PDE for $\overline{u}$. 
We compute
\begin{align}
&\int_{\mathbb{R}^n} \nabla \overline{u} \nabla \phi \dx  = \int_{B_{R_0}(0)} \nabla \overline{u} \nabla \phi\;  \mathrm{d}x = \lim_{j \rightarrow \infty, j \geq j_0} \int_{B_{R_0}(0)} \nabla u_{r_j} \nabla \phi \dx 
\\ &= \lim_{j \rightarrow \infty, j \geq j_0}\int_{B_{R_0}(0)} \nabla u (x_0 + r_j x)  \nabla \phi(x)  \dx  = \lim_{j \rightarrow \infty, j \geq j_0}\int_{\frac{1}{r_j}(\Omega - x_0)} \nabla u (x_0 + r_jx)  \nabla \phi(x)  \dx  
\\ &= \lim_{j \rightarrow \infty}  \frac{1}{r_j^n}  \int_\Omega  \nabla u (y) \nabla \phi \left( \frac{y-x_0}{r_j} \right) \dy
 = \lim_{j \rightarrow \infty} \frac{1}{r_j^{n-1}} \int_\Omega \nabla u(y) \nabla \left( \phi \left( \frac{(\cdot) - x_0}{r_j} \right) \right)  (y) \dy 
\\ & = \lim_{j \rightarrow \infty} \frac{1}{r_j^{n-1}} \int_\Gamma Q(z) \phi \left( \frac{z- x_0}{r_j} \right) d\mathcal{H}^{n-1} (z) 
= \lim_{j \rightarrow \infty} \frac{1}{r_j^{n-1}} \int_{\Gamma \cap B_{r_j R_0} (x_0)}  Q(z) \phi \left( \frac{z- x_0}{r_j} \right) d\mathcal{H}^{n-1} (z) .
\end{align}
Now we can find an orthogonal matrix $R\in O(n)$ and $t > 0$  such that  $\Gamma \cap B_{r_j R_0} (x_0)  \subset  x_0 +  R \{ (x',f(x'))^T  : x' \in B_t(0) \}$ for sufficiently large $j$. Here $f \in C^1(\overline{B_{t_0}(0)})$ satisfies $f(0) = 0 $ and $\nabla f(0) = 0$. We can also achieve that $R e_n = \nu_{\Omega'} (x_0)$. Hence 
\begin{align}
\int_{\mathbb{R}^n} \nabla \overline{u} \nabla \phi \dx & = \lim_{j \rightarrow \infty}  \frac{1}{r_j^{n-1}}  \int_{B_t(0)} Q(x_0 + R(z', f(z'))^T ) (\phi \circ R) \left(  \frac{z'}{r_j}, \frac{f(z')}{r_j}  \right) \sqrt{1+ |\nabla f(z')|^2} dz'
\\ & = \lim_{j \rightarrow \infty}  \int_{ B_{\frac{t}{r_j}}(0) } Q(x_0 + R (r_j s, f(r_j s))^T) (\phi \circ R) ( s, \frac{1}{r_j} f(r_j s))  \sqrt{1 + |\nabla f(r_j s)|^2}  \ds.
\end{align}
Since $ \left\vert R (s, \frac{1}{r_j} f(r_j s))^T \right\vert \geq |s|$ we find that $|s| \geq  R_0$ implies  $ R (s, \frac{1}{r_j} f(r_j s))^T  \not \in \mathrm{supp}(\phi)$. Using the dominated convergence theorem, $\nabla f(0) = 0$, and  $\mathrm{supp}(\phi) \subset B_{R_0} (0)$ we find
\begin{align}
\int_{\mathbb{R}^n} \nabla \overline{u} \nabla \phi \dx&  = \lim_{j \rightarrow \infty}  \int_{ B_{R_0}(0) } Q(x_0 + R (r_j s, f(r_j s))^T) (\phi \circ R) ( s, \frac{1}{r_j} f(r_j s))  \sqrt{1 + |\nabla f(r_j s)|^2}  \ds
\\ & = Q(x_0) \int_{B_{R_0}(0)} (\phi\circ R )(s, \nabla f(0) \cdot s ) \; \ds = Q(x_0) \int_{\mathbb{R}^{n-1}} (\phi \circ R)(s,0) \ds
\end{align}
Summarizing our findings we have that $\overline{u}$ satisfies $\overline{u} \in C^{0,1}$, $\nabla \overline{u} \in L^\infty(\mathbb{R}^n)$, $\overline{u}(0)= 0$ and for all $\phi \in C_0^\infty(\mathbb{R}^n)$ one has 
\begin{equation}\label{eq:ubareq}
\int_{\mathbb{R}^n} \nabla \overline{u} \nabla \phi \dx  = Q(x_0) \int_{\mathbb{R}^{n-1}} (\phi\circ R)(s,0) \ds .
\end{equation}
Next we define $\widetilde{u} : \mathbb{R}^n \rightarrow \mathbb{R}$ by $\widetilde{u} (x) := - \frac{1}{2}|e_n^T R^T x| = - \frac{1}{2}| (x , \nu_{\Omega'}(x_0))|$. We will next understand the relation between $\overline{u}$ and $\widetilde{u}$. Set $H^{+} := \{ z \in \mathbb{R}^n : e_n^T z > 0 \}$, $H^{-} := \{ z \in \mathbb{R}^n : e_n^T z < 0 \}$. One readily checks that $\widetilde{u} \in C^{0,1}_{loc}, \nabla \widetilde{u} \in L^\infty(\mathbb{R}^n)$, $\widetilde{u}(0) = 0$ and for $\phi \in C_0^\infty( \mathbb{R}^n)$ one computes 
\begin{align}
-2\int_{\mathbb{R}^n} \nabla \widetilde{u} \nabla \phi \dx & = \int_{RH^+} (Re_n)^T \nabla \phi  \dx - \int_{R H^{-}} (Re_n)^T \nabla \phi  \dx  \\
& = \int_{H^+} (Re_n)^T (\nabla \phi) (Rx) \dx  - \int_{H^-} (Re_n)^T (\nabla \phi)(Rx) \dx 
\\ & = \int_{H^+} (Re_n)^T R \nabla(\phi \circ R)  \dx  - \int_{H^-} (Re_n)^T R \nabla (\phi \circ R) \dx
\\ & = \int_{H^+} \partial_n (\phi \circ R) \dx - \int_{H^-}  \partial_n ( \phi \circ R ) \dx = -2\int_{\mathbb{R}^{n-1}} ( \phi \circ R) (s,0) \ds, 
\end{align}
where we used Fubini's theorem in the last step and performed the $x_n$-integration first.
We infer from this and \eqref{eq:ubareq}
that $\overline{u} -Q(x_0) \widetilde{u} $ lies in $C^{0,1}_{loc}(\mathbb{R}^n), \nabla (\overline{u}- Q(x_0)  \widetilde{u}) \in L^\infty( \mathbb{R}^n)$, $ (\overline{u}- Q(x_0) \widetilde{u}) (0 ) = 0$ and for all $\phi \in C_0^\infty(\mathbb{R}^n)$ one has 
\begin{equation}
\int_{\mathbb{R}^n} \nabla (\overline{u}-Q(x_0) \widetilde{u}) \nabla \phi = 0 .
\end{equation}
Hence $\overline{u}-Q(x_0) \widetilde{u}$ is a harmonic function on $\mathbb{R}^n$. This implies that also $\nabla ( \overline{u}-Q(x_0) \widetilde{u})$ is harmonic on $\mathbb{R}^n$. Since also $\nabla ( \overline{u} - Q(x_0) \widetilde{u} ) \in L^\infty(\mathbb{R}^n) $ we infer by Liouville's Theorem that $\nabla ( \overline{u} - Q(x_0) \widetilde{u} ) = \mathrm{const} =: \theta (x_0) \in \mathbb{R}^n$. We infer that 
$\overline{u} (x) = Q(x_0) \widetilde{u}(x) + \theta(x_0) x + b$ for some $b \in \mathbb{R}$, but since $\overline{u}(0) =  \widetilde{u}(0) =  0$ we find $b = 0$. Hence $\overline{u}(x) =- \frac{1}{2} Q(x_0) |(x, \nu_{\Omega'} (x_0) )| + \theta (x_0) x.$ Recalling the definition of $\overline{u}$ we obtain for all $x \in B_1(0)$
\begin{equation}
\lim_{r \rightarrow 0} \frac{u(x_0 + rx) - u(x_0)}{r} = \overline{u}(x) = \theta(x_0) x - \frac{1}{2} Q(x_0) |(x, \nu_{\Omega'} (x_0) )|.   
\end{equation}
We could as well write 
\begin{equation}
u(x_0 + r x ) = u(x_0) + \theta(x_0) (rx) - \frac{1}{2} Q(x_0) |(rx , \nu_{\Omega'} (x_0) ) | + o(r),
\end{equation}
which implies \eqref{eq:Taylor}.

\end{proof}

\begin{lemma}\label{lem:alphalim}
Let $\Gamma = \partial \Omega' \in C^1$ and $u \in C^{0,1}(\overline{\Omega})$ be a solution of \eqref{eq:measdir}. Further let $\theta: \Gamma \rightarrow \mathbb{R}^n, x_0 \mapsto \theta(x_0)$, where $\theta(x_0)$ is as in the previous Proposition. Then one has for all $z \in \Gamma$
\begin{equation}\label{eq:alphalim}
\theta(z) = \lim_{ r \rightarrow 0 } \fint_{B_r(z)}  \nabla u \; \mathrm{d}x.
\end{equation}
In particular, $\theta$ is measurable and lies in $L^\infty(\Gamma)$, satisfying $||\theta||_{L^\infty(\Gamma)} \leq ||\nabla u||_{L^\infty(\Omega)}$. 

\end{lemma}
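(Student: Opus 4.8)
The plan is to recognise the small-ball average $\fint_{B_r(z)}\nabla u$ as the $B_1(0)$-average of $\nabla\bar u$, where $\bar u$ is the blow-up limit from the proof of the previous lemma, and then to evaluate that average by a symmetry argument. First I would fix $z \in \Gamma$, write $x_0 = z$, and for $r$ small enough that $B_r(x_0)\subset\Omega$ use the substitution $y = x_0 + rx$ to get
\[
\fint_{B_r(x_0)}\nabla u\dy = \fint_{B_1(0)}\nabla u_r\dx,
\]
with $u_r$ as in the proof of the Taylor expansion. From \eqref{eq:Taylor} applied at $x_0+rx$ (for fixed $x$ the remainder $o(|rx|)$ is $o(r)$) I get $u_r(x)\to\bar u(x):=\theta(x_0)\cdot x-\tfrac12 Q(x_0)\,|(x,\nu(x_0))|$ pointwise as $r\to0$; since $|u_r(x)|\le\|\nabla u\|_{L^\infty(\Omega)}|x|$, dominated convergence upgrades this to $u_r\to\bar u$ in $L^2_{\mathrm{loc}}(\mathbb{R}^n)$.

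Next, since $\|\nabla u_r\|_{L^\infty(\mathbb{R}^n)}\le\|\nabla u\|_{L^\infty(\Omega)}$, the family $\{\nabla u_r\}_{r>0}$ is bounded in $L^2(B_1(0))$, and every weak $L^2$-limit of a subsequence must be the distributional gradient of the $L^2$-limit $\bar u$; hence $\nabla u_r\rightharpoonup\nabla\bar u$ weakly in $L^2(B_1(0))$ along the whole family. Testing this against the constant $|B_1(0)|^{-1}\mathbf{1}_{B_1(0)}$ gives $\lim_{r\to0}\fint_{B_r(x_0)}\nabla u=\fint_{B_1(0)}\nabla\bar u\dx$. It remains to evaluate the right-hand side: almost everywhere $\nabla\bar u(x)=\theta(x_0)-\tfrac12 Q(x_0)\,\mathrm{sgn}\!\big((x,\nu(x_0))\big)\,\nu(x_0)$, and $B_1(0)$ is invariant under the reflection $x\mapsto x-2(x,\nu(x_0))\,\nu(x_0)$, which reverses the sign of $(x,\nu(x_0))$, so $\int_{B_1(0)}\mathrm{sgn}((x,\nu(x_0)))\dx=0$ and thus $\fint_{B_1(0)}\nabla\bar u\dx=\theta(x_0)$. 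This proves \eqref{eq:alphalim}.

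For the remaining regularity of $\theta$, I would note that for each fixed small $r$ the map $z\mapsto\fint_{B_r(z)}\nabla u\dx$ is continuous on a neighbourhood of $\Gamma$ (translations are continuous in $L^1_{\mathrm{loc}}$ and $\nabla u\in L^\infty(\Omega)$), so \eqref{eq:alphalim} exhibits $\theta$ as a pointwise limit (along $r=1/m$) of continuous functions, hence Borel measurable; and the bound $|\fint_{B_r(z)}\nabla u|\le\|\nabla u\|_{L^\infty(\Omega)}$ passes to the limit, yielding $\theta\in L^\infty(\Gamma)$ with $\|\theta\|_{L^\infty(\Gamma)}\le\|\nabla u\|_{L^\infty(\Omega)}$. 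I expect the only slightly delicate point to be the passage from the Taylor expansion of $u$ to the average of $\nabla u$; the weak-$L^2$ compactness step above takes care of it, but alternatively one can use the divergence theorem to write $\fint_{B_r(x_0)}\nabla u=\frac{n}{r}\fint_{\partial B_r(x_0)}(u-u(x_0))\,\nu\,\mathrm{d}\mathcal{H}^{n-1}$ and insert \eqref{eq:Taylor} directly, using $\fint_{\partial B_1(0)}\omega_i\omega_j\,\mathrm{d}\mathcal{H}^{n-1}=\tfrac1n\delta_{ij}$ and the oddness of $\omega\mapsto|(\omega,\nu(x_0))|\,\omega$.
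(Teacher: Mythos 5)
Your primary argument is correct, but it follows a genuinely different route from the paper. You pass to the blow-ups $u_r$ already defined in the Taylor-expansion lemma, show $u_r \to \bar u$ in $L^2_{\mathrm{loc}}$ by dominated convergence, upgrade to $\nabla u_r \rightharpoonup \nabla \bar u$ weakly in $L^2(B_1(0))$ via uniform boundedness and uniqueness of the distributional gradient, and then evaluate $\fint_{B_1(0)}\nabla\bar u$ by the symmetry of the ball under reflection across $\nu(x_0)^\perp$. The paper instead argues directly: it applies the divergence theorem to write
\begin{equation}
\fint_{B_r(z)}\partial_i u \dx = \frac{1}{\alpha_n r^n}\int_{\partial B_r(z)}\bigl(u(x)-u(z)\bigr)\frac{x_i-z_i}{r}\,\mathrm{d}\mathcal{H}^{n-1}(x),
\end{equation}
inserts the Taylor expansion, kills the $|(x-z,\nu(x_0))|$ term by antisymmetry of $\partial B_r(z)$, and reduces the linear term to $\theta_i(z)$ via $\int_{\partial B_r}(x_i-z_i)(x_j-z_j) = \tfrac{1}{n}\delta_{ij}\int_{\partial B_r}|x-z|^2$ together with $\omega_n=n\alpha_n$; this is precisely the ``alternative'' you sketch in your final sentence. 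Your main route saves having to manipulate boundary integrals and reuses the blow-up family already in hand, at the cost of invoking weak $L^2$-compactness; the paper's route is more elementary and makes the cancellations explicit in a short computation. Both are correct, and your measurability and $L^\infty$-bound conclusions match the paper's.
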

\begin{proof}
 To show the measurabilty of $\theta$ and the $L^\infty$-estimate it suffices to prove \eqref{eq:alphalim}.  To this end we compute for all $z \in \Gamma$ and $r > 0$ small enough
\begin{align}
 \fint_{B_r(z)} \partial_i u \dx &   = \frac{1}{\alpha_n r^{n}} \int_{\partial B_r(z)}  u(x)  \frac{x_i-z_i}{r} \; \mathrm{d}\mathcal{H}^{n-1}(x) 
\\ &  = \frac{1}{\alpha_n r^{n}} \int_{\partial B_r(z)} ( u(x) - u(z))   \frac{x-z}{r} \; \mathrm{d}\mathcal{H}^{n-1}(x) 
\\ & =  \frac{1}{\alpha_n r^{n}} \int_{\partial B_r(z)} ( \theta(z)\cdot  (x-z)  -\frac{1}{2} Q(x_0) |(x-z, \nu_{\Omega'}(x_0))| + o(|x-z|) )  \frac{x-z}{r} \; \mathrm{d}\mathcal{H}^{n-1}(x).\label{eq:sonne} 
\end{align}
Now note that 
\begin{equation}
 \left\vert \frac{1}{\alpha_n r^n} \int_{\partial B_r(z) } o(|x-z|) \frac{x-z}{r} \; \mathrm{d} \mathcal{H}^{n-1}(x) \right\vert \leq \frac{ o(r) \omega_n r^{n-1} }{\alpha_n r^n }  \rightarrow 0,  \quad ( r \rightarrow 0 )  ,
\end{equation}
and 
\begin{equation}
\int_{\partial B_{r}(z) } \frac{1}{2} Q(x_0) |( x- z, \nu_{\Omega'} (x_0) ) | \frac{x-z}{r} \; \mathrm{d}\mathcal{H}^{n-1}(x) = 0 , 
\end{equation}
since the integrand is antisymmetric and $\partial B_r(z)$ is a symmetric set. Again using such reflection and symmetry arguments 
we obtain for $i \in \{ 1, ..., n \}$
\begin{align}
  \frac{1}{\alpha_n r^n }&  \int_{\partial B_r(z)} \theta(z) \cdot (x-z)  \frac{x_i-z_i}{r} \; \mathrm{d}\mathcal{H}^{n-1}(x)  \\ & =  \frac{1}{\alpha_n r^n }  \sum_{ j = 1}^n \int_{\partial B_r(z)} \theta_j(z)  (x_j-z_j)  \frac{x_i-z_i}{r} \; \mathrm{d} \mathcal{H}^{n-1}(x)  \\ & =  \frac{1}{\alpha_n r^n } \int_{\partial B_r(z) } \theta_i(z) \frac{(x_i - z_i)^2 }{r} \; \mathrm{d} \mathcal{H}^{n-1}(x) \\ & =  \frac{1}{n \alpha_n r^n } \int_{\partial B_r(z)} \theta_i(z)  \frac{|x-z|^2}{r} \; \mathrm{d}\mathcal{H}^{n-1}(x) =  \frac{1}{n \alpha_n r^n } \frac{r^2}{r} \theta_i(z) \omega_n r^{n-1} = \theta_i (z) ,
\end{align}
since $\omega_n = n \alpha_n$. The claim follows passing to the limit in \eqref{eq:sonne}. 
\end{proof}

\subsection{An a priori estimate}

Next we obtain an a priori estimate for $||\nabla u||_{L^\infty(\Omega)}$ via the maximum principle on $\Omega'$ and $ \Omega'' := \Omega \setminus \overline{\Omega'}$. We shall use the notation for $\Omega''$ in the entire rest of this section.

The regularity that we have proved so far is however not  sufficient to apply the maximum principle to $\nabla u$ -- neither for the classical nor for the weak (Sobolev) maximum principle. Hence we need the following generalization. 

%Note that the classical maximum principle does not apply to $\nabla u$, since $\nabla u$ is not necessarily continuous on $\overline{\Omega'}$, cf. Section \ref{sec:Hopfolei} and Section \ref{sec:35} for positive and negative results.
%The weak (Sobolev) maximum principle does also not apply since  $\nabla u$ can not lie in any Sobolev space (at least not on $\Omega$). We thus need a maximum principle for $BV$-solutions, which we will discuss in Appendix \ref{app:maxpr}. %We remark that this trace problem could be circumvented with the potential theoretic approach of Section \ref{sec:35}, but as already mentioned our interest is to provide an approach that is disjoint from potential theory. 

\begin{lemma}[A maximum principle for $BV$-solutions, Proof in Appendix \ref{app:maxpr}] \label{lem:maxpr}
Let $U\subset \mathbb{R}^n$ be open and bounded with $C^\infty$-smooth boundary and $w \in BV(U)$ be such that 
\begin{equation}\label{eq:weakharm}
\int_U w \Delta \phi = 0 \quad \forall \phi \in C_0^\infty( U).  
\end{equation}
 Then $||w||_{L^\infty(U)} \leq ||\mathrm{tr}_U(w)||_{L^\infty(\partial U)}$, where  $\mathrm{tr}_{U}(w)$ denotes the $BV(U)$-trace of $w$. 
\end{lemma}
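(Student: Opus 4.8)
The plan is to first upgrade $w$ from $BV$ to a classically harmonic $W^{1,1}$-function and then run a duality argument against an auxiliary Dirichlet problem. Since $w\in BV(U)\subset L^1_{\mathrm{loc}}(U)$ satisfies $\int_U w\,\Delta\phi\dx=0$ for all $\phi\in C_0^\infty(U)$, Weyl's lemma gives $w\in C^\infty(U)$ with $\Delta w=0$ classically in $U$. Hence on every $V\subset\subset U$ the measure $Dw$ coincides with $\nabla w\,\mathrm{d}x$ for a smooth $\nabla w$, and since $Dw$ is a finite measure on $U$ this forces $\nabla w\in L^1(U)$, i.e.\ $w\in W^{1,1}(U)$; in particular $\mathrm{tr}_U(w)$ is the ordinary $W^{1,1}$-trace. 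Abbreviate $g:=\mathrm{tr}_U(w)$ and $M:=\|g\|_{L^\infty(\partial U)}$.

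The crucial ingredient is the identity
\begin{equation}\label{eq:mp-keyid}
\int_U\nabla w\cdot\nabla\psi\dx=0\qquad\text{for every }\psi\in C^2(\overline{U})\text{ with }\psi|_{\partial U}=0.
\end{equation}
To prove \eqref{eq:mp-keyid}, let $U_t:=\{x\in U:\mathrm{dist}(x,\partial U)>t\}$, which for small $t$ is a $C^\infty$-domain with $U_t\uparrow U$. Since $w,\psi$ are smooth up to $\partial U_t$ and $\Delta w=0$, Green's first identity gives $\int_{U_t}\nabla w\cdot\nabla\psi\dx=\int_{\partial U_t}\psi\,\partial_\nu w\;\mathrm{d}\mathcal{H}^{n-1}$. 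Because $\psi$ vanishes on $\partial U$ and is Lipschitz, $\|\psi\|_{L^\infty(\partial U_t)}\le\|\nabla\psi\|_{L^\infty(\overline{U})}\,t$, so the right-hand side is bounded by $\|\nabla\psi\|_{L^\infty(\overline{U})}\,t\int_{\partial U_t}|\nabla w|\;\mathrm{d}\mathcal{H}^{n-1}$. The coarea formula applied to $\mathrm{dist}(\cdot,\partial U)$ shows $\int_0^{t_0}\!\big(\int_{\partial U_t}|\nabla w|\;\mathrm{d}\mathcal{H}^{n-1}\big)\dt\le\|\nabla w\|_{L^1(U)}<\infty$, whence $\liminf_{t\to0^+}t\int_{\partial U_t}|\nabla w|\;\mathrm{d}\mathcal{H}^{n-1}=0$; picking $t_j\to0$ realizing this liminf makes the boundary term vanish, while $\int_{U_{t_j}}\nabla w\cdot\nabla\psi\dx\to\int_U\nabla w\cdot\nabla\psi\dx$ by dominated convergence ($|\nabla w\cdot\nabla\psi|\le\|\nabla\psi\|_{L^\infty}|\nabla w|\in L^1(U)$). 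This yields \eqref{eq:mp-keyid}.

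With \eqref{eq:mp-keyid} in hand, fix $f\in C_0^\infty(U)$ with $f\ge0$ and let $\psi\in C^\infty(\overline{U})$ solve $-\Delta\psi=f$ in $U$, $\psi=0$ on $\partial U$ (standard elliptic regularity: $f$ has compact support in $U$ and $\partial U$ is smooth). By the maximum principle $\psi\ge0$ in $U$, hence $-\partial_\nu\psi\ge0$ on $\partial U$. Using $f=-\Delta\psi$, the Gauss--Green formula for $W^{1,1}$-functions, and \eqref{eq:mp-keyid},
\begin{align}
\int_U wf\dx&=-\int_U w\,\Delta\psi\dx=\int_U\nabla w\cdot\nabla\psi\dx-\int_{\partial U}g\,\partial_\nu\psi\;\mathrm{d}\mathcal{H}^{n-1}=\int_{\partial U}g\,(-\partial_\nu\psi)\;\mathrm{d}\mathcal{H}^{n-1}\\
&\le M\int_{\partial U}(-\partial_\nu\psi)\;\mathrm{d}\mathcal{H}^{n-1}=-M\int_U\Delta\psi\dx=M\int_U f\dx.
\end{align}
Thus $\int_U(w-M)f\dx\le0$ for all nonnegative $f\in C_0^\infty(U)$, so $w\le M$ a.e.\ in $U$. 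Applying the same reasoning to $-w$ (harmonic, with trace $-g$ and $\|-g\|_{L^\infty(\partial U)}=M$) gives $w\ge-M$ a.e., hence $\|w\|_{L^\infty(U)}\le M=\|\mathrm{tr}_U(w)\|_{L^\infty(\partial U)}$.

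I expect the main obstacle to be \eqref{eq:mp-keyid}: since $w$ is only $W^{1,1}$ up to $\partial U$, one cannot integrate by parts against a test function that fails to vanish on $\partial U$, and the customary $W^{1,2}$-type testing tricks break down at this $L^1$-endpoint. The workaround is to keep $\psi$ vanishing on $\partial U$ and to control $\int_{\partial U_t}|\nabla w|$ along a suitable sequence $t_j\to0$ via coarea; everything else is routine, using only Weyl's lemma, solvability and regularity for the Poisson--Dirichlet problem, and the Gauss--Green formula for $W^{1,1}$-functions.
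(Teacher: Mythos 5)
Your proof is correct and genuinely different from the paper's in two places. For the key identity $\int_U\nabla w\cdot\nabla\psi\dx=0$ (the paper's equation \eqref{eq:realeqw} in disguise), the paper applies the $BV$ Gauss--Green formula to a $C^2_c$-extension of $\psi$ and then kills the interior term by approximating $\psi$ inside $U$ with the compactly supported regularizations $(\psi-\tfrac1n)^+*\psi_{\epsilon_n}+(\psi+\tfrac1n)^-*\psi_{\epsilon_n}$, chosen precisely so that $\|\nabla\psi_n\|_{L^\infty}$ stays bounded and dominated convergence applies against $\nabla w\in L^1$; you instead exhaust by $U_t$, use Green's identity on each $U_t$, and kill the boundary term along a well-chosen sequence $t_j\to0$ via the coarea estimate $\int_0^{t_0}\bigl(\int_{\partial U_t}|\nabla w|\,\mathrm{d}\mathcal{H}^{n-1}\bigr)\dt\le\|\nabla w\|_{L^1(U)}$, which forces $\liminf_{t\to0^+}t\int_{\partial U_t}|\nabla w|=0$. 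Both workarounds handle the same $L^1$-endpoint difficulty; yours is a bit more geometric and self-contained, while the paper's stays closer to the $BV$ trace machinery it already has in place. The bigger difference is at the end: the paper climbs a three-rung ladder --- first assuming the trace has a $C^\infty$ extension, then $C^0(\partial U)$ via Tietze, then $L^\infty(\partial U)$ via Poisson-kernel bounds from \cite{Anna} --- each time solving an auxiliary Dirichlet problem with the approximating boundary data and identifying the limit with $w$ by uniqueness in $W_0^{1,q}$. Your duality test against a nonnegative source ($-\Delta\psi=f\ge0$, $\psi|_{\partial U}=0$, so $\psi\ge0$ and $-\partial_\nu\psi\ge0$) replaces all of that with one line, $\int_U wf\dx=\int_{\partial U}g(-\partial_\nu\psi)\le M\int_U f\dx$, and is noticeably cleaner; it does require regularity of the Poisson--Dirichlet solution on the smooth domain, which the paper also uses implicitly, so nothing extra is being smuggled in.
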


To the best of our knowledge this result is not known for $BV$-solutions but only for $W^{1,1}$-solutions. This is why we give a proof in the appendix.  
%Actually the result is quite delicate since it fails if one only requires that 
Next we compute the $BV(\Omega')$-trace of $\nabla u$.

\begin{lemma}\label{lem:tronGamma}
Let $\Gamma = \partial \Omega'    \in C^{1}$, $\Omega'' := \Omega \setminus \overline{\Omega'}$, and $u \in C^{0,1}(\overline{\Omega})$ be a solution of \eqref{eq:measdir} with $Q \in C^0(\Gamma)$ such that $\nabla u \in BV(\Omega)$. Then one has $\mathcal{H}^{n-1}$ a.e. on $\Gamma$ 
\begin{equation}
\mathrm{tr}_{\Omega'} (\nabla u) = \theta + \frac{Q}{2} \nu_{\Omega'}
\end{equation}
\begin{equation}
\mathrm{tr}_{\Omega''} ( \nabla u ) = \theta -\frac{Q}{2} \nu_{\Omega'},
\end{equation}
where $\theta$ is as in \eqref{eq:Taylor} and \eqref{eq:alphalim}. 
\end{lemma}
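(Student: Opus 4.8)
The plan is to compute the one-sided traces of $\nabla u$ on $\Gamma$ by combining the pointwise Taylor expansion \eqref{eq:Taylor} with the characterization of $\theta$ as a ball average \eqref{eq:alphalim}, but now taken over \emph{half}-balls lying on one side of $\Gamma$. Recall that for a $BV$-function $g$ on $\Omega'$ whose trace exists at $x_0 \in \partial \Omega'$ in the Lebesgue sense, one has $\mathrm{tr}_{\Omega'}(g)(x_0) = \lim_{r \to 0} \fint_{B_r(x_0) \cap \Omega'} g \dx$, and $\mathcal{H}^{n-1}$-a.e.\ point of $\Gamma$ is such a Lebesgue point for both $\mathrm{tr}_{\Omega'}(\nabla u)$ and $\mathrm{tr}_{\Omega''}(\nabla u)$ (using $\nabla u \in BV(\Omega)$ and the $C^1$-regularity of $\Gamma$, which makes $B_r(x_0)\cap \Omega'$ asymptotically a half-ball). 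So it suffices to evaluate $\lim_{r\to 0}\fint_{B_r(x_0)\cap\Omega'} \partial_i u \dx$ and the analogous limit over $\Omega''$.

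Fix such a Lebesgue point $x_0 \in \Gamma$, and after a rotation assume $\nu_{\Omega'}(x_0) = e_n$, so that $\Omega'$ near $x_0$ is (asymptotically, as $r\to 0$) the lower half-space $H^- = \{x_n < 0\}$ translated to $x_0$. Rescale: set $u_r(x) = (u(x_0+rx)-u(x_0))/r$ as in the Taylor-expansion lemma. From \eqref{eq:Taylor} we know $u_r \to \overline{u}$ uniformly on compact sets, where $\overline{u}(x) = \theta(x_0)\cdot x - \tfrac12 Q(x_0)|x_n|$. Moreover $\nabla u_r \rightharpoonup \nabla\overline u$ weakly in $L^2_{loc}$ along the subsequence, and one can upgrade this: since $\nabla u \in BV$, the rescalings $\nabla u_r$ converge in $L^1_{loc}(\mathbb R^n\setminus\{x_n=0\})$ to $\nabla \overline u$ (on each half-space $\nabla u$ is, after rescaling, converging to the constant trace value, which is exactly the statement to be proved — so one should instead argue directly). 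The clean route: because $\fint_{B_\rho(x_0)\cap\Omega'}\nabla u \dx$ converges (to the trace) as $\rho \to 0$, it equals $\lim_{r\to 0}\fint_{B_1(0)\cap \frac1r(\Omega'-x_0)} \nabla u_r \dx$; by the change of variables this is $\lim_{r\to 0}\fint_{\{x_n<0\}\cap B_1} \nabla u(x_0+rx)\dx + o(1)$, and I claim this equals $\fint_{\{x_n<0\}\cap B_1}\nabla\overline u \dx$. To see the claim, integrate $\nabla u_r \to \overline u$ (uniform convergence) by parts: for $\phi \in C_0^\infty(B_1)$, $\int_{\{x_n<0\}} u_r\,\partial_i\phi \to \int_{\{x_n<0\}}\overline u\,\partial_i\phi$, and the boundary term on $\{x_n=0\}$ also converges since $u_r \to \overline u$ uniformly there; hence $\nabla u_r|_{\{x_n<0\}} \rightharpoonup \nabla\overline u|_{\{x_n<0\}}$ as measures, and testing against $\mathbf 1_{B_1}$ (approximated) gives the average. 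Finally compute $\fint_{\{x_n<0\}\cap B_1}\nabla\overline u\dx = \theta(x_0) - \tfrac12 Q(x_0)\fint_{\{x_n<0\}\cap B_1}\nabla|x_n|\dx = \theta(x_0) - \tfrac12 Q(x_0)(-e_n)\cdot(\text{something})$; more carefully, $\nabla|x_n| = -e_n$ on $\{x_n<0\}$, so the average is $\theta(x_0) + \tfrac12 Q(x_0) e_n$. This is $\mathrm{tr}_{\Omega'}(\nabla u)(x_0) = \theta(x_0) + \tfrac{Q(x_0)}{2}\nu_{\Omega'}(x_0)$. The computation for $\Omega''$ is identical with $\{x_n>0\}$ in place of $\{x_n<0\}$, giving $\nabla|x_n| = +e_n$ there, hence $\mathrm{tr}_{\Omega''}(\nabla u)(x_0) = \theta(x_0) - \tfrac{Q(x_0)}{2}\nu_{\Omega'}(x_0)$.

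The main obstacle is the passage from uniform convergence of $u_r$ to convergence of the one-sided \emph{gradient averages} of $u_r$ — one needs that no mass of $\nabla u_r$ concentrates on the flattened boundary $\{x_n=0\}$ in the limit, which is where the $BV$ bound on $\nabla u$ (uniform in $r$ after rescaling, since $\nabla u \in BV(\Omega)$ gives $\|\nabla u_r\|_{BV(B_R)} = r^{-1}\cdot r^{n-1}\cdot(\dots)$ — careful with scaling!) must be used, together with the trace theory for $BV$. A secondary technical point is handling the discrepancy between the actual domain $\Omega'$ near $x_0$ and the model half-space $\{x_n<0\}$; this is controlled because $\Gamma \in C^1$, so $\frac1r(\Omega'-x_0) \to \{x_n<0\}$ locally in measure, and the averages differ by $o(1)$. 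Once these limits are justified, the identification of the traces is just the elementary computation of $\fint_{\text{half ball}}\nabla|x_n|$.
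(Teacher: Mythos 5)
Your argument is correct, and it takes a route that differs from the paper's in how the limit average is evaluated, though both start from the same two facts: the Lebesgue-point characterization of the $BV$-trace (the paper's Lemma \ref{lem:BVtr}) and the Taylor expansion \eqref{eq:Taylor} (equivalently, the explicit blow-up limit $\overline{u}$). The paper stays at the scale $r$: it applies the divergence theorem on $B_r(z)\cap\Omega'$ to turn the gradient average into a boundary integral of $u$, substitutes the expansion \eqref{eq:Taylor} termwise, and then controls the remainder and the volume ratios via the density theorems of Evans--Gariepy; this requires knowing that $B_r(z)\cap\Omega'$ is Lipschitz (Proposition \ref{prop:transverselip}) but is otherwise elementary. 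You instead rescale first, replace the blown-up domain $\tfrac1r(\Omega'-x_0)\cap B_1$ by the model half-ball $H^-\cap B_1$ (legitimate up to $o(1)$ because $\Gamma\in C^1$ and $\nabla u\in L^\infty$), establish $\nabla u_r \rightharpoonup^* \nabla\overline{u}$ in $L^\infty(B_1)$ weak-$*$ from uniform convergence of $u_r$ plus the $L^\infty$ bound, and then evaluate the half-ball average of $\nabla\overline{u}$ by hand. This is a genuinely different organization of the same blow-up idea; yours is slightly slicker once the weak-$*$ step is in place, while the paper's is more self-contained.

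One remark on your closing paragraph: the worry about the $BV$-scaling of $\nabla u_r$ is in fact a red herring. You are right that $\|\nabla u_r\|_{BV}$ does not stay bounded under the rescaling, but your ``clean route'' never uses it --- the only control you need on $\nabla u_r$ to push the weak-$*$ limit past the indicator of the half-ball is the uniform $L^\infty$ bound coming from $u\in C^{0,1}(\overline\Omega)$, together with the identification of the limit via testing against $C_0^\infty(B_1)$ (integration by parts plus uniform convergence of $u_r$ to $\overline{u}$). The hypothesis $\nabla u\in BV(\Omega)$ enters only once, namely to know that the traces $\mathrm{tr}_{\Omega'}(\nabla u)$, $\mathrm{tr}_{\Omega''}(\nabla u)$ exist $\mathcal H^{n-1}$-a.e.\ and are given by the one-sided ball averages. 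So your argument is complete as written; you may simply delete the caveat.
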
 
\begin{proof}
We only show the first equality, the other one is completely analogous. We know by Lemma \ref{lem:BVtr} that $\mathcal{H}^{n-1}$ a.e. one has 
\begin{align}
\mathrm{tr}_{\Omega'} (\partial_i u)(z) = \lim_{r \rightarrow 0} \fint_{B_r(z) \cap \Omega'} \partial_i  u (x)  \dx.
\end{align}
We compute using \eqref{eq:Taylor} and the fact that by Proposition \ref{prop:transverselip} $B_r(z) \cap \Omega'$ is a Lipschitz domain for $r$ small enough.
\begin{align}
& \mathrm{tr}_{\Omega'} (\partial_i u)(z)   = \lim_{r \rightarrow 0 } \frac{1}{|B_r(z) \cap \Omega'|}\int_{\partial (B_r(z) \cap \Omega')}   u(y) \nu^i(y)  \; \mathrm{d} \mathcal{H}^{n-1}(y) 
%& =  \lim_{r \rightarrow 0 } \frac{1}{|B_r(z) \cap \Omega'|}\int_{\partial (B_r(z) \cap \Omega')}   u(y) \nu^i(y)  \; \mathrm{d} \mathcal{H}^{n-1}(y) 
\\ & =  \lim_{r \rightarrow 0 } \frac{1}{|B_r(z) \cap \Omega'|}\left( \int_{\partial (B_r(z) \cap \Omega')}   (u(y)- u(z)) \nu^i(y)  \; \mathrm{d} \mathcal{H}^{n-1}(y) + u(z) \int_{\partial (B_r(z) \cap \Omega')} \nu^i(y)  \; \mathrm{d} \mathcal{H}^{n-1}(y) \right) 
\\ & = \lim_{r \rightarrow 0 } \frac{1}{|B_r(z) \cap \Omega'|} \left( \int_{\partial (B_r(z) \cap \Omega')}   (u(y)- u(z)) \nu^i(y)  \; \mathrm{d} \mathcal{H}^{n-1}(y) + u(z) \int_{B_r(z) \cap \Omega'} \partial_i (1) \; \mathrm{d}y \right) 
\\ & = \lim_{r \rightarrow 0 } \frac{1}{|B_r(z) \cap \Omega'|}  \int_{\partial (B_r(z) \cap \Omega')}  ( \theta(z)\cdot(y-z) - \frac{1}{2} Q(z) |(y-z, \nu(z))|  + o(|y-z|)) \nu^i(y) ) \; \mathrm{d} \mathcal{H}^{n-1}(y).
\end{align}
Note that
\begin{equation}
\lim_{r \rightarrow 0 } \frac{1}{|B_r(z) \cap \Omega'|}  \left\vert \int_{\partial (B_r(z) \cap \Omega')} o(|y-z|) \; \mathrm{d}\mathcal{H}^{n-1}(y)  \right\vert  \leq \limsup_{r \rightarrow 0 } o(1) \frac{r \mathcal{H}^{n-1}(\partial (B_r(z) \cap \Omega')) }{| B_r(z) \cap \Omega'|}.
\end{equation}
Now we obtain by \cite[Theorem 5.14(iii) and Theorem 5.15]{EvGar} that $\mathcal{H}^{n-1}(\partial \Omega' \cap \overline{B_r(z)} ) \leq (2^{n-1} + o(1)) \alpha_{n-1} r^{n-1}$. Moreover $\mathcal{H}^{n-1}(\overline{\Omega'} \cap \partial B_r(z)) \leq  \omega_n r^{n-1} = n \alpha_n r^{n-1}$. Therefore 
\begin{align}\label{eq:o(1)sclaing}
& \limsup_{r \rightarrow 0 } o(1) \frac{r \mathcal{H}^{n-1}(\partial (B_r(z) \cap \Omega') }{| B_r(z) \cap \Omega'|}   \leq \limsup_{r \rightarrow 0 } o(1) \frac{r (\mathcal{H}^{n-1}(\overline{B_r(z)} \cap \partial\Omega') + \mathcal{H}^{n-1} ( \overline{\Omega'} \cap \partial B_r(z)) } {|B_r(z) \cap \Omega'| }
\\ & = \limsup_{r \rightarrow 0 } o(1) \frac{r^n   (n + 2^{n-1} + o(1) ) \alpha_{n-1}}{|B_r(z) \cap \Omega' | }  = \limsup_{r \rightarrow 0 } o(1)  \frac{(n + 2^{n-1} + o(1) ) \alpha_{n-1} }{|B_1(0) \cap  ( \frac{\Omega' - z}{r} ) |} = 0 ,
\end{align} 
where we used that by \cite[Theorem 5.13]{EvGar} 
\begin{equation}
\lim_{r \rightarrow 0 } \frac{1}{|B_1(0) \cap  ( \frac{\Omega' - z}{r} ) |} = \frac{1}{|B_1(0) \cap H^-| }  \in (0, \infty) ,
\end{equation}
where $H^- = \{ y \in \mathbb{R}^n : \nu(z) \cdot y < 0 \} $. For later use we also define $H^{+} :=  \{ y \in \mathbb{R}^n : \nu(z) \cdot y > 0 \}$. 
Now using once more the divergence theorem we obtain
\begin{equation}
\frac{1}{|B_r(z) \cap \Omega'|}  \int_{\partial (B_r(z) \cap \Omega')}   \theta(z)\cdot(y-z) \nu^i(y) \mathrm{d} \mathcal{H}^{n-1}(y) = \frac{1}{|B_r(z) \cap \Omega'| } \int_{B_r(z) \cap \Omega'} \theta_i(z)  = \theta_i(z),
\end{equation}
for all $r > 0$. Hence we obtain  that $\mathcal{H}^{n-1}$-a.e. one has
\begin{align}
\mathrm{tr}_{\Omega'} (\partial_i u)(z) &  =   \theta_i(z) - \lim_{r \rightarrow 0 }\frac{1}{|B_r(z) \cap \Omega'|}  \int_{\partial( B_r(z) \cap \Omega')} \frac{Q(z)}{2}  |(y-z, \nu(z))| \nu^i(y) \; \mathrm{d}\mathcal{H}^{n-1}(y). 
\\ & =  \theta_i(z) -  \lim_{r \rightarrow 0 }    \frac{1}{|B_r(z) \cap \Omega'|}   \frac{Q(z)}{2} \int_{\partial( B_r(z) \cap \Omega')}  |(y-z, \nu(z))| \nu^i(y) \; \mathrm{d}\mathcal{H}^{n-1}(y). 
\end{align}
%The $o(1) ....$-term can be estimated by 
%\begin{equation}
%o(1) \frac{1}{|B_r(z) \cap \Omega'|} r \mathcal{H}^{n-1}( \partial B_r(z) \cap \Omega') , 
%\end{equation}
%which tends to zero as $r \rightarrow 0 $ by similar arguments as in \eqref{eq:o(1)sclaing}. 
Therefore
\begin{align}
& \mathrm{tr}_{\Omega'} ( \partial_i u) (z)   = \theta_i(z) -  \lim_{r \rightarrow 0 }  \frac{1}{|B_r(z) \cap \Omega'|} \frac{Q(z)}{2}  \int_{ B_r(z) \cap \Omega'}   \partial_{y_i} |(y-z, \nu(z))| \dy 
\\ & = \theta_i(z) - \frac{Q(z)}{2} \lim_{r \rightarrow 0} \frac{|B_r(z) \cap \Omega' \cap H^+|- |B_r(z) \cap \Omega' \cap H^-| }{|B_r(z) \cap \Omega'| }
 \\ & = \theta_i(z) - \frac{Q(z)\nu^i(z)}{2} \lim_{r \rightarrow 0} \frac{|B_1(0) \cap \frac{\Omega'-z}{r} \cap H^+|- |B_1(0) \cap \frac{\Omega'-z}{r} \cap H^-| }{|B_1(0) \cap \frac{\Omega'-z}{r} | } \\& = \theta_i(z) - \frac{Q(z)\nu^i(z)}{2} \frac{|B_1(0) \cap H^- \cap H^+| - |B_1(0) \cap H^{-}|}{|B_1(0) \cap H^-|} = \theta_i(z) + \frac{Q(z) \nu^i(z)}{2},
\end{align}
where we used \cite[Theorem 5.13]{EvGar} in the last step. 
\end{proof}

In a nonstandard sense we can hence look at $\nabla u \in BV(\Omega')$ as a solution of the following Dirichlet problem
\begin{equation}
\begin{cases} 
\Delta ( \nabla u) = 0 & \mathrm{in} \; \Omega' \\
\nabla u = \theta +  \frac{1}{2} Q \nu   & \mathrm{on} \; \partial \Omega' , \end{cases} \quad \quad \quad 
\end{equation} 
where the last line holds in the sense of $BV$-traces. In the same manner $\nabla u \in BV(\Omega'')$ solves
\begin{equation}
\begin{cases} 
\Delta ( \nabla u) = 0 & \mathrm{in} \; \Omega'' \\
\nabla u = \theta -  \frac{1}{2} Q \nu   & \mathrm{on} \; \partial \Omega'' \cap \partial \Omega' . \end{cases}
\end{equation}
Notice that in the sense of $BV$-traces $\nabla u$ makes a \emph{normal jump} on $\Gamma = \partial \Omega'= \partial \Omega'' \cap \partial \Omega'$. This can be seen as a new version of the potential theoretic statement \eqref{eq:normaljump}, which characterizes the normal jump of the single-layer potential.

Having this characterization of $\nabla u$ at hand we can apply the ($BV$-)maximum principle to estimate $||\nabla u||_{L^\infty(\Omega)}$.

\begin{cor}
Let $\Gamma = \partial \Omega'  \in C^{\infty}$ and $u \in C^{0,1}(\overline{\Omega})$ be a solution of \eqref{eq:measdir} with $Q \in C^0(\Gamma)$ such that $\nabla u \in BV(\Omega)$. Then one has
\begin{equation}\label{eq:cor48}
||\theta||_{L^\infty(\Gamma)} \leq ||\nabla u ||_{L^\infty(\Omega)} \leq ||\theta ||_{L^\infty(\Gamma)} + \frac{1}{2} ||Q||_{L^\infty(\Gamma)}  + C(\Omega, \mathrm{dist}(\Gamma, \partial \Omega)) ||u||_{L^{\infty}}.
\end{equation}
The constant $C(\Omega,D)$ can always be chosen to be increasing in $D+ \frac{1}{D}$.
\end{cor}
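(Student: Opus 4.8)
The plan is to get the lower bound directly from the blow-up identity for $\theta$, and the upper bound by applying the $BV$-maximum principle of Lemma \ref{lem:maxpr} to the (harmonic) gradient of $u$ separately on the two sides of $\Gamma$, reading off the relevant boundary values from Lemma \ref{lem:tronGamma}. The lower bound is immediate: by Lemma \ref{lem:alphalim} one has $\theta(z)=\lim_{r\to 0}\fint_{B_r(z)}\nabla u\,\mathrm{d}x$ for every $z\in\Gamma$, and each average has modulus at most $\|\nabla u\|_{L^\infty(\Omega)}$, so $\|\theta\|_{L^\infty(\Gamma)}\le\|\nabla u\|_{L^\infty(\Omega)}$.

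For the upper bound I would split $\Omega=\Omega'\cup\Gamma\cup\Omega''$ with $\Omega'':=\Omega\setminus\overline{\Omega'}$; since $|\Gamma|=0$ it suffices to bound $\nabla u$ on $\Omega'$ and on $\Omega''$. On each of these open sets the right-hand side measure $Q\,\mathcal{H}^{n-1}\mres\Gamma$ vanishes, so $u$ is smooth and harmonic there, and hence so is each $\partial_i u$; in particular $\partial_i u\in BV(U)$ (because $\nabla u\in BV(\Omega)$) and $\int_U\partial_i u\,\Delta\phi\,\mathrm{d}x=0$ for all $\phi\in C_0^\infty(U)$, $U\in\{\Omega',\Omega''\}$. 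Both $\Omega'$ and $\Omega''$ have $C^\infty$ boundary, since $\Gamma\in C^\infty$, $\partial\Omega$ is smooth, and the two are disjoint because $\Gamma\subset\subset\Omega$; thus Lemma \ref{lem:maxpr} applies to the scalar function $w:=e\cdot\nabla u$ for any fixed unit vector $e$. The reduction of the vector estimate to this scalar one is clean because $\nabla u$ is continuous on $U$: for $x_0\in U$ with $\nabla u(x_0)\neq 0$, taking $e:=\nabla u(x_0)/|\nabla u(x_0)|$ gives $|\nabla u(x_0)|=|w(x_0)|\le\|w\|_{L^\infty(U)}\le\|\mathrm{tr}_U(w)\|_{L^\infty(\partial U)}\le\|\mathrm{tr}_U(\nabla u)\|_{L^\infty(\partial U)}$, and sup over $x_0$ yields $\|\nabla u\|_{L^\infty(U)}\le\|\mathrm{tr}_U(\nabla u)\|_{L^\infty(\partial U)}$. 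On $\partial\Omega'=\Gamma$ Lemma \ref{lem:tronGamma} identifies $\mathrm{tr}_{\Omega'}(\nabla u)=\theta+\tfrac12 Q\nu_{\Omega'}$ and $\mathrm{tr}_{\Omega''}(\nabla u)=\theta-\tfrac12 Q\nu_{\Omega'}$ $\mathcal{H}^{n-1}$-a.e., each of $L^\infty$-norm at most $\|\theta\|_{L^\infty(\Gamma)}+\tfrac12\|Q\|_{L^\infty(\Gamma)}$ since $|\nu_{\Omega'}|=1$; this already gives the asserted bound on $\Omega'$.

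The only piece needing a genuine estimate is the extra boundary component $\partial\Omega$ of $\Omega''$. With $D:=\mathrm{dist}(\Gamma,\partial\Omega)$, every point of $\Omega$ within distance $\tfrac12 D$ of $\partial\Omega$ has distance at least $\tfrac12 D$ from $\Gamma$ (as $\Gamma\subset\subset\Omega$), so $u$ is harmonic in the collar $\{x\in\Omega:\mathrm{dist}(x,\partial\Omega)<\tfrac12 D\}$ and vanishes on the smooth boundary $\partial\Omega$; standard boundary elliptic estimates then give $u\in C^1$ up to $\partial\Omega$ there with $\|\nabla u\|_{L^\infty(\partial\Omega)}\le C(\Omega,D)\|u\|_{L^\infty(\Omega)}$, and this smoothness makes the $BV$-trace of $\nabla u$ on $\partial\Omega$ the classical restriction. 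The constant produced this way behaves like $D^{-1}$ as $D\to 0$, and after enlarging it if necessary it can be taken increasing in $D+\tfrac1D$. Combining, $\|\nabla u\|_{L^\infty(\Omega'')}\le\max\{\|\theta\|_{L^\infty(\Gamma)}+\tfrac12\|Q\|_{L^\infty(\Gamma)},\,C(\Omega,D)\|u\|_{L^\infty(\Omega)}\}$, and together with the (stronger) bound on $\Omega'$ this gives \eqref{eq:cor48}. I expect the main obstacle to be bookkeeping rather than a new idea: verifying carefully that the hypotheses of Lemma \ref{lem:maxpr} hold on $\Omega''$ and that the $BV$-traces on the two components of $\partial\Omega''$ are precisely those supplied by Lemma \ref{lem:tronGamma} and by classical elliptic regularity near $\partial\Omega$.
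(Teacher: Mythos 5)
Your proposal is correct and follows essentially the same route as the paper: lower bound from Lemma \ref{lem:alphalim}, upper bound by applying the $BV$-maximum principle (Lemma \ref{lem:maxpr}) to $\nabla u$ on $\Omega'$ and $\Omega''$, reading the traces on $\Gamma$ from Lemma \ref{lem:tronGamma}, and controlling the trace on $\partial\Omega$ by elliptic (Schauder) estimates in a collar of width comparable to $\mathrm{dist}(\Gamma,\partial\Omega)$. Your explicit reduction of the vector-valued maximum principle to scalar directional derivatives $e\cdot\nabla u$ is slightly more careful than the paper's implicit componentwise use of Lemma \ref{lem:maxpr}, but it leads to the same estimate.
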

\begin{proof}
Let $u$ be as in the statement. 
The first inequality has already been shown in Lemma \ref{lem:alphalim}. For the second inequality observe that $\nabla u$ is harmonic in $\Omega'$ and in $\Omega''$ and hence by Lemma \ref{lem:maxpr}
\begin{align}
||\nabla u||_{L^\infty(\Omega)} & \leq \max \{  || \nabla u ||_{L^\infty(\Omega')} , || \nabla u ||_{L^\infty(\Omega'')} \} \leq \max \{ ||\mathrm{tr}_{\Omega'} ( \nabla u ) ||_{L^\infty(\partial \Omega')}, ||\mathrm{tr}_{\Omega''} ( \nabla u ) ||_{L^\infty(\partial \Omega'')} \}
\\ & = \max\{ ||\mathrm{tr}_{\Omega'} ( \nabla u ) ||_{L^\infty(\Gamma)}, ||\mathrm{tr}_{\Omega''} ( \nabla u ) ||_{L^\infty(\Gamma \cup \partial \Omega)} \}. \label{eq:trace2} 
\end{align}
On $\Gamma$ one has by Lemma \ref{lem:tronGamma} that 
\begin{equation}\label{eq:trace1} 
||\mathrm{tr}_{\Omega'} ( \nabla u ) ||_{L^\infty(\Gamma)}, ||\mathrm{tr}_{\Omega''} ( \nabla u ) ||_{L^\infty(\Gamma)} \leq ||\theta||_{L^\infty(\Gamma)} + \frac{1}{2} || Q ||_{L^\infty(\Gamma)}. 
\end{equation}
It only remains to estimate $||\mathrm{tr}_{\Omega''}(\nabla u) ||_{L^\infty(\partial \Omega)}$. To this end let $\delta = \min\{ \delta_0 ,  \frac{1}{2}\mathrm{dist}(\Gamma, \partial\Omega) \}$, where $\delta_0 > 0$ is such that for all $\epsilon< 2\delta_0$ one has that $\Omega^\epsilon := \{ x \in \Omega : \mathrm{dist}(x, \partial\Omega) < \epsilon \}$ is a $C^{2,\gamma}$- domain. Since $u$ is harmonic on $\Omega^\delta$ and takes smooth values on $\partial \Omega^\delta$ we infer by Schauder theory (cf. \cite[Theorem 2.19]{Sweers}) that $u \in C^{2,\gamma}(\overline{\Omega}^\delta)$ and by \cite[Theorem 8.33]{GilTru} we have that 
\begin{equation}
||u||_{C^{1,\gamma}(\overline{\Omega^\delta})} \leq C_0({\Omega^{\delta}}) ||u||_{L^\infty}, 
\end{equation} 
where the constant $C_0({\Omega^\delta})$ depends only on the measure of $\Omega^\delta$ (or better on $|\Omega^\delta| + \frac{1}{|\Omega^\delta|}$) and the boundary parametrizations of $\Omega^\delta$. One should notice now that this constant does not depend on $\Gamma$ but only on $\Omega$ and $\delta$. Using the choice of $\delta$ we find 
\begin{equation}
||\nabla u||_{L^\infty(\partial \Omega) } \leq C( \Omega, \mathrm{dist}( \Gamma, \partial \Omega)) ||u||_{L^\infty},
\end{equation}
where $C(\Omega, D)$ depends increasingly on $D + \frac{1}{D}$.
This, \eqref{eq:trace1} and \eqref{eq:trace2} imply the claim. \end{proof}

\begin{lemma} [An a priori estimate for $||\theta||_{L^\infty(\Gamma)}$]\label{lem:49}
Let $\Gamma = \partial \Omega' \in C^\infty$, $u \in C^{0,1}(\overline{\Omega})$ be as in the previous corollary. Then there exists $C= C(n) > 0$ such that 
\begin{equation}
||\theta||_{L^\infty(\Gamma)} \leq C(n) ||Q||_{L^\infty(\Gamma)}.
\end{equation} 
\end{lemma}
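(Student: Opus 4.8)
The plan is to argue by contradiction via a blow-up/compactness scheme, exploiting the fact that the quotient $\|\theta\|_{L^\infty(\Gamma)}/\|Q\|_{L^\infty(\Gamma)}$ is invariant under the scalings $u\mapsto\lambda u$ and $x\mapsto\lambda x$, and combining the pointwise description of $\theta$ from \eqref{eq:Taylor} and \eqref{eq:alphalim} with a flatness estimate of the type used in the proof of Lemma~\ref{lem:neces}. Concretely, if $u\in C^{0,1}(\overline\Omega)$ solves \eqref{eq:measdir} and $x_0\in\Gamma$, then the affine map $h_{x_0}(x):=u(x_0)+\theta(x_0)\cdot(x-x_0)$ satisfies, by the maximum principle applied on $B_r(x_0)\cap\Omega'$ and $B_r(x_0)\cap\Omega''$ together with \eqref{eq:Taylor} (the singular term $-\tfrac12 Q(x_0)|(y-x_0,\nu(x_0))|$ being $o(r)$ on $\Gamma\cap B_r(x_0)$ and bounded by $\tfrac12\|Q\|_{L^\infty}r$ on $\partial B_r(x_0)$),
\begin{equation}
\|u-h_{x_0}\|_{L^\infty(B_r(x_0))}\le \tfrac12\|Q\|_{L^\infty(\Gamma)}\,r+o(r)\qquad(r\to0).
\end{equation}
Interior gradient estimates for the harmonic functions $u-h_{x_0}$ on each side of $\Gamma$ then give $|\theta(x_0)-\theta(x_1)|\le C(n)\|Q\|_{L^\infty(\Gamma)}+o(1)$ for $x_1\in\Gamma$ within the flatness scale of $x_0$; i.e. $\theta$ is ``locally Hölder-at-scale-one along $\Gamma$'' with modulus controlled by $\|Q\|_{L^\infty}$.

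Now suppose the assertion of the lemma fails. Then there are domains $\Omega_j$, open sets $\Omega_j'\subset\subset\Omega_j$ with $\Gamma_j=\partial\Omega_j'\in C^\infty$, data $Q_j\in C^0(\Gamma_j)$ and solutions $u_j$ of \eqref{eq:measdir} with $\nabla u_j\in BV(\Omega_j)$, such that $\|\theta_j\|_{L^\infty(\Gamma_j)}>j\|Q_j\|_{L^\infty(\Gamma_j)}$. Normalising via the two scalings I may assume $\|\theta_j\|_{L^\infty(\Gamma_j)}=1$, so $\|Q_j\|_{L^\infty(\Gamma_j)}\to0$, and I pick $z_j\in\Gamma_j$ with $|\theta_j(z_j)|\ge\tfrac12$. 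After translating $z_j$ to $0$ and rotating so $\nu_{\Omega_j'}(z_j)=e_n$, I use that $\Gamma_j\in C^\infty$ to select a scale $r_j\downarrow0$ so small that simultaneously $\tfrac1{r_j}(\Gamma_j-z_j)$ is $\tfrac1j$-flat in $B_j(0)$ and the rescaled functions $\hat u_j(x):=\tfrac1{r_j}\bigl(u_j(z_j+r_jx)-u_j(z_j)\bigr)$ are $\tfrac1j$-close in $L^\infty(B_j(0))$ to $x\mapsto\theta_j(z_j)\cdot x-\tfrac12Q_j(z_j)|x_n|$ (possible by the Taylor expansion and the flatness estimate above). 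Each $\hat u_j$ solves \eqref{eq:measdir} on $\tfrac1{r_j}(\Omega_j-z_j)$ with surface $\tfrac1{r_j}(\Gamma_j-z_j)$, density $\hat Q_j=Q_j(z_j+r_j\cdot)$ of the same sup-norm, and vanishes on the (receding to infinity) rescaled outer boundary; on each side of the interface $\hat u_j$ is harmonic and $C^1_{\rm loc}$-close to the affine map with gradient $\theta_j(z_j)$. Passing to a subsequence, $\hat u_j\to\bar u$ in $C^1_{\rm loc}(\mathbb R^n)$ with $\bar u$ harmonic on all of $\mathbb R^n$ (the right-hand side measure vanishes since $\|\hat Q_j\|_{L^\infty}\to0$ and the interfaces flatten), hence affine, $\bar u(x)=c\cdot x$ with $|c|=\lim|\theta_j(z_j)|\in[\tfrac12,1]$, and moreover $\theta_j\to c$ locally uniformly along the flattening interfaces, with $|c|$ attained as a maximum of the limit of $|\theta_j|$.

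The remaining — and main — obstacle is to turn this limiting picture into a genuine contradiction; a consistent affine limit is not yet absurd. I expect this to be forced by re-inserting the global information lost under the blow-up, namely $u_j=0$ on $\partial\Omega_j$. An affine function with $|\nabla|\ge\tfrac12$ cannot remain bounded, so $u_j$ must deviate strongly from $h_{z_j}$ at some intermediate scale $\rho_j$ with $r_j\ll\rho_j\le\mathrm{dist}(z_j,\partial\Omega_j)$; on the other hand, by the flatness estimate the affine approximant of $u_j$ in $B_\rho(z_j)$ changes its gradient only by $O(\|Q_j\|_{L^\infty})$ per dyadic scale, so deviating by an amount comparable to $|\theta_j(z_j)|$ would require $\gtrsim|\theta_j(z_j)|/\|Q_j\|_{L^\infty}\to\infty$ scales — while between $r_j$ and $\rho_j$ only finitely many dyadic scales are available with the required flatness. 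One then uses the maximum principle for $\nabla u_j\in BV$ (Lemma~\ref{lem:maxpr}) and the trace formulas of Lemma~\ref{lem:tronGamma} to quantify ``deviation'' in terms of $\|\theta_j\|_{L^\infty}$ and $\|Q_j\|_{L^\infty}$, arriving at $\|\theta_j\|_{L^\infty(\Gamma_j)}\le C(n)\|Q_j\|_{L^\infty(\Gamma_j)}+o(1)$, contradicting $\|\theta_j\|_{L^\infty(\Gamma_j)}=1>j\|Q_j\|_{L^\infty(\Gamma_j)}$. Making this multi-scale bookkeeping uniform over the varying (and possibly geometrically wild) surfaces $\Gamma_j$, and keeping track of which constants genuinely depend only on $n$, is the technical heart of the argument.
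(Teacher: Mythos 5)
There is a genuine gap, and you acknowledge it yourself: the blow-up/compactness scheme you set up does not actually reach a contradiction. After rescaling, the limit $\bar u$ is affine with $|\nabla\bar u|\in[\tfrac12,1]$, which is perfectly consistent -- the boundary condition $u_j=0$ on $\partial\Omega_j$ recedes to infinity under the blow-up, so no contradiction is visible at the limit, and ``re-inserting the global information'' is precisely the unproved step. The multi-scale argument you sketch to close this (``the affine approximant changes its gradient only by $O(\|Q_j\|_{L^\infty})$ per dyadic scale'') is an unproved decay estimate of its own, and is the technical crux; moreover the $o(r)$ in your flatness estimate is not a priori uniform over the sequence of varying surfaces $\Gamma_j$, so the choice of the blow-up scale $r_j$ in a way that makes $\hat u_j$ simultaneously $\tfrac1j$-close to its Taylor polynomial and $\Gamma_j$ $\tfrac1j$-flat on $B_j(0)$ is not justified for a degenerating family. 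As written, the proposal is a plausible strategy, not a proof.

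The paper's proof is completely different and much more economical: it is a direct, local computation at a fixed point $x_0\in\Gamma$. One tests the weak formulation with the specific function $\phi_r(x)=\theta(x_0)\cdot(x-x_0)\,\psi(|x-x_0|/r)$, integrates by parts, and substitutes the Taylor expansion \eqref{eq:Taylor}. The linear-in-$\theta$ structure of $\phi_r$ produces a term proportional to $|\theta(x_0)|^2 r^n$ with a sign (via $I_n=\int_0^1\psi'(t)t^n\,\mathrm{d}t<0$); the $Q$-singular piece of the Taylor expansion integrates to zero by antisymmetry; the $o(|x-x_0|)$ error contributes $o(1)r^n|\theta(x_0)|$; and the left-hand side measure integral is bounded by $\|Q\|_{L^\infty}|\theta(x_0)|\,r\,\mathcal H^{n-1}(\Gamma\cap B_r(x_0))\sim \|Q\|_{L^\infty}|\theta(x_0)|\,\omega_n r^n$. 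Dividing by $|\theta(x_0)|r^n$ and letting $r\to0$ gives $|\theta(x_0)|\le C(n)\|Q\|_{L^\infty}$ pointwise, with a constant that manifestly depends only on $n$ -- no contradiction scheme, no compactness, and no multi-scale bookkeeping are needed.
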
 
\begin{proof}
Let $x_0 \in \Gamma$. 
Let $\psi \in C^\infty_0 ([0,1))$ be such that $\psi \vert_{[0, \frac{1}{4}]} = 1$, $\psi \vert_{[\frac{3}{4}, 1] } = 0$, $0 \leq \psi \leq 1$ and $\psi' \leq 0$. We test \eqref{eq:measdir} with $\phi_r(x) := \theta(x_0) \cdot (x- x_0) \psi( \frac{|x-x_0|}{r})$  for $r \in  (0, \frac{1}{2} \mathrm{dist}(\Gamma, \partial \Omega)) $.
We have to perform some integrations by parts, even more than one would guess, since the Taylor expansion \eqref{eq:Taylor} is only of first order. We conclude
\begin{align}\label{eq:maincompu}
& \int_\Gamma \phi_r Q \; \mathrm{d}\mathcal{H}^{n-1}   = \int_{B_r(x_0)} \nabla u(x) \cdot \left( \theta(x_0) \psi\left(\frac{|x-x_0|}{r} \right)  + \theta(x_0) \frac{|x-x_0|}{r} \psi'\left(\frac{|x-x_0|}{r}\right) \right) \dx
\\ & =  \int_{\partial B_r(x_0) } u(x)  \left( \theta(x_0) \psi\left(\frac{|x-x_0|}{r} \right)  + \theta(x_0) \frac{|x-x_0|}{r} \psi'\left(\frac{|x-x_0|}{r}\right) \right) \cdot \nu_{B_r(x_0)} \; \mathrm{d}\mathcal{H}^{n-1}(x) \\ & - \int_{ B_r(x_0) } u (x)  \;  \mathrm{div}\left( \theta(x_0) \psi\left(\frac{|x-x_0|}{r} \right)  + \theta(x_0) \frac{|x-x_0|}{r} \psi'\left(\frac{|x-x_0|}{r}\right) \right) \dx 
\\  & =  - \int_{ B_r(x_0) }  u (x)  \;  \mathrm{div}\left( \theta(x_0) \psi\left(\frac{|x-x_0|}{r} \right)  + \theta(x_0) \frac{|x-x_0|}{r} \psi'\left(\frac{|x-x_0|}{r}\right) \right) \dx 
\\ & =  - \int_{ B_r(x_0) }  (u (x) - u(x_0))   \;  \mathrm{div}\left( \theta(x_0) \psi\left(\frac{|x-x_0|}{r} \right)  + \theta(x_0) \frac{|x-x_0|}{r} \psi'\left(\frac{|x-x_0|}{r}\right) \right) \dx
\\ & = - \int_{B_r(x_0)} \theta(x_0) \cdot (x-x_0)  \mathrm{div}\left( \theta(x_0) \psi\left(\frac{|x-x_0|}{r} \right)  + \theta(x_0) \frac{|x-x_0|}{r} \psi'\left(\frac{|x-x_0|}{r}\right) \right) \dx
\\ & + \int_{B_r(x_0)} \frac{1}{2} Q(x_0) |(x-x_0, \nu(x_0))|   \mathrm{div}\left( \theta(x_0) \psi\left(\frac{|x-x_0|}{r} \right)  + \theta(x_0) \frac{|x-x_0|}{r} \psi'\left(\frac{|x-x_0|}{r}\right) \right) \dx
\\ & - \int_{B_r(x_0)} o(|x-x_0|) \mathrm{div}\left( \theta(x_0) \psi\left(\frac{|x-x_0|}{r} \right)  + \theta(x_0) \frac{|x-x_0|}{r} \psi'\left(\frac{|x-x_0|}{r}\right) \right)  \dx .
\end{align}
Now note that 
\begin{align}
&\left\vert \mathrm{div}\left( \theta(x_0) \psi\left(\frac{|x-x_0|}{r} \right)  + \theta(x_0) \frac{|x-x_0|}{r} \psi'\left(\frac{|x-x_0|}{r}\right) \right) \right\vert  \\ & = \left\vert \frac{\theta(x_0)\cdot (x-x_0)}{r|x-x_0|} \left( 2\psi' \left( \frac{|x-x_0|}{r}  \right)  + \frac{|x-x_0|}{r} \psi''\left( \frac{|x-x_0|}{r} \right) \right) \right\vert
 \leq \frac{|\theta(x_0)| (2||\psi'||_{L^\infty} + || \psi''||_{L^\infty} )}{r}.
\end{align}
Hence 
\begin{align}
& \int_{B_r(x_0)} o(|x-x_0|) \mathrm{div}\left( \theta(x_0) \psi\left(\frac{|x-x_0|}{r} \right)  + \theta(x_0) \frac{|x-x_0|}{r} \psi'\left(\frac{|x-x_0|}{r}\right) \right)
\\ &  \quad = o(r) |B_r(x_0)| \frac{|\theta(x_0)| (2||\psi'||_{L^\infty} + || \psi''||_{L^\infty} )}{r} = o(r) \alpha_n r^{n-1} |\theta(x_0)| (2||\psi'||_{L^\infty} + || \psi''||_{L^\infty} )
\\ &= o(1) r^n |\theta(x_0)|   \label{eq:alphainfbere}.
\end{align}
Moreover integrating by parts once more (using that $\phi_r \in C_0^\infty(B_r(x_0)$ and $\nabla |(x,z)| = \mathrm{sgn}(x,z) z$ in the sense of weak derivatives) we find 
\begin{align}
&\int_{B_r(x_0)} \frac{1}{2} Q(x_0) |(x-x_0, \nu(x_0))|   \mathrm{div}\left( \theta(x_0) \psi\left(\frac{|x-x_0|}{r} \right)  + \theta(x_0) \frac{|x-x_0|}{r} \psi'\left(\frac{|x-x_0|}{r}\right) \right) \dx
\\ &  = - \int_{B_r(x_0)} \frac{1}{2} Q(x_0) \mathrm{sgn}((x-x_0, \nu(x_0)) \nu(x_0) \cdot \left( \theta(x_0) \psi\left(\frac{|x-x_0|}{r} \right)  + \theta(x_0) \frac{|x-x_0|}{r} \psi'\left(\frac{|x-x_0|}{r}\right) \right) \dx
\\ &=  -\int_{B_r(0)} \frac{1}{2} Q(x_0) \mathrm{sgn}((y, \nu(x_0)) \nu(x_0) \cdot \left( \theta(x_0) \psi\left(\frac{|y|}{r} \right)  + \theta(x_0) \frac{|y|}{r} \psi'\left(\frac{|y|}{r}\right) \right) \dx = 0 ,\label{eq:Qbere}
\end{align}
since the integrand is antisymmetric with respect to the transformation $y \mapsto - y$, though the set $B_r(0)$ is symmetric with respect to this transformation. 
Furthermore we infer
\begin{align}
& - \int_{B_r(x_0)} \Big[\theta(x_0) \cdot (x-x_0) \Big] \mathrm{div}\left( \theta(x_0) \psi\left(\frac{|x-x_0|}{r} \right)  + \theta(x_0) \frac{|x-x_0|}{r} \psi'\left(\frac{|x-x_0|}{r}\right) \right) \dx
%\\ &  
%= - \int_{B_r(x_0)} \Big[\alpha(x_0)\cdot (x-x_0) \Big] \Big[ \alpha(x_0) \cdot \nabla \left( \psi\left( \frac{|x-x_0|}{r} \right) + \frac{|x-x_0|}{r} \psi' \left( \frac{|x-x_0|}{r} \right) \right) \Big] \dx
\\ &
 = - \int_{B_r(x_0)} \Big[\theta(x_0)\cdot (x-x_0) \Big] \Big[ \theta(x_0) \cdot \left( 2 \psi' \left( \frac{|x-x_0|}{r} \right) \frac{x-x_0}{r|x-x_0|} + \psi'' \left( \frac{|x-x_0|}{r} \right) \frac{(x-x_0)}{r^2}  \right) \Big] \dx 
 \\ &
  = - \int_{B_r(x_0)} \Big[\theta(x_0)\cdot (x-x_0) \Big]^2 \left( 2 \psi' \left( \frac{|x-x_0|}{r} \right) \frac{1}{r|x-x_0|} + \psi'' \left( \frac{|x-x_0|}{r} \right) \frac{1}{r^2} \right) \dx
  \\ & 
  = - \int_{B_r(0)} \Big[\theta(x_0) \cdot z \Big]^2 \left( 2 \psi' \left( \frac{|z|}{r} \right) \frac{1}{r|z|} + \psi'' \left( \frac{|z|}{r} \right) \frac{1}{r^2} \right) \dz
  \\ & = - |\theta(x_0)|^2 \int_{B_r(0)} \Big[\frac{\theta(x_0)}{|\theta(x_0)|} \cdot z \Big]^2 \left( 2 \psi' \left( \frac{|z|}{r} \right) \frac{1}{r|z|} + \psi'' \left( \frac{|z|}{r} \right) \frac{1}{r^2} \right) \dz.
%\\ & = \int_{B_r(x_0)} \alpha(x_0)  \left( \alpha(x_0) \psi\left(\frac{|x-x_0|}{r} \right)  + \alpha(x_0) \frac{|x-x_0|}{r} \psi'\left(\frac{|x-x_0|}{r}\right) \right) \dx
%\\ & = \int_{B_r(x_0)} |\alpha(x_0)|^2 \left( \psi\left(\frac{|x-x_0|}{r} \right)  +  \frac{|x-x_0|}{r} \psi'\left(\frac{|x-x_0|}{r}\right)\right) 
%\\ & = \int_{0}^r | \alpha(x_0)|^2 \int_{\partial B_s(x_0)} \left( \psi\left(\frac{|x-x_0|}{r} \right)  +  \frac{|x-x_0|}{r} \psi'\left(\frac{|x-x_0|}{r}\right)\right)  \; \mathrm{d}\mathcal{H}^{n-1}(x) \ds \\ 
%& = \int_0^r |\alpha(x_0)|^2 \left( \psi \left(\frac{s}{r} \right) + \frac{s}{r} \psi'\left(\frac{s}{r} \right) \right) \omega_{n-1} s^{n-1} \ds \\ &  = \omega_{n-1} |\alpha(x_0)|^2 \left(  \int_0^r  s^{n-1} \psi\left(\frac{s}{r}\right) \ds + \int_0^r \frac{s^n}{r} \psi'\left(\frac{s}{r}\right)  \ds \right) 
%\\ & = \omega_{n-1} |\alpha(x_0)|^2  \left( \int_0^r  s^{n-1} \psi \left( \frac{s}{r} \right) \ds + \left[  {s^n}\psi\left( \frac{s}{r} \right) \right]_{s=0}^{s=r} - \int_0^r  n \psi\left( \frac{s}{r} \right) s^n \ds \right)
%\\ & = \omega_{n-1} (1-n) |\alpha(x_0)|^2 \int_0^r s^{n-1} \psi \left( \frac{s}{r} \right) \ds 
\end{align}
Applying an orthogonal tranformation that maps $\frac{\theta(x_0)}{|\theta(x_0)|}$ to $e_n$, the $n$-th unit vector, and using the symmetry of the expression with respect to the labeling of coordinates we find
\begin{align}
    & - \int_{B_r(x_0)} \theta(x_0) \cdot (x-x_0)  \mathrm{div}\left( \theta(x_0) \psi\left(\frac{|x-x_0|}{r} \right)  +\theta(x_0) \frac{|x-x_0|}{r} \psi'\left(\frac{|x-x_0|}{r}\right) \right) \dx
    \\ & 
    = - |\theta(x_0)|^2 \int_{B_r(0)} z_n^2 \left( 2 \psi' \left( \frac{|z|}{r} \right) \frac{1}{r|z|} + \psi'' \left( \frac{|z|}{r} \right) \frac{1}{r^2} \right) \dz.
    \\ & 
    = - \frac{|\theta(x_0)|^2}{n}  \int_{B_r(0)}(z_1^2 + ... + z_n^2) \left( 2 \psi' \left( \frac{|z|}{r} \right) \frac{1}{r|z|} + \psi'' \left( \frac{|z|}{r} \right) \frac{1}{r^2} \right) \dz.
    \\ &= 
    - \frac{|\theta(x_0)|^2}{n}  \int_{B_r(0)} \left( 2 \psi' \left( \frac{|z|}{r} \right) \frac{|z|}{r} + \psi'' \left( \frac{|z|}{r} \right) \frac{|z|^2}{r^2} \right) \dz
    \\ & = 
     - \frac{|\theta(x_0)|^2}{n} \int_0^r \omega_n s^{n-1}\left( 2 \psi' \left( \frac{s}{r} \right) \frac{s}{r} + \psi'' \left( \frac{s}{r} \right) \frac{s^2}{r^2} \right) \ds 
     \\
     & =  - \frac{|\theta(x_0)|^2\omega_n}{n} \left( \int_0^r  2 \psi' \left( \frac{s}{r} \right) \frac{s^n}{r} \ds + \left[ \psi'\left( \frac{s}{r} \right) \frac{s^{n+1}}{r} \right]_{s=0}^{s=r} - \int_0^r (n+1) \psi'\left( \frac{s}{r} \right) \frac{s^n}{r} \ds \right)
     \\ &
      =  - \frac{|\theta(x_0)|^2\omega_n}{n} \int_0^r (1-n)\psi'\left( \frac{s}{r} \right) \frac{s^n}{r} \ds = \frac{(n-1)\omega_n}{n}|\theta(x_0)|^2 r^n \int_0^1 \psi'(u) u^n \; \mathrm{d}u. 
\end{align}
Note that by the choice of $\psi$ one has
\begin{equation}
    I_n := \int_0^1 \psi'(u) u^n  \; \mathrm{d}u < 0.
\end{equation}
This, \eqref{eq:Qbere}, \eqref{eq:alphainfbere} together with \eqref{eq:maincompu} yield 
\begin{equation}
\frac{(n-1) \omega_n I_n}{n} |\theta(x_0)|^2 r^n 
+ o(1) r^n |\theta(x_0)|  = \int_{\Gamma \cap B_r(x_0)} Q \phi_r \; \mathrm{d}\mathcal{H}^{n-1}.  
\end{equation}
Taking absolute values  we infer 
\begin{equation}
\frac{(n-1) \omega_n I_n}{n} |\theta(x_0)|^2 r^n \leq o(1) r^n |(x_0)|+ ||Q||_{L^\infty} \int_{\Gamma \cap B_r(x_0)} |\phi_r| \; \mathrm{d}\mathcal{H}^{n-1}.
\end{equation}
Since $|\phi_r(x)| \leq |\theta(x_0)| r$ on $B_r(x_0)$ one has
\begin{equation}
\frac{(n-1) \omega_n I_n}{n} |\theta(x_0)|^2 r^n \leq o(1) r^n |\theta(x_0)| + ||Q||_{L^\infty} |\theta(x_0)| r \mathcal{H}^{n-1} ( \Gamma \cap B_r(x_0) )  
\end{equation}
Now dividing by $\frac{(n-1) \omega_n I_n}{n} r^n|\theta(x_0)| $ we obtain 
\begin{equation}
|\theta(x_0)|  \leq o(1)   +\frac{n}{(n-1) I_n} ||Q||_{\infty} \frac{\mathcal{H}^{n-1}( \Gamma \cap B_r(x_0)) }{\omega_{n}r^{n-1}}.
\end{equation}
We can now let $r \rightarrow 0 +$. Since $\Gamma = \partial \Omega' \in C^\infty$ we find that $\frac{\mathcal{H}^{n-1}( \Gamma \cap B_r(x_0)) }{\omega_{n}r^{n-1}} \rightarrow 1$, $(r \rightarrow 0 )$, (cf. \cite[Theorem 5.14 and 5.15]{EvGar}). It follows that 
\begin{equation}
|\theta(x_0)| \leq C(n) ||Q||_{L^\infty},
\end{equation}
as claimed. 
\end{proof}

\begin{cor}(A priori estimate) \label{cor:410}
Suppose that $\Gamma = \partial \Omega' \in C^\infty$ and $Q \in W^{2,p}(\Omega)$ for some $p > n$. Then the solution $u$ of \eqref{eq:measdir} lies in $C^{0,1}(\overline{\Omega})$ and satisfies
\begin{equation}
||\nabla u ||_{L^\infty ( \Omega ) }\leq C_1(n) ||Q||_{L^\infty(\Omega) } + C_2(\Omega, \mathrm{dist}(\Gamma, \partial \Omega)) ||u||_{L^\infty(\Omega)},
\end{equation} 
where $C_2$ depends increasingly on $\mathrm{dist}(\Gamma, \partial \Omega) + \frac{1}{\mathrm{dist(\Gamma, \partial \Omega)}}$.
Moreover, 
   \begin{equation}
    ||\nabla u ||_{L^\infty(\Omega)}  \leq C(\Omega, \mathrm{dist}(\Gamma, \partial \Omega), [\Gamma]_{Lip})||Q||_{L^\infty(\Omega)}, 
\end{equation}
where $C$ depends increasingly on $[\Gamma]_{Lip}$ and $\mathrm{dist}(\Gamma, \partial \Omega) + \frac{1}{\mathrm{dist(\Gamma, \partial \Omega)}}$
%where $D_2$ depends on $\mathrm{dist(\Gamma, \partial \Omega)}$ in the same way as $C_2$.
\end{cor}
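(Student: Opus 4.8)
Overview of the plan. This corollary is an assembly of the machinery built up in Section \ref{sec:4}, so the strategy is to check that the hypotheses of the preceding lemmas are all met under the standing assumptions $\Gamma=\partial\Omega'\in C^\infty$ and $Q\in W^{2,p}(\Omega)$, $p>n$, and then to chain the estimates. First I would record two consequences of the hypotheses. Since $p>n$, Sobolev embedding gives $Q\in C^0(\overline\Omega)$, hence $Q|_\Gamma\in C^0(\Gamma)$; and $\Gamma\in C^\infty\subset C^2$, so a tubular neighbourhood argument (Appendix \ref{app:sign}) furnishes an $\epsilon>0$ for which every $\Gamma_{\epsilon'}$ with $\epsilon'\le\epsilon$ is a $C^2$-domain and $d_{\Omega'}\in C^2(\Gamma_\epsilon)$. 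The Corollary following Lemma \ref{lem:regudist} then yields $u\in C^{0,1}(\overline\Omega)$, and Corollary \ref{cor:BVreg} yields $\nabla u\in BV(\Omega;\mathbb{R}^n)$. This is precisely the regularity needed to apply Lemma \ref{lem:tronGamma}, the Corollary containing \eqref{eq:cor48}, and Lemma \ref{lem:49}.

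First inequality. I would combine \eqref{eq:cor48} with Lemma \ref{lem:49}: \eqref{eq:cor48} gives $||\nabla u||_{L^\infty(\Omega)}\le ||\theta||_{L^\infty(\Gamma)}+\tfrac12||Q||_{L^\infty(\Gamma)}+C(\Omega,\mathrm{dist}(\Gamma,\partial\Omega))||u||_{L^\infty(\Omega)}$ with $C(\Omega,\cdot)$ increasing in $D+\tfrac1D$, while Lemma \ref{lem:49} bounds $||\theta||_{L^\infty(\Gamma)}\le C(n)||Q||_{L^\infty(\Gamma)}$ by a purely dimensional constant. Using $||Q||_{L^\infty(\Gamma)}\le||Q||_{L^\infty(\Omega)}$ and absorbing $C(n)+\tfrac12$ into $C_1(n)$, with $C_2(\Omega,\cdot):=C(\Omega,\cdot)$, gives the claimed bound. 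The point I would stress is that $C_1$ depends only on $n$ and $C_2$ only on $\Omega$ and $\mathrm{dist}(\Gamma,\partial\Omega)$, never on $\Gamma$ itself — this $\Gamma$-independence is what makes the subsequent approximation argument work.

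Second inequality and obstacle. For the ``moreover'' part I would eliminate $||u||_{L^\infty(\Omega)}$ using the $L^\infty$-bound for the measure problem: Lemma \ref{lem:310} gives $||u||_{L^\infty(\Omega)}\le D(\Omega)\,||Q\,\mathcal H^{n-1}\mres\Gamma||_{W_0^{1,1}(\Omega)^*}$, and \eqref{eq:estiF} of Lemma \ref{lem:divF} bounds the right-hand side by $D(\Omega)C(\Omega)[\Gamma]_{Lip}||Q||_{L^\infty(\Gamma)}$. Substituting into the first inequality and collecting terms yields $||\nabla u||_{L^\infty(\Omega)}\le C(\Omega,\mathrm{dist}(\Gamma,\partial\Omega),[\Gamma]_{Lip})||Q||_{L^\infty(\Omega)}$, and since $[\Gamma]_{Lip}$ enters linearly (multiplied by $C_2$) and $\mathrm{dist}(\Gamma,\partial\Omega)$ only through $C_2$, the final constant is monotone in $[\Gamma]_{Lip}$ and in $\mathrm{dist}(\Gamma,\partial\Omega)+\tfrac{1}{\mathrm{dist}(\Gamma,\partial\Omega)}$ as asserted. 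There is no genuine obstacle in this corollary itself; the substantive work lives in the earlier results (notably Lemma \ref{lem:49} and the $BV$-maximum principle behind \eqref{eq:cor48}). The only real care required is bookkeeping: tracking exactly which geometric quantity each intermediate constant depends on, so that the final constants have the stated — and crucially $\Gamma$-independent, respectively monotone — form.
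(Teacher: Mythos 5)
Your proposal is correct and follows the paper's proof essentially verbatim: the first inequality is obtained by combining Corollary~\ref{cor:BVreg} (to secure $\nabla u \in BV$, after $C^{0,1}$-regularity from the unnamed corollary after Lemma~\ref{lem:regudist}), \eqref{eq:cor48}, and Lemma~\ref{lem:49}; the second inequality then follows by substituting the $L^\infty$-bound from Lemma~\ref{lem:310} together with the $W_0^{1,1}(\Omega)^*$-estimate \eqref{eq:estiF} of Lemma~\ref{lem:divF}. The only addition you make is to spell out the hypothesis-checking (Sobolev embedding for $Q$, the tubular neighbourhood for $\Gamma$) that the paper leaves implicit, which is harmless.
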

\begin{proof}
The first estimate is a direct consequence of Corollary \ref{cor:BVreg}, \eqref{eq:cor48} and Lemma \ref{lem:49}. The second estimate follows from the first one, Lemma \ref{lem:310} and Lemma \ref{lem:divF}.
\end{proof}

\subsection{Completion of the proof by approximation}

\begin{proof}[Proof of Theorem \ref{thm:lipreg}]
Let $\Omega, \Omega', \Gamma,Q$ be as in the statement and $v$ be the unique solution to \eqref{eq:measdir}.\\
\textbf{Step 1:} Assume additionally that $\Gamma= \partial \Omega' \in C^\infty $  and $Q \in W^{2,p}(\Omega)$ for some $p > n$. Then the claim follows from Lemma \ref{lem:310} and  Corollary \ref{cor:410}. \\
\textbf{Step 2:} Assume additionally that $\Gamma = \partial \Omega' \in C^{\infty}$ and $Q \in C(\Gamma)$. By Tietze's extension theorem we can find $\widetilde{Q} \in C(\mathbb{R}^n)$ such that $\widetilde{Q} \vert_{\Gamma} = Q$. For fixed $\epsilon \in (0,1)$ let $\psi_\epsilon$ be the standard mollifier and note that $Q_\epsilon := \widetilde{Q} * \psi_\epsilon \vert_\Omega$ satisfies  $Q_\epsilon \in W^{2,\infty}(\Omega)$ and $Q_\epsilon$ converges to $Q$ uniformly on $\Gamma$.  Note also that for $\epsilon < 1$ one has $||Q_\epsilon||_{L^\infty(\Omega)} \leq ||\widetilde{Q}||_{L^\infty(B_1(\Omega))} < \infty$, where $B_1(\Omega):= \{x \in \mathbb{R}^n: \mathrm{dist}(x,\Omega) <1 \}$. Let $v_\epsilon \in L^1(\Omega)$ be the unique solution of 
\begin{equation}
\begin{cases}
    - \Delta v_\epsilon  = Q_\epsilon \vert_\Gamma \; \mathcal{H}^{n-1} \mres \Gamma  & \textrm{in } \Omega,  \\
   \quad \; \;  v_\epsilon = 0 & \textrm{on } \partial \Omega.
    \end{cases}
\end{equation}
By Step 1 and Corollary \ref{cor:410} we have $v_\epsilon \in C^{0,1}(\overline{\Omega})$ and
\begin{equation}
    ||\nabla v_\epsilon||_{L^\infty(\Omega)} \leq C(\Omega, \mathrm{dist}(\Gamma,\partial \Omega), [\Gamma]_{Lip}) ||Q_\epsilon||_{L^\infty(\Gamma)} \leq  C(\Omega, \mathrm{dist}(\Gamma,\partial \Omega), [\Gamma]_{Lip}) ||\widetilde{Q}||_{L^\infty(B_1(\Omega))}.
\end{equation}

Recalling that $v_\epsilon\vert_{\partial \Omega} = 0$ we find that  $(v_\epsilon)_{\epsilon \in (0,1)}$ defines a bounded family in $W^{1,\infty}(\Omega)$. Using Proposition \ref{prop:C01approx} we obtain a sequence $\epsilon_n \rightarrow 0$ and $u \in C^{0,1}(\overline{\Omega})$ such that $u \vert_{\partial \Omega} = 0$ and $v_{\epsilon_n} \rightarrow u$ uniformly on $\overline{\Omega}$. Moreover
\begin{equation}
    ||\nabla u||_{L^\infty(\Omega)} \leq \liminf_{n \rightarrow \infty} ||\nabla v_{\epsilon_n} ||_{L^\infty(\Omega)} \leq C(\Omega, \mathrm{dist}(\Gamma,\partial \Omega), [\Gamma]_{Lip})||\widetilde{Q}||_{L^\infty(B_1(\Omega))}.
\end{equation}
We claim next that $u = v$. From this follows the $C^{0,1}$-regularity  of $v$ by the previous inequality. We show that $u$ solves \eqref{eq:measdir} with the same data as $v$. To this end fix $\phi \in C^2(\overline{\Omega})$ such that $\phi \vert_{\partial \Omega} = 0$ and observe
\begin{equation}
    \int_\Omega u \Delta \phi \dx = \lim_{n \rightarrow \infty} \int_\Omega v_{\epsilon_n} \Delta \phi \dx = - \lim_{n \rightarrow \infty} \int_\Gamma Q_{\epsilon_n} \phi \; \mathrm{d}\mathcal{H}^{n-1}  = \int_\Gamma Q \phi \; \mathrm{d}\mathcal{H}^{n-1},
\end{equation}
where the last identity holds due to uniform convergence. By uniqueness of solutions of \eqref{eq:measdir}, $v = u$ and we obtain also
\begin{equation}\label{eq:Step2}
    ||\nabla v||_{L^\infty(\Omega)} \leq C(\Omega, \mathrm{dist}(\Gamma,\partial \Omega) , [\Gamma]_{Lip} ) || \widetilde{Q}||_{L^\infty(B_1(\Omega))}.
\end{equation}
\\
\textbf{Step 3.} Assume now that $\Gamma = \partial \Omega'\in C^{0,1}$ and $Q \in C(\Gamma)$. By Tietze's extension theorem we can again choose $\widetilde{Q} \in C(\mathbb{R}^n)$ such that $\widetilde{Q}\vert_{\Gamma} = Q.$ Next we can choose for $U = \Omega'$ approximating domains $(U_k)_{k \in \mathbb{N}}$ as in Proposition \ref{prop:dompert}. By Proposition \ref{prop:35} one has that 
$\mathrm{dist}(U_k,\partial \Omega) \rightarrow \mathrm{dist}(\Gamma, \partial \Omega) > 0 $ and hence there exists $\delta > 0$ such that $\mathrm{dist}(U_k, \partial \Omega) > \delta$. Moreover by the same proposition there exists $L> 0$ such that $[\partial U_k]_{Lip} < L$ for all $k$. Next we let $v_k \in L^1(\Omega)$ be the unique weak solution of 
\begin{equation}
    \begin{cases}
    - \Delta v_k = \widetilde{Q} \vert_{\partial U_k} \mathcal{H}^{n-1} \mres {\partial U_k} & \textrm{in } \Omega, \\
     \quad \; \; v_k =  0 & \textrm{on } \partial \Omega .
    \end{cases}
\end{equation}
We infer from Step 2 and \eqref{eq:Step2} that $v_k \in C^{0,1}(\overline{\Omega})$ and
\begin{equation}\label{eq:Step3mid}
    ||\nabla v_k||_{L^\infty(\Omega)} \leq C(\Omega, \mathrm{dist}(\partial U_k,\partial \Omega) , [\partial U_k]_{Lip} ) || \widetilde{Q}||_{L^\infty(B_1(\Omega))}.
\end{equation}
Since $\mathrm{dist}(\partial U_k, \partial \Omega) > \delta $ and $[\partial U_k]_{Lip} < L$ we infer that $\nabla v_k$ is uniformly bounded in $L^\infty( \Omega)$. Further, since $v_k \vert_{\partial \Omega} = 0$, we find that $v_k$ is uniformly bounded in $W^{1,\infty}(\Omega)$. By Proposition \ref{prop:C01approx} we may assume that (up to a subsequence, which we do not relabel) $v_k \rightarrow u$ uniformly on $\overline{\Omega}$ for some $u \in C^{0,1}(\overline{\Omega})$. We show next that $u = v$, i.e. $u$ solves \eqref{eq:measdir}. To this end we compute for $\phi \in C^2(\overline{\Omega})$ with $\phi \vert_{\partial \Omega} = 0$ that 
\begin{equation}
    \int_\Omega u \Delta \phi \; \mathrm{d}x = \lim_{k \rightarrow \infty} \int_{\Omega} v_k \Delta \phi \; \mathrm{d}x = \lim_{k \rightarrow \infty} \int_{\partial U_k} \widetilde{Q} \phi \; \mathrm{d}\mathcal{H}^{n-1} = \int_\Gamma \widetilde{Q} \phi \; \mathrm{d}\mathcal{H}^{n-1},
\end{equation}
where we used Proposition \ref{prop:35} in the last identity. By uniqueness we infer that $v = u$. Using that $\widetilde{Q} \vert_\Gamma = Q$, we obtain the claim. Note that by \eqref{eq:Step3mid} and the fact that $\mathrm{dist}(\partial U_k, \partial \Omega) \rightarrow \mathrm{dist}(\Gamma, \partial \Omega)$ and $[\partial U_k]_{Lip} \leq L$ we obtain
\begin{equation}
    ||\nabla v ||_{L^\infty(\Omega)} \leq C( \Omega , \mathrm{dist}( \Gamma, \partial \Omega) , L) || \widetilde{Q}||_{L^\infty(B_1(\Omega))},
\end{equation}
for some $L> 0$.
We remark finally that we may choose an extension $\widetilde{Q}$  of $Q$ such that $|| \widetilde{Q}||_{L^\infty(B_1(\Omega))} = ||Q||_{L^\infty( \Gamma)}$. Indeed, if $\widetilde{Q} \in C(\mathbb{R}^n)$ is an arbitary extension then $$\widetilde{\widetilde{Q}} := \max\{- ||Q||_{L^\infty(\Gamma)} , \min \{ \widetilde{Q}, ||Q||_{L^\infty(\Gamma)} \} \} \in C(\mathbb{R}^n)$$ does the job. We obtain that for some $L  > 0$
\begin{equation}\label{eq:Step3fin}
    ||\nabla v ||_{L^\infty(\Omega)} \leq C( \Omega , \mathrm{dist}( \Gamma, \partial \Omega) , L) ||Q||_{L^\infty(\Gamma)}.
\end{equation}
\\
\textbf{Step 4.} Assume the assumptions in the statement, i.e. $\Gamma$ has Lipschitz boundary and $Q \in L^\infty(\Gamma)$ is arbitrary. Let $L$ be as in Step 3. 
By Proposition \ref{prop:L^infapp} there exists a sequence $(Q_k)_{k \in \mathbb{N}} \subset C(\Gamma)$ such that $||Q_k||_{L^\infty(\Gamma)} \leq ||Q||_{L^\infty(\Gamma)}$ and $Q_k \rightarrow Q $ in $L^1(\Gamma)$. Again let $v_k \in L^1(\Omega)$ be the weak solutions to 
\begin{equation}
    \begin{cases}
    - \Delta v_k = Q_k \;  \mathcal{H}^{n-1}\mres \Gamma  & \textrm{in } \Omega \\ \quad \; \;  v_k = 0 & \textrm{on } \partial \Omega 
    \end{cases}
\end{equation}
By \eqref{eq:Step3fin}  and   $v_k\vert_{\partial \Omega} = 0$ we infer that $(v_k)_{k \in \mathbb{N}}$ is uniformly bounded in $W^{1,\infty}(\Omega)$ and 
    \begin{equation}
    ||\nabla v_k ||_{L^\infty(\Omega)} \leq C( \Omega , \mathrm{dist}( \Gamma, \partial \Omega) , L) ||Q_k||_{L^\infty(\Gamma)} \leq C( \Omega , \mathrm{dist}( \Gamma, \partial \Omega) , L) ||Q||_{L^\infty(\Gamma)}.
\end{equation}
By Proposition \ref{prop:C01approx} we infer that a subsequence (which we do not relabel) has a uniform limit $u \in C^{0,1}(\overline{\Omega})$ satisfying
\begin{equation}
    ||\nabla u ||_{L^\infty(\Omega)} \leq C( \Omega , \mathrm{dist}( \Gamma, \partial \Omega) , L) ||Q||_{L^\infty(\Gamma)}.
\end{equation}
We show next that $u =v$ by showing again that $u$ solves $\eqref{eq:measdir}$. Indeed one has (since $Q_k \rightarrow Q$ in $L^1(\Gamma)$) for each $\phi \in C^2(\overline{\Omega})$ such that $\phi\vert_{\partial \Omega} = 0$
 \begin{equation}
     \int_\Omega u \Delta \phi \dx = \lim_{k \rightarrow \infty} \int_\Omega v_k \Delta \phi \dx = \lim_{k \rightarrow \infty} - \int_\Gamma Q_k \phi \; \mathrm{d}\mathcal{H}^{n-1} = - \int_\Gamma Q \phi \; \mathrm{d}\mathcal{H}^{n-1}.
 \end{equation}
 The claim follows.
\end{proof}

\section{Proof of Main Theorem \ref{thm:lipgraphmain}}

We have now shown the Lipschitz regularity for \emph{closed} Lipschitz manifolds. Next we want to discuss the case of general compact Lipschitz manifolds --- with or without boundary. To this end we first look at Lipschitz graphs. 

\begin{theorem}[Lipschitz regularity for Lipschitz graphs]\label{thm:lipgraph}
Suppose that $\Omega \subset \mathbb{R}^n$ is a bounded domain with smooth boundary. Let $\Gamma = \{ (y, f(y)) : y \in U \} \subset \subset \Omega$ be a graph of a Lipschitz function $f: U \rightarrow \mathbb{R}$, where $U \subset  \mathbb{R}^{n-1}$ is such that $\partial U$ is a set of vanishing $n-1$ dimensional Lebesgue measure. Further let $Q \in L^\infty(\Gamma)$. Then the unique solution of \eqref{eq:measdir} lies in $C^{0,1}(\overline{\Omega})$.
\end{theorem}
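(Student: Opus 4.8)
The plan is to deduce Theorem~\ref{thm:lipgraph} from Main Theorem~\ref{thm:lipreg} by realising $\Gamma$ (up to an $\mathcal{H}^{n-1}$-null set) as a Borel subset of the boundary of a genuine bounded Lipschitz domain $\Omega' \subset\subset \Omega$, and then extending $Q$ by zero. First I would extend $f$ to a globally Lipschitz function $\bar f \in W^{1,\infty}(\mathbb{R}^{n-1})$ with $\bar f|_U = f$ and the same Lipschitz constant $L$; the McShane extension $\bar f(x') := \inf_{y \in U}\big(f(y) + L\,|x'-y|\big)$ does the job. Since $\Gamma \subset\subset \Omega$, the projection $U$ is bounded, so $\overline U$ is compact; set $\delta := \dist(\overline\Gamma, \partial\Omega) > 0$. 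Fix a bounded Lipschitz (even $C^\infty$) domain $W \subset \mathbb{R}^{n-1}$ with $\overline U \subset W \subset \{x' : \dist(x', U) < \eta\}$ and some $\epsilon > 0$ with $(1+L)\eta + \epsilon < \delta$, and put
\[
\Omega' := \{\, (x', x_n) : x' \in W,\ \bar f(x') - \epsilon < x_n < \bar f(x') \,\}.
\]

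Next I would verify that $\Omega'$ has the required properties. For $x' \in \overline W$ pick $y \in \overline U$ with $|x'-y| \le \eta$; for $x_n \in [\bar f(x')-\epsilon, \bar f(x')]$ one has $|(x',x_n) - (y,\bar f(y))| \le |x'-y| + |x_n-\bar f(x')| + |\bar f(x')-\bar f(y)| \le (1+L)\eta + \epsilon < \delta$, and since $(y,\bar f(y)) \in \overline\Gamma$ this gives $\overline{\Omega'} \subset \Omega$. To see that $\partial\Omega'$ is Lipschitz, use the bi-Lipschitz change of variables $\Phi(x',x_n) := (x', x_n - \bar f(x'))$, whose inverse is $(y',y_n) \mapsto (y', y_n + \bar f(y'))$, and which maps $\Omega'$ onto the cylinder $W \times (-\epsilon,0)$; as $W$ is a bounded Lipschitz domain, so is $W \times (-\epsilon,0)$, and bi-Lipschitz images of bounded Lipschitz domains are again bounded Lipschitz domains (cf.\ Appendix~\ref{app:lipschitzDom}). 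Finally, since $U$ is open and $U \subset W$, the graph $\Gamma = \mathrm{graph}(f|_U) = \mathrm{graph}(\bar f|_U)$ is a Borel subset of $\mathrm{graph}(\bar f|_W) \subset \partial\Omega'$.

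With $\Omega'$ in hand, define $\widetilde Q : \partial\Omega' \to \mathbb{R}$ by $\widetilde Q := Q$ on $\Gamma$ and $\widetilde Q := 0$ on $\partial\Omega' \setminus \Gamma$, so that $\widetilde Q \in L^\infty(\partial\Omega')$ with $\|\widetilde Q\|_{L^\infty(\partial\Omega')} = \|Q\|_{L^\infty(\Gamma)}$. Since $\Gamma$ is Borel in $\partial\Omega'$, the restriction of $\mathcal{H}^{n-1}\mres\partial\Omega'$ to $\Gamma$ equals $\mathcal{H}^{n-1}\mres\Gamma$, and because $\partial U$ is $\mathcal{H}^{n-1}$-null the part of $\mathrm{graph}(\bar f|_W)$ lying over $\overline U \setminus U$ carries no mass (so that $\Gamma$ and $\overline\Gamma$ induce the same surface measure and ``the unique solution of \eqref{eq:measdir}'' is unambiguous). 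Hence $\widetilde Q\,\mathcal{H}^{n-1}\mres\partial\Omega' = Q\,\mathcal{H}^{n-1}\mres\Gamma$ as finite signed measures on $\Omega$. Therefore the unique weak solution $v$ of \eqref{eq:measdir} for the data $(\Gamma, Q)$ coincides with the unique weak solution of \eqref{eq:measdir} for the data $(\partial\Omega', \widetilde Q)$, and Main Theorem~\ref{thm:lipreg} — applicable since $\Omega$ has smooth boundary, $\Omega' \subset\subset \Omega$ is open with Lipschitz boundary $\partial\Omega'$, and $\widetilde Q \in L^\infty(\partial\Omega')$ — yields $v \in C^{0,1}(\overline\Omega)$.

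I expect the only delicate point — the main obstacle — to be the geometric construction of $\Omega'$, which must simultaneously be compactly contained in $\Omega$, have Lipschitz boundary, and contain $\Gamma$ in its boundary. The straightening map $\Phi$ takes care of the Lipschitz regularity once $W$ is chosen as a sufficiently tight bounded Lipschitz enlargement of $U$; everything else reduces to bookkeeping with Hausdorff measures and an appeal to Main Theorem~\ref{thm:lipreg}. (The same slab-thickening device, carried out chart by chart with a partition of unity, is then what one would use to pass from Lipschitz graphs to arbitrary compact Lipschitz manifolds in Main Theorem~\ref{thm:lipgraphmain}.)
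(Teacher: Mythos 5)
Your overall strategy is sound and in fact a streamlined variant of the paper's: the paper also reduces Theorem~\ref{thm:lipgraph} to Main Theorem~\ref{thm:lipreg} by realising $\Gamma$ inside the boundary of a compactly contained Lipschitz domain and extending $Q$ by zero, but because its chosen $\Omega'$ (the whole subgraph within the full box $R = U\times(\cdot)$) need not fit inside $\Omega$, it must first localise over finitely many balls $B_{\delta(p_i)}(p_i)$, split $\Gamma$ into disjoint pieces $\Gamma_i$, and superpose the corresponding solutions. By replacing the full subgraph by a thin slab of height $\epsilon$ directly under the graph over a small smooth enlargement $W \supset \overline U$, you dispense with the localisation and superposition entirely, which is a genuine simplification. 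The measure-theoretic bookkeeping (restriction of $\mathcal{H}^{n-1}\mres\partial\Omega'$ to $\Gamma$, negligibility of the graph over $\partial U$ via the area formula) and the final appeal to Main Theorem~\ref{thm:lipreg} are correct.

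The gap is in the single step that carries the geometric weight: you justify that $\Omega'$ is a Lipschitz domain by asserting that ``bi-Lipschitz images of bounded Lipschitz domains are again bounded Lipschitz domains'' and point to Appendix~\ref{app:lipschitzDom}. No such statement appears in the appendix, and the claim is false in general once $n\ge 3$: bi-Lipschitz maps preserve chord-arc/NTA-type conditions but not the local-Lipschitz-graph condition, and a bi-Lipschitz image of a ball in $\mathbb{R}^3$ need not be a Lipschitz domain. Fortunately the particular map you use is not arbitrary bi-Lipschitz but a vertical shear $\Phi^{-1}(y',y_n)=(y',y_n+\bar f(y'))$, and \emph{that} does preserve the Lipschitz property of the cylinder $W\times(-\epsilon,0)$; but this needs a direct argument rather than the false general principle. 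Concretely: away from the two edges the boundary is trivially a graph; near a top (or bottom) edge, in local coordinates where $\partial W$ is $\{x_{n-1}=g(z)\}$, $\Omega'$ is locally $\{x_{n-1}<g(z)\}\cap\{x_n<\bar f(z,x_{n-1})\}$, and after rotating in the $(x_{n-1},x_n)$-plane by an angle $\alpha$ with $\tan\alpha>\mathrm{Lip}(f)$ both half-spaces become subgraphs $\{u<h_1\}$, $\{u<h_2\}$ with Lipschitz $h_1,h_2$ (the second by solving $\sin\alpha\, u+\cos\alpha\, v=\bar f(z,\cos\alpha\, u-\sin\alpha\, v)$ for $u$, using $\sin\alpha-\mathrm{Lip}(f)\cos\alpha>0$), so locally $\Omega'=\{u<\min(h_1,h_2)\}$ is a Lipschitz subgraph. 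This is exactly the style of argument behind Proposition~\ref{prop:Lipschitzsub}, and you should cite or reproduce that argument rather than a non-existent bi-Lipschitz invariance. Once this is repaired, the proof goes through and is, if anything, cleaner than the paper's.
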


\begin{proof}
Let $\Gamma,Q$ be as in the statement and $v \in L^1(\Omega)$ be the unique solution of \eqref{eq:measdir}.
First note that $f$ is uniformly continuous on $U$ and hence extends uniquely to a Lipschitz continuous function on $\overline{U}$. We divide the proof in two steps.\\
\textbf{Step 1}: We assume additionally that there exists $\epsilon >0$ such that $R := U  \times (- \inf_U f - \epsilon, \sup_U f + \epsilon) \subset \subset \Omega$.  We define 
$
    \Omega' := \{(x,y) : (x,y) \in R , y > f(x) \} . 
$
Note that $\Omega' \subset \subset \Omega$ is a domain with Lipschitz boundary, cf. Proposition \ref{prop:Lipschitzsub}. Moreover the boundary is cointained in $\Gamma \cup \partial R$. Define $\widetilde{Q} : \Gamma \cup \partial R \rightarrow \mathbb{R}$ via
\begin{equation}
    \widetilde{Q}(x) := \begin{cases} Q(x) & x \in \Gamma, \\ 0 &  x \in \partial R .
    \end{cases} 
\end{equation}
This is well-defined as $\partial R \cap \Gamma = \emptyset$   and we have $ \widetilde{Q} \in L^\infty(\Omega)$.
Next let $u \in L^1(\Omega)$ be the unique weak solution of 
\begin{equation}
    \begin{cases}
    -\Delta u = \widetilde{Q} \;  \mathcal{H}^{n-1}\mres \partial \Omega'  & \textrm{in }  \Omega, \\
   \quad \; \;  u = 0 & \textrm{on } \partial \Omega.
    \end{cases}
\end{equation}
We infer from Main Theorem \ref{thm:lipreg} that $u \in C^{0,1}(\overline{\Omega})$. Next we show that $v = u$. To this end, we show that $u$ solves \eqref{eq:measdir}. Indeed, let $\phi \in C^2(\overline{\Omega})$ be such that $\phi\vert_{\partial \Omega} = 0$. Then by the choice of $\widetilde{Q}$ we have
\begin{equation}
    \int_\Omega u \Delta \phi \dx = \int_{\partial \Omega'} \widetilde{Q} \phi \; \mathrm{d}\mathcal{H}^{n-1} = \int_\Gamma Q \phi \; \mathrm{d}\mathcal{H}^{n-1}. 
\end{equation}
By uniqueness we infer that $u = v$ and thus $v \in C^{0,1}(\overline{\Omega})$.
\\
\textbf{Step 2:} Let $\Gamma$ be as in the statement without any further assumption. We claim that 
$\Gamma$ can be written as union of finitely many disjoint Lipschitz graphs $(\Gamma_i)_{i =1}^r$ all of which satisfy the additional assumption of Step 1. To this end let $p \in \overline{U}$ be arbitrary. Then $\mathrm{dist}((p, f(p)), \partial \Omega) > 0$. Define $\epsilon(p):= \frac{1}{2\sqrt{5}} \mathrm{dist}((p, f(p)), \partial \Omega)$ and choose $\delta(p) \in (0 , \epsilon(p))$ such that for all $q \in \overline{U} \cap B_{\delta(p)}(p)$ one has $|f(p)-f(q)| < \epsilon(p)$. We claim that $R(p) := (U \cap B_{\delta(p)}(p)) \times (\inf_{B_{\delta(p)}(p)} f - \epsilon(p), \sup_{B_{\delta(p)}(p)} f + \epsilon(p)) \subset \subset \Omega$. Indeed, if $z =(z', z_n) \in R(p)$ we can find $q \in B_{\delta(p)} (p) $ such that $|z_n - f(q)| < \epsilon (p)$. Now
\begin{equation}
    |z-(p,f(p))|^2 \leq \delta(p)^2 + |z_n- f(p)|^2 < \epsilon(p)^2 + (|z_n - f(q)| + |f(q)- f(p)|)^2 \leq 5 \epsilon(p)^2.
\end{equation}
The choice of $\epsilon(p)$ implies that $|z- (p,f(p))| < \frac{1}{2} \mathrm{dist}((p,f(p)), \partial \Omega)$ for all $z \in R(p)$ and hence we infer that $R(p) \subset \subset \Omega$. Note that $\overline{U} \subset \bigcup_{p \in \overline{U}} B_{\delta(p)}(p)$ and hence we may select a finite family $(p_i)_{i = 1}^r$ such that $\overline{U} =  \bigcup_{i = 1}^r ( \overline{U} \cap B_{\delta(p_i)} (p_i))$. For $i = 1,..., r$ define 
\begin{equation}
    \Gamma_i := \left\lbrace (x, f(x)) : x \in U \cap B_{\delta(p_i)}(p_i) \setminus \bigcup_{j = 1}^{i-1} ( \overline{U \cap B_{\delta(p_j)} (p_j))} \right\rbrace,
\end{equation}
\begin{equation}
    R_i := \left( U \cap B_{\delta(p_i)}(p_i) \setminus \bigcup_{j = 1}^{i-1} ( \overline{U \cap {B}_{\delta(p_j)} (p_j)}) \right) \times (\inf_{B_{\delta(p_i)}(p_i)} f - \epsilon(p_i) + \sup_{B_{\delta(p_i)}(p_i)} f + \epsilon(p_i) ) .
\end{equation}
We infer that $\Gamma \cap R_i = \Gamma_i$ 
and $\Gamma$ is up to a set of $\mathcal{H}^{n-1}$-measure zero the disjoint union of $(\Gamma_i)_{i = 1}^r$. Indeed, each $z \in \Gamma \setminus \bigcup_{i = 1}^r \Gamma_i$ must be given by $z = (y,f(y))$ for some $y \in \bigcup_{j=1}^r \partial ( U \cap B_{\delta(p_j)}(p_j))$. Now observe that by the area formula (cf. \cite[Section 3.3.2]{EvGar})
\begin{equation}\label{eq:Husmeas}
    \mathcal{H}^{n-1} \left( \left\lbrace (y, f(y)) : y \in \partial (U \cap B_{\delta(p_j)}(p_j)) \right\rbrace \right)   = 0 \quad \forall j \in \mathbb{N}.  
\end{equation}
Here we needed that $\partial U$ is an $n-1$ dimensional null set since  then $\partial (U \cap B_{\delta(p_j)}(p_j)) \subset \partial U \cup \partial B_{\delta(p_j)}(p_j)$ is also an $n-1$ dimensional null set.
Due to the fact that $\Gamma_i \subset R_i \subset \subset \Omega$ we conclude that $\Gamma_i$ satisfies the assumptions of Step 1. Since 
\begin{equation}
    \mathcal{H}^{n-1} \mres \Gamma = \sum_{i = 1}^r \mathcal{H}^{n-1} \mres \Gamma_i
\end{equation}
we conclude by the superposition principle and the uniqueness of the solution that 
\begin{equation}
    v = \sum_{ i = 1} v_i,
\end{equation}
where $v_i \in L^1(\Omega)$ is the unique solution of 
\begin{equation}
    \begin{cases}
    - \Delta v_i = Q \mathcal{H}^{n-1} \mres \Gamma_i  & \textrm{in }\Omega, \\
    \quad \; \; v_i = 0 & \textrm{on } \partial \Omega.
     \end{cases}
\end{equation}
By Step 1 $v_i \in C^{0,1}(\overline{\Omega})$ for all $i$ and hence we conclude that $v \in C^{0,1}(\overline{\Omega}).$
\end{proof}

\begin{remark}\label{rem:egal}
In the statement of the previous theorem we needed to require the domain of definition $U$ of $f$ satisfies that $\partial U$ is a set of $n-1$ dimensional Lebesgue measure zero. This in particular implies (cf. \cite[Section 3.3.2]{EvGar})
\begin{equation}
    \mathcal{H}^{n-1} ( \{ (y,f(y)) : y \in \partial U \} ) = 0 ,
\end{equation}
which in turn yields that the set $\Gamma$ coincides $\mathcal{H}^{n-1}$ a.e. with the (compact) Lipschitz manifold $\{ (y, f(y)): y \in \overline{U} \}$. Hence the previous theorem can already be seen as a result about compact Lipschitz manifolds.
\end{remark}

\begin{remark}\label{rem:partitionofunit}
Instead of the decomposition of $\Gamma$ in Step 2 of the proof of Theorem \ref{thm:lipgraph} we could as well argue with a partition of unity on $\Gamma$. Such partition of unity exists since $\Gamma$ can be understood as a smooth (!) manifold with a smooth structure generated by the chart $\psi : \Gamma \rightarrow \mathbb{R}^{n-1}$, $\psi(x,f(x)) := x, (x \in U)$. We decided here for the  decomposition approach since it is more elementary. 
%We decided here for the disjoint decomposition technique since it might be useful for precise localized $W^{1,\infty}$-estimates, which could be relevant in furture research.  Obtaining those localized estimates with partitions 
\end{remark}

Having shown the regularity for Lipschitz graphs it is easy to obtain the desired result for compact Lipschitz manifolds --- they can simply be covered by finitely many Lipschitz graphs.

\begin{proof}[Proof of Main Theorem \ref{thm:lipgraphmain}]
Let $v \in L^1(\Omega)$ be the unique solution of \eqref{eq:measdir} and $\Gamma,Q$ be as in the statement. 
Since $\Gamma$ is a compact Lipschitz manifold we can find $N \in \mathbb{N}$, $U_1,..., U_N \subset \mathbb{R}^{n-1}$ open or half-open rectangles (cf. Appendix \ref{app:lipschitzDom}), $V_1,...,V_N \subset \mathbb{R}$ open and bounded, $R_1,...,R_N \in O(n)$ and $f_i : U_i \rightarrow V_i$ Lipschitz functions such that 
\begin{equation}
    \Gamma \cap R_i (U_i \times V_i) = R_i \{ (x, f_i(x)) : x \in U_i \}.
\end{equation}
Note that all those Lipschitz functions $f_i: U_i \rightarrow V_i$ also have a Lipschitz  extension to the closure $\overline{U}_i$, which we will also call $f_i$. 
By possibly shrinking the sets $U_i$ we can also achieve that 
\begin{equation}\label{eq:bars}
    \Gamma \cap R_i (\overline{U_i} \times \overline{V_i}) = R_i \{ (x, f_i(x)) : x \in \overline{U_i} \} \quad \forall i = 1,...,N.
\end{equation}
We define 
$
    \Gamma_i := \Gamma \cap R_i (U_i \times V_i) \setminus \bigcup_{j = 1}^{i-1} \overline{\Gamma \cap R_j (U_j \times V_j)}.
$
We claim that up to a set of $\mathcal{H}^{n-1}$- measure zero $\Gamma$ is the disjoint union of $(\Gamma_i)_{i = 1}^N$. That $(\Gamma_i)_{i = 1}^N$ is a pairwise disjoint family is clear from the construction. To see that the union of $(\Gamma_i)_{i = 1}^N$ coincides up to a set of Hausdorff measure zero with $\Gamma$ it suffices to show that 
\begin{equation}
    \mathcal{H}^{n-1} \left( \overline{\Gamma \cap R_i(U_i \times V_i) } \setminus ( \Gamma \cap R_i(U_i \times V_i) \right) = 0 \quad \forall i = 1,...,N.
\end{equation}
This becomes obvious when observing by rotational invariance of the Hausdorff measure and the area formula  
\begin{align}
 \mathcal{H}^{n-1} \left( \overline{\Gamma \cap R_i(U_i \times V_i) } \setminus ( \Gamma \cap R_i(U_i \times V_i) \right) & \leq \mathcal{H}^{n-1}( \Gamma \cap R_i (  \overline{U}_i \setminus U_i \times \mathbb{R} ) )
 \\ & \leq \mathcal{H}^{n-1}( R_i \{(x, f_i(x)): x \in \overline{U}_i \setminus U_i \} ) = 0.
 \end{align} 
 By the superposition principle we conclude that 
 \begin{equation}\label{eq:superposi}
     v = \sum_{i = 1}^N v_i ,
 \end{equation}
where $v_i \in L^1(\Omega)$ is the unique solution to 
\begin{equation}
\begin{cases}
    -\Delta v_i = Q \mathcal{H}^{n-1}\mres \Gamma_i & \textrm{in } \Omega, \\
   \quad \; \;  v_i = 0 & \textrm{on } \partial \Omega. \end{cases} \quad \quad  (i = 1,...,N)
\end{equation}
We show next that $v_i \in C^{0,1}( \overline{\Omega})$ for all $i$. This and \eqref{eq:superposi} will then imply the claim. To show the desired Lipschitz regularity we apply Theorem \ref{thm:lipgraph}. To this end define for all $i = 1,...,N$ the set $K_i := R_i^{-1}\left( \bigcup_{j = 1}^{i-1} \overline{\Gamma \cap R_j ( U_j \times V_j)} \right)$. Observe that $K_i \subset \Gamma$ is compact and $C_i := \{ x' \in \overline{U_i} : \exists z \in \overline{V_i} \; s.t. \;  (x',z) \in R_i^{-1}(\Gamma) \cap K_i \}$ is also compact. We rewrite 
\begin{align}
    \Gamma_i & = \Gamma \cap R_i (U_i \times V_i)  \setminus R_i K_i 
    = R_i \{ ( x, f_i(x)) : x \in U_i \} \setminus R_i K_i 
    \\ &= R_i \left( \{ ( x, f_i(x)) : x \in U_i \} \setminus K_i \right) 
   = R_i \left( \{ (x, f_i(x) ) : x \in U_i \setminus C_i \} \right).\label{eq:Gammai}
\end{align}
This representation brings us closer to applicability of Theorem \ref{thm:lipgraph} - we only need to discuss the rotation $R_i$ and the domain of definition $U_i \setminus C_i$. %, which is not necessarily an open rectangle. %That $U_i \setminus C_i$ is not an open rectangle. The fact that $U_i \setminus C_i$ is not an open rectangle is not a problem, since by \ref{rem:egal} 
One  needs to show that  $\partial (U_i \setminus C_i)  $ has vanishing $n-1$ dimensional Lebesgue measure.
%This is indeed the case since 
%\begin{equation}
%    | \overline{U_i \setminus C_i} \setminus (U_i \setminus C_i) | \leq | \overline{U_i} \setminus U_i| = 0,
%\end{equation}
%by the choice of $U_i$.
To this end observe that 
$U_i = (id, f_i)^{-1}( R_i^{-1}(\Gamma) \cap (U_i \times V_i))$ and by \eqref{eq:bars} one has $C_i = (id, f_i)^{-1}(R_i^{-1}(\Gamma) \cap K_i \cap ( \overline{U}_i \times \overline{V}_i))$. This yields 
\begin{equation}
    (id, f_i)(U_i \setminus C_i) = R_i^{-1}(\Gamma \cap R_i(U_i \times V_i) ) \setminus K_i = R_i^{-1} \left(\Gamma \cap (U_i \times V_i) \setminus \bigcup_{j =1 }^{i-1} \overline{\Gamma \cap R_j(U_j \times V_j)}   \right). 
\end{equation}
Now if $\Pi: \mathbb{R}^n \rightarrow \mathbb{R}^{n-1}$ denotes the projection on the first $n-1$ coordinates one has 
\begin{align}
    U_i \setminus C_i & = \Pi \left( R_i^{-1} \left(\Gamma \cap R_i(U_i \times V_i) \setminus \bigcup_{j =1 }^{i-1} \overline{\Gamma \cap R_j(U_j \times V_j)}   \right) \right),\\
   \overline{U_i \setminus C_i} & = \Pi \left( R_i^{-1} \left(\overline{\Gamma \cap R_i(U_i \times V_i)} \setminus \bigcup_{j =1 }^{i-1} \overline{\Gamma \cap R_j(U_j \times V_j)}   \right) \right).
\end{align}
This yields immediately that 
\begin{equation}
    |\partial (U_i \setminus C_i) | = |\overline{U_i \setminus C_i} \setminus (U_i \setminus C_i)| \leq \sum_{j = 1}^{i} \mathcal{H}^{n-1} ( \overline{\Gamma \cap R_j(U_j \times V_j)} \setminus \Gamma \cap R_j(U_j \times V_j) ) = 0.
\end{equation}
We next get rid of the rotation $R_i$ by using the rotation invariance of the Laplacian. To this end claim that $w_i := v_i( R_i (\cdot)) $ solves 
\begin{equation}\label{eq:lapwi}
    \begin{cases}
    - \Delta w_i = (Q \circ R_i) \mathcal{H}^{n-1} \mres \{ (x, f_i(x)) : x \in U_i \setminus C_i \} & \textrm{in } R_i^{-1}\Omega, \\
    \quad \; \; w_i = 0 & \textrm{on } \partial (R_i^{-1} \Omega).
    \end{cases}
\end{equation}
Once this is shown it follows from Theorem \ref{thm:lipgraph}
 and Remark \ref{rem:egal} that $w_i \in C^{0,1}( \overline{ R_i \Omega})$ which in turn implies that $v_i = w_i ( R_i^{-1}(\cdot)) $ lies in $C^{0,1}(\overline{\Omega})$. It only remains to show \eqref{eq:lapwi}. To this end let $\phi \in C^2(\overline{R_i^{-1} \Omega})$ be  such that $\phi_{\vert_{\partial (R_i^{-1} \Omega)}} = 0$. Using that $|\mathrm{det}(R_i)| = 1$, the rotational invariance of $\Delta$ and of $\mathcal{H}^{n-1}$ and \eqref{eq:Gammai}  we obtain
 \begin{align}
     \int_{R_i^{-1} \Omega} w_i(x) \Delta \phi(x) \dx & =  \int_{R_i^{-1} \Omega } v_i(R_i x) \Delta \phi (x) \dx = \int_\Omega v_i(y) \Delta \phi (R_i y) \dy 
     \\ & = \int_{\Omega} v_i(y) \Delta ( \phi \circ R_i) (y) \dy = \int_{\Gamma_i} Q(z) \phi(R_i(z)) \; \mathrm{d}\mathcal{H}^{n-1}(z)
     \\ & = \int_{R_i\left( \{ (x, f_i(x) ) : x \in U_i \setminus C_i \} \right)} Q(z) \phi(R_i(z)) \; \mathrm{d}\mathcal{H}^{n-1}(z) 
     \\ & = \int_{\left( \{ (x, f_i(x) ) : x \in U_i \setminus C_i \} \right)} (Q\circ R_i)(s) \phi(s) \; \mathrm{d}\mathcal{H}^{n-1}(z) .
 \end{align}
 This shows \eqref{eq:lapwi} and the claim follows then  as discussed above from Theorem \ref{thm:lipgraph}, Remark \ref{rem:egal} and \eqref{eq:superposi}.
 \end{proof}

\section{Application: Regularity for the biharmonic Alt-Caffarelli Problem}\label{sec:appli}

\subsection{Description of the problem}

In this section we apply our findings to the biharmonic Alt-Caffarelli Problem in two dimensions, which we will introduce now. Suppose that $\Omega \subset \mathbb{R}^2$ is a smooth domain and $u_0 \in C^\infty( \overline{\Omega}), u_0 > 0$. We define
\begin{equation}
    \mathcal{A}(u_0) := \{ u \in W^{2,2}(\Omega) : u - u_0 \in W_0^{1,2}(\Omega)\} 
\end{equation}
and the energy functional $\mathcal{E}: \mathcal{A}(u_0) \rightarrow \mathbb{R}$ given by 
\begin{equation}
    \mathcal{E}(u) := \int_\Omega ( \Delta u )^2 \dx + |\{ u > 0 \}| \quad ( u \in \mathcal{A}(u_0)) , 
\end{equation}
where $| \cdot |$ denotes the Lebesgue measure. The energy balances out two conflicting interests: Minimizers must be nonpositive on a large set but at the same time have a small bending energy, for details see \cite{Marius}. Minimizing $\mathcal{E}$ in $\mathcal{A}(u_0)$ and the study of properties of minimizers has recently raised a lot of interest, cf. \cite{Dipierro2}, \cite{Dipierro1}, \cite{Marius}.

By \cite[Lemma 2.1]{Dipierro2} there exists $w \in \mathcal{A}(u_0)$ such that 
\begin{equation}\label{eq:minialt}
    \mathcal{E}(w)= \inf_{u \in \mathcal{A}(u_0)} \mathcal{E}(u).
\end{equation}
In \cite[Theorem 1.4]{Marius} it has been shown that $w \in C^2(\Omega)$, $\Omega' := \{w < 0 \} \subset \subset \Omega$ has $C^2$-boundary, given by $\Gamma := \{ w = 0\}$, and $\nabla w \neq 0$ on $\Gamma$. Moreover
$v := -\Delta w$ solves 
\begin{equation}
    \int v \Delta \phi \dx  =  - \int_{\Gamma} \frac{\phi}{2|\nabla w|} \; \mathrm{d}\mathcal{H}^1 \quad \forall \phi \in C^2(\overline{\Omega}):\;  \phi \vert_{\partial \Omega} = 0.   
\end{equation}
In our language, $v$ is a weak solution of 
\begin{equation}\label{eq:lapsol}
    \begin{cases}
    - \Delta v = Q \;  \mathcal{H}^1 \mres \Gamma & \textrm{in} \; \Omega, \\
  \quad \; \;   v = 0 & \textrm{on} \; \partial \Omega,
    \end{cases}
\end{equation}
where $Q(x) := \frac{1}{2|\nabla w(x)|}$ for all $x \in \Gamma$. 
\subsection{Optimal regularity}
\begin{prop}(Optimal regularity for $v = \Delta w$)
Suppose that $w \in \mathcal{A}(u_0)$ is a solution of \eqref{eq:minialt}. Then $v= \Delta w$ lies in $C^{0,1}(\overline{\Omega})$ and $\nabla v \in BV(\Omega)$
\end{prop}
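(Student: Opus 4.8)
The plan is to deduce both assertions from the structural description of the minimizer $w$ recalled above together with Main Theorem~\ref{thm:lipreg} and Corollary~\ref{cor:BVreg}. By \cite{Marius} (as quoted above) we have $w \in C^2(\Omega)$, the set $\Omega' = \{w<0\} \subset\subset \Omega$ is a domain with $C^2$-boundary $\Gamma = \{w=0\} = \partial\Omega'$, $\nabla w$ does not vanish on $\Gamma$, and $v = \Delta w$ is, up to sign, the unique weak solution of \eqref{eq:lapsol}, with $Q = \tfrac{1}{2|\nabla w|}$ on $\Gamma$. Since $\Gamma$ is compact and $\nabla w$ is continuous and nowhere zero on $\Gamma$, we get $Q \in C^0(\Gamma) \subset L^\infty(\Gamma)$, and $\Gamma = \partial\Omega'$ is a closed $C^2$- (in particular Lipschitz-) hypersurface with $\Omega' \subset\subset \Omega$. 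As $C^{0,1}$-regularity is insensitive to the sign of the density, Main Theorem~\ref{thm:lipreg} applies and yields $v \in C^{0,1}(\overline{\Omega})$.

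It remains to prove $\nabla v \in BV(\Omega;\mathbb{R}^2)$, for which I intend to invoke Corollary~\ref{cor:BVreg}. Its hypotheses are $\Gamma = \partial\Omega' \in C^2$ --- already available --- and $Q \in W^{2,p}(\Omega)$ for some $p > n = 2$, i.e.\ that $Q$ possesses a $W^{2,p}$-extension to $\Omega$. To produce such an extension I bootstrap the regularity of $w$ near $\Gamma$. Having shown $v \in C^{0,1}(\overline\Omega)$ we have in particular $v \in W^{1,p}_{\mathrm{loc}}(\Omega)$ for all $p < \infty$ (this also follows from Proposition~\ref{prop:C0alpha}). Since $w \in C^2(\Omega)$, for each $i$ the function $\partial_i w \in C^1(\Omega)$ solves, in the distributional sense, $\Delta(\partial_i w) = \partial_i(\Delta w) = \partial_i v \in L^p_{\mathrm{loc}}(\Omega)$, so interior $L^p$-estimates give $\partial_i w \in W^{2,p}_{\mathrm{loc}}(\Omega)$; hence $w \in W^{3,p}_{\mathrm{loc}}(\Omega)$ for every $p < \infty$. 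Fix $p > 2$.

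Then $\nabla w \in W^{2,p}_{\mathrm{loc}}(\Omega;\mathbb{R}^2)$. Because $\Gamma$ is compact and $\nabla w$ is continuous and nonvanishing there, we may pick an open neighbourhood $\mathcal{N} \subset\subset \Omega$ of $\Gamma$ and a constant $c > 0$ with $|\nabla w|^2 \geq c^2$ on $\mathcal{N}$. On the bounded set $\mathcal{N}$ the space $W^{2,p}(\mathcal{N})$ embeds into $C^1(\overline{\mathcal{N}})$ and is an algebra, so $|\nabla w|^2 = \sum_i (\partial_i w)^2 \in W^{2,p}(\mathcal{N})$; composing with $t \mapsto \tfrac{1}{2\sqrt t}$, which is smooth on $[c^2,\infty)$, gives $Q = \tfrac{1}{2\sqrt{|\nabla w|^2}} \in W^{2,p}(\mathcal{N})$. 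Taking $\eta \in C_c^\infty(\mathcal{N})$ with $\eta \equiv 1$ near $\Gamma$, the function $\eta Q$ extended by zero lies in $W^{2,p}(\Omega)$ and restricts to $Q$ on $\Gamma$. Thus Corollary~\ref{cor:BVreg} applies and $\nabla v \in BV(\Omega;\mathbb{R}^n)$, finishing the proof.

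The only real obstacle is the bootstrap step. Although $v$ genuinely jumps in its gradient across $\Gamma$ (so $v \notin W^{2,q}$ for any $q$), it is nonetheless regular enough --- $W^{1,p}$, indeed Lipschitz --- that plain interior elliptic regularity upgrades $w$ from $C^2$ to $W^{3,p}_{\mathrm{loc}}$, and the non-degeneracy $\nabla w \neq 0$ on $\Gamma$ is exactly what lets one transfer this to $Q = \tfrac{1}{2|\nabla w|}$ in a neighbourhood of $\Gamma$. The remainder is bookkeeping around Main Theorem~\ref{thm:lipreg} and Corollary~\ref{cor:BVreg}.
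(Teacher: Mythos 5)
Your proposal is correct and follows essentially the same route as the paper: $C^{0,1}$-regularity from Main Theorem \ref{thm:lipreg} using $Q = \tfrac{1}{2|\nabla w|} \in L^\infty(\Gamma)$ via the non-degeneracy $\nabla w \neq 0$ on $\Gamma$, and $BV$-regularity of $\nabla v$ from Corollary \ref{cor:BVreg} after bootstrapping $\Delta(\partial_i w) = \partial_i v \in L^p_{\mathrm{loc}}$ to get $w \in W^{3,p}_{\mathrm{loc}}$ and then building a $W^{2,p}$-extension of $Q$ by localizing near $\Gamma$. Your treatment of the extension step (using that $W^{2,p}$ is an algebra for $p > n$ and that composition with the smooth map $t\mapsto \tfrac{1}{2\sqrt{t}}$ on $[c^2,\infty)$ preserves $W^{2,p}$) is a little more explicit than the paper's brief "by the chain rule," but the content is the same.
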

\begin{proof}
Let $w,v$ be as in the statement and let $Q, \Omega', \Gamma$ be defined as in the beginning of this section. 
Since by \cite[Theorem 1.4]{Marius} $\nabla w \in C^1(\Omega)$ and $\nabla w \neq 0$ on $\Gamma \subset \subset \Omega$ we have that $Q = \frac{1}{2|\nabla w| } \in L^\infty(\Gamma)$. By \eqref{eq:lapsol} and Theorem \ref{thm:lipreg} we obtain that $v \in C^{0,1}(\overline{\Omega})$. Hence also $\nabla v \in L^\infty(\Omega)$. In order to obtain that $\nabla v \in BV(\Omega)$ we intend to use Corollary \ref{cor:BVreg}. To this end we have to show that $Q = \frac{1}{2|\nabla w|}$ extends to a function in $W^{2,p}(\Omega)$ for some $p > 2$. Since $\nabla w \in C^1$ and $\nabla w \neq 0$ on $\Gamma$  there exists an open neighborhood $U \subset \subset \Omega$ of $\Gamma$ and $\delta > 0$ that $|\nabla w|> \delta$ on $U$. Next we choose $\eta \in C_0^\infty(U)$ auch that $\eta \equiv 1$ on $\Gamma$ and define 
\begin{equation}
    \widetilde{Q}(x) := \begin{cases}  \frac{1}{|\nabla w(x)|}\eta (x) & x \in U, \\ 0 & x \in \Omega \setminus U. \end{cases}
\end{equation}
Obviously, $\widetilde{Q}$ is an extension of $Q$. We show next that $\widetilde{Q} \in W^{2,p}(\Omega)$ for some $p > 2$. Since $U \subset \subset \Omega$ it is sufficient to show by the chain rule that $\partial_i w \in W^{2,p}_{loc}(\Omega)$ for all $i = 1,...,n$. To this end we compute for all $\phi \in C_0^\infty(\Omega)$
\begin{equation}
    \int_\Omega \nabla \partial_i w \nabla \phi \dx  = \int_{\Omega} w \partial_i \Delta \phi \dx  = \int_{\Omega} w \Delta \partial_i \phi \dx  = \int_\Omega \Delta w \partial_i \phi \dx  = -\int_\Omega \partial_i v \phi \dx.
\end{equation}
This makes $\partial_i w$ a weak solution of $ \Delta (\partial_i w) = \partial_i v \in L^\infty(\Omega)$. Standard regularity theory implies that $ \partial_i w \in W^{2,p}_{loc}(\Omega)$ for all $p \in (1,\infty)$. Choosing any $p \in (2,\infty)$ we obtain the claim and the desired BV-regularity follows immediately by Corollary \ref{cor:BVreg}.
\end{proof}
This implies that $w \in C^2(\Omega)$ is a (classical) solution of 
\begin{equation}
    \begin{cases}
    - \Delta w= -v \in W^{1,\infty}(\Omega) & \textrm{in } \Omega \\
    \quad \; \; w = u_0 \in C^\infty( \overline{\Omega}) &  \textrm{on } \partial \Omega
    \end{cases}
\end{equation}
By elliptic regularity of the Laplacian (cf. \cite[Theorem 8.13]{GilTru}) one infers that $w \in W^{3,p}(\Omega)$ for all $p \in [1,\infty)$. Note that for $p = \infty$ it is not possible to conclude anything with this elliptic regularity argument. The question whether $w \in W^{3,\infty}(\Omega)$ is still unanswered. However at least $v = \Delta w$ lies in $W^{1,\infty}(\Omega)$ which is optimal. Whether further (variational or potential theoretic) arguments for the $W^{3,\infty}$-regularity can be found will be subject to future research. 
%The optimal regularity of the Laplacian paves the way for further analysis in this direction. 
%\subsection{A coupled problem}
%\section{Higher regularity}

%\section{Nonlinear problems (?)}

\appendix

\section{The (signed) distance function}\label{app:sign}
In this section we will define the signed distance function and discuss some of its basic properties. 
For a set $A \subset \mathbb{R}^n$ we define the \emph{$\epsilon$-parallel set} $A_\epsilon := \{ x \in \mathbb{R}^n : \mathrm{dist}(x,A) < \epsilon \}$. 

\begin{definition}
For $U \subset \mathbb{R}^n$ open and bounded we define 
\begin{equation}
d_U(x) := \begin{cases} 
\mathrm{dist}(x,U)  & x \in U^C,  \\ -\mathrm{dist}(x,U^C)  & x  \in U .
\end{cases}
\end{equation}
\end{definition}

\begin{lemma}[{cf. \cite[Appendix 14.6]{GilTru} }]\label{lem:signdist}
Let $U \subset \mathbb{R}^n$ be bounded with $C^k$-boundary, $k \geq 2$ and $S:= \partial U$. Then there exists $\epsilon_0 > 0$ such that  for all $\epsilon \in (0,\epsilon_0)$
\begin{itemize}
\item $d_U \in C^k( \overline{S_\epsilon})$,
\item For each $x \in S_\epsilon$ there exists a unique $\pi(x) \in S$ such that $|x-\pi(x)| = d_U(x)$, 
\item For all $x \in S_\epsilon$ one has $\nabla d_U(x) = \nu(\pi(x))$ where $\nu(z)$ denotes the outer unit normal of $U$ at $z$. 
\item For each $x \in S_\epsilon$ there exists an orthogonal matrix $S(x)$ such that $S(x)^TD^2d_U(x) S(x) = \mathrm{diag} \left(\frac{-\kappa_1(\pi(x))}{1- \kappa_1(\pi(x))d_U(x)},...\frac{-\kappa_{n-1}(\pi(x))}{1- \kappa_{n-1}(\pi(x))d_U(x)} \right)$, where $\kappa_1(z),..., \kappa_{n-1}(z)$ denote the principal curvatures of $S$ at $z$. In particular $\Delta d_U(x) = \sum_{i = 1}^{n-1} \frac{-\kappa_i(\pi(x))}{1- \kappa_i(\pi(x))d_U(x)}$ for all $x\in S_\epsilon$. 
\end{itemize}
\end{lemma}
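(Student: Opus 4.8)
This statement is classical --- it is essentially \cite[Lemma 14.16]{GilTru} --- so I would only sketch the argument. The plan is to reduce everything to a local computation near a boundary point, extract $C^{k-1}$-regularity from the inverse function theorem applied to the normal map, and then bootstrap to $C^{k}$ using the identity $\nabla d_U = \nu\circ\pi$. First I would fix $y_0\in S$ and, after a rigid motion, write $S$ near $y_0$ as a graph $x_n=\varphi(x')$ with $\varphi\in C^k$, $\varphi(0)=0$, $\nabla\varphi(0)=0$; then the outward unit normal $\nu(z)=(-\nabla\varphi(z'),1)/\sqrt{1+|\nabla\varphi(z')|^2}$ is $C^{k-1}$. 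The normal map $\Psi(z',t):=(z',\varphi(z'))+t\,\nu(z',\varphi(z'))$ is $C^{k-1}$ with differential equal to the identity at $(0,0)$, so by the inverse function theorem it is a $C^{k-1}$-diffeomorphism from a neighbourhood of $(0,0)$ onto a neighbourhood of $y_0$; writing $\Psi^{-1}(x)=(z'(x),t(x))$, both components are $C^{k-1}$.

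Next I would choose $\epsilon_0>0$ small enough --- using the compactness of $S$ and the uniform curvature bound coming from $\partial U\in C^2$ --- that for every $x\in S_{\epsilon_0}$ the point $P(x):=(z'(x),\varphi(z'(x)))\in S$ is the \emph{unique} nearest point of $S$ to $x$. This is the standard tubular-neighbourhood (reach) estimate: one compares $|x-P(x)|$ with $|x-w|$ for competitors $w\in S$ and uses that the normal foliation $\{\Psi(\cdot,t)\}_{|t|<\epsilon_0}$ does not self-intersect. Consequently $\pi\equiv P$ and $d_U(x)$ equals $t(x)$ up to the factor $\mathbf{1}_{U^C}-\mathbf{1}_U$, so $d_U\in C^{k-1}(\overline{S_{\epsilon_0}})$; the first two bullet points follow.

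To upgrade the regularity I would differentiate $d_U(x)^2=|x-\pi(x)|^2$: since the columns of $D\pi(x)$ span $T_{\pi(x)}S$ while $x-\pi(x)$ is normal to $S$ at $\pi(x)$, the cross term vanishes and one gets $\nabla d_U(x)=(x-\pi(x))/d_U(x)=\nu(\pi(x))$. As $\nu\in C^{k-1}(S)$ and $\pi\in C^{k-1}$, the right-hand side is $C^{k-1}$, hence $\nabla d_U\in C^{k-1}$ and $d_U\in C^k$, which gives the regularity claim and the third bullet. For the Hessian I would differentiate $x=\pi(x)+d_U(x)\,\nu(\pi(x))$ to obtain $I-\nu(\pi(x))\otimes\nu(\pi(x))=(I+d_U(x)\,D\nu(\pi(x)))\,D\pi(x)$, invert on $T_{\pi(x)}S$, and substitute into $D^2 d_U(x)=D\nu(\pi(x))\,D\pi(x)$. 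Diagonalising the Weingarten map $D\nu(\pi(x))$ in an orthonormal frame of principal directions (which also contains $\nu(\pi(x))$ as an eigenvector with eigenvalue $0$) yields precisely the stated diagonal form with $S(x)$ the orthogonal matrix of that frame, and the trace formula for $\Delta d_U$ is then immediate.

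The main obstacle is the choice of $\epsilon_0$: one must ensure that the normal neighbourhood is genuinely embedded and that the nearest point is unique, uniformly in $y_0\in S$ --- this is exactly where compactness of $S$ and the $C^2$ curvature bound are used. Once $\epsilon_0$ is fixed, the passage from $C^{k-1}$ to $C^k$ via $\nabla d_U=\nu\circ\pi$ and the Hessian identity are routine differentiations.
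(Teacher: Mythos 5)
The paper does not supply a proof of Lemma~\ref{lem:signdist} at all: it is stated in Appendix~\ref{app:sign} as a recollection of known facts and discharged by the citation to \cite[Appendix~14.6]{GilTru} (i.e.\ Lemma~14.16 there). So there is no in-paper argument to compare against, and the right benchmark is the cited reference.

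Your sketch reproduces precisely the structure of the Gilbarg--Trudinger proof: local graph coordinates making $\nu$ a $C^{k-1}$ field, the inverse function theorem for the normal exponential $\Psi(z',t)=(z',\varphi(z'))+t\,\nu(z',\varphi(z'))$ yielding $C^{k-1}$ regularity of $(z'(x),t(x))$ and hence of $\pi$ and $d_U$, a compactness argument fixing a uniform $\epsilon_0$ for the tubular neighbourhood, the gradient identity $\nabla d_U=\nu\circ\pi$ to bootstrap $d_U$ from $C^{k-1}$ to $C^{k}$ (but no further, since $\nu$ is only $C^{k-1}$), and the differentiation of $x=\pi(x)+d_U(x)\,\nu(\pi(x))$ combined with the Weingarten map to obtain the stated Hessian diagonalisation. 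Two small points you should spell out if writing this in full: (i) the identity $d_U(x)^2=|x-\pi(x)|^2$ and the subsequent division by $d_U(x)$ only make literal sense off $S$; on $S$ one either argues by continuity or differentiates $t(x)$ directly from the inverse of $\Psi$, which gives $\nabla d_U=\nu\circ\pi$ without dividing by zero. (ii) The uniqueness of the foot point $\pi(x)$ on $S_{\epsilon_0}$ is not an automatic consequence of $\Psi$ being a local diffeomorphism near each boundary point — one must rule out that a competitor $w\in S$ far from $\pi(x)$ could be closer; the usual fix is to first shrink to a uniform tubular radius via compactness and a lower bound on the reach of a $C^2$ hypersurface, exactly as you indicate. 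With these caveats made explicit the argument is complete and matches the cited source.
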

Observe also that $|d_U(x)| = \mathrm{dist}(x,\partial U)$ is Lipschitz continuous, which is often used throughout the article.

\section{Lipschitz Manifolds and Lipschitz domains}\label{app:lipschitzDom}
In this section we recall properties of Lipschitz domains that are useful for our purposes. Recall that a bounded domain $D \subset \mathbb{R}^n$ is a \emph{Lipschitz domain} if for all $x \in \partial D$ there exists $U \subset \mathbb{R}^{n-1}$ open and $V \subset \mathbb{R}$ open as well as a rotation matrix $R \in O(n)$  and a Lipschitz function $f \in C^{0,1}(W)$ such that $x \in R(U\times V)$ and 
\begin{equation}
    D \cap R(U \times V) = R\{ (\widetilde{x},y) : x \in U, y < f(\widetilde{x}) \} . 
\end{equation}
One may as well assume that $R$ is a rigid euclidean motion instead of just a rotation matrix. Since $U$ is not required to be connected one may require that $V= (-a,\infty)$ for some $a > 0$. 
In any case $\partial D$ is a \emph{Lipschitz manifold (without boundary)}, i.e. represented locally by the graph of  Lipschitz functions.

For a compact Lipschitz manifold $\Gamma$ (without boundary) we can look at the \emph{Lipschitz constant} of $\Gamma$ given by
\begin{align}\label{eq:GammaLipschitzNorm}
    [\Gamma]_{Lip} := \inf \Big\lbrace \sum_{i = 1}^N & \sqrt{1+ ||\nabla f_i||_{L^\infty(U_i)}^2} : \;  N \in \mathbb{N}, U_1,...,U_N \subset \mathbb{R}^{n-1} \; V_1,...,V_n \subset \mathbb{R} \textrm{open}, \\ &  R_1,...,R_N \in O(n) \textrm{ and }  f_1: U_1 \rightarrow V_1,....f_N: U_N \rightarrow V_N \textrm{ Lipschitz s.t.} \\ &  \Gamma \subset \bigcup_{i = 1}^N  R_i(U_i \times V_i) \textrm{ and } ( \Gamma \cap R_i(U_i \times V_i))  = R_i \{(x,f_i(x)): x \in U_i\}  \Big\rbrace.
\end{align}
The Lipschitz constant will appear at several occasions in this discussion, which is why we have a notation for it.  
\begin{remark}
A shorter way to express $[\Gamma]_{Lip}$ is 
\begin{equation}
    [\Gamma]_{Lip} = \sup_{\phi \in L^1(\Gamma), ||\phi||_{L^1} = 1} \int_\Gamma \phi \; \mathrm{d}\mathcal{H}^{n-1}.
\end{equation}
We will not need this formula in this article which is why we do not give a proof. It may however be useful in several discussions.
\end{remark}

We say that $\Gamma \subset \subset \mathbb{R}^n$ is a \emph{compact Lipschitz manifold with boundary} if $\Gamma$ is (up to rotation)  given by a union of finitely many graphs of Lipschitz functions $f: U \rightarrow \mathbb{R}$ that are either defined on an open rectangle $U \subset \mathbb{R}^{n-1}$ or on a set of the form $U = (a_1,b_1) \times ... \times (a_{n-2}, b_{n-2}) \times (a_{n-1}, b_{n-1}] $.

%If one also permits $U$ to be a finite union of such sets
One can define $[\Gamma]_{Lip}$ accordingly also for manifolds with boundary.

Next we will discuss properties of Lipschitz manifolds that we use throughout the article.

Another very important property for us is the approximation of Lipschitz domains with smooth domains. 

\begin{prop}[{Domain perturbation, cf. \cite[Theorem 8.3.1]{Daners} and \cite[Theorem 4.1 and 5.1]{Doktor}}]\label{prop:dompert}
Let $U \subset \mathbb{R}^n$ be a bounded Lipschitz domain. Then there exists a sequence $(U_k)_{k \in \mathbb{N}}$ of domains with $C^\infty$-boundary such that 
\begin{itemize}
\item $U \subset  U_k$ for all $k \in \mathbb{N}$, 
\item $\lim_{k \rightarrow \infty} | U_k \setminus U | = 0$,
\item for all $x \in \partial U$ there exists a  Euclidean motion $R_x$, an open neighborhood $B_x \subset \mathbb{R}^n$, a parameter $a_x > 0$ and a Lipschitz function $\psi^{(x)} : B_x \rightarrow \mathbb{R}$ such that $\psi^{(x)}(0)= 0$ that satisfies 
\begin{equation}
 U \cap R_x(B_x \times (-a_x , \infty) ) =R_x \{ (y', y_n) \in B_x \times (-a_x , \infty)  : y_n < \psi^{(x)} (y') \} 
\end{equation}
and for all $k \in \mathbb{N}$ there exists $\psi_k^{(x)} \in C^\infty(B_x))$ such that $\psi_k^{(x)} \geq \psi^{(x)}$ and 
\begin{equation}
 U_k \cap R_x(B_x \times (-a_x , \infty) )  =  R_x\{(y', y_n) \in B_x \times (-a_x , \infty)  : y_n < \psi_k^{(x)}(y') \},
\end{equation}
\item For all $x \in \partial U$ one has $\psi_k^{(x)} \rightarrow \psi^{(x)}$ uniformly in $B_x$,
\item There exists $M > 0$ such that for all $x \in \partial U$ one has $||\nabla \psi_k^{(x)}||_{L^\infty(B_x)} \leq M$ for all $k \in \mathbb{N}$,
\item For all $x \in \partial U$ and all $p \in [1, \infty)$ one has $\nabla \psi_k^{(x)} \rightarrow \nabla \psi^{(x)}$ in $L^p(B_x)$,
\item For each finite number $m \in \mathbb{N}$ and $x_1,...,x_m \in \partial U$ such that $\partial U \subset \bigcup_{j = 1}^m R_{x_j} (B_{x_j} \times (-a_{x_j}, \infty))$ one has $\partial U_k \subset \bigcup_{j = 1}^m R_{x_j} (B_{x_j} \times (-a_{x_j}, \infty))$ for all $k \geq k_0$.
\end{itemize}
\end{prop}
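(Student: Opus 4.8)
The plan is to treat this as a quantitative refinement of the classical fact that a Lipschitz domain can be exhausted from outside by smooth domains; the existence and smoothness of the $U_k$ is exactly the content of \cite[Theorem 8.3.1]{Daners} together with \cite[Theorems 4.1 and 5.1]{Doktor}, so the substance of the write-up is to recall the construction and read off the four uniform/convergence properties (uniform convergence of the local parametrizations, the bound $\|\nabla\psi_k^{(x)}\|_{L^\infty}\le M$ independent of $x$ and $k$, $L^p$-convergence of the gradients, and $|U_k\setminus U|\to 0$) that are actually used later in Step~3 of the proof of Main Theorem~\ref{thm:lipreg}.

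First I would fix a finite atlas for the compact set $\partial U$: boundary cylinders $R_{x_1}(B_{x_1}\times(-a_{x_1},\infty)),\dots,R_{x_m}(B_{x_m}\times(-a_{x_m},\infty))$ on each of which $U$ is the subgraph $\{y_n<\psi^{(x_j)}(y')\}$ of a Lipschitz function, and set $M_0:=\max_j\mathrm{Lip}(\psi^{(x_j)})$. The local building block is mollification: after a harmless shrinking of the cylinders, put $\psi_k^{(x_j)}:=\psi^{(x_j)}*\rho_{1/k}+\tfrac{\sqrt{n-1}\,M_0}{k}$. Standard mollifier estimates give $\psi_k^{(x_j)}\in C^\infty$; $\psi_k^{(x_j)}\ge\psi^{(x_j)}$ because the vertical shift dominates the error $\|\psi^{(x_j)}*\rho_{1/k}-\psi^{(x_j)}\|_{L^\infty}\le\sqrt{n-1}\,M_0/k$; $\psi_k^{(x_j)}\to\psi^{(x_j)}$ uniformly; $\|\nabla\psi_k^{(x_j)}\|_{L^\infty}\le\|\nabla\psi^{(x_j)}\|_{L^\infty}\le M_0$ by Jensen's inequality; and $\nabla\psi_k^{(x_j)}\to\nabla\psi^{(x_j)}$ in $L^p_{loc}$ for every $p<\infty$ since $\nabla\psi^{(x_j)}\in L^\infty\subset L^p_{loc}$. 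Then I would glue: choose a smooth partition of unity $\{\eta_0,\eta_1,\dots,\eta_m\}$ with $\mathrm{supp}\,\eta_j$ in the $j$-th cylinder for $j\ge1$ and $\mathrm{supp}\,\eta_0$ compactly inside $U$, and define $U_k$ by locally raising $\partial U$ using partition-of-unity combinations of the $\psi_k^{(x_l)}$, carried out in the fixed frame of each cylinder after reconciling the rotations $R_{x_l}$. Every such combination still lies above the original graph because each $\psi_k^{(x_l)}\ge\psi^{(x_l)}$, so $U\subset U_k$; smoothness of $\partial U_k$ follows from smoothness of the $\psi_k^{(x_l)}$ and the $\eta_l$; and at a given $x\in\partial U$ the resulting local parametrization $\psi_k^{(x)}$ converges uniformly to $\psi^{(x)}$. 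Applying the product rule to the combination gives $\|\nabla\psi_k^{(x)}\|_{L^\infty}\le M$ with $M$ depending only on $M_0$ and the fixed $\|\nabla\eta_l\|_{L^\infty}$, hence independent of $x$ and $k$; likewise $\nabla\psi_k^{(x)}\to\nabla\psi^{(x)}$ in every $L^p(B_x)$. Finally $|U_k\setminus U|\to 0$ follows from the uniform convergence of the boundary parametrizations, $|U|<\infty$, and dominated convergence, and the last bullet is immediate: once $k$ is large enough that $\partial U_k$ lies in a fixed uniform neighbourhood of $\partial U$, any finite subcover of $\partial U$ by the cylinders also covers $\partial U_k$.

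The hard part will be the gluing step: simultaneously reconciling the different coordinate frames $R_x$, keeping $\partial U_k$ smooth and trapped between $\partial U$ and a shrinking exterior collar, and controlling the gradients uniformly over \emph{all} boundary points at once. This bookkeeping is precisely what the constructions in \cite{Daners} and \cite{Doktor} organize, so in the write-up I would invoke those references for the existence and $C^\infty$-smoothness of the $U_k$ and for the local graph structure, and then supply only the elementary mollifier estimates above to obtain the additional uniformities that the later approximation argument relies on.
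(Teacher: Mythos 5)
The paper does not prove Proposition \ref{prop:dompert}; it is stated as a cited result from Daners and Doktor, so there is no in-paper proof to compare against. Since you, too, ultimately propose to invoke those references for the substance of the construction and only supply mollifier estimates for the additional uniform properties, your overall strategy matches the paper's.

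That said, the chart-wise mollification sketch you give in the middle has a real gap, which you yourself flag: the gluing step. After raising each local graph $\psi^{(x_j)}$ by mollification in its own rotated frame, a partition-of-unity combination does not obviously produce a well-defined open set $U_k$ with $C^\infty$ boundary, nor, in an arbitrary fixed frame $R_x$, a single smooth function $\psi_k^{(x)}$ that is still a graph over $B_x$, still dominates $\psi^{(x)}$, and has gradient controlled by $M_0$ and the cutoffs. The transition between frames acts nonlinearly on graph functions, so on overlaps the ``combination'' may fail to be a graph at all until the cutoffs and the geometry of the overlaps are carefully reconciled; this reconciliation is the nontrivial content of Doktor's construction (which works via a global regularization rather than chart-by-chart mollification), and it is precisely why the paper cites it instead of reproving it. So the right move, as you conclude at the end of your proposal, is to cite the construction outright and verify that the listed uniform properties are present in it, rather than to present the partition-of-unity mollification sketch as if it constituted a proof.
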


\begin{prop}\label{prop:35}
Let $U$, $(U_k)_{k \in \mathbb{N}}$ be as in the previous proposition and $\overline{U} \subset \Omega$ for a domain $\Omega$. Then
\begin{equation}
    \mathrm{dist}(\partial U_k,  \Omega^C) \rightarrow \mathrm{dist}(\partial U,\Omega^C) \quad (k \rightarrow \infty).
\end{equation}
In particular, there exists $k_0 \in \mathbb{N}$ such that for all $k \geq k_0$ one has $\overline{U_k}\subset \Omega$.
Furthermore there exists $L> 0$ such that $[\partial U_k]_{Lip} < L$  for all $k \in \mathbb{N}$ and for all $f \in C(\overline{\Omega})$ one has
\begin{equation}\label{eq:GWLippi}
   \lim_{k \rightarrow \infty} \int_{\partial U_k} f \; \mathrm{d}\mathcal{H}^{n-1}  = \int_{\partial U} f \; \mathrm{d}\mathcal{H}^{n-1}.
\end{equation}
\end{prop}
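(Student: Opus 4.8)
The plan is to establish the three assertions separately, reducing each to the uniform and $L^p$-convergence of the local boundary parametrizations provided by Proposition~\ref{prop:dompert}. Fix finitely many charts $R_{x_1}(B_{x_1}\times(-a_{x_1},\infty)),\dots,R_{x_m}(B_{x_m}\times(-a_{x_m},\infty))$ whose union $N$ covers $\partial U$; by the last item of Proposition~\ref{prop:dompert} there is $k_0$ with $\partial U_k\subset N$ for $k\ge k_0$, and on each chart $\partial U_k$, resp.\ $\partial U$, is the graph of $\psi_k^{(x_j)}$, resp.\ $\psi^{(x_j)}$, by the subgraph description there. Put $\epsilon_k:=\max_j\|\psi_k^{(x_j)}-\psi^{(x_j)}\|_{L^\infty(B_{x_j})}\to 0$.

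For the distance statement, recall $x\mapsto\mathrm{dist}(x,\Omega^C)$ is $1$-Lipschitz, so it suffices to bound the Hausdorff distance of $\partial U_k$ and $\partial U$ by $\epsilon_k$. Given a point $R_{x_j}(y',\psi_k^{(x_j)}(y'))$ of $\partial U_k$, the point $R_{x_j}(y',\psi^{(x_j)}(y'))$ lies on $\partial U$ (using $\psi^{(x_j)}\le\psi_k^{(x_j)}$ and that the charts are genuine graph charts) and within $\epsilon_k$; conversely, since $\psi_k^{(x_j)}\ge\psi^{(x_j)}$, raising a point of $\partial U$ to the graph of $\psi_k^{(x_j)}$ produces a point of $\partial U_k$ within $\epsilon_k$. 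Hence $|\mathrm{dist}(\partial U_k,\Omega^C)-\mathrm{dist}(\partial U,\Omega^C)|\le\epsilon_k\to 0$. Since $\overline U\subset\Omega$ with $\Omega^C$ closed and $\partial U$ compact, $\mathrm{dist}(\partial U,\Omega^C)>0$, so for $k$ large $\partial U_k\cap\Omega^C=\emptyset$; each connected component of $\Omega^C$ then lies entirely in $U_k$ or in $\mathbb{R}^n\setminus\overline{U_k}$, the unbounded one in the latter because $U_k$ is bounded, and a bounded component $K$ must also lie there because $|K|>0$ (as $\partial\Omega$ is smooth) while $K\cap U=\emptyset$, so $K\subset U_k$ would force $|U_k\setminus U|\ge|K|$, impossible for $k$ large. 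Thus $\overline{U_k}\subset\Omega$.

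For the uniform Lipschitz bound, for $k\ge k_0$ the fixed chart family together with the maps $\psi_k^{(x_j)}$ is admissible in the definition \eqref{eq:GammaLipschitzNorm} of $[\partial U_k]_{Lip}$, so with the uniform gradient bound $\|\nabla\psi_k^{(x)}\|_{L^\infty}\le M$ from Proposition~\ref{prop:dompert} we get $[\partial U_k]_{Lip}\le m\sqrt{1+M^2}$; enlarging the constant to absorb the finitely many $[\partial U_k]_{Lip}$ with $k<k_0$ (each finite, $\partial U_k$ being a compact $C^\infty$ manifold) gives $L$. For \eqref{eq:GWLippi}, after possibly shrinking the charts fix a smooth partition of unity $(\chi_j)_{j=1}^m$ with $\chi_j\in C_0^\infty(R_{x_j}(B_{x_j}\times(-a_{x_j},\infty)))$ and $\sum_j\chi_j\equiv 1$ on a fixed compact neighbourhood $K^\ast$ of $\partial U$ with $K^\ast\Subset N$; since $\partial U_k$ lies in the $\epsilon_k$-neighbourhood of $\partial U$, after increasing $k_0$ we have $\partial U_k\subset K^\ast$ for all $k\ge k_0$. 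By the area formula,
\begin{equation}
\int_{\partial U_k}f\chi_j\;\mathrm{d}\mathcal{H}^{n-1}=\int_{B_{x_j}}(f\chi_j)\!\left(R_{x_j}(y',\psi_k^{(x_j)}(y'))\right)\sqrt{1+|\nabla\psi_k^{(x_j)}(y')|^2}\;\mathrm{d}y',
\end{equation}
and likewise with $\psi^{(x_j)}$. The first factor converges uniformly on $B_{x_j}$ (uniform convergence of $\psi_k^{(x_j)}$ and uniform continuity of $f$ on $\overline\Omega$); the Jacobian factors converge in $L^1(B_{x_j})$ because $|\sqrt{1+a^2}-\sqrt{1+b^2}|\le|a-b|$ and $\nabla\psi_k^{(x_j)}\to\nabla\psi^{(x_j)}$ in $L^1$; as the first factor is also uniformly bounded, the products converge in $L^1$, so the integrals converge. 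Summing over the finitely many $j$ yields \eqref{eq:GWLippi}.

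The main obstacle I anticipate is the point-set bookkeeping near the boundaries of the charts in the distance step — making sure the graph representations of $\partial U_k$ and $\partial U$ genuinely match on chart overlaps and that lowering/raising a boundary point between the graphs of $\psi_k^{(x_j)}$ and $\psi^{(x_j)}$ really stays in $\overline U$, resp.\ produces a point of $\partial U_k$ — together with the related need to choose the partition of unity so that it covers all the moving surfaces $\partial U_k$ at once. Both are handled by shrinking the charts slightly and invoking the uniform estimates in Proposition~\ref{prop:dompert}, but that is where the care is required; the rest is a routine combination of the area formula and the convergences listed there.
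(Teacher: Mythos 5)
Your proof is correct and follows essentially the same route as the paper's: localize via the finitely many charts from Proposition \ref{prop:dompert}, and reduce the distance, Lipschitz-constant, and integral statements to the uniform convergence of $\psi_k^{(x_j)}$, the uniform gradient bound $M$, and the $L^1$-convergence of $\nabla\psi_k^{(x_j)}$, together with the elementary estimate $\lvert\sqrt{1+\lvert a\rvert^2}-\sqrt{1+\lvert b\rvert^2}\rvert\le\lvert a-b\rvert$ for the area Jacobians. Two small refinements you make are genuinely worth having and tidy up points the paper glosses over: introducing a partition of unity so the surface integrals decompose cleanly despite chart overlaps (the paper's direct sum over charts implicitly overcounts), and noting that the chart covering of $\partial U_k$ only holds for $k\ge k_0$ so that the uniform Lipschitz bound for the finitely many $k<k_0$ must be supplied separately by compactness of the smooth $\partial U_k$; your connected-component argument for $\overline{U_k}\subset\Omega$ likewise fills in a step the paper leaves as an ``in particular.''
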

\begin{proof}
We use the notation from Proposition \ref{prop:dompert}.
We choose $x_1,...,x_m \in \partial U$ such that (for $k \geq k_0$) 
\begin{equation}
    \partial U \subset \bigcup_{j = 1}^m R_{x_j}(B_{x_j} \times (-a_{x_j}, \infty) ), \quad \partial U_k \subset \bigcup_{j = 1}^m R_{x_j}(B_{x_j} \times (-a_{x_j}, \infty) )  \quad \forall k \in \mathbb{N}.
\end{equation}
Now we can compute
\begin{align}
    \mathrm{dist}(\partial U_k, \Omega^C) & = \inf_{ z \in \Omega^C , x \in \partial U_k} |x-z| = \min_{j = 1,...,m} \inf_{z \in \Omega^C} \inf_{x \in \partial U_k \cap R_{x_j}(B_{x_j} \times (-a_{x_j}, \infty))} |x-z|\\
   & = \min_{j = 1,...,m} \inf_{z \in \Omega^C} \inf_{y' \in B_{x_j}} |R_{x_j} (y', \psi_k^{(x_j)}(y'))^T-z|
   \\ & =  \min_{j = 1,...,m} \inf_{z \in \Omega^C} \inf_{y' \in B_{x_j}} | (y', \psi_k^{(x_j)}(y'))^T-R_{x_j}^{-1} z|
   \\ &  \leq \min_{j = 1,...,m} \inf_{z \in \Omega^C} \inf_{y' \in B_{x_j}} | (y', \psi^{(x_j)}(y'))^T-R_{x_j}^{-1} z| + \max_{j = 1,...,m} ||\psi_k^{x_j} - \psi^{(x_j)}||_{L^\infty}
   \\ & = \mathrm{dist}(\partial U,  \Omega^C) + \max_{j = 1,...,m} ||\psi_k^{(x_j)} - \psi^{(x_j)}||_{L^\infty},
\end{align}
where the derivation of the last identity follows the lines of the first few steps in the computation, just backwards. Analogously one can show
\begin{align}
    \mathrm{dist}(\partial U_k, \Omega^C) \geq \mathrm{dist}(\partial U,  \Omega^C) - \max_{j = 1,...,m} ||\psi_k^{(x_j)} - \psi^{(x_j)}||_{L^\infty}.
\end{align}
We infer $\mathrm{dist}(\partial U_k, \Omega^C) \rightarrow \mathrm{dist}(\partial U, \Omega^C)$. Next we show that $[\partial U_k]_{Lip} < L$ for all $k \in \mathbb{N}$ for some $L$ independent of $k$. Indeed, by \eqref{eq:GammaLipschitzNorm} and the properties in Proposition \ref{prop:dompert} we obtain
\begin{equation}
    [\partial U_k]_{Lip} \leq \sum_{i = 1}^m \sqrt{1 + ||\nabla \psi_k^{(x_i)}||_{L^\infty(B_{x_i})}^2}   \leq m \sqrt{1+ M^2}. 
\end{equation}
The claim follows when taking $L := m \sqrt{1+M^2}$. It remains to show that for all $f \in C(\overline{\Omega})$ \eqref{eq:GWLippi} holds. To this end observe for $f \in C(\overline{\Omega})$ that 
\begin{align}
   \int_{\partial U_k} f \; \mathrm{d}\mathcal{H}^{n-1}  & =  \sum_{i = 1}^m \int_{B_{x_i}} f(R_{x_i}(y', \psi_k^{(x_i)}(y'))^T) \sqrt{1+|\nabla \psi_k^{(x_i)}|^2} \dy',\\
   \int_{\partial U} f \; \mathrm{d}\mathcal{H}^{n-1}  & =  \sum_{i = 1}^m \int_{B_{x_i}} f(R_{x_i}(y', \psi^{(x_i)}(y'))^T) \sqrt{1+|\nabla \psi^{(x_i)}|^2} \dy',
\end{align}
Next observe that for all $i = 1,...,m$ one has 
\begin{equation}\label{eq:convuniff}
    f(R_{x_i}( \cdot , \psi_k^{(x_i)}(\cdot) )  \rightarrow f(R_{x_i}( \cdot , \psi^{(x_i)} ( \cdot) ) ) \quad \textrm{ uniformly on $B_{x_i}$, \; $(k \rightarrow \infty)$.}
\end{equation}
 since $f$ is uniformly continuous and $\psi_k^{(x_i)} \rightarrow \psi^{(x_i)}$ uniformly. Also observe that 
\begin{equation}\label{eq:convpsi}
    \left\vert \sqrt{1+ |\nabla \psi_k^{(x_i)}|^2}- \sqrt{1+ |\nabla \psi^{(x_i)}|^2} \right\vert \leq |\nabla \psi_k^{(x_i)} - \nabla \psi^{(x_i)}| 
\end{equation}
since for all $z,w \in \mathbb{R}^n$ one has 
\begin{align}
   \left\vert  \sqrt{1+ |z|^2} - \sqrt{1+ |w|^2} \right\vert = \frac{||z|^2 - |w|^2|}{\sqrt{1+ |z|^2}+ \sqrt{1+ |w|^2}} = \frac{(|z| + |w|)||z|-|w||}{\sqrt{1+ |z|^2}+ \sqrt{1+ |w|^2}}  \leq ||z|- |w|| \leq |z-w| . 
\end{align}

%by convexity of $x \mapsto \sqrt{1 + |x|^2}$ one has
%\begin{align}
% \frac{\nabla \psi^{(x_i)} }{\sqrt{1+ |\nabla \psi^{(x_i)}|^2}}  ( \nabla \psi_k^{(x_i)} -  \nabla \psi^{(x_i)} )  \leq  \sqrt{1+ |\nabla \psi_k^{(x_i)}|^2}- \sqrt{1+ |\nabla \psi^{(x_i)}|^2} 
%\end{align}
%and 
%\begin{equation}
% \sqrt{1+ |\nabla \psi_k^{(x_i)}|^2}- \sqrt{1+ |\nabla \psi^{(x_i)}|^2} \leq \frac{\nabla \psi_k^{(x_i)} }{\sqrt{1+ |\nabla \psi_k^{(x_i)}|^2}}  ( \nabla \psi_k^{(x_i)} -  \nabla \psi^{(x_i)} ) .
%\end{equation}
Using \eqref{eq:convuniff} and \eqref{eq:convpsi} and the properties in Proposition \ref{prop:dompert} we find
\begin{align}
   &  \left\vert \int_{\partial U_k} f \; \mathrm{d}\mathcal{H}^{n-1} - \int_{\partial U} f \; \mathrm{d}\mathcal{H}^{n-1} \right \vert 
   \\ & =  \left\vert \sum_{i = 1}^m \int_{B_{x_i}} f(R_{x_i}(y', \psi_k^{(x_i)}(y'))^T) \sqrt{1+|\nabla \psi_k^{(x_i)}|^2} - f(R_{x_i}(y', \psi^{(x_i)}(y'))^T) \sqrt{1+|\nabla \psi^{(x_i)}|^2} \dy' \right\vert
   \\ & \leq \sum_{i = 1}^m \int_{B_{x_i}} \left\vert f( R_{x_i}(y', \psi_k^{(x_i)}(y'))^T)- f(R_{x_i}(y', \psi^{(x_i)}(y'))\right\vert  \sqrt{1+|\nabla \psi_k^{(x_i)}|^2} \dy' \\ & \quad + \sum_{i = 1}^m \int_{B_{x_i}} |f(R_{x_i}(y', \psi^{(x_i)}(y'))^T)| \left\vert  \sqrt{1+ |\nabla \psi_k^{(x_i)}|^2}- \sqrt{1+ |\nabla \psi^{(x_i)}|^2} \right\vert \dy' \\ & 
   \leq 
   \sum_{i =1}^m \sqrt{1+ M^2} || f( R_{x_i}(\cdot, \psi_k^{(x_i)}(\cdot))^T)- f(R_{x_i}(\cdot, \psi^{(x_i)}(\cdot))||_{L^\infty(B_{x_i})} 
   \\ & \quad + \sum_{i = 1}^m ||f||_{L^\infty(\overline{\Omega})} \int_{B_{x_i}} |\nabla \psi_k^{(x_i)} - \nabla \psi^{(x_i)}| \dy'  \quad \quad \quad \longrightarrow 0  \quad (k \rightarrow \infty),
\end{align}
where we used in the last step the uniform continuity of $f$ and the fact that by Proposition \ref{prop:dompert} $\nabla \psi_k^{(x_i)} \rightarrow \nabla \psi^{(x_i)}$ in $L^1(B_{x_i})$ for all $i = 1,...,m$. The claim follows.
\end{proof}

In the sequel we will also often localize and use Lipschitz domains that arise from the following procedures: 
\begin{prop}[{cf. \cite[Proposition 2.5.4]{Carbone}}]\label{prop:transverselip}
Let $U \subset \mathbb{R}^n$ be a $C^1$-domain and $z \in \partial U$. Then there exists $r_0 > 0$ such that for $r < r_0$ $U \cap B_r(z)$ is a Lipschitz domain. 
\end{prop}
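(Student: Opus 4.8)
The plan is to verify the defining property of a Lipschitz domain \emph{directly} at every boundary point of $D:=U\cap B_r(z)$; the only delicate points are those where $\partial U$ meets $\partial B_r(z)$, and there the key ingredient is a \emph{transversality} estimate that holds precisely because $r$ is small. (This is essentially the content of \cite[Proposition 2.5.4]{Carbone}; I only sketch it.)

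\textbf{Setup.} Without loss of generality $z=0$. Since $U$ is a $C^1$-domain, after a rotation there are $\rho>0$ and $f\in C^1(B^{n-1}_\rho(0))$ with $f(0)=0$ and $\nabla f(0)=0$ (the graph of $f$ is tangent to $\{x_n=0\}$ at $0$), such that $U$ agrees near $0$ with $\{(x',x_n):x_n<f(x')\}$. Fix $\eta=\tfrac14$ and choose $r_0\in(0,\rho/2)$ so small that $|\nabla f|<\eta$ on $B^{n-1}_{r_0}(0)$ and $\overline{B_{r_0}(0)}$ lies inside the chart; then $|f(x')|\le\eta|x'|$ for $|x'|<r_0$ by the mean value theorem. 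Fix $r<r_0$ and set $D=U\cap B_r(0)$: this is open, bounded and nonempty ($-\tfrac r2e_n\in D$), and also connected, because $D$ fibers over the base $\{x'\in\mathbb R^{n-1}:D\cap(\{x'\}\times\mathbb R)\neq\emptyset\}$ with nonempty open-interval fibers, and (using $\nabla f(0)=0$ and the smallness of $r$) this base is radially star-shaped about the origin.

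\textbf{The boundary and the easy points.} Every $p\in\partial D$ lies in $\partial U$ or in $\partial B_r(0)$, so $\partial D$ splits into: (a) points $p\in\partial U$ with $|p|<r$; (b) points $p\in\partial B_r(0)$ with $p\in U$; (c) \emph{edge points} $p\in E:=\partial U\cap\partial B_r(0)$. In case (a) a neighbourhood of $p$ is contained in $B_r(0)$, so near $p$ the set $D$ coincides with the $C^1$-domain $U$, and a Lipschitz graph chart for $U$, shrunk into $B_r(0)$, serves for $D$; case (b) is identical with the $C^\infty$-domain $B_r(0)$ in place of $U$.

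\textbf{Edge points.} Let $p=(p',p_n)\in E$, so $p_n=f(p')$ and $|p|=r$. The outer unit normals are $\nu_U(p)=\frac{(-\nabla f(p'),1)}{\sqrt{1+|\nabla f(p')|^2}}$ and $\nu_B(p)=p/r$. From $|\nabla f(p')|\le\eta$ and $|p_n|=|f(p')|\le\eta|p'|\le\eta r$ one gets $|(\nu_U(p))'|\le\eta$ and $|(\nu_B(p))_n|\le\eta$, hence
\[
|\langle\nu_U(p),\nu_B(p)\rangle|\le|(\nu_U(p))'|\,|(\nu_B(p))'|+|(\nu_U(p))_n|\,|(\nu_B(p))_n|\le 2\eta<1 .
\]
In particular $\nu_U(p)\neq\pm\nu_B(p)$, so $\xi:=-\tfrac{\nu_U(p)+\nu_B(p)}{|\nu_U(p)+\nu_B(p)|}$ is well defined and $\langle\xi,\nu_U(p)\rangle,\langle\xi,\nu_B(p)\rangle\le-\tfrac{1-2\eta}{2}<0$. (This is the one place where ``$C^1$'' and ``$r$ small'' are genuinely used; the statement fails for large $r$, e.g. when $\partial B_r(0)$ is tangent to $\partial U$.) Now put $g_1(x)=x_n-f(x')$, $g_2(x)=|x|^2-r^2$, both $C^1$ near $p$, with $\nabla g_1(p)\parallel\nu_U(p)$, $\nabla g_2(p)\parallel\nu_B(p)$, and $D=\{g_1<0\}\cap\{g_2<0\}$ near $p$. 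Choosing coordinates in which $\xi$ becomes $e_n$, we get $\partial_{x_n}g_i(p)=\langle\nabla g_i(p),\xi\rangle<0$, hence $\partial_{x_n}g_i<-c<0$ on a ball around $p$ for some $c>0$. By the implicit function theorem there are $C^1$ functions $\phi_i$ on a box $W=W'\times W''$ around $p'$ with $g_i(x',\phi_i(x'))=0$ and $\{g_i<0\}\cap W=\{x_n>\phi_i(x')\}\cap W$; moreover $|\nabla\phi_i|=|\nabla'g_i|/|\partial_{x_n}g_i|$ is bounded, so $\phi_i$ is Lipschitz. Shrinking $W$ so that both representations hold simultaneously gives $D\cap W=\{x_n>\max(\phi_1(x'),\phi_2(x'))\}\cap W$, and $\max(\phi_1,\phi_2)$ is Lipschitz; after the reflection $x_n\mapsto-x_n$ (again in $O(n)$) this is the local graph representation required by the definition. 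Since a Lipschitz chart exists at every boundary point of the bounded domain $D$, it follows that $D=U\cap B_r(0)$ is a Lipschitz domain for every $r<r_0$.

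\textbf{Main obstacle.} The crux is the transversality inequality $|\langle\nu_U(p),\nu_B(p)\rangle|<1$ at edge points; once it is in hand, the chart is a routine implicit-function-theorem computation. The only other point needing care is the connectedness of $D$, which I have only sketched above.
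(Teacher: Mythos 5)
Your proposal is correct, and there is nothing in the paper to compare it against: the paper does not prove this proposition but simply cites \cite[Proposition 2.5.4]{Carbone} (and likewise for the following Proposition \ref{prop:Lipschitzsub}, whose ``proof'' just refers back to the same source). Your argument --- splitting $\partial(U\cap B_r(z))$ into the interior-of-$B_r$, interior-of-$U$, and edge cases, establishing the transversality bound $|\langle\nu_U(p),\nu_B(p)\rangle|\le 2\eta<1$ at edge points, and then invoking the implicit function theorem along a direction $\xi$ that simultaneously decreases both defining functions to write $D$ locally as the supergraph of $\max(\phi_1,\phi_2)$ with $\phi_1,\phi_2$ Lipschitz --- is the standard route and is presumably what Carbone does. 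Two small remarks. First, the connectedness you only sketched does go through for any $\eta<1$: with $h(s):=f(s\omega)+\sqrt{r^2-s^2}$ one has $h(s)\le 0\Rightarrow s\ge r/\sqrt{1+\eta^2}$ while $h'(s)>0\Rightarrow s<\eta r/\sqrt{1+\eta^2}$, so $h$ cannot return to positive values after its first zero, whence each radial slice of the base is an interval and the base is star-shaped about the origin. Second, your bound $\langle\xi,\nu_U(p)\rangle\le-\tfrac{1-2\eta}{2}$ is correct but not sharp; one actually gets $\langle\xi,\nu_i(p)\rangle=-\sqrt{\tfrac{1+\langle\nu_U,\nu_B\rangle}{2}}\le-\sqrt{\tfrac{1-2\eta}{2}}$, which is a stronger statement, although this has no bearing on the validity of the proof.
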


\begin{prop}[Lipschitz subgraphs]\label{prop:Lipschitzsub}
Let $U \subset \mathbb{R}^{n-1}$ be open and bounded and $\Gamma= \{ (y, f(y) : y \in U \} \subset \mathbb{R}^n$ be a Lipschitz graph of a Lipschitz function $f: U \rightarrow \mathbb{R}$. Moreover let $\delta > 0$ and $B := U \times (- ||f||_\infty - \delta , ||f||_\infty + \delta).$ Then 
$
    D := \{(x,y) \in B :  y > f(x) \}
$
is a Lipschitz domain. 
\end{prop}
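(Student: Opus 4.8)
The plan is to write $D$ as a transversal intersection of two Lipschitz domains and then verify, point by point, the local graph condition from Appendix~\ref{app:lipschitzDom}. Put $M := \|f\|_\infty$, $I := (-M-\delta, M+\delta)$ and $B := U \times I$, so that $D = B \cap E$ with $E := \{(x,y) \in \mathbb{R}^{n-1}\times\mathbb{R} : y > f(x)\}$ (here I only ever use $E$ locally near $\partial D$). Two elementary facts drive the argument. First, $f$ extends uniquely to a Lipschitz function on $\overline U$ with $\|f\|_{L^\infty(\overline U)} = M$, so the graph of $f$ lies strictly inside $B$, at vertical distance $\geq \delta$ from each lid $\overline U \times \{\pm(M+\delta)\}$; hence $\partial B$ and $\partial E$ can meet only along the lateral wall $\partial U \times I$. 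Second, $D$ is connected, since every point of $D$ is joined to the slice $U \times \{M+\delta/2\} \subset D$ by a vertical segment lying in $D$.

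Write $\Gamma_{\overline U} := \{(x,f(x)) : x \in \overline U\}$. One checks directly that
\[
\partial D = \Gamma_{\overline U} \,\cup\, \big(\overline U \times \{M+\delta\}\big) \,\cup\, \{(x,y) : x \in \partial U,\ f(x) \le y \le M+\delta\},
\]
so $D$ reaches the top lid but not the bottom one. I would check the chart condition in three easy groups and one hard one. At a point $(x_0,f(x_0))$ with $x_0 \in U$, a small ball meets $\partial D$ exactly in the graph of $f$, which is already in the required form (with $R = \mathrm{id}$, in subgraph form after the reflection $y\mapsto -y$), with $D$ above the graph. At $(x_0,M+\delta)$ with $x_0 \in U$, continuity of $f$ and $f(x_0) < M+\delta$ make $\partial D$ locally the flat hyperplane $\{y = M+\delta\}$, with $D$ below. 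At $(x_0,y_0)$ with $x_0 \in \partial U$ and $f(x_0) < y_0 < M+\delta$, the strict inequalities and continuity force $D$ to equal $(U\cap W)\times J$ near the point, so $\partial D$ is locally the cylinder $(\partial U) \times \mathbb{R}$; since $\partial U$ is a Lipschitz manifold in $\mathbb{R}^{n-1}$ (for instance when $U$ is a rectangle, as in our applications), this is a Lipschitz graph and $D$ lies on the $U$-side.

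The decisive points are the \emph{edges} $(x_0,y_0)$ with $x_0 \in \partial U$ and $y_0 \in \{f(x_0), M+\delta\}$. By the margin observation, $D$ coincides near such a point with the intersection of the cylinder $U \times \mathbb{R}$ — equivalently, since $\partial U$ is piecewise flat there, of the relevant flat half-spaces — with either $E$ or the half-space $\{y < M+\delta\}$. Transversality is immediate and quantitative: an outward normal of $E$ at a graph point is $(\nabla f,-1)/\sqrt{1+|\nabla f|^2}$, whose inner product with any horizontal unit vector (an outward normal of the lateral wall) has absolute value $\le L/\sqrt{1+L^2} < 1$, where $L := \mathrm{Lip}(f)$; and $\{y < M+\delta\}$ has outward normal $e_n$, orthogonal to the wall normals. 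Hence the participating domains have outward normal cones at $(x_0,y_0)$ separated by an angle bounded below in terms of $L$ alone. I would then invoke the standard lemma that a transversal intersection of Lipschitz domains is again Lipschitz: pick a unit vector $\omega$ in the common complement of the outward normal cones (a bisecting direction), rotate so $\omega = e_n$, and verify that near the point $\partial D$ is a Lipschitz graph over $\omega^\perp$ — explicitly, the pointwise minimum of the finitely many Lipschitz functions describing the flat walls and the graph of $f$ in the rotated frame — with $D$ below it and Lipschitz constant controlled by the separation angle. If $x_0$ is an edge or corner of the rectangle $U$, one applies this with the several flat faces of $\partial U$ meeting there, which are pairwise transversal and each transversal to the graph of $f$ by the above.

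Collecting the cases, every boundary point of $D$ admits the required chart, so $D$ is a Lipschitz domain. The main obstacle is exactly the edge case: the transversality estimate is a one-line consequence of the Lipschitz bound on $f$, but converting it into an honest local graph over a rotated hyperplane — choosing the bisecting direction, bounding the new Lipschitz constant, and handling several mutually transversal walls at a corner of $U$ simultaneously — is the one genuinely technical step, being the local model for the stability of Lipschitz domains under transversal intersection.
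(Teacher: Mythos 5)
Since the paper's proof is a one-line citation to Carbone--De Arcangelis, your proposal is a genuine reconstruction, and its structure --- a case split over the boundary pieces (graph, top lid, lateral wall, edges and corners) with a cone-direction argument at the edges --- is the right one.

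Two points deserve tightening. First, you (correctly) need $\partial U$ to be a Lipschitz manifold, and at the edges you actually use that $\partial U$ is piecewise flat; neither is among the stated hypotheses. The proposition is false without some such assumption: take $U\subset\mathbb{R}^2$ with an outward cusp at the origin and $f\equiv 0$; then near $(0,0,y_0)$ with $0<y_0<\delta$ the boundary $\partial D$ coincides locally with $\partial U\times\mathbb{R}$, which is not a Lipschitz graph. You should state this as an explicit hypothesis rather than leave it in a parenthetical (in the paper's applications $U$ is a possibly half-open rectangle, so it holds, but it is a hypothesis). Second, ``a transversal intersection of Lipschitz domains is again Lipschitz'' is not a correct general statement, and ``pairwise transversal'' at a corner is strictly weaker than what you need: the right criterion is the existence of a single direction $\omega$ with $\omega\cdot\nu\ge c>0$ uniformly over all outward normals $\nu$ of all constraints active near the point --- equivalently, that $0$ does not lie in the convex hull of the union of the outward normal cones. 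Pairwise separation does not imply this: three cones around $e_1$, $e_2$ and $-e_1-e_2$ are pairwise separated but admit no such $\omega$. Your argument does in fact produce the correct $\omega$ (for instance $\omega=(0,\dots,0,1,-K)$ with $K=2L+1$ at a graph--wall edge, or at a rectangle corner a vector whose first $n-1$ entries match the signs of the outgoing face normals and whose last entry is a sufficiently large negative constant depending only on $L$); once $\omega$ is fixed, the ``subgraph of a pointwise minimum of Lipschitz functions in the rotated frame'' step is correct. So state the cone criterion as the thing being verified instead of appealing to a false general lemma.
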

\begin{proof} 
The proof follows the lines of \cite[Proposition 2.5.4]{Carbone}.
%First one can rewrite $D = D_1 \cap B$, where $D_1 = \{ (x,y) \in U \times \mathbb{R} : y > f(x) \}$. Note that both $D_1$ and $B$ are open and have Lipschitz boundary. Now write
%\begin{equation}
%    \partial D = (\partial D_1 \cap B) \cup (D_1 \cap \partial B ) \cup ( \partial D_1 \cap \partial B) . 
%\end{equation}
%We need to show that $\partial D$ is a Lipschitz manifold. It follows immediately from the Lipschitz properties of $D_1, B$ that all point $p \in (\partial D_1 \cap B) \cup ( D_1 \cap \partial B)$ have a neighborhood $N$ such that $\partial D \cap N$ can be written as a Lipschitz graph. It is more involved to show this for $p = (x,y) \in \partial D_1 \cap \partial B$ 
\end{proof}

\section{Approximation and Compactness results}
\begin{prop}[Approximation in $L^\infty(\Gamma)$]\label{prop:L^infapp}
Let $\Gamma= \partial \Omega'$ be a boundary of a Lipschitz domain and $Q \in L^\infty(\Gamma)$. Then there exists a sequence $(Q_k)_{k \in \mathbb{N}} \subset C(\Gamma)$ such that $Q_k \rightarrow Q$ in $L^1(\Gamma)$ and $||Q_k||_{L^\infty(\Gamma)} \leq ||Q||_{L^\infty(\Gamma)}$. 
\end{prop}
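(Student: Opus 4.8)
The plan is to first approximate $Q$ by continuous functions in $L^1(\Gamma)$ without worrying about the $L^\infty$-bound, and then restore the bound by a harmless truncation.

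First I would note that $\Gamma = \partial \Omega'$ is a compact metric space (with the metric induced from $\mathbb{R}^n$) and that $\mathcal{H}^{n-1}\mres \Gamma$ is a finite Radon measure on $\Gamma$, since $\mathcal{H}^{n-1}(\Gamma) < \infty$ for Lipschitz boundaries (cf. the area formula estimates used in Lemma \ref{lem:divF}). On such a space $C(\Gamma)$ is dense in $L^1(\Gamma, \mathcal{H}^{n-1}\mres\Gamma)$: simple functions are dense in $L^1$, and each characteristic function $\chi_A$ of a Borel set $A \subset \Gamma$ can be approximated in $L^1$ by continuous functions with values in $[0,1]$ using inner/outer regularity of the Radon measure together with Urysohn's lemma (equivalently, one may invoke Lusin's theorem directly). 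Hence there exists a sequence $(\widetilde{Q}_k)_{k \in \mathbb{N}} \subset C(\Gamma)$ with $\widetilde{Q}_k \to Q$ in $L^1(\Gamma)$.

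Next set $M := \|Q\|_{L^\infty(\Gamma)}$ and define the truncation $Q_k := \max\{-M, \min\{\widetilde{Q}_k, M\}\}$. Since $t \mapsto \max\{-M,\min\{t,M\}\}$ is continuous, $Q_k \in C(\Gamma)$, and by construction $\|Q_k\|_{L^\infty(\Gamma)} \leq M = \|Q\|_{L^\infty(\Gamma)}$. Moreover this truncation map is $1$-Lipschitz and fixes the interval $[-M,M]$; since $|Q(x)| \leq M$ for $\mathcal{H}^{n-1}$-a.e.\ $x \in \Gamma$, we obtain
\begin{equation}
|Q_k(x) - Q(x)| = \left| \max\{-M,\min\{\widetilde{Q}_k(x),M\}\} - \max\{-M,\min\{Q(x),M\}\} \right| \leq |\widetilde{Q}_k(x) - Q(x)|
\end{equation}
for a.e.\ $x \in \Gamma$. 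Integrating over $\Gamma$ gives $\|Q_k - Q\|_{L^1(\Gamma)} \leq \|\widetilde{Q}_k - Q\|_{L^1(\Gamma)} \to 0$, which proves the claim.

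The only nontrivial ingredient is the density of $C(\Gamma)$ in $L^1(\Gamma, \mathcal{H}^{n-1}\mres\Gamma)$; everything else is an elementary truncation argument. If one prefers to avoid Lusin's theorem, an alternative for the density step is to use the fact that $\Gamma$ is locally a Lipschitz graph and transport the problem, via the bi-Lipschitz charts, to approximation in $L^1$ of open subsets of $\mathbb{R}^{n-1}$ (where mollification suffices) and then patch together with a continuous partition of unity subordinate to the finitely many charts.
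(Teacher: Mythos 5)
Your proof is correct and takes a genuinely different route from the paper's. The paper localizes: it writes $\Gamma$ as a union of Lipschitz graph charts, pulls $Q$ back to an open rectangle $U\subset\mathbb{R}^{n-1}$ as $\widetilde{Q}(x)=Q(x,f(x))$, approximates $\widetilde{Q}$ in $L^1(U)$ by cutting off and mollifying (both operations are $L^\infty$-nonincreasing, so the $L^\infty$-bound is obtained for free), pushes forward to $\Gamma$, and patches the charts with a partition of unity. Your primary route instead works intrinsically on the compact metric measure space $(\Gamma,\mathcal{H}^{n-1}\mres\Gamma)$: density of $C(\Gamma)$ in $L^1$ via Lusin's theorem (or Urysohn plus regularity of the finite Radon measure), followed by post hoc restoration of the $L^\infty$-bound through the $1$-Lipschitz truncation $t\mapsto\max\{-M,\min\{t,M\}\}$, which fixes $Q$ almost everywhere because $|Q|\le M$ a.e.\ and therefore converts $L^1$-convergence of $\widetilde{Q}_k$ into $L^1$-convergence of $Q_k$. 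This decoupling of the density step from the $L^\infty$-constraint is clean and slightly more general: it needs neither the graph structure nor mollification, only that $\mathcal{H}^{n-1}\mres\Gamma$ is a finite Radon measure on a compact metric space, which the Lipschitz hypothesis supplies. Your alternative sketch at the end — transport to $\mathbb{R}^{n-1}$ via bi-Lipschitz charts, mollify, and patch — is essentially the paper's argument.
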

\begin{proof}
First assume that 
$
    \Gamma = \{ (y ,f(y)) : y \in \overline{U} \}
$
is a Lipschitz graph of a function $f: U \rightarrow \mathbb{R}$ which is Lipschitz on an open rectangle ${U}$. Now we can define $\widetilde{Q}(x) := Q(x,f(x))$ for all $x \in U$ and observe that $\widetilde{Q} \in L^\infty(U).$ It is now possible to approximate $\widetilde{Q}$ in $L^1(U)$ by functions $(\widetilde{Q}_k)_{k \in \mathbb{N}} \subset C(\overline{U})$ by multiplying $\widetilde{Q}$  with cutoff functions and mollifying the result. Since cutting off and mollifying does not increase the $L^\infty(U)$-norm we can achieve
$
    ||\widetilde{Q}_k||_{L^\infty(U)}\leq ||\widetilde{Q}||_{L^\infty(U)} .
$
Next define $Q_k : \Gamma \rightarrow \mathbb{R}$ via
\begin{equation}
    Q_k(x, f(x)) := \widetilde{Q}_k(x) \quad (x \in \Gamma).
\end{equation}
One readily checks that $Q_k \in C(\Gamma)$. Moreover 
\begin{align}
    ||Q_k - Q||_{L^1(\Gamma)} & = \int_U |Q_k(x,f(x)) - Q(x,f(x)) | \sqrt{1+ |\nabla f(x)|^2} \dx \\ & \leq \sqrt{1+ ||\nabla f||^2_{L^\infty(U)}} ||\widetilde{Q}_k - \widetilde{Q}||_{L^1(U)} \rightarrow 0 \quad (k \rightarrow \infty).  
\end{align}
Our construction also reveals that if $Q$ has compact support in $\{ (y, f(y)) : y \in U \}$ then also $(Q_k)_{k \in \mathbb{N}}$ can be chosen to have compact support in the same set. This at hand, a standard argument involving a partition of unity implies the claim (cf. also Remark \ref{rem:partitionofunit}). 
\end{proof}

%\section{Approximation in C^{0,1}(\overline{\Omega})}
\begin{prop}[A compactness result in   $C^{0,1}(\overline{\Omega})$]\label{prop:C01approx}
Let $\Omega \subset \mathbb{R}^n$ be a smooth domain and $(v_k)_{k \in \mathbb{N}} \subset C^{0,1}( \overline{\Omega})$ such that $||v_k||_{C^{0,1}(\overline{\Omega})}$ is bounded. Then there exists a subsequence $(v_{l_k})_{k \in \mathbb{N}}$ and $v \in C^{0,1}(\overline{\Omega}) $ such that $v_{l_k} \rightarrow v$ uniformly on $\overline{\Omega}$ and 
\begin{equation}
    ||\nabla v||_{L^\infty( \Omega) } \leq \liminf_{k \rightarrow \infty} || \nabla v_{l_k}||_{L^\infty(\Omega)}.
\end{equation}
\end{prop}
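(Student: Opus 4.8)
The plan is to derive the result from the Arzelà–Ascoli theorem together with lower semicontinuity of the Lipschitz seminorm under uniform convergence. First I would use the hypothesis that $\|v_k\|_{C^{0,1}(\overline{\Omega})}$ is bounded by some constant $M$. This means in particular that $\|v_k\|_{L^\infty(\overline{\Omega})} \le M$ for all $k$, so the family is uniformly bounded, and that $[v_k]_{C^{0,1}} := \sup_{x \ne y} \frac{|v_k(x) - v_k(y)|}{|x-y|} \le M$ for all $k$, so the family is uniformly Lipschitz and hence equicontinuous. By the Arzelà–Ascoli theorem there is a subsequence $(v_{l_k})_{k \in \mathbb{N}}$ and a continuous function $v \in C(\overline{\Omega})$ such that $v_{l_k} \to v$ uniformly on $\overline{\Omega}$.

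Next I would check that $v$ is itself Lipschitz. For any fixed $x, y \in \overline{\Omega}$ with $x \ne y$, one has $|v_{l_k}(x) - v_{l_k}(y)| \le [v_{l_k}]_{C^{0,1}} |x - y|$; passing to the limit $k \to \infty$ using pointwise convergence gives $|v(x) - v(y)| \le \left( \liminf_{k \to \infty} [v_{l_k}]_{C^{0,1}} \right) |x-y|$. Hence $v \in C^{0,1}(\overline{\Omega})$ with $[v]_{C^{0,1}} \le \liminf_{k \to \infty} [v_{l_k}]_{C^{0,1}}$. Since $\Omega$ is a smooth (in particular Lipschitz, and hence extension) domain, one has $W^{1,\infty}(\Omega) = C^{0,1}(\overline{\Omega})$ with $\|\nabla w\|_{L^\infty(\Omega)} = [w]_{C^{0,1}(\overline{\Omega})}$ for every $w \in C^{0,1}(\overline{\Omega})$; applying this identification to $v$ and to each $v_{l_k}$ yields
\begin{equation}
\|\nabla v\|_{L^\infty(\Omega)} = [v]_{C^{0,1}(\overline{\Omega})} \le \liminf_{k \to \infty} [v_{l_k}]_{C^{0,1}(\overline{\Omega})} = \liminf_{k \to \infty} \|\nabla v_{l_k}\|_{L^\infty(\Omega)},
\end{equation}
which is the claimed estimate.

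There is no real obstacle here; the statement is essentially a packaging of standard facts. The one point that deserves a word of care is the passage $[v]_{C^{0,1}} \le \liminf_k [v_{l_k}]_{C^{0,1}}$: one must take the liminf over the \emph{full} subsequence rather than argue along a further subsequence realizing the liminf, but since the difference quotient bound $|v(x)-v(y)| \le c_k |x-y|$ with $c_k := [v_{l_k}]_{C^{0,1}}$ holds for \emph{every} $k$ and the left side is independent of $k$, taking $\liminf$ on the right is immediate. Alternatively, one could phrase the lower semicontinuity via weak-$*$ compactness of $\nabla v_{l_k}$ in $L^\infty(\Omega;\mathbb{R}^n)$ and identification of the weak-$*$ limit with $\nabla v$ (which follows from the uniform convergence $v_{l_k} \to v$ in the distributional sense), but the elementary difference-quotient argument is cleaner and avoids invoking weak-$*$ sequential compactness.
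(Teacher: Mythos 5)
Your first step (Arzelà--Ascoli giving a uniformly convergent subsequence with a Lipschitz limit) matches the paper. The lower semicontinuity argument is where you diverge, and it contains a gap: the claimed identity $\|\nabla w\|_{L^\infty(\Omega)} = [w]_{C^{0,1}(\overline{\Omega})}$ is \emph{false} for non-convex $\Omega$. The inequality $\|\nabla w\|_{L^\infty(\Omega)} \le [w]_{C^{0,1}(\overline{\Omega})}$ always holds, but the reverse can fail badly: for a horseshoe-shaped domain (say a thin $C$-shaped annular sector with a small gap $\epsilon$), the function $w = \theta$ (polar angle) has $|\nabla w| \le 1$ but Euclidean Lipschitz constant of order $1/\epsilon$, because points on the two ends of the horseshoe are close in $\mathbb{R}^n$ but far along any path inside $\Omega$. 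Tracing your chain with only the valid inequality directions, you obtain $\|\nabla v\|_{L^\infty} \le \liminf_k [v_{l_k}]_{C^{0,1}}$, but $[v_{l_k}]_{C^{0,1}} \ge \|\nabla v_{l_k}\|_{L^\infty}$ goes the wrong way, so you cannot conclude $\|\nabla v\|_{L^\infty} \le \liminf_k \|\nabla v_{l_k}\|_{L^\infty}$. Since the paper's hypotheses do not assume convexity of $\Omega$, this is a genuine flaw.

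The fix is easy and stays within your difference-quotient framework: localize to small balls. For $x \in \Omega$ choose $\rho$ with $B_\rho(x) \subset \Omega$; then for $y,z \in B_\rho(x)$ the segment $[y,z]$ lies in $\Omega$, so $|v_{l_k}(y) - v_{l_k}(z)| \le \|\nabla v_{l_k}\|_{L^\infty(\Omega)}|y-z|$ for every $k$, and passing to the limit gives $|v(y)-v(z)| \le \bigl(\liminf_k \|\nabla v_{l_k}\|_{L^\infty}\bigr)|y-z|$. Since $v$ is a.e.\ differentiable, this yields $|\nabla v(x)| \le \liminf_k \|\nabla v_{l_k}\|_{L^\infty(\Omega)}$ a.e., which is the desired inequality. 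The paper sidesteps the issue entirely by a different mechanism: it writes $\|\nabla u\|_{L^\infty(\Omega)} = \sup_{\phi \in C_0^\infty(\Omega;\mathbb{R}^n),\,\|\phi\|_{L^1}\le 1} \int_\Omega u\,\mathrm{div}(\phi)\,\mathrm{d}x$ and observes that each such functional is continuous under uniform convergence of $u$, so the supremum is lower semicontinuous. Both routes are short; the duality route has the small advantage of not requiring any discussion of the geometry of $\Omega$.
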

\begin{proof}
Existence of the uniformly convergent subsequence $(v_{l_k})_{k \in \mathbb{N}}$ is a direct consequence of the Arzela-Ascoli theorem. Let $v \in C^{0}(\overline{\Omega})$ be the limit.  The fact that $v \in C^{0,1}(\overline{\Omega})$ follows from 
\begin{equation}
    |v_{l_k}(x) - v_{l_k}(y)| \leq ||v_{l_k}||_{C^{0,1}(\overline{\Omega})} |x-y|,
\end{equation}
where we used that for all $u \in C^{0,1}(\overline{\Omega})$ 
\begin{equation}
    ||u||_{C^{0,1}(\overline{\Omega})} := ||u||_{C^0(\overline{\Omega})} + \sup_{x,y \in \overline{\Omega},x \neq y} \frac{|u(x)-u(y)|}{|x-y|}.
\end{equation}
The lower semicontinuity of $u \mapsto ||\nabla u||_{L^\infty}$ with respect to uniform convergence can be seen in many ways. One possibility is the following computation.
\begin{align}
   & ||\nabla u||_{L^\infty(\Omega)}  = \sup_{\phi \in L^1(\Omega; \mathbb{R}^n),||\phi||_{L^1}\leq 1} \int_\Omega \nabla u \cdot \phi \dx \\ & =  \sup_{\phi \in C_0^\infty(\Omega; \mathbb{R}^n), ||\phi||_{L^1} \leq 1} \int_\Omega \nabla u \cdot \phi \dx = \sup_{\phi \in C_0^\infty(\Omega; \mathbb{R}^n),||\phi||_{L^1} \leq 1} \int_\Omega u \cdot \mathrm{div}(\phi) \dx.
\end{align}
\end{proof}

%\section{Approximation of Lipschitz domains}

\section{A maximum principle for $BV$-solutions}\label{app:maxpr}

In this section we discuss a generalized maximum principle, which allows us to estimate the $L^\infty$ norm of a harmonic function in terms of its \emph{$BV$-trace}, whose definition and properties will also be recalled in this section.
\begin{lemma}[{Traces of $BV$-Functions, cf. \cite[Theorem 2.10]{Giusti}}]\label{lem:BVtr}
Suppose that $U \subset \mathbb{R}^n$ has Lipschitz boundary and $f \in BV(U)$. Then there exists some $g \in L^1(\partial U, \mathcal{H}^{n-1})$ such that for $\mathcal{H}^{n-1}$ a.e. $x \in \partial U$ one has
\begin{equation}
\lim_{ r \rightarrow 0 } \fint_{B_r(x) \cap U} f(y) \dy  = g(x),
 \quad 
\lim_{ r \rightarrow 0 } \fint_{B_r(x) \cap U} |f(y)-g(x)| \dy = 0.
\end{equation}
Further, for all $\phi \in C^1_c(\mathbb{R}^n;\mathbb{R}^n)$ one has 
\begin{equation}
\int_U  f \;  \mathrm{div}(\phi) \dx  = - \int_U \phi \; \mathrm{d}[Df] + \int_{\partial U} g \; \phi \cdot \nu \; \mathrm{d}\mathcal{H}^{n-1}.
\end{equation}
The function $g$ is called \emph{$BV$-trace} of $f$.
\end{lemma}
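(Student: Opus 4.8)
The plan is to reduce to a model situation with flat boundary by localization. Since $\partial U$ is a Lipschitz manifold, cover $\overline{U}$ by finitely many open sets $W_0,W_1,\dots,W_N$ with $\overline{W_0}\subset U$ and, for $i\geq 1$, $W_i=R_i(A_i\times I_i)$ with $R_i\in O(n)$, $A_i\subset\mathbb{R}^{n-1}$, $I_i\subset\mathbb{R}$ bounded and open, such that $R_i^{-1}(U)\cap(A_i\times I_i)=\{(x',x_n):x_n<\gamma_i(x')\}$ for a Lipschitz $\gamma_i:A_i\to\mathbb{R}$. The map $\Phi_i(x',x_n):=(x',x_n-\gamma_i(x'))$ is bi-Lipschitz and straightens the boundary, carrying this piece of $U$ into the half-space $\mathbb{R}^n_-:=\{x_n<0\}$. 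Since $BV$ is stable under bi-Lipschitz changes of variables --- the new total variation measure being comparable, with constants depending only on the bi-Lipschitz constants, to the push-forward of $|Df|$ --- it suffices to prove the statement locally for $f\in BV(\mathbb{R}^n_-\cap Q)$ on a bounded box $Q$, and then to patch the local traces together with a smooth partition of unity $\{\eta_i\}_{i=0}^N$ subordinate to $\{W_i\}$: each $\eta_i f$ again lies in $BV$, its trace is $\eta_i$ times the local trace, and $\sum_i\eta_i\equiv 1$ produces the global $g$.

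For the half-space model, define for a.e.\ $t>0$ the slice $f_t:=f(\cdot,-t)\in L^1(\mathbb{R}^{n-1})$, which makes sense by Fubini. The key estimate is that, for a.e.\ $0<s<t$,
\[
\|f_s-f_t\|_{L^1(\mathbb{R}^{n-1})}\leq |Df|\big(\mathbb{R}^{n-1}\times(-t,-s)\big).
\]
I would prove this by mollifying $f$ so the slices become smooth, bounding $\|f^\varepsilon_s-f^\varepsilon_t\|_{L^1}$ by $\int_{\mathbb{R}^{n-1}\times(-t,-s)}|\partial_n f^\varepsilon|$ via the fundamental theorem of calculus in the $x_n$-variable, and letting $\varepsilon\to 0$ using $L^1_{loc}$-convergence of $f^\varepsilon$ together with lower semicontinuity of the total variation. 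Since $|Df|$ is a finite measure on $\mathbb{R}^n_-\cap Q$ that assigns no mass to the hyperplane $\{x_n=0\}$ (which lies outside the open set), one has $|Df|(\mathbb{R}^{n-1}\times(-t,0))\to 0$ as $t\to 0^+$, so $(f_t)_{t>0}$ is Cauchy in $L^1$; I would then define $g:=\lim_{t\to0^+}f_t$.

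The Gauss--Green identity follows by integrating by parts over the truncated half-space $\{x_n<-t\}$, whose boundary is a hyperplane: for $\phi\in C^1_c(\mathbb{R}^n;\mathbb{R}^n)$,
\[
\int_{\{x_n<-t\}} f\,\mathrm{div}(\phi)\,\dx = -\int_{\{x_n<-t\}}\phi\cdot \mathrm{d}[Df] + \int_{\{x_n=-t\}} f_t\,\phi_n\,\mathrm{d}\mathcal{H}^{n-1},
\]
which is justified by approximating $f$ by smooth functions on the open set $\{x_n<-t\}$; letting $t\to 0^+$ and using $f_t\to g$ in $L^1$ yields the claimed formula. For the pointwise characterization I would run a Lebesgue-differentiation argument: combining the slice estimate with a one-sided Hardy--Littlewood maximal inequality shows that $\mathcal{H}^{n-1}$-a.e.\ $x'$ is a Lebesgue point of $g$ at which $\fint_{B_r(x',0)\cap\mathbb{R}^n_-}|f-g(x')|\to 0$; transporting this through the charts $\Phi_i^{-1}$ (whose Jacobians are bounded away from $0$ and $\infty$) and summing over the partition of unity gives the two displayed limits on $\partial U$, with $g\in L^1(\partial U,\mathcal{H}^{n-1})$ since each local contribution has $L^1$-mass controlled by $|Df|(U)+\|f\|_{L^1(U)}$.

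The main obstacle is the slicing estimate for genuine $BV$ functions --- it requires either the coarea/slicing theory of $BV$ functions or the mollification argument sketched above, carried out with care --- together with the bookkeeping needed to transfer the flat-boundary construction through the bi-Lipschitz charts while tracking how $|Df|$ and the surface measure $\mathcal{H}^{n-1}$ transform; once these are in place, the partition-of-unity globalization is routine. All of this is classical and contained in \cite[Chapter 2]{Giusti}, so in the article we simply cite that reference; the outline above only indicates the structure of the argument.
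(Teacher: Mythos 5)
The paper does not prove this lemma but simply cites \cite[Theorem 2.10]{Giusti}; your sketch is a correct outline of precisely that classical argument (straighten the boundary with a bi-Lipschitz chart, obtain the trace as an $L^1$-limit of slices, integrate by parts on a truncated half-space, and globalize with a partition of unity). One small imprecision: in your proof of the slicing estimate you invoke ``lower semicontinuity of the total variation'' when passing from the mollified functions $f^\varepsilon$ to $f$, but lower semicontinuity gives the inequality in the wrong direction. What is actually used is the smoothing bound $\int_A |\nabla (f*\psi_\varepsilon)|\,\dx \leq |Df|(A+B_\varepsilon(0))$, which, combined with the fact that $|Df|$ charges at most countably many horizontal hyperplanes, yields the desired inequality for a.e.\ $s,t$. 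With that correction the outline matches the cited proof.
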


This at hand we can finally prove the desired maximum principle.

\begin{proof}[Proof of Lemma \ref{lem:maxpr}] Let $U$,$w$ be as in the statement. For the rest of the proof let  $(\psi_\epsilon)_{\epsilon> 0}$ be the standard mollifier. \\
\textbf{Step 1:} $W^{1,1}$-regularity of $w$.
First of all note that $w \in C^\infty(U)$. This can for example be shown by inferring from \eqref{eq:weakharm} that  $w* \psi_\epsilon$ is harmonic (in the classical sense) in $U^\epsilon:= \{x \in U : \mathrm{dist}(x, U^C) > \epsilon \}$. This implies also that $w * \psi_\epsilon$ has the mean-value property, i.e. 
\begin{equation}
w * \psi_\epsilon(x) = \fint_{ B_r(x)} (w* \psi_\epsilon)(y) \dy \quad \forall x \in U^\epsilon, r < \mathrm{dist}(x, (U^{\epsilon})^C)).
\end{equation}
Thereupon one can pass to the limit as $\epsilon \rightarrow 0 $ and infer that $w$ must have the mean value property in the whole of $U$. This implies by standard computations $w \in C^\infty(U)$ and $w* \psi_\epsilon = w$. We infer that $w \in C^\infty(U) \cap BV(U)$. We claim next that $w \in W^{1,1}(U)$. Indeed: For all $\phi \in C_0^\infty(U)$ one has since $w \in C^\infty(U)$
\begin{equation}
\int_U w \partial_i \phi \dx  =  - \int_{U} ( \partial_i w ) \phi \dx, 
\end{equation}
but on the other hand 
\begin{equation}
\int_{U} w \partial_i  \phi \dx  = - \int_{U} \phi \;  \mathrm{d}[Dw]^i.
\end{equation}
Hence $[Dw]^i = (\partial_i w) \mathcal{L}^n$, where $\mathcal{L}^n$ denotes the $n$-dimensional Lebesgue measure. Since $|[Dw]|(U)< \infty$ we conclude 
\begin{equation}
\infty > |[Dw]^i|(U) = \int_U |\partial_i w | \dx . 
\end{equation} 
The $W^{1,1}$-regularity follows. Note however that the maximum principle is also not clear for $W^{1,1}$-solutions.\\
\textbf{Step 2}: We find an equation that characterizes $w$ uniquely.
To this end let $\phi \in C^2(\overline{U})$ be such that $\phi\vert_{\partial U} = 0$. Our aim is to show that 
\begin{equation}
\int_U  w \Delta \phi \dx = \int_{\partial U}  \mathrm{tr}(w) \partial_{\nu} \phi \; \mathrm{d} \mathcal{H}^{n-1}. 
\end{equation}
To this end we extend $\phi$ to a $C^2_c(\mathbb{R}^n)$-function which is possible due to the boundary regularity. Let $\overline{\phi} \in C^2_c(\mathbb{R}^n)$ be this extension. Then by Lemma \ref{lem:BVtr} one has 
\begin{align}
\int_U w \Delta \phi \dx & = \int_{U} w  \Delta \overline{\phi} \dx = \int_U w  \; \mathrm{div}( \nabla \overline{\phi} ) \; \mathrm{d}x 
\\ & = \int_{\partial U} \mathrm{tr}(w)  \nabla \phi \cdot \nu \; \mathrm{d}\mathcal{H}^{n-1}  - \int_U \nabla \phi  \; \mathrm{d} [Dw] 
\\  & = \int_{\partial U} \mathrm{tr}(w)  \nabla \phi \cdot \nu \; \mathrm{d}\mathcal{H}^{n-1}  - \int_U \nabla \phi \nabla w \dx . \label{eq:PI1} 
\end{align}
Next we intend to approximate $\phi$ by a sequence of functions $(\phi_n)_{n \in \mathbb{N}} \subset C_0^\infty(U)$ to get rid of the second summand.
This approximation is nonstandard because of the fact that one only has $\nabla w \in L^1$. We claim that there exist $(\phi_n)_{n \in \mathbb{N}} \subset C_0^\infty(U)$ such that $\phi_n \rightarrow \phi$ in $W_0^{1,2}(U)$ and $||\nabla \phi_n||_{L^\infty} \leq ||\nabla \phi||_{L^\infty}$. To this end notice that (since $\phi\vert_{\partial \Omega} = 0$) for all $n \in \mathbb{N}$ the functions  $\left( \phi - \frac{1}{n} \right)^+$ and $\left( \phi + \frac{1}{n} \right)^-$ have compact support  in $U$ and lie in $W^{1,2}_0(U)$. This implies that for $\epsilon < \epsilon_0$ appropriately small $\left( \phi - \frac{1}{n} \right)^+ * \psi_\epsilon \in C_0^\infty(U)$ and $\left( \phi - \frac{1}{n} \right)^+ * \psi_\epsilon  \rightarrow \left( \phi - \frac{1}{n} \right)^+$  in $W_0^{1,2}(U)$ as $\epsilon \rightarrow 0$.
We infer that for all $n \in \mathbb{N}$ there exists $\epsilon_n > 0$ such that $ \left( \phi - \frac{1}{n} \right)^+ * \psi_{\epsilon_n} \in C_0^\infty(U)$ and $||\left( \phi - \frac{1}{n} \right)^+ * \psi_{\epsilon_n} - \left( \phi - \frac{1}{n} \right)^+ ||_{W_0^{1,2}} < \frac{1}{n}$. Similarly we can find $\widetilde{\epsilon_n} > 0$ such that  $ \left( \phi - \frac{1}{n} \right)^- * \psi_{\widetilde{\epsilon}_n} \in C_0^\infty(U)$ and $||\left( \phi - \frac{1}{n} \right)^- * \psi_{\widetilde{\epsilon}_n} - \left( \phi - \frac{1}{n} \right)^- ||_{W_0^{1,2}} < \frac{1}{n}$. By possibly shrinking $\epsilon_n$ or $\widetilde{\epsilon}_n$ we may assume that $\epsilon_n = \widetilde{\epsilon_n}$.
 Now define 
\begin{equation}
\phi_n := \left( \phi - \frac{1}{n} \right)^+ * \psi_{\epsilon_n} + \left( \phi + \frac{1}{n} \right)^- * \psi_{{\epsilon}_n} 
\end{equation}
It is then straightforward to show that $(\phi_n)_{n\in \mathbb{N}} \subset C_0^\infty(U)$ and $\phi_n \rightarrow \phi$ in $W_0^{1,2}(U)$. It remains to show that $||\nabla \phi_n||_{L^\infty} \leq ||\nabla \phi||_{L^\infty}$. To this end notice that 
\begin{align}
\nabla \left[ \left( \phi - \frac{1}{n} \right)^+ * \psi_{\epsilon_n} \right] (x) & = \int_U \left( \phi - \frac{1}{n} \right)^+(y) \nabla_x \psi_{\epsilon_n}(x-y) \dy  \\ &  = - \int_U \left( \phi - \frac{1}{n} \right)^+(y) \nabla_y \psi_{\epsilon_n}(x-y)  \dy 
\\ & = \int_U \nabla \left( \phi - \frac{1}{n} \right)^+(y)  \psi_{\epsilon_n}(x-y)  \dy
\\ & = \int_{ U \cap \{\phi > \frac{1}{n}\}} (\nabla \phi)  \psi_{\epsilon_n}(x-y) \dy.
\end{align}
 Similarly 
 \begin{equation}
 \nabla \left[ \left( \phi - \frac{1}{n} \right)^- * \psi_{\epsilon_n} \right] (x) = \int_{ U \cap \{\phi < - \frac{1}{n}\}} (\nabla \phi) \;  \psi_{\epsilon_n}(x-y) \dy.
 \end{equation}
 We obtain by Young's inequality that 
 \begin{equation}
 |\nabla \phi_n(x)| =  \left\vert \int_{ U \cap \{ |\phi| > \frac{1}{n} \} } (\nabla \phi) \;  \psi_{\epsilon_n}(x-y) \dy  \right\vert \leq ||\nabla \phi||_{L^\infty(U)} ||\psi_{\epsilon_n}||_{L^1(\mathbb{R}^n)}  = ||\nabla \phi||_{L^\infty(U)}. 
 \end{equation}
 It follows that $||\nabla \phi_n||_{L^\infty} \leq ||\nabla \phi||_{L^\infty}$. Note that we may assume after the choice of an appropriate subsequence that $\nabla \phi_n \rightarrow \nabla \phi$ pointwise almost everywhere. Since $||\nabla \phi_n||_{L^\infty} \leq ||\nabla \phi||_{L^\infty}$ and $|\nabla w | \; ||\nabla \phi||_{L^\infty}  \in L^1$ we infer by the dominated convergence theorem that 
 \begin{equation}
 \int_U \nabla w \nabla \phi \dx = \lim_{n \rightarrow \infty} \int_U \nabla w \nabla \phi_n \dx = \lim_{n \rightarrow \infty }  \int_U  w \Delta \phi_n  = 0 ,
\end{equation}  
where we used \eqref{eq:weakharm} in the last step. We infer from \eqref{eq:PI1} that for each $\phi \in C^2(\overline{U})$  such that $\phi\vert_{\partial U }  = 0$ one has 
\begin{equation}\label{eq:realeqw}
\int_U w \Delta \phi \dx  = \int_{\partial U } \mathrm{tr}(w) \partial_{\nu} \phi \; \mathrm{d}\mathcal{H}^{n-1}. 
\end{equation}
With this equation we will be able to show that $||w||_{L^\infty(U)} \leq || \mathrm{tr}(w)||_{L^\infty( \partial U ) }$. \\
\textbf{Step 3:} Show the claim under the additional assumption that $\mathrm{tr}(w) $ has a $C^\infty$-extension on $\overline{U}$, say $\overline{w} \in C^\infty(\overline{U})$ is such that $\overline{w}\vert_{\partial U} = \mathrm{tr}(w)$. Then one has  for all $\phi \in C^2(\overline{U})$ such that $\phi \vert_{\partial U}  = 0$ by the divergence theorem
\begin{align}
\int_U w \Delta \phi \dx =  \int_{\partial U } \mathrm{tr}(w) \partial_{\nu} \phi \; \mathrm{d}\mathcal{H}^{n-1} & = \int_{\partial U } \overline{w} \partial_{\nu} \phi \; \mathrm{d}\mathcal{H}^{n-1} \\ & = \int_{ U} \nabla \overline{w} \nabla \phi \dx +  \int_U \overline{w} \Delta \phi  \dx 
\\ &=-  \int_U \Delta \overline{w} \phi \dx  + \int_U \overline{w} \Delta \phi  \dx 
\end{align}
Hence for all $\phi \in C^2(\overline{U})$ such that $\phi\vert_{\partial U} = 0$ one has 
\begin{equation}
\int_U (w - \overline{w}) \Delta \phi \dx =  - \int_U \Delta \overline{w} \phi \dx . 
\end{equation}
By Lemma \ref{lem:ponci} we conclude that $w - \overline{w} \in W_0^{1,q}(U)$ for some $q > 1$ and solves $\Delta (w- \overline{w}) = - \Delta \overline{w}$ in the weak $W_0^{1,q}$-sense. By elliptic regularity this implies that $w - \overline{w} \in C^\infty(\overline{U})$. We infer that $w \in C^\infty( \overline{U})$, $\Delta w = 0$ classically, and $w \vert_{\partial U} = \overline{w} \vert_{\partial U} = \mathrm{tr}(w)$. 
By the (classical) maximum principle we have $||w||_{L^\infty(U)} \leq ||\mathrm{tr}(w)||_{L^\infty(\partial U)}$. We have shown the claim under the additional assumption of Step 3.\\
\textbf{Step 4:} We show the claim under the (weaker) additional assumption that $\mathrm{tr}(w) \in C^0(\partial U)$. If $\mathrm{tr}(w) \in C^0(\partial U)$, then by Tietze's extension theorem we can find $v \in C_0^0(\mathbb{R}^n)$ such that $v= w $ on $\partial U$. Now $v$ can be locally uniformly approximated by $(v_n)_{n \in \mathbb{N}} \subset C_0^\infty( \mathbb{R}^n)$ which means that $w$ can be uniformly approximated on $\partial U$ by $v_n \vert_{\partial U}$ --- i.e. by functions that possess a $C^\infty$ extension to $\overline{U}$. Now let $(w_n)_{n \in \mathbb{N}} \subset C^\infty(\overline{U}) $ be the solutions of 
\begin{equation}
\begin{cases} 
-\Delta w_n = 0  & \mathrm{in } \;  U, \\ \quad \; \;  w_n = v_n &\mathrm{on} \;  \partial U.
\end{cases} 
\end{equation}
Note that then  $||w_n - w_m||_{L^\infty(U)} \leq ||v_n - v_m ||_{L^\infty(\partial U)}$ and hence $w_n$ converges in $L^\infty(U)$ to some $\widetilde{w} \in L^\infty(U)$. Using  the Gauss divergence theorem and \eqref{eq:realeqw}  we find for each $\phi \in C^2(\overline{U})$ such that $\phi\vert_{\partial U } = 0$ that 
\begin{align}
\int_U \widetilde{w} \Delta \phi \dx & = \lim_{n \rightarrow \infty} \int_U w_n \Delta \phi \dx = \lim_{n \rightarrow \infty} \int_{\partial U} v_n \partial_\nu \phi \; \mathrm{d}\mathcal{H}^{n-1} 
\\ & =  \int_{\partial U} \mathrm{tr}(w) \partial_\nu \phi \; \mathrm{d} \mathcal{H}^{n-1} = \int_U w \Delta \phi \dx .
\end{align}
Hence we infer that for all $\phi \in C^2(\overline{U})$ such that $\phi\vert_{\partial U} = 0$ one has
\begin{equation}
\int_U (\widetilde{w} - w ) \Delta \phi \dx = 0 .
\end{equation}
By Lemma \ref{lem:ponci} we infer that $\widetilde{w} - w$ lies in $W_0^{1,q}(\Omega)$ for some $q > 1$ and is a weak solution of $\Delta( \widetilde{w} - w) = 0$. By uniqueness of weak solutions of the Dirichlet problem in $W_0^{1,q}$ we find $\widetilde{w} = w$. Hence 
\begin{equation}
||w||_{L^\infty(U)} = || \widetilde{w} ||_{L^\infty(U)} = \lim_{n \rightarrow \infty} ||w_n||_{L^\infty(U)} \leq \liminf_{n \rightarrow \infty} ||v_n ||_{L^\infty(\partial U)} = ||\mathrm{tr}(w)||_{L^\infty(\partial U)}.  
\end{equation}  
We have now shown the claim under the additional assumption of Step 4.\\
\textbf{Step 5:} Next we assume nothing except $\mathrm{tr}(w) \in L^\infty(\partial U)$. We know that then by Proposition \ref{prop:L^infapp} there exist $(v_n)_{n \in \mathbb{N}} \subset C^0(\partial U)$ such that $v_n \rightarrow \mathrm{tr}(w)$ in $L^1(\partial U)$ and $||v_n||_{L^\infty(\partial U)} \leq ||w||_{L^\infty(\partial U)}$. Now let $(w_n)_{n \in \mathbb{N}} \subset C^\infty( \overline{U})$ be the weak $W_0^{1,2}$-solutions to   
\begin{equation}
\begin{cases} 
-\Delta w_n = 0  & \mathrm{in } \;  U, \\ \quad \; \;  w_n = v_n &\mathrm{on} \;  \partial U.
\end{cases} 
\end{equation}
Then by \cite[p.467]{Anna} one has for each $x \in U$ 
\begin{equation}
w_n(x) = \int_{\partial U} K_U (x,y) v_n(y) \; \mathrm{d}y ,
\end{equation}
where $K_U \in C^\infty( U \times \partial U)$ is the Poisson kernel for $U$. Since by \cite[Theorem 4]{Anna} 
\begin{equation}
|K_U(x,y)| \leq C_U \frac{\mathrm{dist}(x, \partial U)}{|x-y|^{n-1}},
\end{equation}
the dominated convergence theorem yields for all $x \in U$  
\begin{equation}
\lim_{n \rightarrow \infty} w_n(x) = \int_{\partial U}  K_U(x,y) \mathrm{tr}(w)(y) \; \mathrm{d}y = : w^*(x), \quad (x \in U). 
\end{equation}
Hence $(w_n)_{n \in \mathbb{N}}$ converges pointwise on $U$ to some measurable function $w^*: U \rightarrow \mathbb{R}$. Since $||w_n||_{L^\infty(U)} \leq ||v_n||_{L^\infty(\partial U)} \leq ||\mathrm{tr}(w)||_{L^\infty(\partial U)} $ we infer that $w^* \in L^\infty$ and
\begin{equation}
||w^*||_{L^\infty} \leq \liminf_{n \rightarrow \infty} || w_n||_{L^\infty} \leq  ||\mathrm{tr}(w)||_{L^\infty(\partial U)}.
\end{equation}
By the dominated convergence theorem, the Gauss divergence theorem and \eqref{eq:realeqw} we infer for all $\phi \in C^2(\overline{U})$ such that $\phi\vert_{\partial U} = 0$ 
\begin{align}
\int_U w^* \Delta \phi \dx & = \lim_{n \rightarrow \infty} \int_U w_n \Delta \phi \dx = \lim_{n \rightarrow \infty} \int_{\partial U} v_n \partial_\nu \phi \; \mathrm{d}\mathcal{H}^{n-1} 
\\ & =  \int_{\partial U} \mathrm{tr}(w) \partial_\nu \phi \; \mathrm{d} \mathcal{H}^{n-1} = \int_U w \Delta \phi \dx .
\end{align}
It follows again that 
\begin{equation}
\int_U ( w^* - w) \Delta \phi \dx  = 0 \quad \forall \phi \in C^2(\overline{U}) : \phi\vert_{\partial U} = 0. 
\end{equation}
and hence by \ref{lem:ponci} we obtain that $w^*- w \in W_0^{1,q}(U)$ for some $q >1$ and by uniqueness of solutions to the Dirichlet problem in $W_0^{1,q}(U)$ we infer once again $w = w^*$ which gives $||w||_{L^\infty(U)} \leq ||w^*||_{L^\infty(U)} \leq || \mathrm{tr}(w)||_{L^\infty(\partial U)}$. 
\end{proof}

%\section{Application: The biharmonic Alt-Caffarelli-Problem}

%\section{Application: Thin obstacle problems}

\end{document}